\documentclass{amsart}
\usepackage{amssymb,
enumerate,
mathrsfs,
hyperref
}
\hypersetup{colorlinks=true}
\numberwithin{equation}{section}
\usepackage[T1]{fontenc}

\renewcommand{\L}{\mathcal{L}}
\newcommand{\U}{\mathcal{U}}
\newcommand{\RR}{\mathbb{R}}
\newcommand{\QQ}{\mathbb{Q}}
\newcommand{\pre}[2]{{}^{#1} #2}
\newcommand{\set}[2]{\{ #1 \mid #2 \}}

\newcommand{\id}{\operatorname{id}}
\newcommand{\pow}{\mathscr{P}}
\newcommand{\Mod}{\operatorname{Mod}}
\newcommand{\leng}{\operatorname{length}}
\newcommand{\pred}{\operatorname{pred}}
\newcommand{\cf}{{\operatorname{cf}}}
\newcommand{\inj}{{\operatorname{inj}}}
\newcommand{\rk}{{\operatorname{rk}}}

\newcommand{\ZF}{{\sf ZF}}
\newcommand{\AC}{{\sf AC}}

\newcommand{\I}[2]{I_{#1}^{\vec{#2}}}
\newcommand{\II}[1]{I^{\vec{#1}}}
\newcommand{\isom}{\cong}
\newcommand{\RCT}{\mathrm{RCT}}

\newtheorem{theorem}{Theorem}[section]
\newtheorem{lemma}[theorem]{Lemma}
\newtheorem{corollary}[theorem]{Corollary}
\newtheorem{proposition}[theorem]{Proposition}
\newtheorem{question}[theorem]{Question}
\newtheorem{fact}[theorem]{Fact}
\newtheorem{claim}{Claim}[theorem]
\theoremstyle{definition}
\newtheorem{defin}[theorem]{Definition}
\newtheorem{notation}[theorem]{Notation}
\theoremstyle{remark}
\newtheorem{remark}[theorem]{Remark}

\begin{document}

\title[On isometry and isometric embeddability]{On isometry and isometric embeddability\\ between ultrametric Polish spaces}
\date{October 8, 2017}
\author[R.~Camerlo]{Riccardo Camerlo}
\address{Dipartimento di scienze matematiche \guillemotleft{Joseph-Louis Lagrange}\guillemotright, Politecnico di Torino, Corso Duca degli Abruzzi 24, 10129 Torino --- Italy}
\email{riccardo.camerlo@polito.it}
\author[A.~Marcone]{Alberto Marcone}
\address{Dipartimento di scienze matematiche, informatiche e fisiche, Universit\`a di Udine, Via delle Scienze 208, 33100 Udine --- Italy}
\email{alberto.marcone@uniud.it}
\author[L.~Motto Ros]{Luca Motto Ros}
\address{Dipartimento di matematica \guillemotleft{Giuseppe Peano}\guillemotright, Universit\`a di Torino, Via Carlo Alberto 10, 10121 Torino --- Italy}
\email{luca.mottoros@unito.it}
 \subjclass[2010]{Primary: 03E15; Secondary: 54E50}
 \keywords{Borel reducibility; ultrametric spaces; Polish spaces; isometry; isometric embeddability}
\thanks{Marcone's research was supported by PRIN 2009 Grant
\lq\lq Modelli e Insiemi\rq\rq\ and PRIN 2012 Grant \lq\lq Logica, Modelli e
Insiemi\rq\rq. Until September 2014 Motto Ros was a member of the Logic
Department of the Albert-Ludwigs-Universit\"at Freiburg, which supported him
at early stages of this research. After that, he was supported by the Young
Researchers Program ``Rita Levi Montalcini'' 2012 through the project ``New
advances in Descriptive Set Theory''.}

\begin{abstract}
We study the complexity with respect to Borel reducibility of the relations
of isometry and isometric embeddability between ultrametric Polish spaces
for which a set $D$ of possible distances is fixed in advance. These are,
respectively, an analytic equivalence relation and an analytic quasi-order
and we show that their complexity depends only on the order type of $D$.
When $D$ contains a decreasing sequence, isometry is Borel bireducible with countable
graph isomorphism and isometric embeddability has maximal complexity among
analytic quasi-orders. If $D$ is well-ordered the situation is more
complex: for isometry we have an increasing sequence of Borel equivalence
relations of length $\omega_1$ which are cofinal among Borel equivalence
relations classifiable by countable structures, while for isometric
embeddability we have an increasing sequence of analytic quasi-orders of
length at least $\omega+3$.

We then apply our results to solve various open problems in the literature.
For instance, we answer a long-standing question of Gao and Kechris by
showing that the relation of isometry on locally compact ultrametric Polish
spaces is Borel bireducible with countable graph isomorphism.
\end{abstract}

\maketitle

\tableofcontents

\section{Introduction}

A common problem in mathematics is to classify interesting objects up to some
natural notion of equivalence. More precisely, one considers a class of
objects \( X \) and an equivalence relation \( E\) on \( X \), and tries to
find a set of complete invariants \( I \) for \( (X,E) \). To be of any use,
such an assignment of invariants should be as simple as possible. In most
cases, both \( X \) and \( I \) carry some intrinsic Borel structures, so
that it is natural to ask the assignment to be a Borel measurable map.

A classical example  is the problem  of classifying separable complete metric
spaces, called \emph{Polish metric spaces}, up to isometry. In
\cite{gromov1999} Gromov showed for instance that one can classify compact
Polish metric spaces using (essentially) elements of \( \RR \) as complete
invariants; in modern terminology, we say that the corresponding
classification problem is smooth. However, as pointed out by Vershik in
\cite{vershik1998} the problem of classifying arbitrary Polish metric spaces
is \guillemotleft an enormous task\guillemotright, in particular it is far
from being smooth. Thus it is natural to ask how complicated is such a
classification problem.

A natural tool for studying the complexity of classification problems is the
notion of Borel reducibility introduced in \cite{Friedman1989} and in
\cite{HKL}: we say that a classification problem \( (X,E) \) is \emph{Borel
reducible} to another classification problem \( (Y,F) \) (in symbols, \( E
\leq_B F \)) if there exists a Borel measurable function \( f \colon X \to Y
\) such that \( x \mathrel{E} x' \iff f(x) \mathrel{F} f(x') \) for all \(
x,x' \in X \). Intuitively, this means that the classification problem \(
(X,E ) \) is not more complicated than \( (Y,F) \): in fact, any assignment
of complete invariants for \( (Y,F) \) may be turned into an assignment for
\( (X,E) \) by composing  with \( f \). A comprehensive reference for the
theory of Borel reducibility is \cite{gaobook}.

In \cite{Gao2003} (see also~\cite{Clemens2001,clemensisometry}), Gao and
Kechris were able to determine the exact complexity of the classification
problem for isometry on  arbitrary Polish metric spaces with respect to Borel
reducibility: it is Borel bireducible with the most complex orbit equivalence
relation (i.e.\ every equivalence relation induced by a Borel action of a
Polish group on a Polish space Borel reduces to it). Then, extending the work
of Gromov on compact Polish metric spaces, they turned their attention to
some other natural subclasses of spaces. Among these, the cases of locally
compact Polish metric spaces and of ultrametric%
\footnote{Recall that a metric \( d \) on a space \( X \) is called an
\emph{ultrametric} if it satisfies the following strengthening of the
triangular inequality: \( d(x,y) \leq \max \{ d(x,z),d(z,y) \} \) for all \(
x,y,z \in X \).} Polish spaces are particularly important. Notice that
ultrametric Polish spaces naturally occur in various parts of mathematics and
computer science. For example, the space \( \QQ_p \) of \( p \)-adic numbers
with the metric induced by the evaluation map \( | \cdot |_p \) (for any
prime number \( p \)) and, more generally, the completion of any countable
valued field are ultrametric Polish spaces. The same is true for the space \(
\pre\omega\Sigma \) of infinite words over a finite alphabet \( \Sigma \)
appearing  e.g.\ in automaton theory, which is usually equipped with the
metric \( d \) measuring how much two given words \( x,y \in \pre\omega\Sigma
\) are close to each other, i.e.\ \( d(x,y) = 2^{-n} \) with $n$ least such
that $x(n) \neq y(n)$. Indeed the space \( \pre\omega\Sigma \) is ultrametric
complete for \emph{any} (finite or infinite) alphabet \( \Sigma \), and is
Polish if and only if \( \Sigma \) is at most countable.

While the complexity of the isometry relation on arbitrary locally compact
Polish metric spaces has not yet been determined,  \cite[Theorem
4.4]{Gao2003} shows that for the case of ultrametric Polish spaces the
corresponding classification problem is Borel bireducible with  isomorphism
on countable graphs, the most complex isomorphism relation for classes of
countable structures.
Countable graph isomorphism is far from being smooth but it is
strictly simpler, with respect to Borel reducibility, than isometry on
arbitrary Polish metric spaces. For our purposes, it is important to notice
that the proof of the result on ultrametric Polish spaces crucially uses
spaces whose set of distances has \( 0 \) as a limit point and that are far
from being locally compact. Motivated by these observations, Gao and Kechris
devote a whole chapter of their monograph to the study of the isometry
relation on Polish metric spaces that are both locally compact and
ultrametric, obtaining some interesting partial results which seem to
transfer this problem to the study of the isometry relation on discrete
ultrametric Polish spaces. However, the latter problem remained unsolved and
hence they asked:

\begin{question} \cite[\S 8C and Chapter 10]{Gao2003} \label{mainquest}
What is the exact complexity of the isometry problem for discrete ultrametric
Polish spaces and for arbitrary locally compact ultrametric Polish spaces?
\end{question}

As a partial result, they isolated two lower bounds for these isometry
relations, namely isomorphism of countable well-founded trees and isomorphism
of countable trees with only countably many infinite branches.

Question~\ref{mainquest} was raised again (and another lower bound was
proposed) in \cite{ClemensPreprint}, where Clemens studies the complexity of
isometry on the collection of Polish metric spaces using only distances in a
set  \( A \subseteq \RR^+ \) fixed in advance. A related question is also
raised in \cite{GaoShao2011}, where Gao and Shao consider Clemens' problem
restricted to ultrametric Polish spaces:

\begin{question} \cite[\S 8]{GaoShao2011} \label{questgaoshao}
Given a countable%
\footnote{The countability of \( D \) is a necessary
requirement when dealing with ultrametric Polish spaces, see
Lemma~\ref{countablymanydistances}.} set of distances \( D \subseteq \RR^+ \)
such that \( 0 \) is not a limit point of \( D \), what is the complexity of
the isometry relation between ultrametric Polish spaces using only distances
from \( D \)?
\end{question}

Since all spaces as in Question~\ref{questgaoshao} are necessarily discrete
(and hence locally compact), it is clear that the isometry relations
considered there constitute lower bounds for the isometry relations of
Question~\ref{mainquest}.

In this paper, we address Question~\ref{questgaoshao} and show that our
solution of this problem allows us to answer also Question~\ref{mainquest}.
Moreover, we also consider the analogous problem concerning the complexity of
the quasi-order of isometric embeddability between ultrametric Polish spaces
using only distances from a given countable \( D \subseteq \RR^+ \). (The
formal setup for these problems is described in Section \ref{sect:setup}.)
Concerning isometric embeddability on arbitrary ultrametric Polish spaces, it
is already known that the relation is as complicated as possible: Louveau and
Rosendal (\cite{louros}) showed that it is a complete analytic quasi-order,
and we strengthened this in \cite{cammarmot} by showing that it is in fact
invariantly universal (see definitions below). However, the proofs of the
results in \cite{louros,cammarmot} again use in an essential way ultrametric
spaces with distances converging to zero.  This naturally raises the
following question, which is somewhat implicit in~\cite{GaoShao2011}:

\begin{question} \label{questemb}
Given a countable set of distances \( D \subseteq \RR^+ \) such that \( 0 \)
is not a limit point of \( D \), what is the complexity of the isometric
embeddability relation between ultrametric Polish spaces using only distances
from \( D \)?
\end{question}

We will show that actually the answers to both Question~\ref{questgaoshao}
and~\ref{questemb} depend only on whether $D$ contains a decreasing sequence
and, when it does not, on the order type (a countable ordinal) of $D$.
Thus isometry between ultrametric Polish spaces using only distances in $D$ is actually the same for any ill-founded $D$, regardless of the limits of the decreasing sequences in $D$, and the same is true for isometric embeddability.

In Sections \ref{isomif} and \ref{embif} we prove that if $D$ contains a
decreasing sequence of real numbers then our relations attain the maximal
possible complexity: isometry is Borel bireducible with countable graph isomorphism,
and isometric embeddability is invariantly universal, and therefore complete
for analytic quasi-orders. This implies that the isometry relation on every
class of ultrametric Polish spaces containing all spaces using only distances
from some specific $D$ with a decreasing sequence is also Borel bireducible
with countable graph isomorphism. By choosing such a $D$ so that $0$ is not one of its
limit points we obtain only discrete spaces. Thus our result in particular
answers Question~\ref{mainquest} (see
Corollary~\ref{cor:mainquest}): the relations of isometry on discrete
ultrametric Polish spaces and on locally compact ultrametric Polish spaces
are both Borel bireducible with countable graph isomorphism. This also shows that
isometry on the classes of countable or $\sigma$-compact ultrametric Polish
spaces is also Borel bireducible with countable graph isomorphism. We will also observe
that many of the lower bounds previously isolated in the literature are in
fact not sharp.

The situation is more complex when $D$ is a well-ordered set of real numbers
of order type $\alpha$. In Section \ref{isomwf} we show that when $\alpha $
ranges over non-null countable ordinals, the corresponding isometry relations
form a strictly increasing sequence of Borel equivalence relations, cofinal
among Borel equivalence relations classifiable by countable structures. In
Section \ref{sect:wf} we prove that if $\alpha \leq \omega+1$ then the
corresponding quasi-order of isometric embeddability is not complete
analytic. In fact for $1 \leq \alpha \leq \omega+2$ we have a strictly
increasing chain of quasi-orders; these are Borel exactly when $\alpha \leq
\omega$. We do not know whether for some $\alpha \geq \omega+2$ the
corresponding quasi-order of isometric embeddability is already complete for
analytic quasi-orders.

The main tool we use to deal with well-ordered sets of distances is a jump
operator $S \mapsto S^\inj$ defined on quasi-orders, which seems to be of
independent interest. This is defined and studied in Section \ref{sect:jump},
which starts with a review of Rosendal's jump operator $S \mapsto S^\cf$ in
Subsection \ref{subsect:cf} before introducing our variant in Subsection
\ref{subsect:inj}. In particular, when $S$ is Borel, $S^\cf$ is always Borel
while we will show that $S^\inj$ can be proper analytic (Subsection
\ref{subsec:injnotB}) but cannot be complete for analytic quasi-orders
(Subsection \ref{subsec:eqrelinj}).

\section{Terminology and notation}

We now describe the basic terminology and notation used in the paper. Recall
that a quasi-order is a reflexive and transitive binary relation. Any
quasi-order $S$ on a set $X$ induces an equivalence relation on \( X \), that
we denote by $E_S$, defined by $x \mathrel{E_S} x'$ if and only if
$x\mathrel{S}x'$ and $x'\mathrel{S}x$. A quasi-order is a \emph{well
quasi-order} (wqo for short) if it is well-founded and contains no infinite
antichains. In the 1960's Nash-Williams introduced the notion of \emph{better
quasi-order} (bqo for short): the definition of bqo is quite involved, but to
understand the references to this notion in this paper it suffices to know
that, as the terminology suggests, every bqo is indeed a wqo, and to use as
black boxes some closure properties of bqo's.

If $\mathcal{A}$ is a countably generated $\sigma$-algebra of subsets of $X$
that separates points we refer to the members of $\mathcal{A}$ as Borel sets
(indeed, as shown e.g.\ in \cite[Proposition 12.1]{Kechris1995}, in this case
$\mathcal{A}$ is the collection of Borel sets of some separable metrizable
topology on $X$). A map between two sets equipped with a collection of Borel
sets is Borel (measurable) if the preimages of Borel sets of the target space
are Borel sets of the domain. The space $(X, \mathcal{A})$ is standard Borel
if $\mathcal{A}$ is the collection of Borel sets of some Polish (i.e.\
separable and completely metrizable) topology on $X$. Except where explicitly
noted, in this paper we will always deal with standard Borel spaces.

Let $R$ and $S$ be binary relations on spaces equipped with a collection of
Borel sets $X$ and $Y$, respectively. We say that $R$ is \emph{Borel
reducible} to $S$, and we write $R\leq_BS$, if there is a Borel function $f
\colon X\to Y$ such that $x \mathrel{R} x'$ if and only if $f(x) \mathrel{S}
f(x')$ for all $x,x'\in X$. If $R\leq_BS$ and $S\leq_BR$ we say that $R$ and
$S$ are \emph{Borel bireducible} and we write $R\sim_BS$. If on the other
hand we have $R\leq_BS$ and $S\nleq_BR$ we write $R <_B S$.

If $\Gamma $ is a class of binary relations on standard Borel spaces and
$S\in\Gamma$, we say that $S$ is \emph{complete for $\Gamma$} if $R\leq_BS$
for all $R\in\Gamma$. Some classes $\Gamma$ we consider in this paper are the
collection of all analytic equivalence relations and the collection of all
analytic quasi-orders. If $\Gamma$ is the class of equivalence relations
classifiable by countable structures, the canonical example of an equivalence
relation complete for $\Gamma$ is countable graph isomorphism. Thus an equivalence
relation on a standard Borel space which is Borel bireducible with countable graph
isomorphism is in fact complete for equivalence relations classifiable by
countable structures.

Following the main result of \cite{frimot}, in \cite{cammarmot} we introduced
the following notion.

\begin{defin}\label{def:invariantlyuniversal}
Let the pair \( (S,E) \) consist of an analytic quasi-order \( S \) and an
analytic equivalence relation \( E \subseteq S \), with both relations
defined on the same standard Borel space $X$. Then $(S,E)$ is
\emph{invariantly universal} (for analytic quasi-orders) if for any analytic
quasi-order $R$ there is a Borel $B\subseteq X$ invariant under $E$ such that
$R\sim_BS\restriction B$.

Whenever the equivalence relation $E$ is clear from the context (in this
paper this usually means that $E$ is isometry between metric spaces in the
class under consideration) we just say that $S$ is invariantly universal.
\end{defin}

Notice that if $(S,E)$ is invariantly universal, then $S$ is, in particular,
complete for analytic quasi-orders.

We now extend the notions of classwise Borel embeddability and isomorphism
between equivalence relations introduced in \cite{MR12} to pairs consisting
of a quasi-order and an equivalence relation (\( E \simeq_{cB} F \),
respectively $E \sqsubseteq_{cB} F$, in the notation of \cite{MR12} is the
same as $(E,E)$ is classwise Borel isomorphic to, respectively embeddable in,
$(F,F)$ in our terminology). Given a pair \( (S,E) \) consisting of a
quasi-order \( S \) and an equivalence relation \( E \subseteq S \) on a set
$X$, we denote by \( S/E \) the \( E \)-quotient of \( S \), i.e.\ the
quasi-order on $X/E$ induced by $S$. If $F$ and $E$ are equivalence relations
on sets $X$ and $Y$ and $f \colon X/F\to Y/E$, then a \emph{lifting} of $f$
is a function $ \hat f \colon X\to Y$ such that $[ \hat f (x)]_E=f([x]_F)$
for every $x\in X$.

\begin{defin}\label{def:cB}
Let $(R,F)$ and $(S,E)$ be pairs consisting of a quasi-order and an
equivalence relation on some standard Borel spaces, with $F \subseteq
R$ and $E \subseteq S$.

We say that $(R,F)$ is \emph{classwise Borel isomorphic} to $(S,E)$, in
symbols $(R,F) \simeq_{cB} (S,E)$, if there is an isomorphism of quasi-orders
\( f \) between \( R/F \) and \( S/E \) such that both \( f \) and \( f^{-1}
\) admit Borel liftings.

We say that $(R,F)$ is \emph{classwise Borel embeddable} in $(S,E)$, in
symbols $(R,F) \sqsubseteq_{cB} (S,E)$, if there is an \( E \)-invariant
Borel subset \( B \) of the domain of \( S \) such that \( (R,F) \simeq_{cB}
(S \restriction B, E \restriction B) \).

Again, when the equivalence relations $F$ and $E$ are clear from the context
we just say that $R$ is classwise Borel isomorphic to or embeddable into $S$,
and write \( R \simeq_{cB} S \) or \( R \sqsubseteq_{cB} S \).
\end{defin}

The relevance of these definitions lies in the observation that if $(R,F)$ is
invariantly universal and $(R,F) \sqsubseteq_{cB} (S,E)$ then $(S,E)$ is
invariantly universal as well. This will be used e.g.\ for proving
Theorem~\ref{theorconvergingto0} and Theorem~\ref{theorillfounded2}.\medskip

We end this section by looking at Polish ultrametric preserving functions,
which will be used later. Pongsriiam and Termwuttipong (\cite{PT}) studied
\emph{ultrametric preserving functions} (i.e.\ functions $f$ such that for
every ultrametric $d$ on a space $X$, $f \circ d$ is still an ultrametric on
$X$) and showed that $f \colon \RR^+ \to \RR^+$ is ultrametric preserving if
and only if it is non-decreasing and such that $f^{-1}(0)=\{0\}$. For $f:
\RR^+ \to \RR^+$ to send Polish ultrametrics into Polish ultrametrics it is
necessary and sufficient that $f$ is an ultrametric preserving function
continuous at $0$ (notice that this implies that for every sequence $(x_n)_{n
\in \omega}$ such that $\lim_{n \to \infty} f(x_n)=0$ we have $\lim_{n \to
\infty} x_n=0$).

If we consider functions $f: A \to \RR^+$ for some $A \subseteq \RR^+$ (in
particular when $0$ is not an accumulation point of $A$) we need to
strengthen the continuity condition. Thus $f: A \to \RR^+$ is \emph{Polish
ultrametric preserving} (i.e.\ for every Polish ultrametric $d$ on a space
$X$ which uses only distances in $A$, $f \circ d$ is still a Polish
ultrametric on $X$) if and only if it is non-decreasing, such that
$f^{-1}(0)=\{0\}$, and satisfies $\lim_{n \to \infty} x_n=0$ if and only if
$\lim_{n \to \infty} f(x_n)=0$ for every sequence $(x_n)_{n \in \omega}$ of
elements of $A$.

\section{Jump operators}\label{sect:jump}
By a \emph{jump operator} we mean a mapping $S \mapsto S^J$ from a collection
of binary relations into itself satisfying $S \leq_B S^J$ and $R \leq_B S
\implies R^J \leq_B S^J$. Typically the jump is defined on equivalence
relations or quasi-orders. When we study jump operators we are
interested in finding conditions on $S$ that imply $S <_B S^J$ (notice that
if $S \leq_B R <_B S^J$ then $R <_B R^J$) and in using the jump to define
transfinite sequences of binary relations which are increasing with respect
to Borel reducibility.

\subsection{Rosendal's jump operator $S \mapsto S^{\cf}$}\label{subsect:cf}

A jump operator on equivalence relations \( E \mapsto E^+ \) was introduced
by H.~Friedman (see~\cite{Stanley1985}). If $E$ is defined on $X$, then $E^+$
is defined on $ \pre{\omega}{X} $ by letting
\[
(x_n)_{n \in \omega} \mathrel{E^+} ( y_n )_{ n \in \omega } \iff
(\forall n \exists m \, (x_n \mathrel{E} y_m)\wedge\forall n \exists m \, (y_n \mathrel{E} x_m)).
\]

In \cite{friedman2000} (where $E^+$ is denoted $ {\rm NCS}(X,E)$) it is
proved that this jump operator gives rise to a transfinite sequence of
equivalence relations that is strictly increasing with respect to $\leq_B$
and cofinal among Borel equivalence relations classifiable by countable
structures (see also \cite[\S12.2]{gaobook}, where a variant of this sequence
is called the Friedman-Stanley tower). This sequence is defined as follows.
Let $T(0)$ be the equivalence relation $(\omega , =)$. For $\alpha
<\omega_1$, let $T(\alpha +1)=(T({\alpha }))^+$. For $\lambda <\omega_1$
limit, let $T(\lambda )=\sum_{\beta <\lambda }T(\beta )$, where this sum is
the disjoint union of the equivalence relations $T(\beta )$.

Another well-known sequence of equivalence relations is obtained by
considering isomorphism on well-founded trees of bounded rank. Let $\mathcal
T_{\alpha }$ be the set of well-founded trees on $\omega $ of rank less than
$\alpha $: the equivalence relation of isomorphism on $\mathcal T_{\alpha }$
is quite easily seen to be Borel. H. Friedman and Stanley proved in
\cite{Friedman1989} that for $0<\alpha <\beta <\omega_1$, one has that
isomorphism on $ \mathcal T_{\alpha }$ is $<_B$ than isomorphism on $\mathcal
T_{\beta }$. The following fact is well-known and follows e.g.\ from
\cite[Theorem 13.2.5]{gaobook}.

\begin{proposition} \label{folklore}
For every Borel equivalence relation $E$ classifiable by countable structures
there exists $\alpha <\omega_1$ such that $E$ is Borel reducible to
isomorphism on $\mathcal T_{\alpha }$.
\end{proposition}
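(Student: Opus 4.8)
The plan is to reduce the statement to two steps: first showing that a Borel $E$ classifiable by countable structures is Borel reducible to a fixed \emph{finite-level} approximation of isomorphism, and then reducing that approximation to isomorphism on well-founded trees of bounded rank. Concretely, since $E$ is classifiable by countable structures there are a countable language $\mathcal{L}$ and a Borel reduction $g \colon X \to \Mod(\mathcal{L})$ of $E$ to isomorphism $\cong$ on the space $\Mod(\mathcal{L})$ of countable $\mathcal{L}$-structures with universe $\omega$. For $\alpha < \omega_1$ write $\cong^\alpha$ for the $\alpha$-th back-and-forth (Scott) approximation of $\cong$ on $\Mod(\mathcal{L})$: for each fixed $\alpha$ this is a Borel equivalence relation, the relations $\cong^\alpha$ are decreasing in $\alpha$, and $\bigcap_{\alpha < \omega_1} {\cong^\alpha} = {\cong}$ by Scott's theorem. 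The plan is to prove (i) $E \leq_B {\cong^{\alpha_0}}$ for some $\alpha_0 < \omega_1$, and (ii) ${\cong^{\alpha_0}} \leq_B {\cong} \restriction \mathcal{T}_\beta$ for a suitable $\beta < \omega_1$ depending on $\alpha_0$; composing these and taking $\alpha = \beta$ yields the proposition.

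For (i) I would argue by boundedness. Non-isomorphism $N = \{(M,M') : M \not\cong M'\}$ is a $\Pi^1_1$ subset of $\Mod(\mathcal{L})^2$, and the map sending a pair $(M,M') \in N$ to the least $\alpha$ with $M \not\cong^\alpha M'$ (its ``Scott watershed'') is a $\Pi^1_1$-rank on $N$; this is the classical analysis underlying Nadel's theorem. Pulling this rank back along the Borel map $(x,x') \mapsto (g(x),g(x'))$ produces a $\Pi^1_1$-rank $\psi$ on $g^{-1}(N) = \{(x,x') : \neg(x \mathrel{E} x')\}$. Since $E$ is Borel, its complement is in particular $\Sigma^1_1$, so by the boundedness theorem for $\Pi^1_1$-ranks $\psi$ is bounded on $g^{-1}(N)$ by some $\alpha_0 < \omega_1$. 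Then $\neg(x \mathrel{E} x')$ implies $g(x) \not\cong^{\alpha_0} g(x')$, while $x \mathrel{E} x'$ implies $g(x) \cong g(x')$ and hence $g(x) \cong^{\alpha_0} g(x')$; thus $g$ itself witnesses $E \leq_B {\cong^{\alpha_0}}$.

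For (ii) I would assign to each $M \in \Mod(\mathcal{L})$, in a Borel way, its canonical Scott tree of height $\alpha_0$ --- the tree whose level-$n$ nodes are the $\cong^{\alpha_0}$-approximation types of $n$-tuples from $M$, ordered by restriction --- which, after a Borel relabelling of nodes by natural numbers, is a well-founded tree on $\omega$ of rank bounded by some fixed $\beta < \omega_1$ (one can take $\beta$ of the order of $\omega \cdot \alpha_0$), with the property that $M \cong^{\alpha_0} M'$ if and only if the associated trees are isomorphic. This gives ${\cong^{\alpha_0}} \leq_B {\cong} \restriction \mathcal{T}_\beta$, and composing with (i) finishes the proof. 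All of this is implicit in \cite[Theorem 13.2.15]{gaobook}, from which the statement can alternatively just be quoted.

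I expect the main obstacle to be the bookkeeping in step (ii): one must set up the assignment $M \mapsto$ (Scott tree of height $\alpha_0$) carefully enough that it is genuinely Borel, that the rank bound $\beta$ is uniform over all of $\Mod(\mathcal{L})$, and that isomorphism of these trees captures exactly $\cong^{\alpha_0}$. An alternative that sidesteps explicit Scott trees is to first invoke the fact that $\cong$ on $\Mod(\mathcal{L})$ is Borel bireducible with isomorphism on countable trees and run the boundedness argument of (i) directly on that space, ranking non-isomorphic pairs of (possibly ill-founded) trees; this needs a slightly more delicate $\Pi^1_1$-rank but minimizes the combinatorics of step (ii).
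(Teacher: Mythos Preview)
Your proposal is correct; the paper itself gives no proof at all beyond the sentence ``follows e.g.\ from \cite[Theorem 13.2.15]{gaobook}'', which you also cite at the end of your sketch. What you have written is a faithful unpacking of the standard Scott-analysis/boundedness argument underlying that reference: step~(i) is exactly the boundedness-of-Scott-rank argument, and step~(ii) is the coding of bounded-rank invariants by well-founded trees, with the caveat you already flag that the Borel bookkeeping for the tree assignment needs some care. Since the paper treats the proposition as folklore and defers entirely to the literature, your sketch actually goes beyond what the paper provides.
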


In~\cite{Rosend2005} Rosendal introduced the following jump operator  \( S
\mapsto S^{\cf} \) on quasi-orders which is an analogue in this wider context
of Friedman's operator on equivalence relations \( E \mapsto E^+ \) (in fact,
with our notation $E^+ = E_{E^{\cf}}$).

\begin{defin}[{\cite[Definition 4]{Rosend2005}}] \label{defin:rosendaljump}
Let $(X,S)$ be a quasi-order. We denote by \( (X, S)^{\cf} \) (or even just
by \( S^{\cf} \), when the space \( X \) is clear from the context) the
quasi-order on \( \pre{\omega}{X} \) defined by
\[
(x_n)_{n \in \omega} \mathrel{S^{\cf}} ( y_n )_{ n \in \omega } \iff \forall n \exists m (x_n \mathrel{S} y_m).
 \]
\end{defin}

It is immediate to check that \( S \mapsto S^{\cf} \) is indeed a jump
operator. Moreover \( S \) is a Borel (respectively, analytic) quasi-order if
and only if so is \( S^{\cf} \).

In~\cite{Rosend2005}, the author used the jump operator \( S \mapsto S^{\cf}
\) to define an  \( \omega_1 \)-sequence of Borel quasi-orders which is
cofinal among Borel quasi-orders.

\begin{defin}[\cite{Rosend2005}] \label{defin:rosendalsequence}
Let \( P_0 \) be the quasi-order \( (\omega,=) \). For \( \alpha < \omega_1
\) let \( P_{\alpha+1} = P_\alpha^{\cf} \). For \( \lambda < \omega_1 \)
limit let \( P_\lambda = \prod_{\beta < \lambda} P_\beta \), where the
product of relations is the relation defined componentwise on the Cartesian
product of the domains.
\end{defin}

\begin{theorem}[{\cite[Corollary 15]{Rosend2005}}] \label{th:rosendalsequence}
The sequence \( (P_\alpha)_{ \alpha < \omega_1} \) is strictly \( \leq_B
\)-increasing and cofinal among the Borel quasi-orders, i.e.\ for every Borel
quasi-order \( S \) there is \( \alpha < \omega_1 \) such that \( S \leq_B
P_\alpha \).
\end{theorem}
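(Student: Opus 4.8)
The statement has two halves; the plan is to prove them separately, the cofinality being the substantial one. (The whole argument is Rosendal's; see \cite[Corollary 15]{Rosend2005}.)

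\emph{Monotonicity and strictness.} First I would check, by induction on $\beta$, that $P_\alpha\leq_B P_\beta$ whenever $\alpha\leq\beta<\omega_1$: at a successor this is the jump inequality $P_\beta\leq_B P_\beta^{\cf}=P_{\beta+1}$, and at a limit $\lambda$, for $\gamma<\lambda$, the map sending $x$ to the tuple whose $\gamma$-th coordinate is $x$ and whose remaining coordinates are fixed elements $z_\delta$ in the domain of $P_\delta$ is a Borel reduction of $P_\gamma$ to $\prod_{\delta<\lambda}P_\delta=P_\lambda$. So the sequence is $\leq_B$-non-decreasing, and it suffices to exclude $P_\alpha\sim_B P_{\alpha+1}$ for every $\alpha$. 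A trivial induction (using that $S\mapsto S^{\cf}$ and countable products preserve Borelness) shows each $P_\alpha$ is Borel, and $P_0=(\omega,=)\leq_B P_\alpha$ shows $E_{P_\alpha}$ has infinitely many classes. If we had $P_\alpha\sim_B P_{\alpha+1}$, then $E_{P_\alpha}\sim_B E_{P_{\alpha+1}}=(E_{P_\alpha})^+$; but $E<_B E^+$ for every Borel equivalence relation $E$ with at least two classes (a standard property of the Friedman jump; see, e.g., \cite{gaobook}), a contradiction. (Alternatively, one locates each $E_{P_\alpha}$ inside the Friedman--Stanley tower---$E_{P_n}=T(n)$, and at limit stages the product telescopes one step up---and invokes the strictness of that tower.) Hence $P_\alpha<_B P_\beta$ whenever $\alpha<\beta<\omega_1$.

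\emph{Cofinality.} Let $S$ be a Borel quasi-order; after replacing its underlying standard Borel space by a Borel-isomorphic copy I may assume it is defined on $\pre\omega\omega$, and then, as a subset of $\pre\omega\omega\times\pre\omega\omega$, it is $\mathbf{\Pi}^0_\xi$ for some $1\leq\xi<\omega_1$. The plan is a transfinite induction on $\xi$ showing that every $\mathbf{\Pi}^0_\xi$ quasi-order on $\pre\omega\omega$ Borel reduces to $P_{\beta_\xi}$ for a suitable $\beta_\xi<\omega_1$. Two closure properties of the family $\{P_\alpha\mid\alpha<\omega_1\}$ drive the induction: closure under the $\cf$-jump, which is immediate since $P_\alpha^{\cf}=P_{\alpha+1}$; and closure under countable disjoint sums, $\sum_n P_{\alpha_n}\leq_B P_{\sup_n\alpha_n}$, which I would establish by a tagging argument, absorbing the summand index into the first coordinate (the $P_0=(\omega,=)$ one) at limit stages and pushing it through the $\cf$-jump at successor stages. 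For the induction itself I would first pass from $S$ to a relation of inclusion between Borel sets: the Borel map $x\mapsto S_x^{\uparrow}=\set{y}{x\mathrel{S}y}$ turns a $\mathbf{\Pi}^0_\xi$ quasi-order into a Borel assignment of $\mathbf{\Pi}^0_\xi$ sections, and by reflexivity and transitivity $x\mathrel{S}y\iff S_y^{\uparrow}\subseteq S_x^{\uparrow}$. It then remains to bound the complexity of inclusion between $\mathbf{\Pi}^0_\xi$ (equivalently, $\mathbf{\Sigma}^0_\xi$) subsets of $\pre\omega\omega$: writing a $\mathbf{\Sigma}^0_\xi$ set as $\bigcup_n A_n$ with each $A_n$ of lower Borel rank, the identity $\bigcup_n A_n\subseteq B\iff\forall n\,(A_n\subseteq B)$ strips one level off the hierarchy and feeds the outcome into a $\cf$-jump, with the disjoint-sum closure absorbing limit ranks; the base case is inclusion between closed sets, which, after coding by pruned trees, becomes plain tree inclusion and so reduces to $P_1$ (which is Borel bireducible with $\subseteq$ on $\pow(\omega)$). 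Since an arbitrary Borel $S$ is $\mathbf{\Pi}^0_\xi$ for some $\xi$, this gives $S\leq_B P_{\beta_\xi}$, as required.

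\emph{The main obstacle} is precisely the inductive step in the cofinality argument: carrying out the descent from Borel rank $\xi+1$ to rank $\xi$ uniformly and in a Borel way. A naive splitting of $\bigcup_n A_n\subseteq\bigcup_m B_m$ lowers the rank on the left but leaves the right-hand side at rank $\xi+1$, so one has to alternate between the $\bigcup$- and the $\bigcap$-presentations of the two sets and keep careful track of Borel codes; checking the base case and arranging the uniform Borel coding of the sections are the other delicate points. Once cofinality is secured, the remaining ingredients---in particular the comparison with the Friedman--Stanley tower used above for strictness---are routine.
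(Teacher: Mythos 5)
The paper offers no proof of this statement: it is imported verbatim from Rosendal's article, and the only hint it gives about the argument is the remark immediately following, namely that strictness is obtained from \cite[Proposition 6]{Rosend2005} (if a Borel quasi-order has two $S$-incompatible elements then $S<_BS^{\cf}$) by checking inductively that each $P_\alpha$ has such a pair. Your reconstruction replaces exactly this step by something false. You assert $E_{P_{\alpha+1}}=(E_{P_\alpha})^+$ and then invoke $E<_BE^+$. But the identity $E^+=E_{E^{\cf}}$, which the paper records, holds only when the $\cf$-jump is applied to an \emph{equivalence relation}; for a genuine quasi-order $S$ one only has the inclusion $(E_S)^+\subseteq E_{S^{\cf}}$, and it is strict already at the first relevant stage. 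Identifying $P_1$ with $(\pow(\omega)\setminus\{\emptyset\},\subseteq)$, the sequences $(\{0\},\{0,1\},\{0,1\},\dots)$ and $(\{0,1\},\{0,1\},\dots)$ are $E_{P_1^{\cf}}$-equivalent but not $(E_{P_1})^+$-equivalent; and for $S=(\omega,\leq)$ the relation $E_{S^{\cf}}$ has only countably many classes while $(E_S)^+=(=)^+$ does not, so the two operations genuinely diverge. Since no $P_\alpha$ with $\alpha\geq 1$ is an equivalence relation, neither the identity you use nor the parenthetical alternative ($E_{P_n}=T(n)$, ``the product telescopes'') is justified, and without it you have no argument that $E_{P_{\alpha+1}}\nleq_BE_{P_\alpha}$, hence none for $P_\alpha<_BP_{\alpha+1}$. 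The repair is the one the paper points to: show by induction that each $P_\alpha$ contains a pair of incompatible elements and apply Rosendal's Proposition 6 (or its refinement, Lemma~\ref{lemmacf}).

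The cofinality half follows the correct strategy --- reduce $S$ to reverse inclusion of upper cones via $x\mathrel{S}y\iff S^\uparrow_y\subseteq S^\uparrow_x$, then bound inclusion between $\boldsymbol{\Sigma}^0_\xi$ sets by induction on $\xi$ using the $\cf$-jump, countable sums, and the base case $P_1\sim_B(\pow(\omega),\subseteq)$ --- but, as you yourself concede in the final paragraph, the inductive descent through the Borel hierarchy is precisely the nontrivial content of Rosendal's theorem and is left unproved: splitting only the left-hand union does not lower the rank of the right-hand side, and no mechanism for the simultaneous descent (or for the uniform Borel handling of codes) is supplied. So as written neither half of the theorem is actually established; the monotonicity argument and the Borelness of the $P_\alpha$ are the only parts that are complete.
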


Notice that it immediately follows that the sequence \(
(E_{P_\alpha})_{\alpha < \omega_1 } \) of the associated equivalence
relations is cofinal among all Borel equivalence relations.

\begin{remark}
By monotonicity of \( S \mapsto S^{\cf} \) and the fact that Rosendal's
sequence is strictly increasing, if \( S \) is a Borel quasi-order such that
\( P_\alpha \leq_B S \leq_B P_{\alpha+1} \) for some \( \alpha < \omega_1 \),
then \( S <_B S^{\cf} \).
\end{remark}

To prove that the sequence \( (P_\alpha)_{\alpha < \omega_1} \) is \( \leq_B
\)-strictly increasing, one argues by induction on \( \alpha < \omega_1 \)
using in the successor step that if \( S \) is a Borel quasi-order on \( X \)
containing \( S \)-incompatible elements (i.e.\ $x$ and $y$ such that for no
$z$ we have $z \mathrel{S} x$ and $z \mathrel{S} y$), then \( S <_B S^{\cf}
\) (see~\cite[Proposition 6]{Rosend2005}). For some of the results of this
paper, we need to slightly improve this technical result as follows.

\begin{lemma}\label{lemmacf}
Let \( S \) be a Borel quasi-order on \( X \) and suppose that one of the
following conditions holds:
\begin{enumerate}[\quad(i)]
\item there exist \( S \)-incomparable elements \( x,y \) such that the
    restriction of \( S \) to \( \{ z \in X \mid z \mathrel{S}  x \land z
    \mathrel{S} y \} \) is well-founded;
\item the quotient $S/E_S$ has a well-founded infinite downward closed
    subset.
\end{enumerate}
Then \( S <_B S^{\cf} \).
\end{lemma}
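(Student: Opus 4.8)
It suffices to establish $S^{\cf}\not\le_B S$, since $S\le_B S^{\cf}$ holds for every quasi-order; so suppose towards a contradiction that $f\colon\pre{\omega}{X}\to X$ is a Borel reduction of $S^{\cf}$ to $S$. Then $S\sim_B S^{\cf}$, hence $S^{(\cf n)}\sim_B S$ for all $n$, where $S^{(\cf n)}$ denotes the $n$-th iterate of the jump. The argument extends the proof of \cite[Proposition~6]{Rosend2005} (which is the special case $W=\emptyset$ of~(i)), and is driven by the following elementary remark on the interplay between the jump and incomparability: if $u,v\in X$ are $S$-incomparable then the constant sequences $\bar u,\bar v$ are $S^{\cf}$-incomparable, the set $\pre{\omega}{W_{u,v}}$ of their common $S^{\cf}$-lower bounds (where $W_{u,v}=\{z\in X\mid z\mathrel{S}u\text{ and }z\mathrel{S}v\}$) is downward closed in $S^{\cf}$, and $S^{\cf}\restriction\pre{\omega}{W_{u,v}}=(S\restriction W_{u,v})^{\cf}$; similarly, for an $S$-increasing sequence $z_0\mathrel{S}z_1\mathrel{S}\cdots$ the sequence $(z_0,z_1,\dots)$ is an $S^{\cf}$-upper bound of all the $\bar z_n$, and the sequences $S^{\cf}$-below it are exactly $\pre{\omega}{W}$ with $W$ the downward closure of $\{z_n\mid n\in\omega\}$.

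When $S/E_S$ is well-founded the conclusion is immediate. By the Kunen--Martin theorem $\rho(S/E_S)<\omega_1$, and $\rho$ is monotone under $\le_B$ among well-founded quasi-orders (a Borel reduction induces an order embedding of quotients, which cannot decrease rank); on the other hand $S^{\cf}/E_{S^{\cf}}$ is the poset of countably generated down-sets of $S/E_S$ under inclusion, and its rank strictly exceeds $\rho(S/E_S)$ as soon as $S/E_S$ is infinite (which~(ii) forces) or has incomparable elements (which~(i) provides) --- the mechanism being that passing to down-sets adjoins a join above every incomparable pair and a supremum above every increasing $\omega$-chain. Thus $\rho(S^{\cf}/E_{S^{\cf}})>\rho(S/E_S)$, contradicting $S^{\cf}\le_B S$.

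The genuinely delicate case is when $S$ itself is ill-founded, and here one localises. In case~(i) take $W=W_{x,y}$; in case~(ii), a well-founded infinite $V\subseteq S/E_S$ contains either an infinite antichain --- whose first two members lift to $S$-incomparable $x,y$ with $W_{x,y}$ contained in the downward closed set $\bigcup V$, hence well-founded, which returns us to case~(i) --- or an infinite strictly increasing $\omega$-chain, in which case (absence of an infinite antichain) $V$ is a wqo and we take $W=\bigcup V$. In all cases $W$ is Borel and $S$-downward closed, $S\restriction W$ is a well-founded Borel quasi-order of rank $<\omega_1$ by Kunen--Martin, $\pre{\omega}{W}$ is downward closed in $S^{\cf}$, and $S^{\cf}\restriction\pre{\omega}{W}=(S\restriction W)^{\cf}$ is well-founded of rank strictly above $\rho(S\restriction W)$. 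The plan is then to feed the reduction $f$ back through the remark above at $\bar x,\bar y$ (respectively at the $S^{\cf}$-upper bound of the increasing chain) and iterate along $S^{(\cf n)}\sim_B S$: each step manufactures inside $S$ a well-founded configuration (a cone-intersection of an incomparable pair, or an initial segment of an increasing chain) of rank one larger, so that $S$ would contain such configurations of unboundedly large countable rank --- contradicting the Kunen--Martin bound applied, via a $\Sigma^1_1$-boundedness argument, to the Borel tree of finite $<_S$-descending sequences lying in such configurations. The main obstacle is exactly this bookkeeping: turning ``the jump strictly increases complexity'' into a property honestly invariant under $\le_B$, and in particular handling quasi-orders that already carry an incomparable pair with an ill-founded cone-intersection, where one must first pass to a suitable Borel downward closed subrelation before running the rank computation. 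This is the point at which Rosendal's original argument must be adapted rather than merely quoted.
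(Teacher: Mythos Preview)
Your proposal is not a proof: in the ill-founded case you explicitly state a ``plan'' and then acknowledge that ``the main obstacle is exactly this bookkeeping,'' without executing it. The rank-iteration strategy you describe does not obviously go through. Even granting that $S^{(\cf n)}\sim_B S$ for all $n$, the reduction $f$ need not map $\pre{\omega}{W}$ into $W$ (or into any well-founded part of $X$), so you cannot iterate the jump \emph{inside} the well-founded configuration and thereby force unboundedly large ranks there. Your appeal to a ``$\boldsymbol{\Sigma}^1_1$-boundedness argument'' on ``the Borel tree of finite $<_S$-descending sequences lying in such configurations'' is not made precise, and it is unclear what analytic set of well-founded relations you would apply boundedness to.

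The missing ingredient is \cite[Proposition~5]{Rosend2005}: if $f\colon\pre{\omega}{X}\to X$ reduces $S^{\cf}$ to $S$, then for every $\vec{x}=(x_n)_n$ there exists $k$ with $f(\vec{x})\mathrel{S}x_k$. This pins down where $f$ lands well enough to argue directly, without any rank machinery. In case~(i), one shows that one of $f(x^\infty),f(y^\infty)$ is a common $S$-predecessor $z_0$ of $x$ and $y$; since $z_0\mathrel{S}y$ but $x\not\mathrel{S}y$, the sequence $z_0^\infty$ is strictly $S^{\cf}$-below $x^\infty$, so $z_1:=f(z_0^\infty)$ is strictly $S$-below $z_0$ and is again a common predecessor of $x,y$; iterating yields an infinite descending chain in the well-founded set $\{z\mid z\mathrel{S}x\wedge z\mathrel{S}y\}$. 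In case~(ii), after reducing to $Y$ a well-order (else case~(i) applies), one takes a strictly increasing $(x_n)_n$ enumerating an initial $\omega$-segment of $Y$ and $x_\omega=(x_n)_n$; then $f(x_\omega)\mathrel{S}x_n$ for some $n$, while the $(f(x_k^\infty))_k$ form a strictly $S$-increasing sequence all $\le_S f(x_\omega)\le_S x_n$, contradicting that $x_n$ has only finitely many predecessors in the downward-closed $Y$. This direct descent argument is what replaces your unfinished iteration scheme.
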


\begin{proof}
To simplify the
notation, given \( x \in X \) denote by \( x^\infty \in \pre{\omega}{X} \)
the constant \(\omega\)-sequence with value \( x \).

We follow the ideas of the proof of~\cite[Proposition 6]{Rosend2005}. Suppose
$f \colon \pre{\omega}{X} \to X$ witnesses $S^{\cf}\leq_BS$. Recall
that~\cite[Proposition 5]{Rosend2005} asserts that for every \( \vec{x} = (
x_n )_{ n \in \omega } \in \pre{\omega}{X} \) there is \( k \in \omega \)
such that \( f(\vec{x}) \mathrel{S} x_k \).

Assume first that (i) holds and let $x,y$ be incomparable elements with a
well-founded set of common predecessors. Then $f(x^{\infty }) \mathrel{S} x$
and $f(y^{\infty }) \mathrel{S} y$; moreover, either $f(xy^{\infty })
\mathrel{S} x$ or $f(xy^{\infty }) \mathrel{S} y$. Since $x^{\infty
}\mathrel{S^{\cf}} xy^{\infty }$ and $y^{\infty }\mathrel{S^{\cf}} xy^{\infty
}$, one has $f(x^{\infty }) \mathrel{S} f(xy^{\infty })$ and $f(y^{\infty })
\mathrel{S} f(xy^{\infty })$. So at least one of $f(x^{\infty }),f(y^{\infty
})$ is a predecessor of both $x$ and $y$. Suppose, for instance, $f(x^{\infty
}) \mathrel{S} x$ and $f(x^{\infty }) \mathrel{S} y$ and let $z_0=f(x^{\infty
})$. Now notice that $z_0^{\infty }$ strictly precedes $x^{\infty }$ under
$S^{\cf}$ (because \( x \not \mathrel{S} y \) while \( z_0 \mathrel{S} y \)),
thus $z_1=f(z_0^{\infty })$ is a strict predecessor of $z_0 = f(x^\infty)$
under $S$. Arguing by induction in a similar way, we build a strictly
decreasing sequence $z_{n} = f(z_{n-1}^\infty)$ of common predecessors of $x$
and $y$.

If (ii) holds, let $Y \subseteq X/E_S$ be infinite, downward closed, and
well-founded, as in the case assumption. If \( Y \) contains two \( S/E_S
\)-incomparable elements, then we are done by case (i). Hence we can assume
without loss of generality that \( Y \) is a well-order and, since it is
infinite, that it contains an initial segment isomorphic to $\omega$. Let
$x_0 \mathrel{S} x_1 \mathrel{S} \dotsc$ be a strictly increasing sequence in
$X$ of representatives of such an initial segment, so that $x_0^{\infty},
x_1^{\infty},\dotsc, x_\omega = ( x_i )_{ i \in \omega }$ is a strictly
increasing sequence with respect to $S^{\cf}$.
Let $n\in\omega $ be such that $f(x_\omega) \mathrel{S}
x_n$. Then $(f(x_k^{\infty }))_{k \in \omega}$ would constitute a strictly
increasing sequence with respect to $S$ bounded by $x_n$, which is
impossible.
\end{proof}

\begin{remark}
\begin{enumerate}[(1)]
\item Conditions (i) and (ii) of Lemma~\ref{lemmacf} are sufficient but not
    necessary for having \( S <_B S^{\cf} \). To see this, consider a Borel
    quasi-order \( S \) whose quotient $S/E_S$ is a copy of \( \omega^* \) (the
    ordinal \( \omega \) equipped with the reverse order) together with two
    incomparable elements above it.
\item There are Borel  quasi-orders \( S \) such that \( S \sim_B S^{\cf}
    \): for example, let \( S \) be such that its quotient is isomorphic to
    a reverse well-ordering.
\end{enumerate}
\end{remark}

One may wonder whether the weaker conditions of Lemma~\ref{lemmacf} may be
used instead of~\cite[Proposition 6]{Rosend2005} to recursively build a \(
\leq_B \)-increasing and cofinal \( \omega_1 \)-sequence of Borel
quasi-orders similar to \( (P_\alpha)_{\alpha < \omega_1} \). Towards this
end, we first need to check that the disjunction of the conditions (i) and
(ii) of Lemma~\ref{lemmacf} is preserved by the operator \( S \mapsto S^{\cf}
\) and by countable products. To simplify the presentation, a quasi-order
satisfying such a disjunction will be called \emph{suitable}.

\begin{lemma} \label{lem:suitable}
Let \( S \), \( (S_\beta)_{\beta < \lambda} \) (for some limit \( \lambda <
\omega_1 \)) be suitable Borel quasi-orders. Then both \( S^{\cf} \) and \(
\prod_{\beta < \lambda} S_\beta \) are suitable as well.
\end{lemma}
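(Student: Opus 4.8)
The plan is to verify the two closure properties separately, dealing first with the cofinal-jump operator $S \mapsto S^{\cf}$ and then with countable products, in each case splitting into the two cases corresponding to conditions (i) and (ii) of Lemma~\ref{lemmacf}. For the jump operator: suppose $S$ is suitable. If $S$ satisfies (i), witnessed by $S$-incomparable $x,y$ with a well-founded set of common predecessors, I would check that the constant sequences $x^\infty$ and $y^\infty$ are $S^{\cf}$-incomparable (which is immediate since $x^\infty \mathrel{S^{\cf}} y^\infty \iff x \mathrel{S} y$), and that the set of common predecessors of $x^\infty$ and $y^\infty$ in $S^{\cf}$ is well-founded; the latter follows because any $\vec z = (z_n)_n$ with $\vec z \mathrel{S^{\cf}} x^\infty$ has every $z_n \mathrel{S} x$, and similarly every $z_n \mathrel{S} y$, so each $z_n$ lies in the well-founded common-predecessor set of $x,y$, and an $S^{\cf}$-descending sequence of such $\vec z$'s would, after passing to a suitable coordinate, yield an $S$-descending sequence there. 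If instead $S$ satisfies (ii), with $Y \subseteq X/E_S$ infinite, downward closed and well-founded, I would produce the analogous set inside $S^{\cf}/E_{S^{\cf}}$: the map sending $[x]_{E_S}$ to $[x^\infty]_{E_{S^{\cf}}}$ embeds $S/E_S$ into $S^{\cf}/E_{S^{\cf}}$, and one argues that the downward closure of the image of $Y$ is still well-founded, using that a predecessor of some $[x^\infty]$ in the quotient is represented by a sequence all of whose entries are $E_S$-below the downward closure of $Y$ (hence in $Y$), so again a bad descending sequence projects to one in $Y$.

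For the product $\prod_{\beta < \lambda} S_\beta$: here the natural strategy is to use condition (i) whenever possible. Each suitable $S_\beta$ has, by definition, either $S_\beta$-incomparable elements with well-founded common-predecessor set, or an infinite well-founded downward closed subset of its quotient; in the latter case, if that subset contains two incomparable elements we are back to (essentially) case (i), and otherwise it is a well-order of order type $\geq \omega$, which in particular contains $S_\beta$-comparable elements $a_\beta \mathrel{S_\beta} b_\beta$ with, say, a strict inequality, giving us lots of structure. The cleanest route is: pick any fixed index $\beta_0 < \lambda$; since $S_{\beta_0}$ is suitable it has an infinite well-founded downward closed subset of its quotient, or incomparable elements as in (i). Combined across coordinates — choosing in every coordinate $\beta \neq \beta_0$ a single element $c_\beta$ and varying only the $\beta_0$-th coordinate through the relevant witnesses in $S_{\beta_0}$ — we obtain in $\prod_\beta S_\beta$ (or in its quotient, which is $\prod_\beta (S_\beta/E_{S_\beta})$) either a pair of incomparable elements with well-founded common-predecessor set, or an infinite well-founded downward closed subset, according to which case $S_{\beta_0}$ falls under. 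Checking well-foundedness of the resulting common-predecessor set (resp.\ downward closed set) reduces to the $\beta_0$-th coordinate because a descending sequence in the product is eventually constant-or-descending in each coordinate, and well-foundedness of the product quasi-order over finitely-varying coordinates is controlled by the single varying coordinate.

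The step I expect to be the main obstacle is the bookkeeping around downward closure in the quotients under case (ii): one must be careful that ``downward closed in the quotient of $S^{\cf}$'' (resp.\ of the product) genuinely reduces to the witnessing set $Y$, since $S^{\cf}$-predecessors of a constant sequence $x^\infty$ are arbitrary sequences, not just constant ones, and one needs the observation that such a predecessor's $E_{S^{\cf}}$-class is determined by the set of $E_S$-classes of its entries, all of which must lie in $Y$ by downward closure. Once this ``entrywise'' description of predecessors in the quotient is established, both the well-foundedness and the downward-closure claims follow routinely. The finitary nature of the witnesses (a pair of incomparable elements, or a single $\omega$-chain) keeps all the verifications elementary, so modulo this care no serious difficulty remains, and I would then remark that, just as with~\cite[Proposition 6]{Rosend2005}, one can now recursively build along $\omega_1$ a $\leq_B$-increasing cofinal sequence of suitable Borel quasi-orders with $P_\alpha <_B P_{\alpha+1}$ at every step.
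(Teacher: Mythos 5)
The structure (split by operator, then by case) matches the paper, but two key steps you wave through are false as stated, and the paper's proof contains exactly the extra ideas needed to fix them.

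For the jump: your claim that a strictly $S^{\cf}$-descending sequence of common predecessors of $x^\infty,y^\infty$ ``after passing to a suitable coordinate'' yields an $S$-descending sequence inside the well-founded set $A$ of common predecessors of $x,y$ does not hold. Well-foundedness of $A$ does not transfer to $S^{\cf}$ on sequences with entries in $A$: if $A$ consists of an infinite antichain $a_0,a_1,a_2,\dotsc$, all strictly below the incomparable $x,y$, then the tails $\vec z^{\,k}=(a_k,a_{k+1},\dotsc)$ are common $S^{\cf}$-predecessors of $x^\infty$ and $y^\infty$ and form a strictly $S^{\cf}$-descending chain, even though $A$ is well-founded (in effect, $S^{\cf}$ compares downward closures of ranges, and downward-closed subsets of a well-founded order need not be well-founded under inclusion). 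The same objection defeats your case (ii) argument (``a bad descending sequence projects to one in $Y$'') whenever $Y$ contains an infinite antichain. The paper's proof inserts a reduction you are missing: replace $(x,y)$ by an $S\times S$-minimal incomparable pair with classes in $A$; minimality forces the new common-predecessor set to be linearly ordered, hence a well-order of some countable type $\alpha$, and only then does the analysis of $S^{\cf}$-predecessors of the constant sequences (split according to whether a sequence has an $S$-greatest entry) show they form a well-order of type $\alpha$ or $\alpha+1$. Likewise in case (ii) one must first split: if $Y$ has incomparable elements, fall back to case (i); only when $Y$ is a well-order does the entrywise description give well-foundedness.

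For the product: fixing arbitrary $c_\beta$ in the coordinates $\beta\neq\beta_0$ does not make well-foundedness ``reduce to the $\beta_0$-th coordinate''. The common predecessors of your two witnesses include every $\vec z$ with $z_\beta \mathrel{S_\beta} c_\beta$ for all $\beta\neq\beta_0$, so a single non-well-founded lower cone below some $c_\beta$ gives an infinite strictly descending chain of common predecessors with the $\beta_0$-th coordinate never moving; even if every such cone is a two-element chain, an infinite product of two-element chains contains strictly decreasing $\omega$-chains (it is essentially $(\pow(\omega),\subseteq)$), so the ``controlled by the single varying coordinate'' claim fails. The paper's argument is built precisely to avoid this: either some coordinate has an incomparable pair with \emph{no} common predecessor (then the product pair has empty common-predecessor set), or else every suitable $S_\beta$ has a globally $S_\beta$-minimal element $a_\beta$ with some $b_\beta$ strictly above it, and with the witnesses $(a_0,b_1,a_2,a_3,\dotsc)$ and $(b_0,a_1,a_2,a_3,\dotsc)$ the common predecessors collapse to the single class of $(a_0,a_1,a_2,\dotsc)$. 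Your proposal neither proves the existence of such minimal elements nor uses them, so the verifications you defer as routine are exactly where the proof breaks.
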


\begin{proof}
Assume first that \( S \) satisfies condition (i) of Lemma~\ref{lemmacf}, let
$x,y \in X$ be $S$-incomparable and such that $A=\{ [z] \in X/E_S\mid z
\mathrel{S} x \land z \mathrel{S} y\}$ is well-founded. Without loss of
generality, we may assume that $A$ is actually linearly ordered: if not,
simply replace \( x,y \) with \( x',y' \in X \) such that
\begin{itemize}
\item \( [x'],[y'] \in A \);
\item \( x',y' \in X \) are \( S \)-incomparable;
\item the pair \( ( x',y' ) \) is \( S \times S \)-minimal among pairs with
    the above two properties (such a minimal pair exists because \( A \) is
    well-founded).
\end{itemize}
So $A$ is a well-order of order type some ordinal $\alpha$ and $x^{\infty
},y^{\infty }$ are $S^{ \cf }$-incomparable. Since $S$ is Borel, by the
boundedness theorem for analytic well founded relations (\cite[Theorem
31.1]{Kechris1995}), it follows that $\alpha <\omega_1$. Let $B=\{ [\vec z ]
\in {}^{\omega }X/E_{S^{ \cf }}\mid \vec z \mathrel{S^{ \cf }} x^{\infty
}\land \vec z \mathrel{S^{ \cf }} y^{\infty }\} $, so that, in particular,
for all \( [\vec z] \in B \) the \( E_S \)-classes of all coordinates of \(
\vec{z} \) belong to \( A \): we claim that $B$ is a well-order too, so that
\( S^{\cf} \) satisfies condition (i) of Lemma~\ref{lemmacf}. Indeed, if $[
\vec z ] \in B$, there are two possibilities: either there is a component in
$ \vec z $ that $S$-dominates all other components, in which case $ \vec z $
is $E_{S^{ \cf }}$-equivalent to the constant sequence with values that
coordinate; or there is no $S$-biggest component of $ \vec z $, so that $
\vec z $ is $S^{ \cf }$-above all constant sequences built using its
components and is $S$-below any sequence mentioning at least one component
$S$-bigger than all components of $ \vec z $. From these observations it
easily follows that \( B \) is a well-order, and in fact the order type of
$B$ can be obtained from $\alpha $ by adding a point $p_{\lambda }$, for any
limit ordinal $\lambda\leq\alpha $, on top of the subset
$\lambda\subseteq\alpha $ (this order type is consequently either $\alpha $
or $\alpha +1$).

Suppose now that \( S \) satisfies condition (ii) of Lemma~\ref{lemmacf}, and
let $A \subseteq X/E_S$ be the set satisfying the condition. If $A$ contains
incomparable elements, then actually condition (i) holds for $S$ and thus for
$S^{ \cf }$. Otherwise $A$ is a well-order. Repeating the argument of case
(i), $\{ [ \vec z ]\in {}^{\omega }X/E_{S^{ \cf }}\mid\forall n\in\omega\
[z_n]\in A\} $ witnesses that $S^{ \cf }$ fulfills condition (ii).

Let us now consider the case of countable products. Let $S_\beta$, \( \beta <
\lambda \), be suitable quasi-orders. If for some \( \gamma < \lambda \)
there are \( S_\gamma \)-incomparable elements $a_\gamma$ and $b_\gamma$
without a common \(S_\gamma \)-predecessor, then for every \( \alpha <
\lambda \) distinct from \( \gamma \) we fix an arbitrary element \( a_\alpha
= b_\alpha \in X_\alpha = \mathrm{dom}(S_\alpha) \) and we consider the
sequences \( \vec a = (a_\beta)_{\beta< \lambda} \) and \( \vec b  =
(b_\beta)_{\beta < \lambda} \): clearly \( \vec{a} \) and \( \vec{b} \) are
\( \prod_{\beta < \lambda}  S_\beta \)-incomparable and they do not have a
common \( \prod_{\beta < \lambda} S_\beta \)-predecessor, so that \(
\prod_{\beta < \lambda} S_\beta \) satisfies condition (i) of
Lemma~\ref{lemmacf}. Otherwise, for all \( \beta < \lambda \) there is an
$S_\beta$-minimal element $a_\beta$ with some \( b_\beta \) strictly \(
S_\beta \)-above \( a_\beta \). Then the sequences $(a_0,b_1,a_2,a_3, \dotsc
)$ and $(b_0,a_1,a_2,a_3, \dotsc )$ are \( \prod_{\beta < \lambda} S_\beta
\)-incomparable, and the set of their common predecessors consists just of
the equivalence class of $(a_0,a_1,a_2, \dotsc )$. Therefore \( \prod_{\beta
< \lambda} S_\beta \) satisfies condition (i) of Lemma~\ref{lemmacf} again.
\end{proof}

This allows us to show that in constructing Rosendal's cofinal sequence of
Borel quasi-orders, we could have started with any nontrivial Borel
quasi-order (instead of a Borel quasi-order satisfying the stronger
assumption of~\cite[Proposition 6]{Rosend2005}).

\begin{proposition}
Let \( S \) be a Borel quasi-order, and recursively define the sequence \(
(S_\alpha)_{\alpha < \omega_1} \) by setting \( S_0 = S \), \( S_{\alpha+1} =
S^{\cf}_\alpha \), and \( S_\lambda = \prod_{\beta < \lambda} S_\beta \) for
\( \lambda < \omega_1 \) limit. Then
\begin{enumerate}[(1)]
\item if \( E_S \) has at least two equivalence classes, then the sequence
    \( (S_\alpha)_{ \alpha < \omega_1} \) is cofinal among the Borel
    quasi-orders;
\item if moreover \( S \) is suitable, then \( (S_\alpha)_{ \alpha <
    \omega_1} \) is also strictly \( \leq_B \)-increasing.
\end{enumerate}
\end{proposition}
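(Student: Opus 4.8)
The plan is to reduce everything to three elementary facts established by routine transfinite induction, and then to invoke Theorem~\ref{th:rosendalsequence}, Lemma~\ref{lemmacf} and Lemma~\ref{lem:suitable}. For a Borel quasi-order \(R\) write \((R)_\alpha\) for the \(\alpha\)-th term of the recursion in the statement run with \(R\) in place of \(S\) (so \((S)_\alpha = S_\alpha\)). The first fact is \emph{monotonicity of the recursion}: if \(R \leq_B R'\) then \((R)_\alpha \leq_B (R')_\alpha\) for all \(\alpha < \omega_1\); the successor step uses that \(\cdot^{\cf}\) is \(\leq_B\)-monotone (being a jump operator), and the limit step uses that a countable product of Borel reductions is a Borel reduction. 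The second fact is the \emph{shift inequality}: \((S_\gamma)_\alpha \leq_B S_{\gamma+\alpha}\) for all \(\gamma,\alpha<\omega_1\); the only delicate point is the limit step, where \(\prod_{\beta<\lambda} S_{\gamma+\beta}\) is a product indexed by \([\gamma,\gamma+\lambda)\) and hence Borel-reduces to \(S_{\gamma+\lambda}=\prod_{\varepsilon<\gamma+\lambda}S_\varepsilon\) by padding the missing coordinates with fixed points (legitimate since every \(S_\varepsilon\) is nonempty). The third fact is purely local: if \(E_S\) has at least two classes then \(\mathbf 2\leq_B S^{\cf}\), where \(\mathbf 2=\{0<1\}\) denotes the two-element chain; indeed, choosing \(x,y\) in distinct \(E_S\)-classes with (after possibly swapping) \(y\not\mathrel{S}x\), one checks directly that in \(S^{\cf}\) the constant sequence \(x^{\infty}\) with value \(x\) lies strictly below the sequence \((y,x,x,x,\dots)\), so \(0\mapsto x^{\infty}\), \(1\mapsto(y,x,x,x,\dots)\) is the required reduction.

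For part (1) I would first deduce \((\omega,=)=P_0\leq_B S_\omega\). By the third fact \(\mathbf 2\leq_B S^{\cf}=S_1\); by monotonicity \((\mathbf 2)_\omega\leq_B (S_1)_\omega\), and by the shift inequality \((S_1)_\omega\leq_B S_{1+\omega}=S_\omega\). Also \(\prod_{n}\mathbf 2\leq_B (\mathbf 2)_\omega=\prod_n(\mathbf 2)_n\), since \(\mathbf 2=(\mathbf 2)_0\leq_B(\mathbf 2)_n\) by iterating \(R\leq_B R^{\cf}\); and \(\prod_n\mathbf 2\cong(\pow(\omega),\subseteq)\) contains the Borel antichain of singletons, so \((\omega,=)\leq_B\prod_n\mathbf 2\). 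Chaining these gives \(P_0\leq_B S_\omega\). Now let \(R\) be an arbitrary Borel quasi-order: Theorem~\ref{th:rosendalsequence} provides \(\alpha<\omega_1\) with \(R\leq_B P_\alpha=(P_0)_\alpha\); monotonicity applied to \(P_0\leq_B S_\omega\) gives \(P_\alpha\leq_B(S_\omega)_\alpha\), and the shift inequality gives \((S_\omega)_\alpha\leq_B S_{\omega+\alpha}\). Hence \(R\leq_B S_{\omega+\alpha}\) with \(\omega+\alpha<\omega_1\), which is precisely cofinality.

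For part (2) the extra hypothesis is that \(S\) is suitable. By Lemma~\ref{lem:suitable}, suitability is preserved by \(\cdot^{\cf}\) and by countable products, so a transfinite induction shows every \(S_\alpha\) is suitable. A routine induction also gives \(S_\alpha\leq_B S_\beta\) for \(\alpha\leq\beta\) (using \(R\leq_B R^{\cf}\) at successors and the injection of a single factor into a product at limits). Applying Lemma~\ref{lemmacf} to the suitable quasi-order \(S_\alpha\) yields \(S_\alpha<_B S_\alpha^{\cf}=S_{\alpha+1}\). Since \(\alpha<\beta\) entails \(\alpha+1\leq\beta\), we obtain \(S_\alpha<_B S_{\alpha+1}\leq_B S_\beta\), hence \(S_\alpha<_B S_\beta\); thus \((S_\alpha)_{\alpha<\omega_1}\) is strictly \(\leq_B\)-increasing.

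I expect the main obstacle to be not any single hard idea but the bookkeeping: setting up the monotonicity and shift inequalities cleanly, in particular handling the limit stages of the recursion and the reindexing \(\gamma+\lambda\). Once those are in place, both parts follow by assembling them with Rosendal's cofinality theorem, Lemma~\ref{lemmacf} and Lemma~\ref{lem:suitable}; the only genuinely computational point, \(\mathbf 2\leq_B S^{\cf}\), is a two-line verification.
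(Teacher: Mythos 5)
Your proof is correct. The skeleton is the same as the paper's: part (2) is handled identically (suitability propagates through \( \cdot^{\cf} \) and countable products by Lemma~\ref{lem:suitable}, then Lemma~\ref{lemmacf} gives \( S_\alpha <_B S_{\alpha+1} \leq_B S_\beta \)), and for part (1) both arguments reduce cofinality to exhibiting \( P_0 \leq_B S_\gamma \) for some countable \( \gamma \) and then invoke Theorem~\ref{th:rosendalsequence} together with monotonicity of the recursion. The difference is in how that embedding of \( P_0 \) is produced, and in how explicit the bookkeeping is. The paper compresses the monotonicity and shift steps into ``it clearly suffices to show \( P_0 \leq_B S_{\omega+\omega} \)'' and then builds an infinite antichain directly: the product argument from the proof of Lemma~\ref{lem:suitable} yields two \( S_\omega \)-incomparable elements, constant sequences propagate incomparability to each \( S_{\omega+n} \), and varying a single coordinate of a fixed base sequence gives infinitely many pairwise incomparable elements of \( \prod_{\beta<\omega+\omega}S_\beta \). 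You instead note that any two \( E_S \)-inequivalent points give a strict two-element chain in \( S^{\cf} \) (the \( x^\infty \) versus \( (y,x,x,\dots) \) computation, which is correct), so \( \{0<1\} \leq_B S_1 \), whence \( (\pow(\omega),\subseteq) \leq_B (S_1)_\omega \leq_B S_\omega \) and the antichain of singletons finishes the job. This lands at \( S_\omega \) rather than \( S_{\omega+\omega} \), is somewhat more modular, and has the merit of spelling out the monotonicity and shift inequalities that the paper's ``clearly suffices'' quietly relies on; both routes are sound.
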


\begin{proof}
To prove (1), by Theorem~\ref{th:rosendalsequence} it clearly suffices to
show that \( P_0  \leq_B S_{\omega+\omega} \), that is that there is an
infinite family \( (\vec{z}^{\, i})_{i \in \omega} \) of pairwise \(
S_{\omega+\omega} \)-incomparable elements.

With an argument as the one used in the proof of the second part of Lemma~\ref{lem:suitable}, we get that \(
S_\omega \) contains at least two incomparable elements \( a_\omega, b_\omega
\). Moreover, since if \( x,y \) are incomparable with respect to a
quasi-order \( R \) then \( x^\infty, y^\infty \) are \( R^{\cf}
\)-incomparable, we also get that for each \( n \in \omega \) there are \(
S_{\omega+n} \)-incomparable elements \( a_{\omega+n}\) and \( b_{\omega+n}
\). Finally, for \( n < \omega \) we pick an arbitrary element \( a_n \in X_n
= \mathrm{dom}(S_n) \). For \( i \in \omega \) and \( \beta < \omega+\omega
\) set \( z^i_\beta = a_\beta \) if \( \beta \neq \omega+i \) and \(
z^i_{\omega +i} = b_{\omega +i} \). Then the sequence obtained setting \(
\vec{z}^{\, i} = (z^i_\beta)_{\beta < \omega+ \omega} \) is as required.

Part (2) follows from Lemmas~\ref{lemmacf} and~\ref{lem:suitable}.
\end{proof}

\subsection{The jump operator $S \mapsto S^{\inj}$}\label{subsect:inj}
Let us now consider the following variant of Rosendal's jump operator.

\begin{defin}
Let \( (X,S) \) be a quasi-order. We denote by \( (X,S)^{\inj} \) (or just by
\( S^{\inj} \), if the space \( X \) is clear from the context) the
quasi-order on \( \pre{\omega}{X} \) defined by
\[
( x_n )_{ n \in \omega } \mathrel{S^{\inj}} ( y_n )_{ n \in \omega}  \iff
\exists f \colon \omega \to \omega \text{ injective such that } \forall n (x_n \mathrel{S} y_{f(n)}).
 \]
\end{defin}

Similarly to what happened for Rosendal's jump \( S \mapsto S^{\cf} \), also
the operator \( S \mapsto S^{\inj} \) has already been implicitly considered
in the literature. For example, in~\cite[\S1.2.2]{Friedman1989} H.~Friedman
and Stanley used a jump operator for equivalence relations \( E \mapsto
E^\omega \) such that \( E^\omega =E_{E^{\inj}} \), as the following lemma
shows.

\begin{lemma} \label{ebij}
If $E$ is an equivalence relation on $X$, then
\[
( x_n )_{ n \in \omega } \mathrel{E_{E^{\inj}}} ( y_n )_{ n \in \omega}  \iff
\exists f \colon \omega \to \omega \text{ bijective } \forall n (x_n \mathrel{E} y_{f(n)}).
\]
\end{lemma}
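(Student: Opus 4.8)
The plan is to prove both implications through a Cantor--Schröder--Bernstein-type counting argument carried out at the level of $E$-classes. Fix $\vec x = (x_n)_{n\in\omega}$ and $\vec y = (y_n)_{n\in\omega}$ in $\pre{\omega}{X}$, and for an $E$-class $C$ set $A_C = \{ n \in \omega \mid [x_n]_E = C \}$ and $B_C = \{ n \in \omega \mid [y_n]_E = C \}$; as $C$ ranges over the $E$-classes, the $A_C$ form a partition of $\omega$ and so do the $B_C$. The implication from right to left is then immediate: a bijection $h$ with $x_n \mathrel{E} y_{h(n)}$ for all $n$ witnesses $\vec x \mathrel{E^{\inj}} \vec y$, and $h^{-1}$ witnesses $\vec y \mathrel{E^{\inj}} \vec x$ (rewriting $y_{h(n)} \mathrel{E} x_n$ as $y_m \mathrel{E} x_{h^{-1}(m)}$), so $\vec x \mathrel{E_{E^{\inj}}} \vec y$.

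For the converse, assume $\vec x \mathrel{E_{E^{\inj}}} \vec y$ and fix injections $f,g \colon \omega \to \omega$ witnessing $\vec x \mathrel{E^{\inj}} \vec y$ and $\vec y \mathrel{E^{\inj}} \vec x$, respectively. The first step is to observe that, since $x_n \mathrel{E} y_{f(n)}$ forces $[y_{f(n)}]_E = [x_n]_E$, the injection $f$ maps $A_C$ into $B_C$ for every $C$; being injective this yields $|A_C| \le |B_C|$, and symmetrically $g$ yields $|B_C| \le |A_C|$. As these are cardinalities of subsets of $\omega$, hence elements of $\omega \cup \{\omega\}$, we conclude $|A_C| = |B_C|$ for every $E$-class $C$; in particular $A_C = \emptyset$ if and only if $B_C = \emptyset$.

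The second step is to assemble the desired bijection: for each $C$ with $A_C \neq \emptyset$ let $h \restriction A_C$ be the unique order-preserving bijection of $A_C$ onto $B_C$, which exists because $A_C$ and $B_C$ are subsets of $\omega$ of the same cardinality --- so they are either both finite of the same size or both of order type $\omega$ --- and note that no appeal to choice is needed here. Since the sets $A_C$ (respectively $B_C$) over the nonempty classes partition $\omega$, the resulting $h \colon \omega \to \omega$ is a well-defined bijection, and $x_n \mathrel{E} y_{h(n)}$ for all $n$ because $x_n$ and $y_{h(n)}$ both lie in $[x_n]_E$. I do not expect a genuine obstacle; the only point worth isolating is that the injectivity of the global witnesses $f,g$ is equivalent to their being injective on each fiber $A_C$ separately --- the target fibers $B_C$ being automatically pairwise disjoint --- which is exactly what lets the whole statement collapse to comparing $|A_C|$ with $|B_C|$ in $\omega \cup \{\omega\}$, where Cantor--Schröder--Bernstein is trivial and canonical.
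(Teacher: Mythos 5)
Your proof is correct and follows essentially the same route as the paper: the paper's argument is exactly the observation that the two injective witnesses force $|\{n \mid x_n \in C\}| = |\{n \mid y_n \in C\}|$ for every $E$-class $C$, from which the bijection exists. You have simply filled in the details the paper leaves implicit (injections respect the fibers $A_C$, $B_C$; equal cardinalities glue to a canonical piecewise bijection without choice), so there is nothing to change.
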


\begin{proof}
The implication from right to left is immediate. Conversely, suppose there
are injections $\omega\to\omega $ witnessing $( x_n )_{ n \in \omega }
\mathrel{E^{\inj}} ( y_n )_{ n \in \omega}$ and $( y_n )_{ n \in \omega }
\mathrel{E^{\inj}} (x_n )_{ n \in \omega}$. This entails that for each
$E$-equivalence class $C$,
\[
|\{ n\in\omega\mid x_n\in C\} |=|\{ n\in\omega\mid y_n\in C\} |
\]
implying the existence of $f$.
\end{proof}

The following result (\cite[\S1.2.2]{Friedman1989}), reformulated with our
terminology using the observation above, will be used in the sequel.

\begin{proposition}\label{friedmaninj}
If $E$ is a Borel equivalence relation with at least two classes, then
$E_{E^{ \inj }}$ is Borel and $E<_BE_{E^{ \inj }}$.
\end{proposition}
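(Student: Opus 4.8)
The plan is to check the three assertions of the statement in turn: that $E_{E^{\inj}}$ is Borel, that $E \leq_B E_{E^{\inj}}$, and that $E_{E^{\inj}} \not\leq_B E$.

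For Borelness I would recast $E_{E^{\inj}}$ as a counting condition via Lemma~\ref{ebij}: $(x_n)_n \mathrel{E_{E^{\inj}}} (y_n)_n$ holds exactly when each $E$-class is met by $(x_n)_n$ the same number of times — an element of $\omega \cup \{\aleph_0\}$ — as by $(y_n)_n$. A class met by neither sequence contributes $0$ on both sides, so it suffices to impose this for the classes $[x_n]_E$ and $[y_n]_E$; and since two elements of $\omega \cup \{\aleph_0\}$ coincide iff they dominate the same positive integers, the condition becomes the countable conjunction, over $n \in \omega$ and $k \geq 1$, of
\[
\bigl( \exists\, i_1 < \dots < i_k \ \forall j\,(x_{i_j} \mathrel{E} x_n) \bigr) \iff \bigl( \exists\, i_1 < \dots < i_k \ \forall j\,(y_{i_j} \mathrel{E} x_n) \bigr)
\]
together with the conjunction obtained by exchanging $\vec x$ and $\vec y$ above. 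Since $E$ is Borel, each biconditional is a Boolean combination of countable unions of Borel sets, hence Borel, and so is the whole conjunction.

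For $E \leq_B E_{E^{\inj}}$ I would use the constant-sequence map $x \mapsto x^\infty := (x)_{n \in \omega}$: if $x \mathrel{E} x'$ then the identity of $\omega$ witnesses $x^\infty \mathrel{E_{E^{\inj}}} (x')^\infty$, while if $x \not\mathrel{E} x'$ no injection can match $x$ with $x'$, the only value occurring in $(x')^\infty$, so the two constant sequences are unrelated. (This is also immediate from $S \mapsto S^{\inj}$ being a jump operator, together with Lemma~\ref{ebij}.)

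The real content is the strictness $E_{E^{\inj}} \not\leq_B E$, which I would establish by contradiction: assume a Borel $g \colon \pre{\omega}{X} \to X$ reduces $E_{E^{\inj}}$ to $E$, and split on the complexity of $E$. If $X/E$ is countable, then $E$ is smooth while $E_{E^{\inj}}$ is still smooth but has strictly more classes — countably infinitely many when $X/E$ is finite (using that $E$ has at least two classes), and $2^{\aleph_0}$ many when $X/E$ is infinite — so applying $g$ to representatives of distinct $E_{E^{\inj}}$-classes would yield more pairwise $E$-inequivalent points than $E$ has classes, a contradiction. If $X/E$ is uncountable, I would fix by Silver's dichotomy a Borel injection $h \colon 2^\omega \to X$ with pairwise $E$-inequivalent range, and a bijective enumeration $(F_n)_n$ of the finite subsets of $\omega$; then $z \mapsto \bigl( h(z \mathbin{\triangle} \chi_{F_n}) \bigr)_{n \in \omega}$ reduces $E_0$ to $E_{E^{\inj}}$, since it enumerates an injective copy of the $E_0$-class of $z$ — so that $E_0$-equivalent $z$'s give equal multisets while $E_0$-inequivalent $z$'s give multisets with disjoint supports — whence $E_{E^{\inj}}$ is non-smooth, and the case of smooth $E$ is done. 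The genuinely hard case, which I expect to be the main obstacle, is that of a non-smooth Borel $E$: here I would invoke \cite[\S1.2.2]{Friedman1989}, or alternatively the Hjorth--Kechris--Louveau theory of potential Wadge complexity, using that the $\inj$-jump strictly increases potential complexity, which is monotone under $\leq_B$.
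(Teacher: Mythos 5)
The paper offers no proof of this proposition: it is stated as a quotation of Friedman--Stanley, with Lemma~\ref{ebij} supplying the identification of $E_{E^{\inj}}$ with Friedman's jump $E^\omega$, so your appeal to \cite[\S1.2.2]{Friedman1989} for the residual case puts you on exactly the same footing as the text, and everything you add on top is correct. Your counting characterization of $E_{E^{\inj}}$ (equal multiplicities in $\omega\cup\{\aleph_0\}$, tested by domination of each $k\geq 1$) is a valid direct proof of Borelness; the paper reaches the same conclusion slightly later by a different encoding, namely Lemma~\ref{prop:equivalencerelation} and Corollary~\ref{cor:equivalencerelation}, which reduce $E^{\inj}$ to $(E\times(\omega,{=}))^{\cf}$ by tagging each $x_i$ with the number of earlier indices in its class --- morally the same multiplicity bookkeeping, so the two routes buy essentially the same thing. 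The constant-sequence reduction $E\leq_B E_{E^{\inj}}$ is fine, and your dispatching of the smooth cases is sound: the cardinality count when $X/E$ is countable (countably vs.\ finitely many classes, continuum vs.\ countably many classes), and the map $z\mapsto\bigl(h(z\mathbin{\triangle}\chi_{F_n})\bigr)_{n\in\omega}$ reducing $E_0$ to $E_{E^{\inj}}$ when $X/E$ is uncountable, which correctly uses the injectivity of $h$ and the injective enumeration of the $E_0$-class. The one claim I would not let stand is the proposed ``alternative'' for the non-smooth case: it is not an established general fact that the $\inj$-jump strictly increases potential complexity for an arbitrary Borel $E$ --- the Hjorth--Kechris--Louveau computations live in the classifiable-by-countable-structures setting --- and invoking it amounts to restating the theorem to be proved; for that case you should simply rest on the Friedman--Stanley citation, exactly as the paper does for the whole proposition.
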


Coming back to our operator for quasi-orders \( S \mapsto S^\inj \), it is
immediate to check that \( S \leq_B S^{\cf} \leq_B S^{\inj} \), that  \( S \)
is an analytic quasi-order if and only if so is \( S^{\inj} \) (but the
analogous statement with analytic replaced by Borel is not true, see
Propositions~\ref{prop:Q} and~\ref{prop:P(omega)}), and that \( S \mapsto
S^{\inj} \) is monotone with respect to \( \leq_B \), i.e.\ it is really a
jump operator. From these observations and Lemma~\ref{lemmacf} it immediately
follows that in many cases the new jump operator strictly increases the
complexity of the quasi-order which it is applied to.

\begin{corollary}\label{inj_jumps}
If \( S \) is a suitable Borel quasi-order (i.e.\ \( S \) satisfies the
hypothesis of Lemma~\ref{lemmacf}), then \( S <_B S^{\inj} \).
\end{corollary}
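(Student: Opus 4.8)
The plan is to derive the strict inequality purely formally from the facts already assembled in the excerpt, without reopening the combinatorial analysis. Recall that the text immediately preceding the corollary records the chain $S \leq_B S^{\cf} \leq_B S^{\inj}$, valid for every quasi-order $S$. So the only thing to rule out is $S^{\inj} \leq_B S$.

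First I would invoke Lemma~\ref{lemmacf}: since $S$ is suitable, i.e.\ it satisfies condition (i) or (ii) of that lemma, we have $S <_B S^{\cf}$; in particular $S^{\cf} \nleq_B S$. Next, suppose toward a contradiction that $S^{\inj} \leq_B S$. Composing this with $S^{\cf} \leq_B S^{\inj}$ yields $S^{\cf} \leq_B S$, contradicting the previous sentence. Hence $S^{\inj} \nleq_B S$, and combined with $S \leq_B S^{\inj}$ this gives $S <_B S^{\inj}$, as claimed.

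There is no real obstacle here: the corollary is a formal consequence of Lemma~\ref{lemmacf} together with the monotonicity-and-ordering facts for $S \mapsto S^{\cf}$ and $S \mapsto S^{\inj}$ noted just above the statement. The only point worth double-checking is that $S^{\inj}$ indeed stays within the class on which $\leq_B$ is being compared (it does, since $S^{\inj}$ is again a quasi-order, analytic whenever $S$ is, hence in particular a legitimate target and source for Borel reductions), so that the displayed chain of reductions makes sense as written.
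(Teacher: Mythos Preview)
Your proof is correct and is exactly the argument the paper intends: the corollary is stated with no separate proof, being derived ``from these observations and Lemma~\ref{lemmacf}'', i.e.\ from $S <_B S^{\cf} \leq_B S^{\inj}$.
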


We now start a detailed analysis of the new jump operator \( S \mapsto
S^{\inj} \), and compare it to the one introduced by Rosendal. In particular,
we show the following.
\begin{enumerate}[(A)]
\item For extremely simple quasi-orders \( S \), like equality on an
    arbitrary standard Borel space, we get \( S^{\cf} \sim_B S^{\inj} \)
    (Proposition~\ref{prop:simple}).
\item If a Borel quasi-order \( S \) is combinatorially simple, like an
    equivalence relation or a wqo, then \( S^{\inj} \) remains Borel
    (Corollaries~\ref{cor:equivalencerelation} and~\ref{cor:injonwqo}).
\item There are not too complicated Borel quasi-orders \( S \) (e.g.\ any
    linear order on $\omega$ which is isomorphic to \( \QQ \), or the
    inclusion relation \( \subseteq \) on \( \pow(\omega) \)) such that \(
    S^{\inj} \) is analytic non-Borel. In fact, we can have both upper and
    lower \( S^{\inj} \)-cones which are \( \boldsymbol{\Sigma}^1_1
    \)-complete, and also the associated equivalence relation \(
    E_{S^{\inj}} \) can be analytic non-Borel (Propositions~\ref{prop:Q}
    and~\ref{prop:P(omega)}).
\item However, if \( S \) is Borel then all \( E_{S^{\inj}} \)-equivalence
    classes are Borel (Theorem~\ref{th:S^inj not complete}). In particular,
    \( S^{\inj} \) is not complete for analytic quasi-orders.
\item Moreover, there are also Borel quasi-orders \( S \) such that \(
    S^{\inj} \) is analytic non-Borel but their associated equivalence
    relation \( E_{S^{\inj}} \) still remains Borel
    (Corollary~\ref{cor:restrictedP(omega)}).
\item There exist Borel quasi-orders \( S \) such that \( S^{\cf} <_B
    S^{\inj} \) but \( E_{S^{\cf}} \sim_B E_{S^{\inj}} \)
    (Proposition~\ref{prop:associatedeqrel}).
\end{enumerate}

Most of these results will be used to analyze the isometric embeddability
relation between ultrametric Polish spaces with well-ordered set of
distances. However, some of them are also of independent interest. For
instance, Corollary~\ref{cor:restrictedP(omega)} provides simple
combinatorial examples of an analytic quasi-order \( S \) such that its
associated equivalence relation \( E_S \) is Borel without \( S \) being
Borel itself. Proposition~\ref{prop:P(omega)} and Theorem~\ref{th:S^inj not
complete} also allow us to construct new examples of analytic quasi-orders
\( S \) which are not Borel but still have the property that \( S <_B S^{\cf}
\) (Corollary~\ref{cor:newexamplesofjump}): these examples are of a different
type with respect to those considered in~\cite[Section 4]{Camerlo2007}, as
they do not satisfy the hypothesis of~\cite[Corollary 4.3]{Camerlo2007}.

\subsection{Borel quasi-orders $S$ such that $S^{\inj}$ is Borel}
\begin{lemma} \label{prop:equivalencerelation}
If $E$ is a Borel equivalence relation then $E^{\inj} \leq_B (E \times
(\omega, {=}))^{\cf}$.
\end{lemma}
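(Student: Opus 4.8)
The plan is to exhibit an explicit Borel reduction $g$ from $E^{\inj}$ to $(E \times (\omega,{=}))^{\cf}$. Recall that $E^{\inj}$ lives on $\pre{\omega}{X}$ and that $(x_n)_n \mathrel{E^{\inj}} (y_n)_n$ holds exactly when some injection $\varphi\colon\omega\to\omega$ satisfies $x_n \mathrel{E} y_{\varphi(n)}$ for all $n$. The obstruction to comparing two such sequences under the weaker relation $S^{\cf}$ (which only asks that every $x_n$ be $E$-equivalent to \emph{some} $y_m$, with no injectivity) is that $S^{\cf}$ cannot count multiplicities, whereas $E^{\inj}$ can: asking for an injection amounts to asking that for every $E$-class $C$ the number of indices $n$ with $x_n \in C$ is at most the number of indices $m$ with $y_m \in C$ (finite cardinals being compared literally, and any infinite count dominating). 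The idea is to encode these multiplicities into the extra $(\omega,{=})$ coordinate so that the cofinal jump of the enriched relation sees them.

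Concretely, I would map $\vec{x} = (x_n)_{n \in \omega}$ to the sequence $g(\vec{x})$ consisting of all pairs $(x_n, k)$ where $k$ ranges over $\{0,1,\dots\}$ and $n$ ranges over $\omega$; that is, $g(\vec{x})$ is a sequence enumerating $\set{(x_n,k)}{n,k \in \omega, k \le (\text{something})}$ — more precisely, list $(x_n, j)$ for all $n \in \omega$ and all $j \in \omega$, but we want the $j$'s attached to a given $E$-class $C$ to measure how many $x_n$ lie in $C$. A clean way: let $g(\vec{x})$ enumerate the countable set $\set{(x_n, j)}{n \in \omega,\ j \in \omega,\ j < |\{ m < \omega \mid x_m \mathrel{E} x_n \}|}$ (where $j < \kappa$ for $\kappa$ infinite just means $j \in \omega$). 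This set is nonempty in each coordinate block and its enumeration is Borel in $\vec x$ since $E$ is Borel. Then $g(\vec x) \mathrel{(E\times{=})^{\cf}} g(\vec y)$ says: for every $n$ and every relevant $j$ there are $n', j'$ with $x_n \mathrel{E} y_{n'}$ and $j = j'$ and $j'$ relevant for $y_{n'}$ — which unwinds exactly to: for every $E$-class $C$, $|\{m : x_m \in C\}| \le |\{m : y_m \in C\}|$. By Lemma~\ref{ebij} (or rather its one-sided version, which is how $E^{\inj}$ was defined), this is precisely $\vec x \mathrel{E^{\inj}} \vec y$.

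The step I expect to need the most care is the bookkeeping that makes $g$ a genuine, well-defined Borel function into $\pre{\omega}{X}$ — i.e.\ fixing once and for all a recursive pairing/enumeration scheme so that the "relevant $j$'s" are listed in a canonical Borel-in-$\vec x$ way even though the quantity $|\{m : x_m \mathrel{E} x_n\}|$ is only $\Pi^0_2$ (it is a supremum of finite counts), and checking that the resulting verification " for all $n,j$ there exist $n',j'$ \dots " really is the $\forall\exists$ shape defining $(E\times{=})^{\cf}$ and nothing stronger. Once the encoding is pinned down, the equivalence of the two conditions is the routine multiplicity computation sketched above, and Borelness of $g$ is immediate from Borelness of $E$. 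I should also double-check the degenerate cases (classes met infinitely often on one side, empty blocks) are handled by the convention that an infinite count dominates everything, which is automatically reflected since then \emph{every} $j \in \omega$ is relevant on that side.
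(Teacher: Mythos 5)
Your proposal is correct and is essentially the paper's argument: both encode the $E$-class multiplicities of the entries into the extra $(\omega,{=})$ coordinate so that the cofinal jump of $E \times (\omega,{=})$ can detect the existence of an injective witness. The paper's bookkeeping is marginally simpler --- it sends $(x_i)_{i\in\omega}$ to $(x_i,n_i)_{i\in\omega}$ where $n_i$ is the number of $j<i$ with $x_j \mathrel{E} x_i$, which makes Borelness immediate and gives injectivity of the recovered witness for free since $E$-equivalent entries carry distinct tags --- whereas your padding by all $j$ below the total class multiplicity needs the canonical-enumeration argument you flag and routes through the (correct, one-sided) multiplicity characterization of $E^{\inj}$; but the two encodings realize exactly the same $(E\times{=})$-classes, so the verifications amount to the same computation.
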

\begin{proof}
Suppose $E$ is defined on the space $X$. Given $\vec{x} = (x_i)_{ i \in
\omega } \in {}^{\omega }X$, set $\varphi (\vec{x} )=(x_i, n_i)_{i \in
\omega} \in \pre{\omega}{(X \times \omega)}$, where $n_i$ is the number of
$j<i$ with $x_j \mathrel{E} x_i$. As $E$ is Borel the function $\varphi $ is
Borel.

Assume first that the function $f \colon \omega \to \omega $ witnesses
$\vec{x} = (x_i )_{i \in \omega} \mathrel{E^{\inj}} (y_j )_{j \in \omega} =
\vec{y}$. Let $\varphi (\vec{x} )=(x_i,n_i)_{i \in \omega}$ and $\varphi
(\vec{y})=(y_j,m_j)_{j \in \omega}$. Since $f$ is injective, given $l \in
\omega $ there are at least $n_l+1$ indices $j$ such that $y_j \mathrel{E}
y_{f(l)}$, for exactly one of which $m_j=n_l$. So $(x_l,n_l)$ and \(
(y_j,m_j) \) are \((E \times {=}) \)-related for this $j$, which implies that
$\varphi (\vec{x} ) \mathrel{(E \times {=})^{\cf}} \varphi (\vec{y} )$.

Conversely, suppose for some \( \vec{x} = (x_i)_{i \in \omega} , \vec{y} =
(y_j)_{j \in \omega} \in \pre{\omega}{X} \) we have that $\varphi (\vec{x}
)=(x_i,n_i)_{i \in \omega}$ and $\varphi (\vec{y} ) = (y_j,m_j)_{j \in
\omega}$ are \( (E \times {=})^{\cf} \)-related, so that in particular
\[
\forall l \in \omega \, \exists k_l \in \omega \, (x_l \mathrel{E} y_{k_l} \land n_l = m_{k_l}).
\]
Then the map $f \colon \omega \to \omega \colon l \mapsto k_l$ is injective
and witnesses $\vec{x} \mathrel{E}^{\inj} \vec{y}$.
\end{proof}

\begin{corollary}\label{cor:equivalencerelation}
If \( E \) is a Borel equivalence relation, then \( E^{\inj} \) is Borel as
well.
\end{corollary}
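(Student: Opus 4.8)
The plan is to deduce this immediately from Lemma~\ref{prop:equivalencerelation} together with the observation recorded just after Definition~\ref{defin:rosendaljump} that a quasi-order $S$ is Borel if and only if $S^{\cf}$ is Borel. First I would note that, since $E$ is a Borel equivalence relation on $X$, the product quasi-order $E \times (\omega, {=})$ on $X \times \omega$ is Borel as well: as a subset of $(X \times \omega)^2$ it is the intersection of the preimages of $E$ and of the diagonal of $\omega$ under the two (continuous, hence Borel) coordinate projections $(X\times\omega)^2 \to X^2$ and $(X\times\omega)^2 \to \omega^2$. Consequently $(E \times (\omega,{=}))^{\cf}$ is a Borel quasi-order.

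Next I would invoke the elementary general fact that $\leq_B$ reflects Borelness of its target: if $\varphi \colon Z \to W$ is a Borel function witnessing $R \leq_B S$ for binary relations $R$ on $Z$ and $S$ on $W$, and $S$ is Borel as a subset of $W \times W$, then $R = (\varphi \times \varphi)^{-1}(S)$, and since $\varphi \times \varphi$ is Borel, so is $R$. Applying this with $R = E^{\inj}$, with $S = (E \times (\omega,{=}))^{\cf}$, and with $\varphi$ the Borel map constructed in the proof of Lemma~\ref{prop:equivalencerelation}, we conclude that $E^{\inj}$ is Borel.

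I do not expect any genuine obstacle: all the work has already been done in Lemma~\ref{prop:equivalencerelation}, and the corollary is a short bookkeeping consequence. The only point I would take care to state explicitly, for the reader's convenience, is the remark in the previous paragraph that a Borel reduction into a Borel relation has Borel source, since this is what lets us pass from the reducibility $E^{\inj} \leq_B (E \times (\omega,{=}))^{\cf}$ to the actual Borelness of $E^{\inj}$.
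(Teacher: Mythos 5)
Your argument is correct and is exactly the route the paper intends: the corollary follows from Lemma~\ref{prop:equivalencerelation} because $E\times(\omega,{=})$ is Borel, the jump $S\mapsto S^{\cf}$ preserves Borelness, and a Borel reduction pulls back a Borel relation to a Borel relation. The paper leaves this bookkeeping implicit, so there is nothing to add.
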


Another consequence of Lemma~\ref{prop:equivalencerelation} is that there are
some simple cases in which $S^{ \cf }\sim_BS^{ \inj }$.

\begin{proposition} \label{prop:simple}
\begin{enumerate}[(1)]
\item \( (\omega,=)^{\inj} \sim_B (\omega,=)^{\cf} \sim_B (\pow(\omega),
    \subseteq) \);
\item \( (\RR, = )^{\inj} \sim_B (\RR, = )^{\cf} \).
\end{enumerate}
\end{proposition}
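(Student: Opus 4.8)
The plan is to establish the equivalences by producing explicit Borel reductions in both directions for each pair, exploiting that when the base relation is mere equality, the only invariant of a sequence $(x_n)_{n \in \omega}$ relevant to $S^{\cf}$ and $S^{\inj}$ is its \emph{set of values} (together with, for the $\inj$-version, the multiplicities of those values, which however turn out not to affect the quotient in the way that matters here). For part (1), I would first note that $(\omega,=)^{\cf} \sim_B (\pow(\omega),\subseteq)$ is immediate: a sequence $\vec{x} \in \pre{\omega}{\omega}$ is sent to its range $\{x_n \mid n \in \omega\} \in \pow(\omega)$, and by the definition of $(\omega,=)^{\cf}$ one has $\vec{x} \mathrel{(\omega,=)^{\cf}} \vec{y}$ iff $\operatorname{ran}(\vec x) \subseteq \operatorname{ran}(\vec y)$; conversely $A \subseteq \omega$ is coded by any surjection $\omega \to A$ (e.g.\ an enumeration with repetitions, using a fixed point of $\omega$ if $A = \emptyset$), and this is easily made Borel. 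Then $(\omega,=) \times (\omega,=) \cong (\omega,=)$, so Lemma~\ref{prop:equivalencerelation} applied to $E = (\omega,=)$ gives $(\omega,=)^{\inj} \leq_B ((\omega,=)\times(\omega,=))^{\cf} \sim_B (\omega,=)^{\cf}$; and the reverse inequality $(\omega,=)^{\cf} \leq_B (\omega,=)^{\inj}$ is part of the general fact $S^{\cf} \leq_B S^{\inj}$ noted just before the definition of $S^{\inj}$. This closes the chain of bireducibilities in (1).

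For part (2), the same strategy applies but one must be slightly more careful because $\RR$ is uncountable, so ``range'' is no longer a point of a nice standard Borel space on which $\subseteq$ lives; instead I would argue directly. The inequality $(\RR,=)^{\cf} \leq_B (\RR,=)^{\inj}$ is again the general fact. For the reverse, $(\RR,=)^{\inj} \leq_B (\RR,=)^{\cf}$, I would reprove the argument of Lemma~\ref{prop:equivalencerelation} in this setting: send $\vec{x} = (x_i)_{i\in\omega} \in \pre{\omega}{\RR}$ to $\varphi(\vec x) = (x_i, n_i)_{i \in \omega} \in \pre{\omega}{(\RR \times \omega)}$ where $n_i = |\{ j < i \mid x_j = x_i \}|$; this $\varphi$ is Borel since $=$ on $\RR$ is Borel, and the same counting argument shows $\vec x \mathrel{(\RR,=)^{\inj}} \vec y \iff \varphi(\vec x) \mathrel{((\RR,=) \times (\omega,=))^{\cf}} \varphi(\vec y)$. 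Finally $(\RR,=) \times (\omega,=) \cong (\RR,=)$ as Borel equivalence relations (both are equality on a standard Borel space of size continuum, and equality relations on standard Borel spaces of the same cardinality are Borel isomorphic — concretely, fix a Borel bijection $\RR \times \omega \to \RR$), so $((\RR,=)\times(\omega,=))^{\cf} \sim_B (\RR,=)^{\cf}$ by monotonicity of $S \mapsto S^{\cf}$. Combining, $(\RR,=)^{\inj} \sim_B (\RR,=)^{\cf}$.

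The only genuinely delicate points, rather than routine verifications, are: (a) checking that the coding maps are Borel — for the range map on $\pre{\omega}{\omega}$ this needs the observation that $\{ \vec x \mid n \in \operatorname{ran}(\vec x)\}$ is Borel (in fact open) for each $n$, and for the ``inverse'' one must fix a Borel selector assigning to each $A \subseteq \omega$ a surjection $\omega \to A$ with the empty-set case handled separately; (b) being careful in part (1) that $(\omega,=)^{\cf} \leq_B (\omega,=)^{\inj}$ really is an instance of the stated general inequality and does not secretly require suitability or any hypothesis from Lemma~\ref{lemmacf}. I expect the main (mild) obstacle to be organizing part (2) cleanly: one wants to invoke Lemma~\ref{prop:equivalencerelation}, but that lemma is stated for Borel equivalence relations in general, so it already applies verbatim with $E = (\RR,=)$, and the only extra ingredient is the Borel isomorphism $(\RR,=) \times (\omega,=) \cong (\RR,=)$ together with the monotonicity and the general bound $S^{\cf} \leq_B S^{\inj}$ — so in fact part (2) is a one-line corollary of Lemma~\ref{prop:equivalencerelation} once that isomorphism is noted. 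I would present (2) in exactly that compressed form.
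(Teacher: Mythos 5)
Your proposal is correct in substance and takes essentially the same route as the paper: the paper's entire proof is the remark that all the reducibilities are trivial except \( E^{\inj} \leq_B E^{\cf} \) for \( E \in \{(\omega,=),(\RR,=)\} \), which follows from Lemma~\ref{prop:equivalencerelation} together with \( E \times (\omega,=) \sim_B E \), plus the general fact \( S^{\cf} \leq_B S^{\inj} \); this is exactly what you do in both parts, and your worry (b) is unfounded, since \( S^{\cf} \leq_B S^{\inj} \) holds for every quasi-order (map a sequence to one repeating each entry infinitely often), with no suitability hypothesis needed. The one slip is in the trivial direction \( (\pow(\omega),\subseteq) \leq_B (\omega,=)^{\cf} \): coding \( A=\emptyset \) by the constant sequence at a fixed \( n_0 \) breaks the reduction, since \( \emptyset \subseteq B \) always, while the range \( \{n_0\} \) of the code is contained in \( B \) only when \( n_0 \in B \) (and dually, for \( A=\{n_0\} \) and \( B=\emptyset \) the codes are \( \cf \)-equivalent although \( A \not\subseteq B \)). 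The standard fix is a shift: send \( A \) to an enumeration of \( \{0\} \cup \{n+1 \mid n \in A\} \), which handles the empty set uniformly and restores \( A \subseteq B \iff \) range-inclusion of the codes.
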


\begin{proof}
All Borel reducibilities are trivial except perhaps for those of the form \(
E^{\inj} \leq_B E^{\cf} \), where \( E \) is one of \( (\omega,= ) \) or \(
(\RR,=) \): these can be proved using Lemma~\ref{prop:equivalencerelation}
together with the fact that in this specific cases \( E \times (\omega,=)
\sim_B E \).
\end{proof}

More generally, by Lemma~\ref{prop:equivalencerelation} we have that \(
E^{\inj} \sim_B E^{\cf} \) whenever \( E \) is a Borel equivalence relation
satisfying \( {E \times ( \omega,=)} \leq_B E \). This includes e.g.\ the
well-known equivalence relations \( E_0 \), \( E_1 \), and so on.

Corollary~\ref{cor:equivalencerelation} provides a first example illustrating
the fact that when applied to combinatorially simple quasi-orders the
operator \( S \mapsto S^{\inj} \) preserves Borelness. Another example of
this kind is when \( S \) is a wqo. To see this, we first need to prove some
easy facts concerning definable wqo's.

\begin{lemma} \label{coranalyticwqo}
If \( S \) is a well-founded analytic quasi-order without uncountable
antichains, then $( \RR ,=) \nleq_B E_S$. Hence \( E_S \) has at most \(
\aleph_1 \)-many classes, and if  \( S \) is Borel it has countably many
classes.
\end{lemma}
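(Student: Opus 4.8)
The plan is to prove the displayed non-reducibility $(\RR,=)\nleq_B E_S$ by contradiction, pulling $S$ back to a partial order on $\RR$, and then to deduce the two cardinality statements. So assume $f\colon\RR\to X$ is a Borel reduction witnessing $(\RR,=)\leq_B E_S$, and define a relation $R$ on $\RR$ by $x\mathrel{R}y\iff f(x)\mathrel{S}f(y)$. Since $S$ is a quasi-order, so is $R$; and since $f$ is a reduction, $x\mathrel{E_R}y\iff f(x)\mathrel{E_S}f(y)\iff x=y$, so $R$ is in fact a \emph{partial order} on $\RR$. Moreover $R$ is analytic (being the $f\times f$-preimage of the analytic set $S$), it is well-founded (a strictly $R$-descending $\omega$-sequence maps under $f$ to a strictly $S$-descending one), and every $R$-antichain is countable: on an antichain $f$ is injective (distinct antichain elements are $E_R$-inequivalent, hence have distinct $f$-images) and sends it onto an $S$-antichain, which is countable by hypothesis.

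The key step is then a boundedness argument exactly as in the proof of Lemma~\ref{lem:suitable}. Because $R$ is antisymmetric, its strict part $R\setminus\Delta_{\RR}=R\cap(\RR^2\setminus\Delta_{\RR})$ is analytic and well-founded, so by the boundedness theorem for analytic well-founded relations (\cite[Theorem 31.1]{Kechris1995}) its rank is some countable ordinal $\gamma$. For $\beta<\gamma$ let $L_\beta$ be the set of points of $R$-rank $\beta$; each $L_\beta$ is an $R$-antichain, since if two distinct points of $L_\beta$ were $R$-comparable one of them would strictly precede the other and hence have strictly smaller rank. Thus $\RR=\bigcup_{\beta<\gamma}L_\beta$ exhibits $\RR$ as a union of countably many countable sets, contradicting $|\RR|=2^{\aleph_0}$. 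This establishes $(\RR,=)\nleq_B E_S$.

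For the consequences, note that $E_S=S\cap S^{-1}$ is analytic. By Burgess's theorem $E_S$ has either at most $\aleph_1$ classes or a perfect set of pairwise inequivalent elements; the latter would force $(\RR,=)\leq_B E_S$ (a perfect set of inequivalent elements always yields such a reduction), contrary to the first part, so $E_S$ has at most $\aleph_1$ classes. If in addition $S$ is Borel then $E_S$ is Borel, and Silver's theorem sharpens the dichotomy to ``countably many classes or a perfect set of inequivalent elements''; the second alternative is again ruled out, so $E_S$ has countably many classes. A more self-contained route avoiding these dichotomies is also available: the strict part $S^*=S\setminus S^{-1}$ is well-founded and $E_S$-invariant, each $E_S$-class lies at a well-defined $S^*$-rank, and the classes at a fixed rank form an antichain in $S/E_S$, hence are countably many; bounding the number of ranks via the Kunen--Martin theorem (below $\omega_2$ in general, and below $\omega_1$ when $S$ is Borel, since then $S^*$ is Borel) yields the two bounds.

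The one subtle point, and the reason for passing through $R$ rather than arguing on $X$ directly, is that the strict part of an arbitrary analytic quasi-order need not be analytic (in general it is only a difference of analytic sets); it is precisely the antisymmetry of the pulled-back relation $R$ on $\RR$ that makes $R\cap R^{-1}=\Delta_{\RR}$ Borel and hence the strict part of $R$ analytic again, so that boundedness applies and ``countably many rank-levels'' becomes a genuine contradiction with the uncountability of $\RR$. The rest is routine.
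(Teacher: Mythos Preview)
Your proof is correct and follows essentially the same approach as the paper: pull $S$ back along a hypothetical Borel reduction to an analytic partial order on $\RR$, apply boundedness (\cite[Theorem 31.1]{Kechris1995}) to bound its rank below $\omega_1$, and observe that rank-levels are antichains, hence countable, contradicting the uncountability of $\RR$. The paper phrases the contradiction via pigeonhole (some level is uncountable, yielding an uncountable $S$-antichain) while you phrase it as ``$\RR$ is a countable union of countable sets''; these are equivalent. Your explicit remark that antisymmetry of the pulled-back order is what makes its strict part analytic is a useful clarification the paper leaves implicit, and your supplementary Kunen--Martin route for the cardinality bounds is a valid alternative not in the paper.
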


\begin{proof}
Towards a contradiction, let \( f \colon \RR \to X \) be a Borel function
such that either \( f(x) \not \mathrel{S} f(y) \) or \( f(y) \not \mathrel{S}
f(x) \) for all distinct \( x, y \in \RR \). Let \( \preceq \) be defined on \(
\RR \) by
\[
x \preceq y \iff f(x) \mathrel{S} f(y).
\]
Then $\preceq$ is an analytic well-founded partial order. Using the
boundedness theorem for analytic well founded relations (\cite[Theorem
31.1]{Kechris1995}), $\preceq $ has countable rank. So there are uncountably
many reals having the same rank with respect to $\preceq $. But this is
impossible as it would yield an uncountable antichain for $\preceq $ and then
for $S$.

The additional part follows from Burgess' trichotomy theorem for analytic
equivalence relations (\cite[Corollary 2]{Burgess1979}) and Silver's
dichotomy theorem for coanalytic equivalence relations (\cite{Silver1980}).
\end{proof}

It is not hard to check that if \( S \) is an analytic quasi-order on a
standard Borel space \( X \) such that \( E_S \) has at most countably many
classes, then \( S \) is actually Borel. This fact combined with
Lemma~\ref{coranalyticwqo} gives the following result.

\begin{corollary} \label{corwqo=countable}
Let \( S \) be a well-founded analytic quasi-order on \( X \) without
uncountable antichains. Then \( S \) is Borel if and only if \(E_S \) has
countably many classes.
\end{corollary}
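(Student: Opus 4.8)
The plan is to deduce this directly from the preceding material. One direction is immediate: if $E_S$ has countably many classes, then (as noted in the paragraph just before the statement) $S$ is automatically Borel, regardless of the extra hypotheses. This is the easy implication and requires no use of well-foundedness or the absence of uncountable antichains.

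For the converse, suppose $S$ is Borel, well-founded, and has no uncountable antichains. Then $S$ in particular satisfies the hypotheses of Lemma~\ref{coranalyticwqo} (a Borel quasi-order is analytic), so its final clause applies and yields that $E_S$ has countably many classes. Thus there is essentially nothing left to prove: the corollary is just the conjunction of the trivial observation with the ``additional part'' of Lemma~\ref{coranalyticwqo}, stated as a biconditional for ease of later reference.

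If one wants to see the argument unpacked rather than merely cited: for the converse, Lemma~\ref{coranalyticwqo} gives $(\RR,{=}) \nleq_B E_S$, so $E_S$ has at most $\aleph_1$ classes by Burgess' trichotomy theorem for analytic equivalence relations; and since $S$ is Borel, $E_S$ is Borel, hence in particular coanalytic, so Silver's dichotomy theorem forces $E_S$ to have countably many classes (it cannot have exactly $\aleph_1$ classes). There is no real obstacle here — the only genuine content was already isolated in Lemma~\ref{coranalyticwqo}, and this corollary simply records the resulting equivalence. The one point worth stating explicitly is the direction ``countably many classes $\implies$ Borel'', which uses that an analytic quasi-order with countably many $E_S$-classes is Borel; this in turn follows because each class, being an analytic set that is ``small'' in the relevant sense, together with the countable partition, generates a Borel structure compatible with $S$.
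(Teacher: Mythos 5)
Your proof is correct and follows the paper's own route exactly: the paper likewise obtains the corollary by combining the observation stated just before it (an analytic quasi-order whose associated equivalence relation has countably many classes is Borel) with the final clause of Lemma~\ref{coranalyticwqo}, which is itself proved via Burgess' trichotomy and Silver's dichotomy, just as you unpack. The only soft spot is your closing gloss on why countably many classes force Borelness --- the precise reason is that each $E_S$-class is both analytic and coanalytic (its complement being a countable union of the remaining analytic classes), hence Borel by Suslin's theorem, and $S$ is then a countable union of Borel rectangles $C \times C'$ over pairs of classes with $C$ below $C'$ --- but the paper itself leaves this as ``not hard to check'', so this does not affect the comparison.
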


In particular, both Lemma~\ref{coranalyticwqo} and
Corollary~\ref{corwqo=countable} apply to analytic wqo's.

The proof of the next proposition is quite similar to that of~\cite[Lemma
10]{NashWil1965}.

\begin{proposition} \label{prop:wqojump}
If \( S \) is a wqo on \( \omega \), then \( S^{\inj} \) is Borel.
\end{proposition}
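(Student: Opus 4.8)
The plan is to show that $S^{\inj}$ is Borel by producing, for each pair $\vec{x}, \vec{y} \in {}^{\omega}\omega$, a purely combinatorial characterization of $\vec{x} \mathrel{S^{\inj}} \vec{y}$ that avoids the existential quantification over injections $f \colon \omega \to \omega$, which is what a priori makes $S^{\inj}$ only analytic. The natural way to do this is to appeal to a Hall-type matching argument: $\vec{x} \mathrel{S^{\inj}} \vec{y}$ holds iff there is a perfect matching of $\omega$ into $\omega$ in the bipartite graph with an edge from $n$ to $m$ whenever $x_n \mathrel{S} y_m$. For countable bipartite graphs, the existence of such a matching (saturating the left side) is governed by an appropriate infinite version of Hall's theorem. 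The key point is that when $S$ is a wqo on $\omega$, the relevant bipartite graph has enough finiteness built in — each vertex on the left has, roughly speaking, a ``downward'' structure controlled by the wqo — so that one can reduce to finite Hall conditions, each of which is Borel (indeed arithmetical) in the parameters. I expect the proof to parallel \cite[Lemma 10]{NashWil1965} closely: there the combinatorial heart is exactly an argument of this Hall/König flavor applied to a wqo.

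Concretely, I would first set up the bipartite graph $G = G(\vec{x},\vec{y})$ on $\omega \sqcup \omega$ with $n \mathrel{G} m \iff x_n \mathrel{S} y_m$, and observe that $\vec{x} \mathrel{S^{\inj}} \vec{y}$ is exactly the assertion that $G$ admits a matching saturating the left copy of $\omega$. Then I would exploit the wqo hypothesis to get a finitary obstruction criterion: since $S$ is a wqo, for each $n$ the set of $S$-predecessors of $x_n$ (up to $S$-equivalence) is a finite union of $S$-equivalence classes, and more importantly any infinite antichain is forbidden, so the ``bad configurations'' that obstruct a matching must already appear among finitely many indices. This should let me express non-existence of a matching as: there is a \emph{finite} set $F \subseteq \omega$ on the left whose neighborhood $N(F)$ in $G$ is finite with $|N(F)| < |F|$. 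By the countable Hall theorem in the form applicable here (using that the graph, restricted appropriately, is locally finite on one side — which is where the wqo is essential, since $\{m : x_n \mathrel{S} y_m\}$ need not be finite, but the \emph{relevant} part of it is), $\vec{x} \mathrel{S^{\inj}} \vec{y}$ fails iff such a finite witnessing $F$ exists. Quantifying over the finite set $F$ and the finite set $N(F)$ gives a $\boldsymbol{\Sigma}^0$ (in fact arithmetical) definition, hence $S^{\inj}$ and its complement are both Borel.

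The main obstacle is precisely that the bipartite graph $G(\vec{x},\vec{y})$ need \emph{not} be locally finite: a single $x_n$ can be $S$-below infinitely many $y_m$'s (e.g.\ if $\vec{y}$ repeats an $S$-upper bound infinitely often). So a naive appeal to the countable König/Hall theorem does not immediately apply, and the wqo structure must be used to tame this. I would handle it by passing to $S$-equivalence classes: since $S$ is a wqo on $\omega$, there are only finitely many $S$-classes below any given element, and one can record, for each $S$-class $C$, the (possibly infinite, but only as a cardinality in $\{0,1,\dots,\aleph_0\}$) multiplicities $|\{n : [x_n] = C\}|$ and $|\{m : [y_m] = C\}|$ together with the finite quasi-order these finitely-many-below classes form. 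The existence of an injection $f$ then reduces to a matching problem on this finite structure with multiplicities — a finite system of inequalities comparing partial sums of multiplicities along down-sets of the finite wqo — and such a system is manifestly Borel to check. So the real content is the reduction ``$S^{\inj}$-relation $\iff$ finite combinatorial condition on class-multiplicities and the finite induced order,'' after which Borelness is immediate; I expect this reduction, and the bookkeeping of when an inequality between cardinals in $\{0,\dots,\aleph_0\}$ is Borel in the codes, to be the only delicate part.
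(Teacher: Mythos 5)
Your overall strategy (recast $\vec x \mathrel{S^{\inj}} \vec y$ as a left-saturating matching problem and show that, thanks to the wqo hypothesis, its failure is witnessed by a finite left set $F$ with $|N(F)|<|F|$ and $N(F)$ finite) does point at a true statement, and in fact that finite Hall criterion is equivalent to the paper's Borel characterization. But your proposal never proves it, and the two places where you try to supply the justification contain genuine errors. First, there is no off-the-shelf ``countable Hall theorem in the form applicable here'': as you yourself note, the bipartite graph is not locally finite on the left, and without local finiteness the finite Hall condition is in general insufficient (the standard counterexample has one left vertex joined to everything and each remaining left vertex joined to a single right vertex). What rules out such configurations is a specific consequence of wqo-ness that is absent from your write-up, namely Nash-Williams' lemma that for any sequence $\vec y$ the set $F_{\vec y}=\{n : \{m : y_n \mathrel{S} y_m\} \text{ is finite}\}$ is \emph{finite}. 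This is the finiteness the paper exploits: left indices $n$ admitting a target $y_m$ with $m\notin F_{\vec y}$ can always be matched greedily (each such target is $S$-below infinitely many coordinates of $\vec y$ outside $F_{\vec y}$), so the only genuine constraint is an injection of the remaining left indices into the finite set $F_{\vec y}$, and that is a Borel condition. Without this decomposition (or some equivalent argument), the step ``the relevant part of the graph is locally finite on one side'' is an unproved assertion, and it is exactly the heart of the proof.

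Second, your proposed repair via equivalence classes rests on a false claim: in a wqo there need \emph{not} be only finitely many $S$-classes below a given element (take $S$ a well-order of type $\omega+1$ on $\omega$; the top element has infinitely many pairwise inequivalent predecessors), and the quasi-order of classes realized by $\vec x$ and $\vec y$ need not be finite either (take $S=(\omega,\leq)$ with $\vec x,\vec y$ enumerating $\omega$). Well-foundedness plus absence of infinite antichains gives no downward local finiteness, so there is no ``finite structure with multiplicities'' and no ``finite system of inequalities'' to check; reducing to class multiplicities just restates the original infinite matching problem. The missing idea is the one above (finiteness of $F_{\vec y}$, i.e.\ upper cones inside $\vec y$, not predecessor sets), and once it is in place the rest of the argument — greedy matching into the complement of $F_{\vec y}$, a finite injection into $F_{\vec y}$, and Borelness of the maps $\vec y\mapsto F_{\vec y}$ and $(\vec x,\vec y)\mapsto K_{\vec x,\vec y}$ — is exactly the paper's proof.
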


\begin{proof}
For every \( \vec{y} = (y_n)_{n \in \omega} \in \pre{\omega}{\omega} \) and
\( n \in \omega \), let \( \vec{y}^{\; \uparrow n} \) denote the \( S \)-upper
cone determined by \( n \) in \( \vec{y} \), i.e.\ \( \vec{y}^{\; \uparrow  n} =
\{ m \in \omega \mid y_n \mathrel{S} y_m \} \). Then the set \( F_{\vec{y}} =
\{ n \in \omega \mid \vec{y}^{\; \uparrow  n} \text{ is finite} \} \) is finite
by~\cite[Lemma 3]{NashWil1965}. Let \( K_{\vec{y}} = \omega \setminus
F_{\vec{y}} \).

\begin{claim} \label{claiminfinite}
For every \( n \in K_{\vec{y}} \) the set \(  \vec{y}^{\; \uparrow
n}_{K_{\vec{y}}} = \{ m \in K_{\vec{y}} \mid y_n \mathrel{S} y_m \} \) is
infinite.
\end{claim}
\begin{proof}
Let \( n \in \omega \) be such that \( \vec{y}^{\; \uparrow n}_{K_{\vec{y}}} \)
is finite. Then \( \vec{y}^{\; \uparrow  n} \subseteq \vec{y}^{\; \uparrow
n}_{K_{\vec{y}}} \cup F_{\vec{y}}\) is finite as well, i.e.\ \( n \in
F_{\vec{y}} \).
\end{proof}

Given \( \vec{x}, \vec{y} \in \pre{\omega}{\omega} \), let \(
K_{\vec{x},\vec{y}} = \{ n \in \omega \mid \exists m \in K_{\vec{y}} \, (x_n
\mathrel{S} y_m) \} \).

\begin{claim} \label{claimK}
There is an injective function \( f \colon K_{\vec{x},\vec{y}} \to
K_{\vec{y}} \) such that \( x_n \mathrel{S} y_{f(n)} \) for every \( n \in
K_{\vec{x},\vec{y}} \).
\end{claim}
\begin{proof}
We define the function \( f \) by recursion on \( n \in K_{\vec{x},\vec{y}}
\). Notice that it is enough to verify that for every \( n \in \omega \) the
restriction \( f \restriction (n \cap K_{\vec{x},\vec{y}}) \) of \( f \) is
injective and such that \( x_k \mathrel{S} y_{f(k)} \) for every \( k \in n
\cap K_{\vec{x},\vec{y}} \). Suppose that \( n \in K_{\vec{x},\vec{y}} \) and
that \( f \restriction (n \cap K_{\vec{x},\vec{y}}) \) is as above. Let \( m
\in K_{\vec{y}} \) be such that \( x_n \mathrel{S} y_m \) (which exists
because \( n \in K_{\vec{x},\vec{y}} \)). By Claim~\ref{claiminfinite} there
are infinitely many \( m' \in K_{\vec{y}} \) such that \( y_m \mathrel{S}
y_{m'} \): letting \( f(n) \) be the least of these \( m' \) such that \( m'
\notin f(n \cap K_{\vec{x},\vec{y}}) \), we get \( x_n \mathrel{S} y_{f(n)}
\) (by transitivity of \( S \)), and that \( f \restriction (n+1 \cap
K_{\vec{x},\vec{y}}) \) is as required.
\end{proof}

The next claim shows that for having \( \vec{x} \mathrel{S}^{\inj} \vec{y} \)
it is necessary and sufficient that there is a partial witness of this fact
defined on \( \omega \setminus K_{\vec{x},\vec{y}} \).

\begin{claim} \label{claimnonK}
\( \vec{x} \mathrel{S}^{\inj} \vec{y} \) if and only if there is an injective
function \(g \colon \omega \setminus K_{\vec{x},\vec{y}} \to F_{\vec{y}} \)
such that \( x_n \mathrel{S} y_{g(n)} \) for every \( n \in \omega \setminus
K_{\vec{x},\vec{y}} \).
\end{claim}
 \begin{proof}
For the forward direction, let \( f \) be a witness of \( \vec{x}
\mathrel{S}^{\inj} \vec{y}\), and let \( g  = f \restriction (\omega
\setminus K_{\vec{x},\vec{y}}) \): it suffices to show that \( g(n) = f(n)
\in F_{\vec{y}} \) for every \(n \in \omega \setminus K_{\vec{x},\vec{y}} \).
Since \( x_n \mathrel{S} y_{f(n)} \), this follows from the definition of \(
K_{\vec{x},\vec{y}} \).

Conversely, let \( g \) be as in the statement of the claim, and let \(f\) be
the map obtained in Claim~\ref{claimK}: then \( f \cup g \) witnesses \(
\vec{x} \mathrel{S}^{\inj} \vec{y} \). To see this, it suffices to show that
if \( n \in K_{\vec{x},\vec{y}} \) and \( m \in \omega \setminus
K_{\vec{x},\vec{y}} \) then \( f(n) \neq g(m) \), and this follows from the
fact that the range of \( f \) is contained in \( K_{\vec{y}} = \omega
\setminus F_{\vec{y}} \), while the range of \( g \) is contained in \(
F_{\vec{y}} \).
\end{proof}

By definitions, Claim~\ref{claimnonK}, and the fact that \(
F_{\vec{y}} \) is finite, $\vec{x} \mathrel{S}^{\inj} \vec{y}$ is equivalent
to
\[
\omega \setminus K_{\vec{x},\vec{y}} \text{ is finite} \land
\exists g \text{ injective} \colon \omega \setminus K_{\vec{x},\vec{y}} \to F_{\vec{y}}\,
\forall n \in \omega \setminus K_{\vec{x},\vec{y}} \, ( x_n \mathrel{S} y_{g(n)}).
\]
Since the maps \( \vec{y} \mapsto F_{\vec{y}} \) and \( (\vec{x},\vec{y})
\mapsto K_{\vec{x},\vec{y}} \) are Borel, this equivalence shows that \(
S^{\inj} \) is Borel.
\end{proof}

\begin{remark} \label{rmk:wellordersonomega}
\begin{enumerate}[(1)]
\item If \( S \) is further assumed to be a bqo on $\omega$, then the fact
    that \( S^{\inj} \) is Borel follows also from
    Corollary~\ref{corwqo=countable} and a result of Laver (\cite[Theorem
    4.11]{Laver1971}). In fact in this case \( S^{\inj} \) is also a bqo by
    Nash-Williams' theorem on transfinite sequences (\cite{NW67}).
\item If moreover $S$ is linear (i.e.\ a well-order on \(\omega\)) then:
\begin{enumerate}[(a)]
\item \( S^{\inj} <_B (\pow(\omega), \subseteq) \) because $f \colon
    \pre\omega \omega \to \pow (\omega)$ defined by $f(\vec{x}) =
    \{\langle n,k \rangle \mid |\{i \mid n \mathrel{S} x_i\}| \geq k \}$, where $(n,k) \mapsto \langle n,k \rangle$ is a pairing function,
    witnesses \( S^{\inj} \leq_B (\pow(\omega), \subseteq) \), while \(
    (\pow(\omega), \subseteq) \nleq_B S^{\inj}\) follows from the
    previous point because $S$ and thus \( S^{\inj} \) are bqo's;
\item if \( S' \) is another linear wqo on \( \omega \) with order type
    strictly larger than that of \( S \), then \( S^\inj <_B (S')^\inj \)
    (this can be proved using the fact that every wqo contains a chain of
    maximal order type \cite{Wolk}).
\end{enumerate}
In particular \( (\omega,\leq)^{\inj} <_B (\omega+1,\leq)^{\inj} <_B \dotsc
<_B (\alpha,\leq)^{\inj} <_B \dotsc <_B (\pow(\omega), \subseteq) \) is a
strictly increasing chain of Borel quasi-orders of length $\omega_1+1$.
    \end{enumerate}
\end{remark}

\begin{corollary}\label{cor:injonwqo}
Let \( S \) be a Borel quasi-order. If \( S \) is a wqo, then \( S^{\inj}\)
is Borel as well.
\end{corollary}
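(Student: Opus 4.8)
The plan is to reduce to Proposition~\ref{prop:wqojump} by showing that a Borel wqo is, as far as the jump $S \mapsto S^{\inj}$ is concerned, essentially a wqo on a countable set.

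First I would observe that since $S$ is a wqo it is well-founded and has no uncountable antichains, so Corollary~\ref{corwqo=countable} applies and, $S$ being Borel, $E_S$ has only countably many classes. Each $E_S$-class is a Borel subset of $X$ (being a section of the Borel set $E_S$), so fixing an enumeration $(C_i)_{i \in I}$ of the classes with $I \subseteq \omega$, the map $\pi \colon X \to I$ sending $x$ to the index of its class is Borel. Let $\bar S$ be the quasi-order induced by $S$ on $I$, i.e.\ $i \mathrel{\bar S} j$ iff $x \mathrel{S} y$ for some (equivalently, any) $x \in C_i$, $y \in C_j$; then $\bar S$ is a wqo on the countable set $I$ and $x \mathrel{S} y \iff \pi(x) \mathrel{\bar S} \pi(y)$ for all $x,y \in X$.

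Next I would transfer $\bar S$ to a wqo on $\omega$: one picks an injection $\iota \colon I \to \omega$ and extends $\bar S$ to a quasi-order $T$ on $\omega$ whose restriction to $\iota(I)$ is the copy of $\bar S$ and in which every element of $\omega \setminus \iota(I)$ is placed strictly below all of $\iota(I)$, with the complement itself arranged in an increasing $\omega$-chain (when $I$ is cofinite, and in particular when $I$ is finite, there is little or nothing to do here). A routine check shows that $T$ is still well-founded with no infinite antichain, hence a wqo on $\omega$, and that for $\rho = \iota \circ \pi \colon X \to \omega$ one has $x \mathrel{S} y \iff \rho(x) \mathrel{T} \rho(y)$, the only nontrivial point being that no element outside $\iota(I)$ lies $T$-above an element of $\iota(I)$. (Alternatively one may simply note that the proof of Proposition~\ref{prop:wqojump} goes through verbatim with $\omega$ replaced by any countable set, making this paragraph unnecessary.)

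Finally, the map $F \colon \pre{\omega}{X} \to \pre{\omega}{\omega}$ defined by $F(\vec x) = (\rho(x_n))_{n \in \omega}$ is Borel, and from $x \mathrel{S} y \iff \rho(x) \mathrel{T} \rho(y)$ one checks at once that the same injection $\omega \to \omega$ witnesses $\vec x \mathrel{S^{\inj}} \vec y$ if and only if it witnesses $F(\vec x) \mathrel{T^{\inj}} F(\vec y)$; hence $S^{\inj} = (F \times F)^{-1}(T^{\inj})$. Since $T^{\inj}$ is Borel by Proposition~\ref{prop:wqojump} and $F \times F$ is Borel, $S^{\inj}$ is Borel. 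There is no real obstacle here beyond Corollary~\ref{corwqo=countable}: the only points requiring a little care are the bookkeeping that turns the countable quotient $\bar S$ into a genuine wqo on $\omega$ without disturbing the reduction, and the observation that $S^{\inj}$ is the \emph{Borel preimage} of the Borel relation $T^{\inj}$ — which is what actually yields Borelness, as opposed to merely $S^{\inj} \leq_B T^{\inj}$.
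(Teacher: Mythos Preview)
Your proof is correct and follows essentially the same route as the paper: use Corollary~\ref{corwqo=countable} to get that $E_S$ has countably many classes, transfer $S$ to a wqo on $\omega$, and invoke Proposition~\ref{prop:wqojump}. The paper handles the ``extra'' elements of $\omega$ by collapsing them into the $E_S$-class of $A_0$ rather than placing them in a chain below, but this is immaterial since the range of the reduction avoids them in either case.

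One small comment: your closing remark distinguishing ``Borel preimage'' from ``merely $S^{\inj} \leq_B T^{\inj}$'' is unnecessary. A Borel reduction $f$ of $S^{\inj}$ to $T^{\inj}$ is by definition a Borel map with $S^{\inj} = (f \times f)^{-1}(T^{\inj})$, so $S^{\inj} \leq_B T^{\inj}$ already yields Borelness of $S^{\inj}$ whenever $T^{\inj}$ is Borel; the paper relies on exactly this (via monotonicity of $S \mapsto S^{\inj}$).
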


\begin{proof}
Let \( S \) be a Borel wqo on \( X \). By Corollary~\ref{corwqo=countable},
\( E_S\) has at most countably many classes. Let \( (A_i)_{i < I} \) (for
some \( I \leq \omega \)) be an enumeration without repetitions of the \( E_S
\)-equivalence classes, and consider the wqo \( \tilde{S} \) on \( \omega \)
defined by letting $n \mathrel{\tilde{S}} m $ if and only if one of the
following conditions holds:
\begin{itemize}
\item $n,m < I \land \forall x \in A_n \forall y \in A_m \, (x \mathrel{S}
    y)$; or
\item $n\in\omega\setminus I\land m<I\land\forall x\in A_0\forall y\in
    A_m\, (x \mathrel{S} y)$; or
\item $n<I\land m\in\omega\setminus I\land\forall x\in A_n\forall y\in
    A_0\, (x \mathrel{S} y)$; or
\item $n,m \in (\omega \setminus I) \cup \{ 0 \} $.
\end{itemize}
Since clearly \( S \sim_B \tilde{S} \), it suffices to show that \(
\tilde{S}^{\inj}\) is Borel: but this follows from
Proposition~\ref{prop:wqojump}, hence we are done.
\end{proof}

\subsection{Borel quasi-orders $S$ such that $S^{\inj}$ is proper
analytic}\label{subsec:injnotB}

We now start considering some examples of Borel quasi-orders \( (X,S) \) such
that \( (X,S)^{\inj}\) is analytic non-Borel: as customary in the subject,
this is usually obtained by either directly showing that the binary relation
\( S^{\inj} \) is a \( \boldsymbol{\Sigma}^1_1 \)-complete subset of the
square \( \pre{\omega}{X} \times \pre{\omega}{X} \), or else by showing that
the \emph{upper cone generated by \( \vec{p} \)}
\[
C^{S^{\inj}}(\vec{p}) =  \{ \vec{x} \in \pre{\omega}{X} \mid \vec{p} \mathrel{S}^{\inj} \vec{x} \}
\]
or the \emph{lower cone generated by \( \vec{q} \)}
\[
C_{S^{\inj}}(\vec{q}) = \{ \vec{x} \in \pre{\omega}{X} \mid \vec{x} \mathrel{S}^{\inj} \vec{q} \}
 \]
are \( \boldsymbol{\Sigma}^1_1 \)-complete subsets of \( \pre{\omega}{X}\)
(for suitable \( \vec{p},\vec{q} \in \pre{\omega}{X} \)).

Our first example shows that the jump operator \( S \mapsto S^{\inj}\) can
produce analytic non-Borel quasi-orders even when applied to countable linear
orders (this should be contrasted with the case of countable well-orders, see
Remark~\ref{rmk:wellordersonomega}(2)).

\begin{proposition} \label{prop:Q}
Consider the linear order $(\QQ, \leq)$. Then $\leq^{\inj}$, considered as a
subset of $\pre{\omega}\QQ \times \pre{\omega}\QQ$ (where the latter is
endowed with the product of the discrete topology on \( \QQ \)), is
$\boldsymbol{\Sigma}^1_1$-complete and hence non-Borel. In fact the
equivalence relation $E_{(\QQ, \leq)^{\inj}}$ is a \( \boldsymbol{\Sigma}^1_1
\)-complete subset of \( \pre{\omega}{\QQ} \times \pre{\omega}{\QQ} \).

Therefore, for every non-scattered countable linear order \( L \) the
quasi-order \( L^{\inj}\) is analytic non-Borel.
\end{proposition}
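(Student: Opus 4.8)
The plan is first to isolate a single hardness statement about $(\QQ,\leq)$ and then transfer it to arbitrary non-scattered $L$. Throughout I will use that $S^{\inj}$ is analytic whenever $S$ is Borel, so in each case it suffices to prove $\boldsymbol{\Sigma}^1_1$-hardness. The source of hardness will be the set $\mathrm{IS}$ of codes for linear orders $\prec$ on $\omega$ admitting an infinite strictly $\prec$-increasing sequence; this set is $\boldsymbol{\Sigma}^1_1$-complete, since the order-reversal homeomorphism of the space of linear-order codes carries the classical $\boldsymbol{\Sigma}^1_1$-complete set of non-well-founded linear orders onto $\mathrm{IS}$ (equivalently, $\mathrm{IS}$ is $\boldsymbol{\Sigma}^1_1$-hard because the Borel map sending a tree $T$ to the reverse of its Kleene--Brouwer order reduces the set of ill-founded trees to it).

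Next I would fix, by a routine recursion, a Borel map assigning to each linear order $\prec$ on $\omega$ an enumeration $(z^\prec_n)_{n\in\omega}$ of an order-isomorphic copy of $(\omega,\prec)$ inside $\QQ\cap(1,2)$ (having placed $z^\prec_0,\dots,z^\prec_{n-1}$, put $z^\prec_n$ in the gap determined by the position of $n$ in $\prec\restriction(n+1)$). I then set $\vec y_\prec := (z^\prec_0, z^\prec_1, z^\prec_2, \dots)$ and $\vec x_\prec := (1, z^\prec_0, z^\prec_1, \dots)$; both assignments are Borel. Since $1 < z^\prec_n$ for all $n$ and $z^\prec_m < z^\prec_n \iff m\prec n$, unwinding the definition of $\leq^{\inj}$ shows that $\vec x_\prec \leq^{\inj}\vec y_\prec$ holds iff there is an injective, non-surjective $g\colon\omega\to\omega$ with $g(m)=m$ or $m\prec g(m)$ for every $m$ (the leading entry $1$ of $\vec x_\prec$ gets matched to any $z^\prec_p$ with $p\notin\operatorname{ran}(g)$, and this is the only constraint). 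The heart of the matter is that such a $g$ exists iff $\prec\in\mathrm{IS}$: given $n_0\prec n_1\prec\cdots$, the map $g$ with $g(n_k):=n_{k+1}$ and $g(m):=m$ for $m\notin\{n_k:k\in\omega\}$ is injective, omits $n_0$ from its range, and has the required monotonicity; conversely, from such a $g$ pick $a_0\notin\operatorname{ran}(g)$ and iterate $a_{k+1}:=g(a_k)$ — injectivity together with $a_0\notin\operatorname{ran}(g)$ forces $a_{k+1}\neq a_k$, whence $a_{k+1}=g(a_k)\succ a_k$, producing an infinite increasing sequence. Thus $\prec\mapsto(\vec x_\prec,\vec y_\prec)$ Borel-reduces $\mathrm{IS}$ to $\leq^{\inj}$, so $\leq^{\inj}$ is $\boldsymbol{\Sigma}^1_1$-complete and hence non-Borel.

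The same map handles $E_{(\QQ,\leq)^{\inj}}$: the underlying multiset of $\vec y_\prec$ is contained in that of $\vec x_\prec$, so $\vec y_\prec \leq^{\inj}\vec x_\prec$ holds unconditionally, and therefore $\vec x_\prec \mathrel{E_{(\QQ,\leq)^{\inj}}}\vec y_\prec$ iff $\vec x_\prec \leq^{\inj}\vec y_\prec$ iff $\prec\in\mathrm{IS}$; as $E_{S^{\inj}}=S^{\inj}\cap(S^{\inj})^{-1}$ is analytic for $S$ Borel, $E_{(\QQ,\leq)^{\inj}}$ is $\boldsymbol{\Sigma}^1_1$-complete. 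Finally, if $L$ is a non-scattered countable linear order I would fix an order-embedding $e\colon\QQ\to L$ (which exists precisely because $L$ is non-scattered); then $\vec x\mapsto(e(x_n))_{n}$ is a continuous — hence Borel — reduction of $(\QQ,\leq)^{\inj}$ to $L^{\inj}$, and of $E_{(\QQ,\leq)^{\inj}}$ to $E_{L^{\inj}}$, because $e$ both preserves and reflects $\leq$. Since $\leq_L$ is Borel on the countable discrete space $L$, $L^{\inj}$ is analytic, and by this reduction it is $\boldsymbol{\Sigma}^1_1$-hard, hence non-Borel.

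I expect the main obstacle to be pinning down the equivalence in the second paragraph cleanly: verifying that a witness of $\vec x_\prec \leq^{\inj}\vec y_\prec$ can be read off as an injective, non-surjective $g$ with the stated monotonicity (immediate with the streamlined $\vec x_\prec$ above, but if one adds ``padding'' coordinates one must first argue that a witness may be assumed to send the padding of $\vec x_\prec$ onto that of $\vec y_\prec$), and ensuring that $\prec\mapsto(z^\prec_n)_n$ is genuinely Borel. Everything else is bookkeeping.
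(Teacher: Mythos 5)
Your proof is correct and is essentially the paper's argument in dual form: the paper adds a top element $1$ above an order-preserving copy of $L$ in $\QQ\cap[0,1)$ and reduces the set of non-well-orders to $\geq^{\inj}$, while you add a bottom element below a copy in $\QQ\cap(1,2)$ and reduce the (order-reversed, hence equally $\boldsymbol{\Sigma}^1_1$-complete) set $\mathrm{IS}$ to $\leq^{\inj}$; the pigeonhole mechanism forcing the injection to shift along an infinite monotone chain, the observation that the reverse reduction holds unconditionally (giving completeness of $E_{(\QQ,\leq)^{\inj}}$), and the transfer to non-scattered $L$ via an order-embedding of $\QQ$ are all the same. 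Your verification of the combinatorial core (which the paper leaves as "straightforward to check") is accurate.
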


\begin{proof}
Since the map \( \QQ \to \QQ \colon q \mapsto - q \) witnesses \( (\QQ, \leq)
\sim_B (\QQ,\geq)\), it is clearly enough to prove the result for \( (\QQ,
\geq)^{\inj} \). To this end we define a continuous reduction of
$\mathrm{NWO}$ to ${\geq^{\inj}}$, where $\mathrm{NWO}$ is the set of
non-well-orders (viewed as a subset of the Polish space of countable linear
orders, see~\cite[Section 27.C]{Kechris1995}), a well-known
$\boldsymbol{\Sigma}^1_1$-complete set.

We associate to every linear order $L$ a pair of sequences $(\vec{\alpha}_L,
\vec{\beta}_L)$ of rationals in the interval $[0,1]$. First we map
continuously and in an order preserving way $L$ into $\QQ \cap [0,1)$, so
that we can identify $L$ with its image. Let $\vec{\alpha}_L$ and
$\vec{\beta}_L$ be injective enumerations of $L \cup \{1\}$ and $L$,
respectively. It is straightforward to check that $L \in \mathrm{NWO}$ if and
only if $\vec{\alpha}_L \geq^{\inj} \vec{\beta}_L$.

Since $\vec{\beta}_L \geq^{\inj} \vec{\alpha}_L$ for every $L$, this argument
shows in fact that $E_{(\QQ, \geq)^{\inj}}$ is a $ \boldsymbol
\Sigma^1_1$-complete subset of \( \pre{\omega}{\QQ} \times \pre{\omega}{\QQ}
\).
\end{proof}

In the second example, we consider the jump of the quasi-order \(
(\pow(\omega), \subseteq)\), which by Proposition~\ref{prop:simple}(1) is
Borel-bireducible with \( (\omega,=)^{\inj} \).

\begin{proposition}\label{prop:P(omega)}
\begin{enumerate}[(1)]
\item The upper cone in $(\pow(\omega), {\subseteq})^{\inj}$ generated by
    \(\vec{p} = (A_n)_{n \in \omega} \in \pre{\omega}{(\pow(\omega))}\)
    with \( A_n = \{ n \} \) is \( \boldsymbol{\Sigma}^1_1 \)-complete. In
    particular, $\subseteq^{\inj}$ is a
    $\boldsymbol{\Sigma}^1_1$-complete (hence non-Borel) subset of
    $\pre{\omega}{(\pow(\omega))} \times \pre{\omega}{(\pow(\omega))}$.
\item The lower cone in $(\pow(\omega), {\subseteq})^{\inj}$ generated by
    \(\vec{q} = (B_n)_{n \in \omega} \in \pre{\omega}{(\pow(\omega))} \)
    with \( B_n = \omega \setminus \{ n \} \) is \( \boldsymbol{\Sigma}^1_1
    \)-complete.
\item The equivalence relation \( E_{ (\pow(\omega),\subseteq)^{\inj}} \)
    is a \(\boldsymbol{\Sigma}^1_1 \)-complete subset of
    $\pre{\omega}{(\pow(\omega))} \times \pre{\omega}{(\pow(\omega))}$.
\end{enumerate}
\end{proposition}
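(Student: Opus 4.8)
The plan is to treat parts (1) and (2) together, reducing both to a single combinatorial problem about matchings, and to obtain (3) directly from Proposition~\ref{prop:Q}. For (1), observe that \( \vec{p} \mathrel{\subseteq^{\inj}} \vec{Y} \) asserts exactly that the countable family \( (A^{\vec{Y}}_n)_{n \in \omega} \), where \( A^{\vec{Y}}_n = \{ m \in \omega \mid n \in Y_m \} \), has an \emph{injective system of distinct representatives} (injective SDR), i.e.\ an injective \( \varphi \colon \omega \to \omega \) with \( \varphi(n) \in A^{\vec{Y}}_n \) for all \( n \). The map \( \vec{Y} \mapsto (A^{\vec{Y}}_n)_n \) is a homeomorphism of \( \pre{\omega}{(\pow(\omega))} \) (its inverse sends \( (B_n)_n \) to \( (\{ n \mid m \in B_n \})_m \)). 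Similarly, \( \vec{X} \mathrel{\subseteq^{\inj}} \vec{q} \) holds iff \( (\omega \setminus X_n)_n \) has an injective SDR, and \( \vec{X} \mapsto (\omega \setminus X_n)_n \) is a homeomorphism. Hence both cones correspond, under homeomorphisms of the ambient space, to the set
\[
R = \{ (A_n)_{n \in \omega} \in \pre{\omega}{(\pow(\omega))} \mid (A_n)_n \text{ has an injective SDR} \},
\]
which is visibly \( \boldsymbol{\Sigma}^1_1 \); so everything reduces to showing that \( R \) is \( \boldsymbol{\Sigma}^1_1 \)-hard. (This also gives that \( \subseteq^{\inj} \) itself is \( \boldsymbol{\Sigma}^1_1 \)-complete, since \( \vec{Y} \mapsto (\vec{p}, \vec{Y}) \) continuously reduces the upper cone to it.)

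To prove \( R \) is \( \boldsymbol{\Sigma}^1_1 \)-hard I would continuously reduce to it the well-known \( \boldsymbol{\Sigma}^1_1 \)-complete set of ill-founded trees on \( \omega \) (see e.g.\ \cite[Section 27.C]{Kechris1995}). Given a tree \( T \subseteq \pre{<\omega}{\omega} \), let \( G_T \) be the bipartite graph with left vertices \( \{ a_s \mid s \in \pre{<\omega}{\omega} \} \cup \{ a_* \} \), right vertices \( \{ b_s \mid s \in \pre{<\omega}{\omega} \} \), and edges: \( a_s \sim b_s \) for every \( s \); \( a_s \sim b_{s ^\frown k} \) whenever \( s \in T \) and \( s ^\frown k \in T \); and \( a_* \sim b_{\langle\rangle} \). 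Enumerating the left vertices, the associated family is \( (N_{G_T}(a))_{a} \); since whether any fixed pair of vertices is an edge depends on at most one value of the characteristic function of \( T \), the assignment \( T \mapsto (N_{G_T}(a))_a \) is continuous. The heart of the argument is the equivalence
\[
G_T \text{ admits a matching saturating every left vertex} \iff [T] \neq \emptyset .
\]
For \( \Leftarrow \), given \( c \in [T] \) one matches \( a_* \to b_{\langle\rangle} \), \( a_{c \restriction k} \to b_{c \restriction (k+1)} \) for all \( k \), and \( a_s \to b_s \) for every \( s \) that is not an initial segment of \( c \); a routine check shows this is a matching and saturates the left. The direction \( \Rightarrow \) is the step I expect to demand the most care: in a left-saturating matching \( a_* \) must be sent to its only neighbour \( b_{\langle\rangle} \); then \( a_{\langle\rangle} \), now barred from \( b_{\langle\rangle} \), must be sent to some \( b_t \) with \( t \) a child of \( \langle\rangle \) lying in \( T \); then \( a_t \), barred from \( b_t \), must be sent to a child slot; iterating and using that the matching is total, this never terminates, and the visited nodes form an infinite branch of \( T \). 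The bookkeeping — checking at each stage that the node reached really lies in \( T \), that its own slot is already occupied (by its predecessor, or by \( a_* \) at the root), and that its only free neighbours are its child slots — is where one must be careful, but there is no real obstruction; note that a leaf of \( T \) has no child slot, which is precisely why well-founded trees yield no saturating matching.

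For part (3) I would not analyse \( E_{\subseteq^{\inj}} \) directly, but instead reduce to it \( E_{(\QQ, \geq)^{\inj}} \), which is a \( \boldsymbol{\Sigma}^1_1 \)-complete subset by the final assertion of Proposition~\ref{prop:Q}. Fixing a bijection \( \QQ \cong \omega \), the map \( h \colon \QQ \to \pow(\omega) \), \( h(q) = \{ r \in \QQ \mid r \geq q \} \), is continuous and satisfies \( q \geq q' \iff h(q) \subseteq h(q') \). Applying \( h \) coordinatewise, \( \vec{a} \mapsto (h(a_n))_n \) is continuous, and since the witnessing injections transfer verbatim (from \( a_n \geq a'_{f(n)} \iff h(a_n) \subseteq h(a'_{f(n)}) \)) it satisfies \( \vec{a} \mathrel{(\QQ, \geq)^{\inj}} \vec{a}' \iff (h(a_n))_n \mathrel{\subseteq^{\inj}} (h(a'_n))_n \). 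Therefore the same map reduces \( E_{(\QQ, \geq)^{\inj}} \) to \( E_{\subseteq^{\inj}} \), proving (3). This part is essentially routine; the only point worth recording is that a Borel reduction of quasi-orders automatically induces a Borel reduction of the associated equivalence relations.
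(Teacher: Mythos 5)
Your proposal is correct. For parts (1) and (2) the underlying gadget is the same as the paper's, just transposed: your bipartite graph, read from the right-hand side, assigns to $b_t$ the set of left vertices $\{a_t, a_{t^\star}\}$ (or $\{a_{\langle\rangle}, a_*\}$ at the root), which is exactly the paper's $B_{T,n} = \{n, n^\star\}$ together with the device $B_{T,0} = \emptyset$ that plays the role of your extra vertex $a_*$ stealing the root's slot and forcing the cascade of displacements down an infinite branch. What your packaging buys is economy: the transpose homeomorphism and coordinatewise complementation show that the upper cone of $\vec{p}$ and the lower cone of $\vec{q}$ are both homeomorphic preimages of the single set $R$ of families with an injective transversal, so (2) comes for free from (1), whereas the paper writes out a second explicit sequence $\vec{p}_T$. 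Part (3) is where you genuinely diverge: the paper builds a third tree gadget from the finite sets $\pred(n)$ and produces a pair $(\vec{p}_T, \vec{q}_T)$ with $\vec{q}_T \subseteq^{\inj} \vec{p}_T$ always, while you push the already-established completeness of $E_{(\QQ,\geq)^{\inj}}$ from Proposition~\ref{prop:Q} through the order-embedding $q \mapsto \{r \in \QQ \mid r \geq q\}$ of $(\QQ,\geq)$ into $(\pow(\omega),\subseteq)$, using that a two-sided order-embedding lifts coordinatewise to the $\inj$-jumps. Your route is shorter and correct, but note what the paper's self-contained constructions additionally deliver (and your versions do not): since the paper's sets $B_{T,n}$ have size at most $2$ and its $\pred(n)$ are finite, Remark~\ref{rmk:restrictedP(omega)} can conclude completeness already for $(\pow_{<N}(\omega),\subseteq)^{\inj}$ and $E_{(\pow_{<\omega}(\omega),\subseteq)^{\inj}}$; your forward neighbourhoods $N_{G_T}(a_s)$ and your sets $h(q)$ are infinite, so those refinements would be lost. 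This does not affect the proposition as stated.
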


\begin{proof}
Fix a bijection \( \omega \to \pre{<\omega}{\omega} \colon n \mapsto s_n \)
such that \( s_0 = \emptyset \), and for every \( n \in \omega \) such that
\( n>0 \) let \( n^\star \) be the unique natural number such that \(
s_{n^\star} = s_n \restriction (\leng(s_n)-1) \).

(1) We show that there is a continuous function mapping each nonempty tree \(
T \subseteq \pre{< \omega}{\omega}\) to some \( \vec{q}_T \in
\pre{\omega}{(\pow(\omega))} \)  which reduces $\mathrm{IF}$ to the upper
cone generated by $ \vec p $, where \( \mathrm{IF} \subseteq \pow(\pre{<
\omega}{\omega}) \) is the set of trees with at least one infinite branch (a
well-known \( \boldsymbol{\Sigma}^1_1 \)-complete set, see~\cite[Section
27.A]{Kechris1995}). Given a nonempty tree \( T \subseteq
\pre{<\omega}{\omega} \) let \( \vec{q}_T = (B_{T,n})_{n \in \omega} \in
\pre{\omega}{(\pow(\omega))} \) be defined by
\[
B_{T,n} =
\begin{cases}
\emptyset & \text{if } n = 0 \\
\{ n \} & \text{if } s_n \notin T \\
\{ n, n^\star \} & \text{if } n \neq 0 \text{ and } s_n \in T.
\end{cases}
 \]
Notice that if there is \( m \neq n \) such that \( m \in B_{T,n} \), then \(
\emptyset \neq s_n \in T \) and \( m = n^\star \). The map \( T \mapsto
\vec{q}_{T} \) is clearly continuous, and we claim that \( T \in \mathrm{IF}
\iff {\vec{p} \subseteq^{\inj} \vec{q}_T} \).

Let first \( T \in \mathrm{IF}\), and let \( (n_k)_{k \in \omega} \) be a
sequence of natural numbers such that \( s_{n_k} \in T \), \( \leng(s_{n_k})
= k \), and \( s_{n_k} \subseteq s_{n_{k'}} \) for \( k \leq k' \in \omega
\), so that, in particular, \( n_0 = 0 \). Define \( \varphi \colon  \omega
\to \omega \) by setting
\[
\varphi(n) =
\begin{cases}
n_{k+1} & \text{if } n = n_k \text{ for some } k \in \omega \\
n & \text{if } n \neq n_k \text{ for all } k \in \omega.
\end{cases}
 \]
It is then easy to check that \( \varphi \) witnesses \( \vec{p}
\subseteq^{\inj} \vec{q}_T\).

Conversely, let \( \varphi \colon \omega \to \omega \) be a witness of \(
\vec{p} \subseteq^{\inj} \vec{q}_T \), and recursively set \( n_0 = 0 \) and
\( n_{k+1} = \varphi(n_k) \). Using the injectivity of \( \varphi \) and the
fact that \( A_i \not\subseteq \emptyset =  B_{T,0} \) for every $i$, one can
easily check by induction that \( n_k \neq n_{k+1} \) and \( n_{k+1} \neq 0
\) for all \( k \in \omega \). These facts, together with \( \{ n_k \} =
A_{n_k} \subseteq B_{T, \varphi(n_k)} = B_{T, n_{k+1}} \), imply that \(
s_{n_{k+1}} \in T \) and  \( n_k = n_{k+1}^\star \) (whence \( s_{n_k}
\subsetneq s_{n_{k+1}} \)) by the observation following the definition of \(
\vec{q}_T \). Thus \( (s_{n_k})_{k \in \omega } \) is an infinite
branch through \( T \) and \( T \in \mathrm{IF} \).

(2) The proof is similar to that of (1). Given a nonempty tree \( T \subseteq
\pre{<\omega}{\omega}\), let \( \vec{p}_T = (A_{T,n})_{n \in \omega} \in
\pre{\omega}{(\pow(\omega))} \) be defined by
\[
A_{T,n} =
\begin{cases}
\omega \setminus \{ n \} & \text{if } s_n \notin T \\
\omega \setminus \{ m \in \omega \mid s_m \in T \land m^\star =0\} & \text{if } n = 0 \\
\omega \setminus (\{ m \in \omega \mid s_m \in T \land m^\star = n \} \cup \{ n \}) & \text{if } s_n \in T \text{ and } n \neq 0.
\end{cases}
\]
It is straightforward to check that exactly
the same argument used in (1) shows \( T \in \mathrm{IF} \iff \vec{p}_T
\subseteq^{\inj} \vec{q} \).

(3) This follows again from a minor modification of the construction given in
part (1). Given \( n \in \omega\), let \( \pred(n) = \{ m \in \omega \mid s_m
\subseteq s_n \} \). To
a given infinite tree $T \subseteq \pre{<\omega}{\omega}$ associate the pair
$(\vec{p}_T, \vec{q}_T)$, where $\vec{p}_T \in \pre{\omega}{(\pow(\omega))}$
lists all sets in $\{ \pred(n) \mid s_n \in T \}$, and $\vec{q}_T \in
\pre{\omega}{(\pow(\omega))}$ is the same as $\vec{p}_T$ except for omitting
$\{ 0 \} $, that is \( \vec{q}_T \) lists the elements of \( \{ \pred(n) \mid
s_n \in T \land n \neq 0 \} \). Then $\vec{q}_T \subseteq^{\inj} \vec{p}_T$
for any infinite tree $T$, while arguing as in (1) one easily sees that
$T \in \mathrm{IF} \iff \vec{p}_T \subseteq^{\inj} \vec{q}_T $.
\end{proof}

\begin{remark} \label{rmk:restrictedP(omega)}
The proof of Proposition~\ref{prop:P(omega)}(1) actually shows that the upper
cone determined by \( \vec{p}\) remains \( \boldsymbol{\Sigma}^1_1
\)-complete even if we replace \( \subseteq^{\inj} \) with its restriction to
subsets of \( \omega \) of size at most \( 2 \). Therefore we further have
that for every \( 2 < N \leq \omega \), the analytic quasi-order \( (\pow_{<
N}(\omega), \subseteq)^{\inj} \) still contains \( \boldsymbol{\Sigma}^1_1
\)-complete upper cones.

Moreover, the proof of Proposition~\ref{prop:P(omega)}(3) shows that \(
E_{(\pow_{< \omega}(\omega), {\subseteq})^{\inj}} \) is already a \(
\boldsymbol{\Sigma}^1_1 \)-complete subset of \( \pre{\omega}{(\pow_{<
\omega}(\omega))} \times \pre{\omega}{(\pow_{< \omega}(\omega))} \).
\end{remark}

\subsection{The complexity of the equivalence relation associated to
$S^{\inj}$}\label{subsec:eqrelinj}

Throughout this subsection we fix a quasi-order $S$ on a set $X$.

\begin{defin}
Let $\vec{a} = (a_n)_{n \in \omega} \in \pre\omega X$. By transfinite
recursion we define for every $\alpha < \omega_1$ a set $\I\alpha a \subseteq
\omega$ as follows:
\begin{itemize}
  \item $\I0 a = \omega$;
  \item $\I{\alpha+1}a = \set{n \in \I\alpha a}{\exists^\infty m \in
      \I\alpha a\, (a_n \mathrel{S} a_m)}$;
  \item $\I\lambda a = \bigcap_{\alpha<\lambda} \I\alpha a$ when $\lambda$
      is limit.
\end{itemize}
\end{defin}

It is obvious that $\alpha < \beta$ implies $\I\alpha a \supseteq \I\beta a$,
and hence the sequence must stabilize at some countable ordinal.

\begin{defin}
For $\vec{a} \in \pre\omega X$ let $\rho (\vec{a})$ be the least $\alpha <
\omega_1$ such that $\I{\alpha+1}a = \I\alpha a$. We write $\II a$ in place
of $\I{\rho (\vec{a})}a$, so that \( \II a = \bigcap_{\alpha < \omega_1} \I
\alpha a \).
\end{defin}

The following lemma follows immediately from the definitions.

\begin{lemma}\label{Ibasic} Let $\vec{a} \in \pre\omega X$. Then
\begin{enumerate}[(1)]
\item for all $\alpha < \omega_1$ and $n \in \I\alpha a$, if $a_m
    \mathrel{S} a_n$ then $m \in \I\alpha a$;
\item if \( n \notin \II a \) and \( \alpha < \omega_1 \) is such that \( n
    \in \I \alpha a \setminus \I {\alpha+1} a \), then
\[
\set{m \in \omega}{a_n \mathrel{E_S} a_m} = \set{m \in \I \alpha a}{a_n \mathrel{E_S} a_m },
 \]
and this set is finite;
\item if $n \in \II a$ then $\exists^\infty m \in \II a\, (a_n \mathrel{S}
    a_m)$.
\end{enumerate}
\end{lemma}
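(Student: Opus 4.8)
The three claims are all immediate consequences of the definitions, so the proof is short. The plan is to verify each item in turn, using only the recursive definition of the sets $\I\alpha a$ and of $\rho(\vec a)$.

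For (1) I would argue by transfinite induction on $\alpha$. The base case $\alpha=0$ is trivial since $\I0 a=\omega$. At a limit stage $\lambda$, if $n\in\I\lambda a=\bigcap_{\beta<\lambda}\I\beta a$ and $a_m\mathrel{S}a_n$, then by the inductive hypothesis $m\in\I\beta a$ for every $\beta<\lambda$, hence $m\in\I\lambda a$. For the successor step, suppose $n\in\I{\alpha+1}a$ and $a_m\mathrel{S}a_n$; then $n\in\I\alpha a$, so by the inductive hypothesis $m\in\I\alpha a$, and moreover for each of the infinitely many $k\in\I\alpha a$ with $a_n\mathrel{S}a_k$ we get $a_m\mathrel{S}a_k$ by transitivity of $S$, so $m\in\I{\alpha+1}a$ as well.

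For (2), fix $n\notin\II a$ and $\alpha<\omega_1$ with $n\in\I\alpha a\setminus\I{\alpha+1}a$. By definition of $\I{\alpha+1}a$, the set $\set{m\in\I\alpha a}{a_n\mathrel{S}a_m}$ is finite; in particular $\set{m\in\I\alpha a}{a_n\mathrel{E_S}a_m}$ is finite, since $a_n\mathrel{E_S}a_m$ implies $a_n\mathrel{S}a_m$. It remains to see that this latter set equals $\set{m\in\omega}{a_n\mathrel{E_S}a_m}$, i.e.\ that every $m$ with $a_n\mathrel{E_S}a_m$ already lies in $\I\alpha a$. But $a_n\mathrel{E_S}a_m$ gives $a_m\mathrel{S}a_n$, so $m\in\I\alpha a$ by part (1) applied with the roles adjusted (using $n\in\I\alpha a$). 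This yields the displayed equality and finiteness.

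For (3), suppose $n\in\II a$. By definition $\II a=\I{\rho(\vec a)}a$ and, by the choice of $\rho(\vec a)$, we have $\I{\rho(\vec a)+1}a=\I{\rho(\vec a)}a=\II a$. Hence $n\in\I{\rho(\vec a)+1}a$, which by the recursive definition means exactly that there are infinitely many $m\in\I{\rho(\vec a)}a=\II a$ with $a_n\mathrel{S}a_m$, as required. There is essentially no obstacle here: the only point requiring a moment's care is the use of transitivity of $S$ in the successor step of (1), on which parts (2) and (3) both ultimately rely.
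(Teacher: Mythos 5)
Your proof is correct, and it is exactly the routine verification that the paper omits (the lemma is stated there as following immediately from the definitions): transfinite induction for (1) using transitivity of $S$, then (1) plus the failure of the $\exists^\infty$ clause for (2), and the fixed-point property $\I{\rho(\vec a)+1}a=\I{\rho(\vec a)}a$ for (3). Nothing further is needed.
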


In particular, $\II a$ is either empty or infinite, and each \( \I \alpha a
\) is invariant under \( E_S \), that is: if \( n,m \in \omega \) are such
that \( a_n \mathrel{E_S} a_m \), then \( n \in \I \alpha a \iff m \in \I
\alpha a \).

\begin{lemma}\label{IEinj}
Let $\vec{a}, \vec{b} \in \pre\omega X$ and suppose $f,g \colon  \omega \to
\omega$ are injective functions witnessing respectively $\vec{a}
\mathrel{S^{\inj}} \vec{b}$ and $\vec{b} \mathrel{S^{\inj}} \vec{a}$. Then
\begin{enumerate}[(1)]
\item for every $\alpha < \omega_1$ we have $\forall n (n \in \I\alpha a
    \iff f(n) \in \I\alpha b)$ and $\forall n (n \in \I\alpha b \iff g(n)
    \in \I\alpha a)$ (so that, in particular, \( \rho(\vec{a}) =
    \rho(\vec{b}) \));
\item $\forall n \in\omega\setminus \II a (a_n \mathrel{E_S} b_{f(n)})$ and
    $\forall n \in\omega\setminus \II b (b_n \mathrel{E_S} a_{g(n)})$.
\end{enumerate}
\end{lemma}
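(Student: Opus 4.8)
The plan is to prove both parts by a single transfinite induction on $\alpha < \omega_1$, establishing part~(1) at every level and extracting part~(2) along the way. The base case $\alpha = 0$ is trivial since $\I0a = \I0b = \omega$. At a limit stage $\lambda$, the equivalence $n \in \I\lambda a \iff f(n) \in \I\lambda b$ follows immediately from the inductive hypotheses at all $\alpha < \lambda$ together with $\I\lambda a = \bigcap_{\alpha<\lambda}\I\alpha a$ and $\I\lambda b = \bigcap_{\alpha<\lambda}\I\alpha b$; the same holds with the roles of $\vec a, \vec b$ and $f, g$ exchanged. The statement $\rho(\vec a) = \rho(\vec b)$ then follows because the whole transfinite sequences $(\I\alpha a)$ and $(\I\alpha b)$ are matched up by $f$ (and by $g$), so one stabilizes at $\alpha$ exactly when the other does.

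The successor step is the heart of the argument. Assume part~(1) holds at $\alpha$; I want it at $\alpha+1$. Fix $n \in \I{\alpha+1}a$, so $n \in \I\alpha a$ and there are infinitely many $m \in \I\alpha a$ with $a_n \mathrel{S} a_m$. By inductive hypothesis $f(n) \in \I\alpha b$; and for each such $m$ we have $f(m) \in \I\alpha b$ (inductive hypothesis) together with $a_m \mathrel{S} b_{f(m)}$ and hence $a_n \mathrel{S} b_{f(m)}$ by transitivity. Since $f$ is injective these $f(m)$ are infinitely many distinct indices in $\I\alpha b$, so $f(n) \in \I{\alpha+1}b$. The converse direction — from $f(n) \in \I{\alpha+1}b$ back to $n \in \I{\alpha+1}a$ — is exactly the point where one must use \emph{both} injections $f$ and $g$ rather than $f$ alone: this is what I expect to be the main obstacle. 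Suppose $f(n) \in \I{\alpha+1}b$, so there are infinitely many $m' \in \I\alpha b$ with $b_{f(n)} \mathrel{S} b_{m'}$. Apply $g$: each $g(m') \in \I\alpha a$ (inductive hypothesis, since $m' \in \I\alpha b$) with $b_{m'} \mathrel{S} a_{g(m')}$, so $b_{f(n)} \mathrel{S} a_{g(m')}$. I also have $a_n \mathrel{S} b_{f(n)}$, hence $a_n \mathrel{S} a_{g(m')}$ for infinitely many distinct indices $g(m') \in \I\alpha a$ (distinctness by injectivity of $g$); thus $n \in \I{\alpha+1}a$. The argument with $a, b$ and $f, g$ swapped is identical, giving $n \in \I\alpha b \iff g(n) \in \I\alpha a$ at level $\alpha+1$.

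For part~(2), let $n \in \omega \setminus \II a$ and pick $\alpha < \omega_1$ with $n \in \I\alpha a \setminus \I{\alpha+1}a$. By part~(1), $f(n) \in \I\alpha b \setminus \I{\alpha+1}b$, so $f(n) \notin \II b$ as well. Now $a_n \mathrel{S} b_{f(n)}$ by hypothesis on $f$; I must produce the reverse inequality $b_{f(n)} \mathrel{S} a_n$. For this, use Lemma~\ref{Ibasic}(2): the set $\set{m \in \I\alpha a}{a_n \mathrel{E_S} a_m}$ equals $\set{m \in \omega}{a_n \mathrel{E_S} a_m}$ and is finite, and likewise for $b_{f(n)}$ inside $\I\alpha b$. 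Chasing $f$ and $g$ between these two finite sets of matching indices — $f$ maps the $a$-side into the $S$-above elements on the $b$-side, $g$ maps back — a counting/injectivity argument (in the style of Lemma~\ref{ebij}) forces $b_{f(n)} \mathrel{S} a_n$, hence $a_n \mathrel{E_S} b_{f(n)}$. The symmetric statement for $n \in \omega \setminus \II b$ follows by exchanging the roles of $\vec a, \vec b$ and $f, g$. This completes the induction and the proof.
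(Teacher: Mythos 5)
Your successor step for the forward implication of part (1)---which is the heart of the lemma---does not go through. You fix \( n \in \I{\alpha+1}a \), take the infinitely many \( m \in \I\alpha a \) with \( a_n \mathrel{S} a_m \), and pass to \( f(m) \in \I\alpha b \) with \( a_n \mathrel{S} b_{f(m)} \). But membership of \( f(n) \) in \( \I{\alpha+1}b \) requires infinitely many \( m' \in \I\alpha b \) with \( b_{f(n)} \mathrel{S} b_{m'} \), and from \( a_n \mathrel{S} b_{f(m)} \) you cannot conclude \( b_{f(n)} \mathrel{S} b_{f(m)} \): the only relation between \( a_n \) and \( b_{f(n)} \) that you have is \( a_n \mathrel{S} b_{f(n)} \), which points the wrong way. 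Your chain works only if \( b_{f(n)} \mathrel{S} a_n \). The paper splits into exactly these two cases: when \( b_{f(n)} \mathrel{S} a_n \) your argument is its Case 1, and when \( b_{f(n)} \not\mathrel{S} a_n \) it instead shows that \( k \mapsto (f \circ g)^k(f(n)) \) enumerates infinitely many indices of \( \I\alpha b \) that are \( S \)-above \( b_{f(n)} \), injectivity of this orbit being proved by noting that \( (g \circ f)^k(n) = n \) for some \( k>0 \) would force \( b_{f(n)} \mathrel{S} a_n \). This iteration idea is what your proposal is missing. Note also that you flagged the wrong direction as the main obstacle: the backward implication you carry out via \( g \) (dominating \( a_n \)) is correct but is the easy one, and the paper gets it essentially for free from the forward implication for the other map together with the downward closure of \( \I\alpha a \) (Lemma~\ref{Ibasic}(1)).

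Part (2) is likewise only gestured at, and the gesture as stated is circular: chasing \( f \) and \( g \) between the finite sets \( \set{m \in \omega}{a_n \mathrel{E_S} a_m} \) and \( \set{m \in \omega}{a_n \mathrel{E_S} b_m} \) presupposes that \( f \) maps the first into the second, but for \( m \) with \( a_m \mathrel{E_S} a_n \) you only know \( a_n \mathrel{S} b_{f(m)} \); that \( b_{f(m)} \mathrel{S} a_n \) is precisely what (2) asserts, so no cardinality comparison in the style of Lemma~\ref{ebij} can start. The paper instead reuses the Case 2 orbit: if \( b_{f(n)} \not\mathrel{S} a_n \), the injective sequence \( (g \circ f)^k(n) \) lies in \( \I\alpha a \) and consists of indices \( S \)-above \( a_n \), contradicting \( n \notin \I{\alpha+1}a \). (A finiteness argument can be made to work, but with the sets \( \set{m \in \I\alpha a}{a_n \mathrel{S} a_m} \) and \( \set{m \in \I\alpha b}{a_n \mathrel{S} b_m} \), between which \( f \) and \( g \) genuinely restrict to injections; then \( g \circ f \) permutes the former, and following the cycle through \( n \) yields \( b_{f(n)} \mathrel{S} a_n \) --- again the orbit idea, not a count of \( E_S \)-classes.) So both the flawed step in (1) and the gap in (2) call for the same missing ingredient.
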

\begin{proof}
To prove (1) we first notice that the right to left direction of each
equivalence follows from the left to right implication of the other
equivalence. Indeed if $f(n) \in \I\alpha b$ then $g(f(n)) \in \I\alpha a$.
Since by definition of $S^{\inj}$ we have $a_n \mathrel{S} b_{f(n)}
\mathrel{S} a_{g(f(n))}$, Lemma \ref{Ibasic}(1) implies $n \in \I\alpha a$.
Similarly one shows that if $g(n) \in \I\alpha a$ then $n \in \I\alpha
b$.

The proof of the forward implication of both equivalences is by induction on
$\alpha$. If $\alpha=0$ the statement is obvious, and if $\alpha$ is a limit
ordinal it suffices to apply the induction hypothesis. Thus it remains to
derive the two implications for $\alpha+1$ assuming they hold for $\alpha$.

Fix $n \in \I{\alpha+1}a$. Since $n \in \I\alpha a$ the induction hypothesis
implies that $f(n) \in \I\alpha b$ and to show that $f(n) \in \I{\alpha+1}b$
we need to find infinitely many $m \in \I\alpha b$ such that $b_{f(n)}
\mathrel{S} b_m$.
We have $a_n \mathrel{S}
b_{f(n)}$. The proof splits into two cases.

If $b_{f(n)} \mathrel{S} a_n$ then for every $m \in \I\alpha a$ with $a_n
\mathrel{S} a_m$ we have $b_{f(n)} \mathrel{S} a_n \mathrel{S} a_m
\mathrel{S} b_{f(m)}$. Since there are infinitely many of these $m$ (because
$n \in \I{\alpha+1}a$) and for each of them $f(m) \in \I\alpha b$ (by induction
hypothesis), by injectivity of \( f \) we have reached our goal.

Now suppose $b_{f(n)} \not \mathrel{S} a_n$. Notice that $(f \circ g)^k(f(n))
\in \I\alpha b$ for every $k$ (here we are using the induction hypothesis for
both $f$ and $g$). Moreover, $b_{f(n)}
\mathrel{S} b_{(f \circ g)^k(f(n))}$ for every $k$. Thus it suffices to show
that the map $k \mapsto (f \circ g)^k (f(n))$ is injective. Since $f \circ g$
is injective, it is enough to show that $(f \circ g)^k(f(n)) \neq f(n)$ for
every $k>0$ and, by injectivity of $f$, this is equivalent to $(g \circ
f)^k(n) \neq n$ for every $k>0$. But if $k>0$ we have $b_{f(n)} \mathrel{S}
a_{(g \circ f) (n)} \mathrel{S} \dots \mathrel{S} a_{(g \circ f)^k(n)}$. Thus
$(g \circ f)^k(n) = n$ implies $b_{f(n)} \mathrel{S} a_n$ against our
hypothesis.

We have thus shown the first implication. The other implication is proved by
switching the roles of $f$ and $g$.

To prove (2) assume that $n \in \I\alpha a \setminus \I{\alpha +1} a $ and
$a_n \mathrel{E_S} b_{f(n)}$ fails, i.e.\ $b_{f(n)} \not \mathrel{S} a_n$. We
showed in the second case of the preceding proof that under this hypothesis
the map $k \mapsto (g \circ f)^k(n)$ is injective. Since by (1) each $(g
\circ f)^k(n) \in \I\alpha a$ and $a_n
\mathrel{S} a_{(g \circ f)^k(n)}$, this shows $n \in \I{\alpha+1}a$. Again, the
second statement is proved symmetrically.
\end{proof}

Lemma~\ref{IEinj}(1) immediately implies the following corollary.

\begin{corollary}\label{cEinj}
Let \( \vec{a}, \vec{b} \in \pre{\omega}{X} \) be such that \( \vec{a}
\mathrel{E_{S^{\inj}}} \vec{b} \), and let \( f \colon \omega \to \omega \)
witness \( \vec{a} \mathrel{S^{\inj}} \vec{b} \). Then
\begin{enumerate}[(1)]
\item \( f (\I \alpha a \setminus \I {\alpha+1} a ) \subseteq \I \alpha b
    \setminus \I {\alpha+1} b \) for every \( \alpha < \rho(\vec{a}) \), so
    that, in particular, \( f (\omega \setminus \II a) \subseteq \omega
    \setminus \II b \);
\item \( f(\II a) \subseteq \II b \).
\end{enumerate}
\end{corollary}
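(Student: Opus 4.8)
The plan is to read off both items directly from Lemma~\ref{IEinj}(1). Since \( \vec{a} \mathrel{E_{S^{\inj}}} \vec{b} \), besides \( f \) there is also an injective \( g \colon \omega \to \omega \) witnessing \( \vec{b} \mathrel{S^{\inj}} \vec{a} \); hence Lemma~\ref{IEinj}(1) applies to the pair \( f,g \) and yields, for every \( \alpha < \omega_1 \) and every \( n \in \omega \), the equivalence \( n \in \I\alpha a \iff f(n) \in \I\alpha b \). Everything else is elementary bookkeeping with the sets \( \I\alpha a \) and \( \I\alpha b \).

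For part (1) I would fix \( \alpha < \rho(\vec{a}) \) and \( n \in \I\alpha a \setminus \I{\alpha+1}a \): from \( n \in \I\alpha a \) the equivalence above gives \( f(n) \in \I\alpha b \), and from \( n \notin \I{\alpha+1}a \) it gives \( f(n) \notin \I{\alpha+1}b \), so \( f(n) \in \I\alpha b \setminus \I{\alpha+1}b \), as required. For the ``in particular'' clause I would note that every \( n \in \omega \setminus \II a \) lies in \( \I\beta a \setminus \I{\beta+1}a \) for some \( \beta < \rho(\vec{a}) \): the least \( \alpha \) with \( n \notin \I\alpha a \) cannot be \( 0 \) (since \( \I 0 a = \omega \)) nor a limit (since \( \I\lambda a \) is the intersection of the earlier \( \I\alpha a \)), so it is a successor \( \beta+1 \), and \( \beta < \rho(\vec{a}) \) because \( \I{\beta+1}a \neq \I\beta a \) while the sequence \( (\I\alpha a)_\alpha \) is constant from \( \rho(\vec{a}) \) on. For such an \( n \) the first part of the argument gives \( f(n) \in \I\beta b \setminus \I{\beta+1}b \subseteq \omega \setminus \II b \) (using \( \II b \subseteq \I{\beta+1}b \)), whence \( f(\omega \setminus \II a) \subseteq \omega \setminus \II b \).

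For part (2): if \( n \in \II a = \bigcap_{\alpha < \omega_1} \I\alpha a \), then \( n \in \I\alpha a \) for every \( \alpha < \omega_1 \), hence \( f(n) \in \I\alpha b \) for every such \( \alpha \) by Lemma~\ref{IEinj}(1), i.e.\ \( f(n) \in \bigcap_{\alpha < \omega_1} \I\alpha b = \II b \).

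I do not expect any real obstacle: the whole content sits in Lemma~\ref{IEinj}(1), and the only step deserving an extra line is the observation that \( \omega \setminus \II a \) is the union of the ``successor layers'' \( \I\beta a \setminus \I{\beta+1}a \), \( \beta < \rho(\vec{a}) \), which is immediate from the structural properties of \( (\I\alpha a)_\alpha \) recorded right after its definition.
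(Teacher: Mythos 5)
Your proposal is correct and follows the same route as the paper: both items are read off directly from Lemma~\ref{IEinj}(1) (applicable because \( \vec{a} \mathrel{E_{S^{\inj}}} \vec{b} \) supplies the second witness \( g \)), with part (2) coming from \( \II a = \bigcap_{\alpha<\omega_1}\I\alpha a \) and \( \II b = \bigcap_{\alpha<\omega_1}\I\alpha b \). The extra observation you spell out — that \( \omega\setminus\II a \) is the union of the successor layers \( \I\beta a\setminus\I{\beta+1}a \) for \( \beta<\rho(\vec a) \) — is exactly the bookkeeping the paper leaves implicit.
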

\begin{proof}
(2) follows from \( \II a = \bigcap_{\alpha < \omega_1} \I \alpha a \) and \(
\II b = \bigcap_{\alpha < \omega_1} \I \alpha b \).
\end{proof}

The following lemma provides a combinatorial characterization of the relation
\( E_{S^\inj} \).

\begin{lemma}\label{Einj:char}
For every $\vec{a}, \vec{b} \in \pre\omega X$ we have $\vec{a}
\mathrel{E_{S^{\inj}} }\vec{b}$ if and only if the following conditions hold:
\begin{enumerate}[\quad(i)]
  \item $\rho (\vec{a}) = \rho (\vec{b})$;
  \item $|\set{m \in \omega}{a_n \mathrel{E_S} a_m}| = |\set{m \in
      \omega}{a_n \mathrel{E_S} b_m}|$ for all \( n \in \omega \setminus
      \II a \);
  \item $|\set{m \in \omega}{b_n \mathrel{E_S} b_m}| = |\set{m \in
      \omega}{b_n \mathrel{E_S} a_m}|$ for all \( n \in \omega \setminus
      \II b \);
  \item $\forall n\in \II a \ \exists m\in \II b\, ( a_n\mathrel{S} b_m)$
      and $\forall n\in \II b \ \exists m\in \II a \, (b_n\mathrel{S}
      a_m)$.
\end{enumerate}
\end{lemma}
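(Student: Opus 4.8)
The plan is to prove the two directions separately. The ``only if'' direction is a quick consequence of the lemmas already established. Assume $\vec{a} \mathrel{E_{S^{\inj}}} \vec{b}$ and fix injections $f,g \colon \omega \to \omega$ witnessing $\vec{a} \mathrel{S^{\inj}} \vec{b}$ and $\vec{b} \mathrel{S^{\inj}} \vec{a}$ respectively. Condition~(i) is exactly Lemma~\ref{IEinj}(1). For~(iv), given $n \in \II a$ we have $f(n) \in \II b$ by Corollary~\ref{cEinj}(2) and $a_n \mathrel{S} b_{f(n)}$ by the choice of $f$; the other clause follows symmetrically from $g$. For~(ii), fix $n \in \omega \setminus \II a$: using Lemma~\ref{IEinj}(2) and the $E_S$-invariance of $\II a$ and $\II b$ (the remark after Lemma~\ref{Ibasic}), one checks that $f$ maps $\set{m}{a_m \mathrel{E_S} a_n}$ injectively into $\set{m}{b_m \mathrel{E_S} a_n}$ while $g$ maps the latter injectively into the former; since the first set is finite by Lemma~\ref{Ibasic}(2), the two have the same cardinality. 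Condition~(iii) is symmetric.

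For the ``if'' direction, assume (i)--(iv). First I would record a consequence of~(iv): if $b_m \mathrel{E_S} a_n$ with $n \notin \II a$, then $m \notin \II b$ (and symmetrically). Indeed, were $m \in \II b$, clause~(iv) would give $n' \in \II a$ with $b_m \mathrel{S} a_{n'}$, whence $a_n \mathrel{S} a_{n'}$ and $n \in \II a$ by Lemma~\ref{Ibasic}(1), a contradiction. Next, for each $E_S$-class $C$ put $N_a(C) = \set{n \in \omega \setminus \II a}{a_n \in C}$ and $N_b(C) = \set{m \in \omega \setminus \II b}{b_m \in C}$. For $n \in N_a(C)$ the remark just made yields $\set{m}{b_m \mathrel{E_S} a_n} = N_b(C)$, so~(ii) (with Lemma~\ref{Ibasic}(2) ensuring these sets are finite) gives $|N_a(C)| = |N_b(C)|$; classes meeting $\omega \setminus \II b$ are treated the same way via~(iii). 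Hence $|N_a(C)| = |N_b(C)|$ for every $C$, and choosing bijections $N_a(C) \to N_b(C)$ class by class yields a bijection $f_0 \colon \omega \setminus \II a \to \omega \setminus \II b$ with $a_n \mathrel{E_S} b_{f_0(n)}$ throughout.

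It then remains to extend $f_0$ over $\II a$. If $\II a = \emptyset$ then $\II b = \emptyset$ too (otherwise~(iv) would produce an element of $\II a$) and $f_0$ already witnesses $\vec{a} \mathrel{S^{\inj}} \vec{b}$. Otherwise $\II a$ and $\II b$ are infinite by Lemma~\ref{Ibasic}, and for every $n \in \II a$ there are infinitely many $m \in \II b$ with $a_n \mathrel{S} b_m$: pick $m_0 \in \II b$ with $a_n \mathrel{S} b_{m_0}$ by~(iv), then apply Lemma~\ref{Ibasic}(3) to $\vec{b}$ to obtain infinitely many $m \in \II b$ with $b_{m_0} \mathrel{S} b_m$, hence $a_n \mathrel{S} b_m$. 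A greedy recursion along an enumeration of $\II a$ now produces an injection $f_1 \colon \II a \to \II b$ with $a_n \mathrel{S} b_{f_1(n)}$. Since the ranges of $f_0$ and $f_1$ are disjoint (lying in $\omega \setminus \II b$ and $\II b$ respectively), $f = f_0 \cup f_1$ is an injection of $\omega$ into $\omega$ with $a_n \mathrel{S} b_{f(n)}$ for all $n$, i.e.\ a witness of $\vec{a} \mathrel{S^{\inj}} \vec{b}$; the symmetric construction (using $f_0^{-1}$ on the non-stabilized part) produces a witness of $\vec{b} \mathrel{S^{\inj}} \vec{a}$, so $\vec{a} \mathrel{E_{S^{\inj}}} \vec{b}$.

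I expect the main obstacle to be the bookkeeping in the ``if'' direction showing $|N_a(C)| = |N_b(C)|$ for all $C$, i.e.\ that $\vec{a}$ and $\vec{b}$ distribute the $E_S$-classes identically over the non-stabilized indices. The delicate point is that (ii) and (iii) only compare cardinalities of sets of the form $\set{m}{a_n \mathrel{E_S} b_m}$ with no a priori control over whether the witnessing indices lie inside or outside $\II b$; it is precisely condition~(iv), via the small remark isolated at the start of the converse, that prevents such indices from ``leaking'' into $\II b$ and so makes the counting coherent. By contrast the stabilized part is soft: there one needs only $S$-relatedness (not $E_S$-relatedness) and Lemma~\ref{Ibasic}(3) supplies an inexhaustible reservoir of targets, so a greedy argument suffices.
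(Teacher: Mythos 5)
Your proof is correct and follows essentially the same route as the paper's: the forward direction via Lemma~\ref{IEinj} and Corollary~\ref{cEinj}, and the converse by matching the $E_S$-classes outside the stable sets using (ii)/(iii) together with the leak-prevention consequence of (iv), then extending greedily over \( \II a \) via (iv) and Lemma~\ref{Ibasic}(3). One small point of justification: in your forward argument for (ii), to apply Lemma~\ref{IEinj}(2) to $g$ you must first know that every $m$ with $b_m \mathrel{E_S} a_n$ lies outside \( \II b \), which needs Corollary~\ref{cEinj}(1) (or your own remark, since (iv) is already available at that stage) in addition to the $E_S$-invariance of the sets \( \I\alpha b \) — exactly as in the paper's proof.
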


\begin{proof}
We first assume $\vec{a} \mathrel{E_{S^{\inj}} }\vec{b}$ and fix the
injective functions $f,g \colon \omega \to \omega$ witnessing respectively
$\vec{a} \mathrel{S^{\inj}} \vec{b}$ and $\vec{b} \mathrel{S^{\inj}}
\vec{a}$, so that we have the same notation of Lemma~\ref{IEinj} and
Corollary~\ref{cEinj}. Then (i) follows directly from Lemma~\ref{IEinj}(1).
Now assume \( n \in \omega \setminus \II a \). By Lemma~\ref{Ibasic}(2), we
have \( \set{m \in \omega}{a_m \mathrel{E_S} a_n} \subseteq \omega \setminus
\II a\), so that $f$ maps injectively $\set{m \in \omega}{a_n \mathrel{E_S}
a_m}$ into $\set{m \in \omega}{a_n \mathrel{E_S} b_m} \cap (\omega \setminus
\II b)$ by Lemma~\ref{IEinj}(2) and Corollary~\ref{cEinj}(1). Since this last
set is nonempty, by Lemma~\ref{Ibasic}(2) again we also have \( \set{m \in
\omega}{a_n \mathrel{E_S} b_m} \subseteq \omega \setminus \II b\), and by
Lemma~\ref{IEinj}(2) $g$ maps injectively $\set{m \in \omega}{a_n
\mathrel{E_S} b_m}$ into $\set{m \in \omega}{a_n \mathrel{E_S} a_m}$. This
shows (ii), and (iii) is obtained similarly. Finally, $f$ maps $\II a$ into
$\II b$ and $g$ maps $\II b$ into $\II a$ by Corollary~\ref{cEinj}(2), which
in particular implies (iv).

For the other direction assume (i)--(iv) hold. To define $f$ witnessing
$\vec{a} \mathrel{S^{\inj}} \vec{b}$, we start pasting together bijections
between each $\set{m \in \omega}{a_n \mathrel{E_S} a_m}$ and $\set{m \in
\omega}{a_n \mathrel{E_S} b_m}$ whenever $n \in \omega \setminus \II a$ is
such that $\forall m<n\, \neg (a_n \mathrel{E_S} a_m)$. Notice that for each
such \( n \), the set $\set{m \in \omega}{a_n \mathrel{E_S} b_m}$ is always
contained in \( \omega \setminus \II b \): if not, there would be \( m \in
\II b \) with \( a_n \mathrel{E_{S}} b_m \); but then by (iv) there would be
\( k \in \II a \) with \( b_m \mathrel{S} a_k \), so that also \( n \in \II a
\) by \( a_n \mathrel{S} b_m \mathrel{S} a_k \) and Lemma~\ref{Ibasic}(1).
Therefore we have constructed \( f
\restriction (\omega \setminus \II a) \) with range in \( \omega \setminus
\II b \): this leaves us with the task of defining $f$ on $\II a$, with range
in $\II b$. This can be done recursively as follows. Assume $n \in \II a$ and
we already defined $f(\ell)$ for all $\ell \in \II a$ with $\ell<n$. By (iv)
there exists $m \in \II b$ such that $a_n \mathrel{S} b_m$. By Lemma
\ref{Ibasic}(3) there are infinitely many $m' \in \II b$ with $b_m
\mathrel{S} b_{m'}$ and hence $a_n \mathrel{S} b_{m'}$. If $m'$ is the least
such which is not yet in the image of $f$ we can set $f(n)=m'$ preserving
injectivity. The definition of $g$ witnessing $\vec{b} \mathrel{S^{\inj}}
\vec{a}$ is symmetric.
\end{proof}

The following theorem shows in particular that if $S$ is Borel then
$S^{\inj}$ is far from being a complete analytic quasi-order.

\begin{theorem} \label{th:S^inj not complete}
If $S$ is a Borel quasi-order on the standard Borel space $X$ then for every
$\vec{a} \in \pre\omega X$ the equivalence class $\set{\vec{b} \in \pre\omega
X}{\vec{a} \mathrel{E_{S^{\inj}} }\vec{b}}$ is Borel.
\end{theorem}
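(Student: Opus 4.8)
The plan is to read off the result from the combinatorial characterisation of $E_{S^{\inj}}$ furnished by Lemma~\ref{Einj:char}. Fix once and for all $\vec a \in \pre\omega X$ and put $\alpha_0 = \rho(\vec a)$, so that $\II a = \I{\alpha_0}a$ and, by Lemma~\ref{Ibasic}(2), for every $n \in \omega \setminus \II a$ the set $\set{m}{a_n \mathrel{E_S} a_m}$ is finite, of size $c_n \in \omega$ say. Thus $\II a$, $\omega \setminus \II a$ and the numbers $c_n$ are all \emph{fixed} data attached to $\vec a$. By Lemma~\ref{Einj:char} the class $\set{\vec b \in \pre\omega X}{\vec a \mathrel{E_{S^{\inj}}} \vec b}$ is exactly the set of $\vec b$ satisfying conditions (i)--(iv) there, so it suffices to show that each of those four conditions defines a Borel subset of $\pre\omega X$.

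First I would run an auxiliary transfinite induction on $\beta < \omega_1$ to show that the set $R_\beta = \set{(\vec b, n) \in \pre\omega X \times \omega}{n \in \I\beta b}$ is Borel. The base case is $R_0 = \pre\omega X \times \omega$ (as $\I 0 b = \omega$); at a limit $\lambda$ we have $R_\lambda = \bigcap_{\beta < \lambda} R_\beta$, a countable intersection of Borel sets; and for a successor, $n \in \I{\beta+1}b$ iff $n \in \I\beta b$ and $\forall k\, \exists m\,(m > k \wedge m \in \I\beta b \wedge b_n \mathrel{S} b_m)$, which is Borel since $S$ (hence $E_S$) is Borel, the coordinate maps $\vec b \mapsto b_n$ are continuous, and $R_\beta$ is Borel by inductive hypothesis. (The Borel rank of $R_\beta$ grows with $\beta$, but $R_\beta$ is Borel at each countable stage, and I shall only use the \emph{fixed} ordinals $\beta \leq \alpha_0 + 1$, so no uniformity over $\omega_1$ is needed.)

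Next I would observe that condition (i), namely $\rho(\vec b) = \alpha_0$, is itself Borel: it amounts to $\I{\alpha_0+1}b = \I{\alpha_0}b$ together with $\I{\beta+1}b \neq \I\beta b$ for each of the countably many $\beta < \alpha_0$, and by the previous step each of these is a Borel condition on $\vec b$. Call this Borel set $A$. The key point is that on $A$ one has $\II b = \I{\alpha_0}b$, so within $A$ the possibly ill-behaved object $\II b$ — which in general need not be Borel in $\vec b$, being the largest $K \subseteq \omega$ with $n \in K \Rightarrow \exists^\infty m \in K\,(b_n \mathrel{S} b_m)$, and hence naturally only $\boldsymbol{\Pi}^1_1$ — may be replaced throughout by the Borel object $\I{\alpha_0}b$ without changing the intersection with $A$. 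After this replacement: condition (ii) becomes the countable conjunction over $n \in \omega \setminus \II a$ of "$|\set{m}{a_n \mathrel{E_S} b_m}| = c_n$", each conjunct being Borel in $\vec b$ (it asserts the existence of a $c_n$-element set of $m$ with $a_n \mathrel{E_S} b_m$ and the non-existence of a further one); condition (iii) becomes $\forall n\,(n \notin \I{\alpha_0}b \to |\set{m}{b_n \mathrel{E_S} b_m}| = |\set{m}{b_n \mathrel{E_S} a_m}|)$, and for each fixed $n$ this implication is Borel, since its antecedent is Borel by the induction above and the equality of two cardinals in $\omega \cup \{\aleph_0\}$ is the countable union over $k \in \omega$ of the Borel conditions "both sides equal $k$" together with the Borel condition "both sides are infinite"; and condition (iv) becomes $\forall n \in \II a\, \exists m\,(m \in \I{\alpha_0}b \wedge a_n \mathrel{S} b_m)$ and $\forall n\,(n \in \I{\alpha_0}b \to \exists m \in \II a\, b_n \mathrel{S} a_m)$, which is Borel because $S$ and $R_{\alpha_0}$ are Borel and every quantifier here ranges over one of the countable sets $\II a$ or $\omega$.

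Putting this together, the equivalence class of $\vec a$ is the intersection of finitely many Borel subsets of $\pre\omega X$, hence Borel. I expect the main obstacle to be exactly the potential non-Borelness of the assignment $\vec b \mapsto \II b$; the argument gets around it by noting that inside the $E_{S^{\inj}}$-class of $\vec a$ the rank $\rho(\vec b)$ is pinned to the fixed countable ordinal $\rho(\vec a)$, where $\II b$ coincides with $\I{\rho(\vec a)}b$, and the latter \emph{is} Borel in $\vec b$.
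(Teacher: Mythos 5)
Your proposal is correct and follows essentially the same route as the paper's proof: fix $\rho(\vec a)$, use the characterization of Lemma~\ref{Einj:char}, observe that computing $\I\alpha b$ for the fixed countable ordinals $\alpha\leq\rho(\vec a)+1$ is Borel in $\vec b$, and check conditions (i)--(iv) with $\II b$ replaced by $\I{\rho(\vec a)}b$ once (i) is verified. You merely spell out in more detail (the induction on $R_\beta$ and the cardinality comparisons) steps the paper treats as routine.
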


\begin{proof}
Fix $\vec{a} \in \pre\omega X$ and let $\rho=\rho(\vec{a})$. We describe a
Borel procedure, based on the characterization of Lemma \ref{Einj:char}, for
checking, given any $\vec{b} \in \pre\omega X$, whether $\vec{a}
\mathrel{E_{S^{\inj}} }\vec{b}$.

The procedure starts by computing $\I\alpha b$ for every $\alpha \leq \rho+1$
(since $\rho$ is fixed, this is Borel). If $\I\alpha b = \I{\alpha+1}b$ for
some $\alpha<\rho$, or if $\I\rho b \neq \I{\rho+1}b$ then the procedure
gives a negative answer (because (i) of Lemma \ref{Einj:char} fails).

Otherwise the procedure looks at every $\alpha < \rho$ and $n \in \I\alpha a
\setminus \I{\alpha+1}a$, computes the (finite) cardinality of $\set{m \in
\omega}{a_n \mathrel{E_S} a_m} = \set{m \in \I\alpha a}{a_n \mathrel{E_S}
a_m}$ (see Lemma~\ref{Ibasic}(2)), and checks whether it coincides with the
cardinality of $\set{m \in \omega}{a_n \mathrel{E_S} b_m}$. If this fails (so
that (ii) of Lemma \ref{Einj:char} is not true) the procedure gives a
negative answer. Otherwise it performs a similar operation reversing the
roles of $\vec{a}$ and $\vec{b}$, and checks whether (iii) holds.

If everything works out, the procedure checks whether
\[
\forall n\in \II a \ \exists m\in \II b \, (a_n\mathrel{S} b_m) \land
\forall n\in \II b \exists m\in \II a \, (b_n\mathrel{S} a_m)
\]
(a Borel condition) and gives the final answer.
\end{proof}

In spite of Theorem \ref{th:S^inj not complete}, $S$ can be a Borel
quasi-order without $\mathrel{E_{S^{\inj}}}$ being Borel, as Propositions
\ref{prop:Q} and \ref{prop:P(omega)} show. However if we restrict \( S^{\inj}
\) to the collection \( (\pre{\omega}{X})_\alpha \) of those \( \vec{a} \in
\pre{\omega}{X} \) such that \( \rho(\vec{a}) < \alpha \) for some fixed \(
\alpha < \omega_1 \), then the proof Theorem \ref{th:S^inj not complete}
shows that the equivalence relation associated to \( {S^{\inj}_\alpha} =
{S^{\inj} \restriction (\pre{\omega}{X})_\alpha} \) is Borel (notice that
$(\pre{\omega}{X})_\alpha$ is Borel and $E_{S^\inj}$-invariant by condition
(i) of Lemma~\ref{Einj:char}). However even more is true, as we can show that
these Borel equivalence relations have a Borel upper bound.

\begin{proposition}\label{prop:restrictedpreceqinj}
Let \( S \) be a Borel quasi-order on a standard Borel space $X$. There
exists a Borel quasi-order $R$ on a standard Borel space $Y$ such that for
every \( \alpha < \omega_1 \), \( E_{S^{\inj}_\alpha} \leq_B E_{R^\cf}\). In
particular $E_{S^{\inj}_\alpha}$ is Borel.
\end{proposition}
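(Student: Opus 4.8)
The plan is to realize each $E_{S^{\inj}_\alpha}$ uniformly as a Borel reduct of a single $R^\cf$ by encoding the combinatorial characterization of Lemma~\ref{Einj:char} into a countable-branching tree-like structure. The key observation is that by Lemma~\ref{Einj:char} the relation $E_{S^{\inj}}$ on a sequence $\vec{a}$ is completely determined by finitely much data at each level $\alpha < \rho(\vec a)$: namely, for each $E_S$-class $C$ whose representatives land in $\I\alpha a \setminus \I{\alpha+1}a$, the (finite) number of indices $n$ with $a_n \in C$, together with a representative of $C$; plus, at the top, the $E_S$-equivalence type of the ``tail'' $\II a$ under $S$, which by Lemma~\ref{Ibasic}(3) is an $S^\cf$-type of a sequence all of whose $S$-upper cones (inside the tail) are infinite. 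So the isomorphism type of $\vec a$ under $E_{S^{\inj}}$ is coded by: (a) the ordinal $\rho(\vec a)$; (b) for each $\alpha < \rho(\vec a)$, a finite multiset of pairs $(\,[x]_{E_S}, k\,)$ with $k \geq 1$; (c) the $E_{S^\cf}$-type of $\II a$ (regarded as an element of $\pre\omega X$).

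First I would define the target space $Y$ and quasi-order $R$. Working with sequences $\vec a$ of rank $<\alpha$ for a fixed $\alpha$, one can code $\vec a$ as a well-founded tree on $\omega$ of height $<\alpha$ decorated by elements of $X$, where the node at level $\beta$ carries, roughly, the finite set of ``new'' $E_S$-classes appearing at that level together with their multiplicities, and the whole structure at the final level records $\II a$. The natural ambient object is thus something like $Y = \pre{<\omega_1}{(\text{finite sequences from } X)} \times \pre\omega X$, or more concretely a space of countable labelled trees; the quasi-order $R$ should compare two such codes by requiring a level-preserving matching that respects $E_S$ on the labels at each level and, on the tails, is $S^\cf$-comparison. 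To keep $R$ Borel one restricts the tree part to have height below a fixed countable ordinal only \emph{within each} $(\pre\omega X)_\alpha$, but the single $R$ witnessing the Proposition will be built so that $R^\cf$ already absorbs the ``for each $\alpha$'' quantifier — this is where Rosendal's jump does the work, because $S^\cf \leq_B R^\cf$ once $S \leq_B R$, and the $\cf$-jump lets us take disjoint unions/suprema over all the level-data at once. Concretely, I expect $R$ to be (a Borel modification of) the disjoint sum of $S$ with $(\omega,=)$ and with codes for the finite-multiset-of-classes data, so that $R^\cf$ can simultaneously express ``the same finite class-data appears at every level'' and ``the tails are $S^\cf$-equivalent''.

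The reduction $f \colon \pre\omega X \to Y$ sends $\vec a$ to its code, and the proof that $\vec a \mathrel{E_{S^{\inj}_\alpha}} \vec b \iff f(\vec a) \mathrel{E_{R^\cf}} f(\vec b)$ is just an unwinding of Lemma~\ref{Einj:char}: condition (i) becomes equality of the ordinal/height part, conditions (ii)--(iii) become the level-by-level matching of finite class-multiplicities (here one uses Lemma~\ref{Ibasic}(2) to know these are finite and computed inside $\I\alpha a$), and condition (iv) becomes $E_{S^\cf}$-equivalence of the tails, using Lemma~\ref{Ibasic}(3) to see that the tail sequences are of the special ``all cones infinite'' form on which $E_{S^\cf}$ is exactly mutual $\cf$-domination. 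Borelness of $f$ on $(\pre\omega X)_\alpha$ follows because $\rho$ is bounded there, so all the sets $\I\beta a$ and the cardinality computations are Borel, exactly as in the proof of Theorem~\ref{th:S^inj not complete}. The ``in particular'' clause is then immediate since $R^\cf$ is Borel whenever $R$ is (as noted after Definition~\ref{defin:rosendaljump}), and $E_{R^\cf}$ is a Borel equivalence relation.

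The main obstacle, and the step requiring the most care, is packaging the level-indexed data into a \emph{single} Borel quasi-order $R$ whose $\cf$-jump captures the whole of Lemma~\ref{Einj:char} uniformly in $\alpha$ — in particular handling the interaction between the well-founded ``scaffolding'' (the $\I\alpha$-filtration, which has unbounded countable rank as $\alpha$ varies) and the genuinely analytic-looking tail condition (iv). The trick will be that the scaffolding, once we only ask for the $E_{S^\cf}$-type rather than a rank function, collapses to finite combinatorial data repeated across levels, so that $R^\cf$ — which is designed precisely to detect ``the same finite pattern occurs cofinally/everywhere'' — suffices; verifying that this collapse is faithful (i.e.\ that no information distinguishing $E_{S^{\inj}}$-classes is lost) is the crux, and it is exactly what Lemmas~\ref{Ibasic} and~\ref{Einj:char} were set up to deliver.
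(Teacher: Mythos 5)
Your proposal is correct and takes essentially the paper's route: the paper sets $R = (\omega,{=})\oplus(X,S)\oplus(X\times\omega, E_S\times{=})$ and maps $\vec a \in (\pre\omega X)_\alpha$ to the sequence whose first entry codes $\rho(\vec a)$ via a fixed injection $\alpha\to\omega$, whose entries $a_n$ with $n\in \II a$ go into the $(X,S)$ summand, and whose entries for $n\notin \II a$ are the pairs $(a_n,|\{m \mid a_n \mathrel{E_S} a_m\}|)$ in the third summand, so that $E_{R^\cf}$ reproduces conditions (i)--(iv) of Lemma~\ref{Einj:char} exactly as you outline, with Borelness on $(\pre\omega X)_\alpha$ because $\rho$ is bounded there. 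The only real difference is that your level-indexed tree scaffolding is superfluous: Lemma~\ref{Einj:char} requires only the single ordinal $\rho(\vec a)$ and the per-class multiplicities, not the level of the filtration at which each class disappears, so the plain direct sum you arrive at in your second paragraph is already the whole construction.
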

\begin{proof}
Let $(Y,R)$ be the direct sum
\begin{equation} \label{eq:(Y,R)}
(\omega,{=}) \oplus (X,{S}) \oplus (X \times \omega, E_{S} \times {=}).
\end{equation}
Given $\alpha < \omega_1$, let $i \colon \alpha \to \omega$ be an injection.

For any $\vec{a} = (a_n)_{n \in \omega} \in (\pre{\omega}{X})_\alpha$ we
define $f(\vec{a}) = (a'_n)_{n \in \omega} \in \pre{\omega}{Y}$ as follows.
We let $a'_0 = i(\rho(\vec{a})) \in \omega$ and $a'_{n+1}$ is defined
according to whether $n \in I^{\vec{a}}$. If $n \in I^{\vec{a}}$ we let
$a'_{n+1} = a_n \in X$. If $n \notin I^{\vec{a}}$ we let $a'_{n+1} = (a_n,k)
\in X \times \omega$ where $k=|\{m \in \omega \mid a_n \mathrel{E_S} a_m\}|$
(this cardinality is finite by Lemma~\ref{Ibasic}(2)).

Since $\alpha$ is fixed, we can recover in a Borel-in-$\vec{a}$ way
$i(\rho(\vec{a}))$ and $I^{\vec{a}}$ and hence $f \colon
(\pre{\omega}{X})_\alpha \to \pre{\omega}{Y}$ is Borel.

Applying Lemma~\ref{Einj:char}, it is straightforward to check that $f$
witnesses \( E_{S^{\inj}_\alpha} \leq_B E_{R^\cf}\).
\end{proof}

\begin{corollary} \label{cor:restrictedP(omega)}
For every positive natural number \( N \), the equivalence relation \(
E_{(\pow_{<N}(\omega), \subseteq)^{\inj}} \) is Borel. Therefore, for $N>2$,
\( (\pow_{<N}(\omega), \subseteq)^{\inj} \) is an analytic non-Borel
quasi-order whose associated equivalence relation is Borel.
\end{corollary}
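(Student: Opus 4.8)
The plan is to derive this from Proposition~\ref{prop:restrictedpreceqinj} by showing that the rank $\rho(\vec a)$ is bounded by $N$ for every $\vec a \in \pre{\omega}{(\pow_{<N}(\omega))}$. Once this is established, $(\pow_{<N}(\omega), \subseteq)^{\inj}$ coincides with its restriction $S^{\inj}_{N+1}$ (in the notation introduced just before Proposition~\ref{prop:restrictedpreceqinj}), so $E_{(\pow_{<N}(\omega), \subseteq)^{\inj}} = E_{S^{\inj}_{N+1}}$ is Borel. The final sentence is then immediate: for $N>2$ the quasi-order $S^{\inj}$ is analytic since $S$ is Borel, is non-Borel by Remark~\ref{rmk:restrictedP(omega)}, and has Borel associated equivalence relation by what we just proved.

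To bound $\rho$, I would exploit that $(\pow_{<N}(\omega), \subseteq)$ has no $\subseteq$-chains of more than $N$ elements. Since each $\I\alpha a$ is invariant under $E_S$, which here is equality, whether $n\in\I\alpha a$ depends only on the value $a_n$; hence there is a $\subseteq$-decreasing family $V_\alpha \subseteq \pow_{<N}(\omega)$ with $\I\alpha a = \{n \mid a_n \in V_\alpha\}$, with $V_0 = \{a_n \mid n \in \omega\}$, and with $s \in V_{\alpha+1}$ if and only if $s \in V_\alpha$ and $\{m \mid a_m \in V_\alpha,\ s \subseteq a_m\}$ is infinite. Say that $s$ is \emph{discarded at stage $\alpha$} if $s \in V_\alpha \setminus V_{\alpha+1}$. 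Two preliminary observations: first, any value occurring infinitely often in $\vec a$ belongs to every $V_\alpha$ (the positions carrying it already witness the infinitude), so it is never discarded; second, if $s$ is discarded at all, then every value $t \supseteq s$ occurring in $\vec a$ occurs only finitely often (otherwise such a $t$ would lie in every $V_\alpha$ and keep $s$ from being discarded), and therefore, for every $\beta$, the set $\{m \mid a_m \in V_\beta,\ s \subseteq a_m\}$ is infinite if and only if $\{t \in V_\beta \mid t \supseteq s\}$ is infinite.

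The key claim is: if $s$ is discarded at stage $\beta+1$, then some $s' \supsetneq s$ is discarded at stage $\beta$. Indeed, using the last equivalence, $s \notin V_{\beta+2}$ makes $\{t \in V_{\beta+1} \mid t \supseteq s\}$ finite while $s \in V_{\beta+1}$ makes $\{t \in V_\beta \mid t \supseteq s\}$ infinite; since $V_{\beta+1} \subseteq V_\beta$, some $t$ lies in $V_\beta \setminus V_{\beta+1}$ with $t \supseteq s$, and $t \neq s$ because $s \in V_{\beta+1}$, so $t$ properly extends $s$. Iterating this down to stage $0$, whenever $s$ is discarded at stage $\alpha$ one obtains a strictly $\subseteq$-increasing chain $s = s_\alpha \subsetneq s_{\alpha-1} \subsetneq \dots \subsetneq s_0$ in $\pow_{<N}(\omega)$ (with $s_j$ discarded at stage $j$); as such a chain has at most $N$ members, $\alpha \leq N-1$. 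Hence no value is discarded at any stage $\geq N$, i.e.\ $V_\alpha = V_{\alpha+1}$ for all $\alpha \geq N$, which gives $\rho(\vec a) \leq N$.

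The main obstacle is the chain-building claim, and within it the need to rule out interference from values occurring infinitely often in $\vec a$; this is exactly what the first preliminary observation takes care of, since such values are never discarded and so never enter the ``only finitely many supersets survive'' bookkeeping. The remaining ingredients — the passage through Proposition~\ref{prop:restrictedpreceqinj} and the non-Borelness quoted from Remark~\ref{rmk:restrictedP(omega)} — are routine.
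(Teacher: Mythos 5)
Your proof is correct and takes essentially the same route as the paper: the paper's entire proof is the one-line observation that \( \pre{\omega}{(\pow_{<N}(\omega))} \subseteq ( \pre{\omega}{(\pow(\omega))})_{N+1} \) — exactly the rank bound \( \rho(\vec a) \leq N \) you establish — combined with Proposition~\ref{prop:restrictedpreceqinj} and Remark~\ref{rmk:restrictedP(omega)}. The only difference is that the paper asserts this inclusion without proof, whereas you supply a detailed and correct argument for it (the \(E_S\)-invariance reduction to the value sets \(V_\alpha\), the two preliminary observations, and the chain-building claim all check out).
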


\begin{proof}
This follows from \( \pre{\omega}{(\pow_{<N}(\omega))} \subseteq (
\pre{\omega}{(\pow(\omega))})_{N+1} \).
\end{proof}

Corollary~\ref{cor:restrictedP(omega)} should be contrasted with the fact
that  \( E_{(\pow_{< \omega}(\omega), {\subseteq})^{\inj}} \) is already a \(
\boldsymbol{\Sigma}^1_1 \)-complete subset of \( \pre{\omega}{(\pow_{<
\omega}(\omega))} \times \pre{\omega}{(\pow_{< \omega}(\omega))} \) by
Remark~\ref{rmk:restrictedP(omega)}.

It may also be interesting to notice that there exist Borel quasi-orders \( S
\) such that \( E_{S^{\cf}} \sim_B E_{S^{\inj}} \) even though \( S^{\cf} <_B
S^{\inj} \) (see the subsequent Proposition~\ref{prop:associatedeqrel}(2)).
In what follows $\subseteq^{\inj}_\alpha$ and $\subseteq^{\cf}_\alpha$ stand
for $((\pre{\omega} \pow(\omega))_\alpha, {\subseteq^{\inj}})$ and
$((\pre{\omega} \pow(\omega))_\alpha, {\subseteq^{\cf}})$ respectively. If
$\alpha \geq 4$, the quasi-order \( \subseteq_\alpha^{\inj} \) is proper
analytic by Remark~\ref{rmk:restrictedP(omega)} and the observation in the
proof of Corollary \ref{cor:restrictedP(omega)}, and hence
${\subseteq_\alpha^\cf} <_B {\subseteq_\alpha^\inj}$. Similarly,
Remark~\ref{rmk:restrictedP(omega)} implies that \( (\pow_{< N}(\omega),
{\subseteq})^\cf <_B (\pow_{< N}(\omega), {\subseteq})^\inj \) for all \( 2 <
N < \omega \). To show that $E_{\subseteq^{\cf}_\alpha} \sim_B
E_{\subseteq^{\inj}_\alpha}$ we use the following general fact.

\begin{fact}\label{fact1}
There is a Borel function $f \colon  \pre{\omega}X \to (\pre{\omega}X)_1$ such that
$\vec{a} \mathrel{E_{S^\cf}} f( \vec a )$ for every $\vec{a} \in
\pre{\omega}X$.
\end{fact}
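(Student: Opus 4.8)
The plan is to take the simplest possible $f$: duplicate every coordinate of $\vec{a}$ infinitely often, so that duplication alone forces membership in $(\pre{\omega}X)_1$ without ever having to inspect which coordinates are $E_S$-related. Concretely, fix once and for all a surjection $\pi \colon \omega \to \omega$ each of whose fibres $\pi^{-1}(n)$ is infinite (for instance the first projection of some bijection $\omega \isom \omega \times \omega$), and for $\vec{a} = (a_n)_{n \in \omega} \in \pre{\omega}X$ define $f(\vec{a}) = \vec{b} = (b_k)_{k \in \omega}$ by $b_k = a_{\pi(k)}$. This definition does not involve $S$ at all, and $f$ is plainly continuous (the preimage of a basic clopen set is a basic clopen set), hence Borel; so there is nothing to prove for that part of the statement.

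Next I would check the two asserted properties. First, $f(\vec{a}) \in (\pre{\omega}X)_1$, that is $\rho(\vec{b}) = 0$, equivalently $\I{1}{b} = \I{0}{b} = \omega$. Indeed, fix $k \in \omega$: since $\pi^{-1}(\pi(k))$ is infinite and $b_{k'} = a_{\pi(k)} = b_k$ for every $k' \in \pi^{-1}(\pi(k))$, reflexivity of $S$ yields $\exists^{\infty} k' \, (b_k \mathrel{S} b_{k'})$, whence $k \in \I{1}{b}$. Second, $\vec{a} \mathrel{E_{S^{\cf}}} \vec{b}$ is immediate from reflexivity of $S$: for $\vec{b} \mathrel{S^{\cf}} \vec{a}$ note that $b_k = a_{\pi(k)} \mathrel{S} a_{\pi(k)}$ for every $k$, and for $\vec{a} \mathrel{S^{\cf}} \vec{b}$, given $n$ pick any $k \in \pi^{-1}(n)$ so that $a_n = b_k \mathrel{S} b_k$.

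I do not anticipate any real obstacle here: the only point requiring a sliver of care is to insist that $\pi$ be surjective with \emph{infinite} fibres, since a finite (in particular, singleton) fibre would leave the corresponding coordinate of $\vec{b}$ outside $\I{1}{b}$ and thereby destroy membership in $(\pre{\omega}X)_1$. Everything else is a one-line unfolding of the definitions of $S^{\cf}$, of the sets $\I{\alpha}{b}$, and of $\rho$.
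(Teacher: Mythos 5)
Your proof is correct and follows essentially the same route as the paper, which simply takes $f(\vec a)$ to be a sequence repeating every element of $\vec a$ infinitely many times; your use of a surjection $\pi$ with infinite fibres is just an explicit implementation of that, together with the (routine but correctly carried out) verifications that $\rho(f(\vec a))=0$ and $\vec a \mathrel{E_{S^{\cf}}} f(\vec a)$.
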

\begin{proof}
Define $f$ to be a Borel function such that $f( \vec a )$ is a sequence
repeating every element of $\vec{a}$ infinitely many times.
\end{proof}

\begin{proposition}\label{prop:associatedeqrel}
\begin{enumerate}[(1)]
\item For every $0<\alpha < \omega_1$ we have
\[
E_{\subseteq^{\cf}_\alpha} \sim_B E_{\subseteq^{\inj}_\alpha} \sim_B E_{\subseteq^{\cf}};
\]
\item for every \( 2 \leq N < \omega \)
\[
E_{(\pow_{<N}(\omega), \subseteq)^{\cf}} \sim_B E_{(\pow_{<N}(\omega), \subseteq)^{\inj}}.
\]
\end{enumerate}
\end{proposition}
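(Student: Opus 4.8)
The plan is to push everything onto sequences of $\rho$-rank $0$, on which the two jumps induce the same associated equivalence relation, and to transport general sequences there via Fact~\ref{fact1}. Indeed, if $\vec a, \vec b \in \pre{\omega}{X}$ satisfy $\II a = \II b = \omega$ (equivalently $\rho(\vec a) = \rho(\vec b) = 0$), then in Lemma~\ref{Einj:char} clauses (ii) and (iii) are vacuous, clause (i) is automatic, and clause (iv) reads precisely $\vec a \mathrel{S^{\cf}} \vec b$ and $\vec b \mathrel{S^{\cf}} \vec a$; hence for \emph{every} quasi-order $S$ we have $E_{S^{\inj}} \restriction (\pre{\omega}{X})_1 = E_{S^{\cf}} \restriction (\pre{\omega}{X})_1$, that is $E_{S^{\inj}_1} = E_{S^{\cf}_1}$. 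Moreover the Borel map $f$ of Fact~\ref{fact1} takes values in $(\pre{\omega}{X})_1$ and satisfies $\vec a \mathrel{E_{S^{\cf}}} f(\vec a)$, so, $E_{S^{\cf}}$ being an equivalence relation, $f$ witnesses $E_{S^{\cf}} \leq_B E_{S^{\cf}} \restriction (\pre{\omega}{X})_1 = E_{S^{\cf}_1}$.

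For part (1) take $X = \pow(\omega)$ and $S = {\subseteq}$. Since $\alpha \geq 1$ we have $(\pre{\omega}{X})_1 \subseteq (\pre{\omega}{X})_\alpha$, so the map $f$ above also witnesses $E_{\subseteq^{\cf}} \leq_B E_{\subseteq^{\cf}_\alpha}$, while the converse reduction is the Borel inclusion $(\pre{\omega}{X})_\alpha \hookrightarrow \pre{\omega}{X}$; thus $E_{\subseteq^{\cf}_\alpha} \sim_B E_{\subseteq^{\cf}}$. Combining the facts of the previous paragraph with the Borel inclusion $(\pre{\omega}{X})_1 \hookrightarrow (\pre{\omega}{X})_\alpha$ gives
\[
E_{\subseteq^{\cf}} \leq_B E_{\subseteq^{\cf}_1} = E_{\subseteq^{\inj}_1} \leq_B E_{\subseteq^{\inj}_\alpha}.
\]
For the reverse inequality I would invoke Proposition~\ref{prop:restrictedpreceqinj}: by its proof $E_{\subseteq^{\inj}_\alpha} \leq_B E_{R^{\cf}}$, where $R = (\omega, {=}) \oplus (\pow(\omega), {\subseteq}) \oplus (\pow(\omega) \times \omega, {=})$ (recall $E_{\subseteq}$ is equality). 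Each of the three summands Borel reduces to $(\pow(\omega), {\subseteq})$ — via $n \mapsto \{n\}$, the identity, and a Borel antichain of $\pow(\omega)$ of size continuum, respectively — and $(\pow(\omega), {\subseteq})$ absorbs any finite direct sum of quasi-orders reducing to it, by placing the copies in disjoint coordinate blocks of $\omega$, each additionally containing a private ``tag'' so that elements from distinct blocks are $\subseteq$-incomparable. Hence $R \leq_B {\subseteq}$, so $R^{\cf} \leq_B {\subseteq^{\cf}}$ by monotonicity of the $\cf$-jump, and therefore $E_{\subseteq^{\inj}_\alpha} \leq_B E_{R^{\cf}} \leq_B E_{\subseteq^{\cf}}$. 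Chaining, $E_{\subseteq^{\cf}_\alpha} \sim_B E_{\subseteq^{\inj}_\alpha} \sim_B E_{\subseteq^{\cf}}$, which is (1).

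For part (2) I would apply Proposition~\ref{prop:restrictedpreceqinj} instead to the \emph{countable} quasi-order $S = (\pow_{<N}(\omega), {\subseteq})$: the associated $R = (\omega, {=}) \oplus (\pow_{<N}(\omega), {\subseteq}) \oplus (\pow_{<N}(\omega) \times \omega, {=})$ is again a quasi-order on a countable set. As recorded in the proof of Corollary~\ref{cor:restrictedP(omega)}, $\rho(\vec a) \leq N$ for every $\vec a \in \pre{\omega}{(\pow_{<N}(\omega))}$, so $(\pre{\omega}{(\pow_{<N}(\omega))})_{N+1} = \pre{\omega}{(\pow_{<N}(\omega))}$ and hence $(\pow_{<N}(\omega), {\subseteq})^{\inj} = S^{\inj}_{N+1}$; thus $E_{(\pow_{<N}(\omega), \subseteq)^{\inj}} \leq_B E_{R^{\cf}}$. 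Now for any quasi-order $T$ on a countable set the map $\vec x \mapsto \{ y \in \mathrm{dom}(T) \mid \exists n\,(y \mathrel{T} x_n) \} \in \pow(\mathrm{dom}(T))$ is a Borel reduction of $E_{T^{\cf}}$ to equality, so $E_{T^{\cf}}$ is smooth; and if $T$ has an infinite antichain then $E_{T^{\cf}}$ has continuum many classes, whence $E_{T^{\cf}} \sim_B (\RR, {=})$ by the classical description of smooth Borel equivalence relations. Since both $R$ and $(\pow_{<N}(\omega), {\subseteq})$ are countable with infinite antichains,
\[
E_{(\pow_{<N}(\omega), \subseteq)^{\inj}} \leq_B E_{R^{\cf}} \sim_B (\RR, {=}) \sim_B E_{(\pow_{<N}(\omega), \subseteq)^{\cf}};
\]
together with the general inequality $E_{(\pow_{<N}(\omega), \subseteq)^{\cf}} \leq_B E_{(\pow_{<N}(\omega), \subseteq)^{\inj}}$ this yields (2).

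The bulk of the work is really the $\rho$-rank bookkeeping: checking that Fact~\ref{fact1}'s map lands among rank-$0$ sequences and that on such sequences the only nontrivial clause of Lemma~\ref{Einj:char} is (iv), which simply records $E_{S^{\cf}}$; and, for (2), importing the uniform bound $\rho(\vec a) \leq N$ from Corollary~\ref{cor:restrictedP(omega)}. The one genuinely new computation, and the step I expect to be the most delicate, is $R \leq_B (\pow(\omega), {\subseteq})$ in part (1), i.e.\ that $(\pow(\omega), {\subseteq})$ absorbs that finite direct sum; everything else is either a restriction map or an instance of monotonicity of the $\cf$- or $\inj$-jump.
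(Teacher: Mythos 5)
Your proposal is correct, and it splits naturally into a part that mirrors the paper and a part that does not. For part (1) you follow essentially the paper's proof: Fact~\ref{fact1} to push $E_{\subseteq^{\cf}}$ onto rank-$0$ sequences, and Proposition~\ref{prop:restrictedpreceqinj} plus a reduction $R\leq_B(\pow(\omega),\subseteq)$ plus monotonicity of the $\cf$-jump for $E_{\subseteq^{\inj}_\alpha}\leq_B E_{\subseteq^{\cf}}$; your ``disjoint blocks with a private tag'' description of $R\leq_B{\subseteq}$ is the same coding the paper writes out with a pairing function (the tag incidentally also takes care of the image of $\emptyset$ from the second summand), and your observation via Lemma~\ref{Einj:char} that $E_{S^{\cf}}$ and $E_{S^{\inj}}$ literally coincide on $(\pre{\omega}{X})_1$ makes explicit the step $E_{\subseteq^{\cf}}\leq_B E_{\subseteq^{\inj}_\alpha}$, which the paper leaves to the general fact $S^{\cf}\leq_B S^{\inj}$. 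For part (2) your route is genuinely different: the paper constructs an explicit Borel reduction of the auxiliary $R$ into $(\pow_{<N}(\omega),\subseteq)$ (the map $(b,k)\mapsto\{\langle\langle n,|b|\rangle,k+2\rangle\mid n\in b\}$, etc.) and again concludes by monotonicity of the $\cf$-jump, whereas you note that for any quasi-order $T$ on a countable set the downward-closure map $\vec x\mapsto\{y\mid\exists n\,(y\mathrel{T}x_n)\}$ reduces $E_{T^{\cf}}$ to equality (correct: transitivity gives one implication, reflexivity the other), so $E_{R^{\cf}}$ and $E_{(\pow_{<N}(\omega),\subseteq)^{\cf}}$ are smooth; since both quasi-orders contain infinite antichains, these Borel relations have continuum many classes and are therefore Borel bireducible with $(\RR,=)$ by Silver's dichotomy. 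What your approach buys is a stronger conclusion, namely $E_{(\pow_{<N}(\omega),\subseteq)^{\cf}}\sim_B E_{(\pow_{<N}(\omega),\subseteq)^{\inj}}\sim_B(\RR,=)$, and it avoids the ad hoc coding into sets of size $<N$; what the paper's approach buys is uniformity with part (1) and a proof by explicit reductions only, with no appeal to Silver. The two ingredients you import --- the specific form of $R$ from the proof of Proposition~\ref{prop:restrictedpreceqinj} and the rank bound $\rho(\vec a)\leq N$ recorded in Corollary~\ref{cor:restrictedP(omega)} --- are used in exactly the same way in the paper, so nothing is circular.
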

\begin{proof}
To prove part (1), it suffices to show that $E_{\subseteq^{\inj}_\alpha}
\leq_B E_{\subseteq^{\cf}}$ and $E_{\subseteq^{\cf}} \leq_B
E_{\subseteq^{\cf}_1}$. The latter follows from Fact \ref{fact1}. To prove
the former we use Proposition \ref{prop:restrictedpreceqinj} and its proof.
Let $(Y,R)$ be as in~\eqref{eq:(Y,R)} with $(X,{S}) =
(\pow(\omega),{\subseteq})$: by (the proof of)
Proposition~\ref{prop:restrictedpreceqinj}, it is clearly enough to show that
$R \leq_B {\subseteq}$.
We define $f \colon Y \to
\pow(\omega)$ by
\[
f(a) = \left\{
   \begin{array}{ll}
   \{\langle a,0 \rangle\}, & \hbox{if $a \in \omega$;} \\
     \{\langle n,1 \rangle \mid n \in a\}, & \hbox{if $a \in \pow(\omega)$;} \\
\begin{split}\{\langle 2n,k+2 \rangle & \mid n \in b\} \mathrel{\cup} \\
\{\langle &2n+1, k+2 \rangle \mid n \notin b\}, \end{split} & \hbox{if $a = (b,k) \in \pow(\omega) \times \omega$.}
   \end{array}
 \right.
\]
Since $E_{\subseteq}$ is equality on $ \pow (\omega )$, it is easy to check that $f$
is the desired Borel reduction of $R$ to $\subseteq$.

To prove the nontrivial reduction in part (2) one can use a similar argument.
Let \( (Y,R) \) be as in~\eqref{eq:(Y,R)} with \( (X,{S}) =
(\pow_{<N}(\omega), {\subseteq}) \). Since \(
\pre{\omega}{(\pow_{<N}(\omega))} =
(\pre{\omega}{(\pow_{<N}(\omega))})_{N+1} \), \(  E_{S^\inj} =
E_{S^\inj_{N+1}} \leq_B E_{R^\cf} \) by
Proposition~\ref{prop:restrictedpreceqinj}, it is enough to show that \(
(Y,R) \) is Borel reducible to \( (\pow_{<N}(\omega), {\subseteq}) \). This
is witnessed by the map
\[
f(a) =
   \begin{cases}
   \{\langle a,0 \rangle\}, & \hbox{if $a \in \omega$;} \\
     \{\langle n,1 \rangle \mid n \in a\}, & \hbox{if $a \in \pow_{<N}(\omega)$;} \\
\{\langle \langle n , |b| \rangle, k+2  \rangle \mid n \in b\}  & \hbox{if $a = (b,k) \in \pow_{<N}(\omega) \times \omega$.}\qedhere
   \end{cases}
\]
\end{proof}

Proposition~\ref{prop:associatedeqrel}(2) should be contrasted with the fact
that \( E_{(\pow_{<\omega}(\omega), \subseteq)^{\cf}} <_B
E_{(\pow_{<\omega}(\omega), \subseteq)^{\inj}} \) because \(
E_{(\pow_{<\omega}(\omega), \subseteq)^{\cf}} \) is Borel while \(
E_{(\pow_{<\omega}(\omega), \subseteq)^{\inj}} \) is proper analytic by
Remark~\ref{rmk:restrictedP(omega)}.

\subsection{A family of proper analytic quasi-orders $S$ with $S <_B
S^{\inj}$}

Combining the next simple lemma together with Proposition~\ref{prop:P(omega)}
and Theorem \ref{th:S^inj not complete}, we will construct a large class of
proper analytic quasi-orders \( S \) which are not stable under the jump
operators \( S \mapsto S^{\cf} \) and \( S \mapsto S^{\inj} \). Examples of
this kind were already provided in~\cite[Section 4]{Camerlo2007}, but those
considered here are different, as shown in Remark~\ref{rmk:CM07}.

\begin{lemma} \label{lem:upperlowercones}
Let \( (X,S) \) be an analytic quasi-order. Then
\begin{enumerate}[(1)]
\item if there are \( \boldsymbol{\Sigma}^1_1 \)-complete upper cones in \(
    S \), then \( E_{S^{\cf}} \) is \( \boldsymbol{\Sigma}^1_1
    \)-complete as a subset of \( \pre{\omega}{X} \times \pre{\omega}{X}
    \);
\item if there are \( \boldsymbol{\Sigma}^1_1 \)-complete lower cones in \(
    S \), then \( E_{S^{\cf}} \) contains a \( \boldsymbol{\Sigma}^1_1
    \)-complete equivalence class.
\end{enumerate}
Similar results hold when \( S^{\cf} \) is replaced by \( S^{\inj} \).
\end{lemma}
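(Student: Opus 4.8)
The plan is to build very simple continuous reductions witnessing $\boldsymbol{\Sigma}^1_1$-hardness, exploiting that $E_{S^{\cf}}$ and $E_{S^{\inj}}$ are themselves $\boldsymbol{\Sigma}^1_1$ (as $S^{\cf}$ and $S^{\inj}$ are analytic whenever $S$ is), so that it is enough to continuously reduce one fixed $\boldsymbol{\Sigma}^1_1$-complete set to each of them. For part (1), fix $p \in X$ such that the upper cone $\{x \in X \mid p \mathrel{S} x\}$ generated by $p$ is $\boldsymbol{\Sigma}^1_1$-complete, and consider the continuous map $r \colon X \to \pre{\omega}{X} \times \pre{\omega}{X}$ sending $x$ to the pair $\bigl((p, x, x, x, \dots),\, (x, x, x, \dots)\bigr)$. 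I would then check that $r$ reduces this cone to $E_{S^{\cf}}$: the sequence $(x,x,x,\dots)$ is $S^{\cf}$-below $(p,x,x,\dots)$ for \emph{every} $x$ (because $x$ occurs in the latter), while $(p,x,x,\dots)$ is $S^{\cf}$-below $(x,x,x,\dots)$ exactly when $p \mathrel{S} x$. Hence $E_{S^{\cf}}$ is $\boldsymbol{\Sigma}^1_1$-hard, and being $\boldsymbol{\Sigma}^1_1$ it is complete as a subset of $\pre{\omega}{X} \times \pre{\omega}{X}$. The very same $r$ works for $E_{S^{\inj}}$, the required witnessing injections being obtained simply by shifting indices by one.

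For part (2), fix $q \in X$ such that the lower cone $\{x \in X \mid x \mathrel{S} q\}$ generated by $q$ is $\boldsymbol{\Sigma}^1_1$-complete, set $\vec{c} = (q, q, q, \dots)$, and consider the continuous map $r \colon X \to \pre{\omega}{X}$ sending $x$ to $(x, q, q, q, \dots)$. I would verify that $r(x) \mathrel{E_{S^{\cf}}} \vec{c}$ if and only if $x \mathrel{S} q$: again $\vec{c}$ is $S^{\cf}$-below $r(x)$ for every $x$ (since $q$ occurs in $r(x)$), whereas $r(x)$ is $S^{\cf}$-below $\vec{c}$ precisely when $x \mathrel{S} q$. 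Thus $r$ reduces the lower cone to the single $E_{S^{\cf}}$-class $[\vec{c}]_{E_{S^{\cf}}}$; since this class is a section of the $\boldsymbol{\Sigma}^1_1$ relation $E_{S^{\cf}}$, it is $\boldsymbol{\Sigma}^1_1$, hence $\boldsymbol{\Sigma}^1_1$-complete. Once more the identical construction handles $E_{S^{\inj}}$.

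I do not expect any real obstacle here; the only delicate point is the bookkeeping involved in checking the two halves of each equivalence-relation comparison --- in particular, making sure that the ``trivial'' direction of each $S^{\cf}$ (respectively $S^{\inj}$) comparison genuinely holds for all $x$ and does not secretly encode cone membership, and that for $S^{\inj}$ a witnessing injection is always available (which is immediate, since in each construction the relevant value occurs infinitely often in the target sequence). It is also worth recording at the outset that $E_{S^{\cf}}$ and $E_{S^{\inj}}$ are $\boldsymbol{\Sigma}^1_1$ and that $\boldsymbol{\Sigma}^1_1$ subsets of standard Borel spaces are closed under continuous preimages, so that reducing a single known $\boldsymbol{\Sigma}^1_1$-complete set does suffice.
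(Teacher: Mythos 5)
Your proof is correct and follows essentially the same route as the paper: the reductions $x \mapsto (p {}^\smallfrown{} x^{\infty}, x^{\infty})$ for the upper-cone case and $x \mapsto x {}^\smallfrown{} q^{\infty}$ targeting the class of $q^{\infty}$ for the lower-cone case are exactly the maps used there, with the same verification and the same observation that the constructions also work for $S^{\inj}$ (via the shift injection). Your explicit remark that $E_{S^{\cf}}$ and $E_{S^{\inj}}$ are themselves $\boldsymbol{\Sigma}^1_1$, so hardness suffices for completeness, is a point the paper leaves implicit but is entirely in the same spirit.
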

\begin{proof}
For part (1), let \( p \in X \) be such that the upper cone \( C^S(p) \) is
\( \boldsymbol{\Sigma}^1_1 \)-complete, and define the map \( f \colon X \to
\pre{\omega}{X} \times \pre{\omega}{X}\) by setting \( f(x) = (p
{}^\smallfrown{} x^{\infty}, x^{\infty}) \). Since clearly \( x^\infty
S^{\cf} p {}^\smallfrown{} x^{\infty} \) for all \( x \in X \) and
\[
p {}^\smallfrown{} x^{\infty} S^{\cf} x^\infty \iff x \in C^S(p),
\]
the function \( f \) continuously reduces \( C^S(p) \) to \( E_{S^{\cf}} \).

For part (2), let \( q \in X \) be such that the lower cone \( C_S(q) \)  is
\( \boldsymbol{\Sigma}^1_1 \)-complete, and define the map \( g \colon X \to
\pre{\omega}{X} \) by setting \( g(x) = x {}^\smallfrown{} q^{\infty} \).
Arguing as above, it is easy to see that \( g \) continuously reduces \(
C_S(q) \) to the \( E_{S^{\cf}} \)-equivalence class of \( q^{\infty} \).

The same arguments work when \( S^{\cf} \) is replaced by \( S^{\inj} \).
\end{proof}

\begin{corollary} \label{cor:newexamplesofjump}
Let \( R \) be a Borel quasi-order such that \( (\pow(\omega ), \subseteq)
\leq_B R \). Then \( S = R^{\inj} \) is a proper analytic quasi-order such
that \( S <_B S^{\cf} \) (hence also \( S <_B S^{\inj} \)).
\end{corollary}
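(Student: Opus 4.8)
The plan is to verify the three assertions in turn, using only the results established above.

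\emph{$S = R^{\inj}$ is proper analytic.} Since $R$ is Borel it is in particular analytic, and the operator $S \mapsto S^{\inj}$ preserves analyticity, so $S$ is analytic. For non-Borelness, I would note that by monotonicity of the jump the hypothesis $(\pow(\omega),\subseteq) \leq_B R$ yields $(\pow(\omega),\subseteq)^{\inj} \leq_B R^{\inj} = S$. By Proposition~\ref{prop:P(omega)}(1) the relation $(\pow(\omega),\subseteq)^{\inj}$ is non-Borel, and a Borel reduction $f$ of $(\pow(\omega),\subseteq)^{\inj}$ to $S$ exhibits $(\pow(\omega),\subseteq)^{\inj}$ as $(f \times f)^{-1}(S)$; so if $S$ were Borel, $(\pow(\omega),\subseteq)^{\inj}$ would be Borel too, a contradiction.

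\emph{$S <_B S^{\cf}$.} The inequality $S \leq_B S^{\cf}$ is automatic since $S \mapsto S^{\cf}$ is a jump operator, so the point is $S^{\cf} \nleq_B S$. First I would record that the property of possessing a $\boldsymbol{\Sigma}^1_1$-complete lower cone is transferred downward by $\leq_B$ among analytic quasi-orders: if $f$ reduces $T$ to $S$ and $C_T(\vec q)$ is $\boldsymbol{\Sigma}^1_1$-complete, then $f^{-1}(C_S(f(\vec q))) = C_T(\vec q)$, so $C_S(f(\vec q))$ is $\boldsymbol{\Sigma}^1_1$-hard and hence, being analytic, $\boldsymbol{\Sigma}^1_1$-complete. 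Applying this with $T = (\pow(\omega),\subseteq)^{\inj}$, which has a $\boldsymbol{\Sigma}^1_1$-complete lower cone by Proposition~\ref{prop:P(omega)}(2), gives that $S = R^{\inj}$ itself has a $\boldsymbol{\Sigma}^1_1$-complete lower cone; then Lemma~\ref{lem:upperlowercones}(2) provides a $\boldsymbol{\Sigma}^1_1$-complete $E_{S^{\cf}}$-equivalence class. On the other hand $R$ is Borel, so Theorem~\ref{th:S^inj not complete} guarantees that every $E_S = E_{R^{\inj}}$-class is Borel. Now if $g$ witnessed $S^{\cf} \leq_B S$, it would also witness $E_{S^{\cf}} \leq_B E_S$, and for $\vec a$ generating the $\boldsymbol{\Sigma}^1_1$-complete $E_{S^{\cf}}$-class the set $g^{-1}([g(\vec a)]_{E_S})$ would equal $[\vec a]_{E_{S^{\cf}}}$; since $[g(\vec a)]_{E_S}$ is Borel this would make a $\boldsymbol{\Sigma}^1_1$-complete set Borel, which is impossible. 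Hence $S <_B S^{\cf}$.

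\emph{$S <_B S^{\inj}$.} Since $S \leq_B S^{\cf} \leq_B S^{\inj}$, any reduction $S^{\inj} \leq_B S$ would entail $S^{\cf} \leq_B S$, contradicting the previous step. This argument is essentially bookkeeping: the only things requiring care are that $\leq_B$ among analytic quasi-orders transfers Borelness and the existence of $\boldsymbol{\Sigma}^1_1$-complete lower cones in the expected directions, and that a reduction of quasi-orders is automatically a reduction of the associated equivalence relations — all routine, so I do not expect a genuine obstacle here, only the need to invoke Theorem~\ref{th:S^inj not complete} against Lemma~\ref{lem:upperlowercones}(2) in the right way.
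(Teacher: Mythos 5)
Your argument is correct and follows essentially the same route as the paper's proof: transfer the $\boldsymbol{\Sigma}^1_1$-complete lower cones of $(\pow(\omega),\subseteq)^{\inj}$ to $S=R^{\inj}$ via monotonicity of the jump (giving properness), then play Lemma~\ref{lem:upperlowercones}(2) against Theorem~\ref{th:S^inj not complete} to rule out $S^{\cf}\leq_B S$, and conclude $S<_B S^{\inj}$ from $S^{\cf}\leq_B S^{\inj}$. The only difference is that you spell out the cone-transfer and preimage bookkeeping that the paper leaves implicit.
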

\begin{proof}
By Proposition~\ref{prop:P(omega)}(2), \( S \) contains \(
\boldsymbol{\Sigma}^1_1 \)-complete lower cones and in particular it is a
proper analytic quasi-order.
By Lemma~\ref{lem:upperlowercones}(2)
there is an \( E_{S^{\cf}} \)-equivalence class which is proper analytic.
Therefore \( S^{\cf} \nleq_B S \) because all \( E_S \)-equivalence classes
are Borel by Theorem~\ref{th:S^inj not complete}.
\end{proof}

\begin{remark}\label{rmk:CM07}
If a quasi-order of the form \( R^{\inj} \) is proper analytic, it cannot
satisfy the hypothesis of~\cite[Corollary 4.3]{Camerlo2007}. To see this,
observe that the proof of~\cite[Theorem 4.2]{Camerlo2007} shows that if a
directed (i.e.\ such that every pair of elements has un upper bound)
quasi-order is not Borel then it does not satisfy that hypothesis. Since
\(R^{\inj}\) is always directed (and in fact even countably-directed) our
examples are different as claimed.
\end{remark}

\section{Ultrametric Polish spaces with a fixed set of distances}\label{sect:setup}

All metric spaces we consider are always assumed to be nonempty. Let \( \RR^+
= \{ r \in \RR \mid r \geq 0 \} \). Let \( U \) be an ultrametric space with
distance \( d_U \). Notice that \( d_U \) is an ultrametric if and only if  for every \( x,y,z \in U \) at
least two of the distances \( d_U(x,y), d_U(x,z), d_U(z,y) \) equal \( \max
\{ d_U(x,y), d_U(x,z), d_U(z,y) \} \) (i.e.\ all triangles are isosceles with
legs not shorter than the base). Thus we have
\begin{equation} \label{eq:basicultrametric}
  d_U(x,z) < d_U(z,y) \Rightarrow
d_U(x,y) =   d_U(z,y)  .
\end{equation}
Recall also that in an ultrametric space every open ball is also closed.

We say that \( U \) is an \emph{ultrametric Polish space} if it is separable
and the ultrametric \( d_U \) is complete.\footnote{Let us remark that if a
Polish space \( X \) admits a compatible ultrametric, then it also admits a
compatible ultrametric \( d \) which is also complete. This is because under
our assumptions \( X \) must be zero-dimensional, and hence homeomorphic to a
closed subset \( F \) of \( \pre{\omega}{\omega} \) by~\cite[Theorem
7.8]{Kechris1995}. Transferring back on \( X \) the complete ultrametric on
\( F \) induced by the standard metric on \( \pre{\omega}{\omega} \) we
obtain \( d \) as required.}

We denote by \( D(U) \) the set of distances that are realized by points in
\( U \), i.e.
\[
D(U) = \{ r \in \RR^+ \mid \exists x,y \in U (d_U(x,y) = r) \}.
\]
Let \( \mathcal{D} \) denote the set of all countable \( D \subseteq \RR^+ \)
with \( 0 \in D \).

\begin{lemma} \label{countablymanydistances}
\( D(U) \in \mathcal{D} \) for every separable ultrametric space \( U \).
\end{lemma}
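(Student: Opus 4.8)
The plan is to show that $D(U)$ is countable; that $0 \in D(U)$ and $D(U) \subseteq \RR^+$ are immediate, the former because $U$ is nonempty so $d_U(x,x) = 0$ for any $x \in U$. For countability, I would fix a countable dense subset $Q \subseteq U$ and prove the key claim that every realized distance is already witnessed inside $Q$, namely
\[
D(U) \setminus \{ 0 \} \subseteq \{ d_U(p,q) \mid p,q \in Q,\ p \neq q \}.
\]
Since the right-hand side is countable, this suffices.

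To prove the claim, suppose $r = d_U(x,y) > 0$ for some $x,y \in U$. Using density of $Q$, pick $p \in Q$ with $d_U(x,p) < r$ and $q \in Q$ with $d_U(y,q) < r$. Now apply the basic ultrametric implication \eqref{eq:basicultrametric} twice: from $d_U(x,p) < r = d_U(x,y)$ we get $d_U(p,y) = d_U(x,y) = r$; then from $d_U(y,q) < r = d_U(p,y)$ we get $d_U(p,q) = d_U(p,y) = r$. Thus $r$ is realized by the pair $p,q \in Q$ (note $p \neq q$ since $r \neq 0$), establishing the claim.

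There is no real obstacle here: the only point requiring care is the correct bookkeeping of the strict inequalities so that \eqref{eq:basicultrametric} applies in the right direction at each of the two steps, and observing that $r > 0$ forces $p \neq q$ so that the enumerating set is genuinely a set of realized (nonzero) distances. Everything then assembles into $D(U) \in \mathcal{D}$.
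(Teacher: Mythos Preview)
Your proof is correct and follows essentially the same approach as the paper: fix a countable dense set $Q$, and for any realized nonzero distance $r = d_U(x,y)$ pick $p,q \in Q$ within distance $r$ of $x,y$ respectively, then apply \eqref{eq:basicultrametric} twice to conclude $d_U(p,q) = r$. The paper's version is slightly terser, but the argument is identical.
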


\begin{proof}
Let $Q$ be a countable dense subset of $U$: it suffices to show that $D(U)=\{
d_U(p,q)\mid p,q\in Q\} $. Clearly \( d_U(p,p) = 0 \), so let \( x,y \in U \)
be such that \( d_U(x,y) = r \neq 0 \), and let \( p,q \in Q \) be such that
\( d_U(x,p), d_U(y,q) < r \). Then using~\eqref{eq:basicultrametric} we get
\( d_U(p,y) = d_U(x,y)  =r \), whence
\[
d_U(p,q) =  d_U(p,y) =r . \qedhere
\]
\end{proof}

Conversely, given \( D \in \mathcal{D } \) one can construct a canonical
ultrametric Polish space \( U(D) \) with \( D(U(D)) = D \).

\begin{defin} \label{def:U(D)}
Let \( D \in \mathcal{D} \). Then \( U(D) \) is the ultrametric Polish space
with domain \( D \) and distance function defined by \( d_{U(D)}(r,r') = \max
\{ r,r' \} \) for \( r\neq r' \) and \( d_{U(D)}(r,r) = 0 \).
\end{defin}

Given \( D \in \mathcal{D} \), we denote by \( \U_D \) the set of all
ultrametric Polish spaces $U$ with \( D(U) \subseteq D \), and by \( \U^\star_D
\) the set of all ultrametric Polish spaces \( U \) such that \( D(U) = D \).

Recalling that a (nonempty) metric space is Polish if and only if it is
isometric to an element of \( F(\mathbb{U}) \), the collection of all
nonempty closed subsets of the Urysohn space \( \mathbb{U} \) (this notation
differs slightly from the one used in \cite{Kechris1995}, where \(
F(\mathbb{U}) \) includes the empty set), we can use the following
formalizations of \( \U_D \) and \( \U^\star_D \).

\begin{defin}\label{not:UR}
Let
\[
\U_D = \{ U \in F(\mathbb{U})  \mid d_U = d_{\mathbb{U}} \restriction U^2 \text{ is an ultrametric and } D(U) \subseteq D \}
\]
and
\[
\U^\star_D = \{ U \in F(\mathbb{U})  \mid d_U = d_{\mathbb{U}} \restriction U^2 \text{ is an ultrametric and } D(U) = D \} .
\]
\end{defin}

\begin{notation} \label{notation:psi_n}
Using~\cite[Theorem 12.13]{Kechris1995}, we fix once and for all a sequence
of Borel functions $\psi_n \colon F(\mathbb{U}) \to \mathbb{U}$ such that for
every $F \in F(\mathbb{U})$ the sequence $( \psi_n(F) )_{ n \in \omega }$ is
an enumeration (which can be assumed without repetitions if $F$ is infinite)
of a dense subset of $F$. Notice that if $F \in F(\mathbb{U})$ is discrete
then $( \psi_n(F) )_{ n \in \omega }$ is an enumeration of $F$.
\end{notation}

\begin{proposition} \label{prop:standardBorel}
The sets \( \U_D \) and \( \U^\star_D \) are both Borel subsets of the
standard Borel space $F( \mathbb U )$, hence they are standard Borel spaces
as well.
\end{proposition}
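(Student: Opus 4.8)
The plan is to express both $\U_D$ and $\U^\star_D$ as countable Boolean combinations of conditions on the Borel-in-$U$ dense sequence $(\psi_n(U))_{n \in \omega}$ provided by Notation~\ref{notation:psi_n}, exploiting the continuity of $d_{\mathbb{U}}$ together with the proof of Lemma~\ref{countablymanydistances}.

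First I would check that the set $A = \{ U \in F(\mathbb{U}) \mid d_U \text{ is an ultrametric} \}$ is Borel. The set $\{ (x,y,z) \in \mathbb{U}^3 \mid d_{\mathbb{U}}(x,z) \leq \max\{ d_{\mathbb{U}}(x,y), d_{\mathbb{U}}(y,z) \} \}$ is closed (both sides are continuous), and since $\{ \psi_n(U) \mid n \in \omega \}$ is dense in $U$, the ultrametric inequality holds for all triples of points of $U$ if and only if it holds for all triples of the form $\psi_i(U),\psi_j(U),\psi_k(U)$. Hence $A = \bigcap_{i,j,k \in \omega} \{ U \mid d_{\mathbb{U}}(\psi_i(U), \psi_k(U)) \leq \max\{ d_{\mathbb{U}}(\psi_i(U), \psi_j(U)), d_{\mathbb{U}}(\psi_j(U), \psi_k(U)) \} \}$, a countable intersection of preimages of a closed set under Borel maps, hence Borel.

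Next, on $A$ the remaining requirements localize to the dense sequence as well: by the proof of Lemma~\ref{countablymanydistances} applied with $Q = \{ \psi_n(U) \mid n \in \omega \}$, every $U \in A$ satisfies $D(U) = \{ d_{\mathbb{U}}(\psi_i(U), \psi_j(U)) \mid i,j \in \omega \}$. Put $B = \{ U \mid \forall i,j \in \omega \ d_{\mathbb{U}}(\psi_i(U), \psi_j(U)) \in D \}$ and $C = \{ U \mid \forall r \in D \ \exists i,j \in \omega \ d_{\mathbb{U}}(\psi_i(U), \psi_j(U)) = r \}$. Then one checks directly that $\U_D = A \cap B$ and $\U^\star_D = A \cap B \cap C$: for $\U_D \subseteq A \cap B$ note each $d_{\mathbb{U}}(\psi_i(U),\psi_j(U))$ lies in $D(U)$, and for the reverse inclusion use the displayed description of $D(U)$ on $A$; the argument for $\U^\star_D$ is analogous, with $C$ expressing $D \subseteq D(U)$. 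Finally $B$ and $C$ are Borel because $D$ is countable, hence a Borel subset of $\RR^+$, and each map $U \mapsto d_{\mathbb{U}}(\psi_i(U), \psi_j(U))$ is Borel: $B$ is a countable intersection of Borel sets, and $C$ is a countable intersection (over $r \in D$) of countable unions (over $(i,j)$) of Borel sets. Since $F(\mathbb{U})$ is standard Borel, any Borel subset of it is a standard Borel space in its own right, giving the last assertion.

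The step most in need of care — the nearest thing to an obstacle — is the inclusion $D(U) \subseteq \{ d_{\mathbb{U}}(\psi_i(U), \psi_j(U)) \mid i,j \in \omega \}$, i.e.\ that in a separable ultrametric space every realized distance is already realized between points of a prescribed countable dense set. This is exactly what the proof of Lemma~\ref{countablymanydistances} delivers, and it is also the reason we must intersect with $A$ before invoking it: without the ultrametric hypothesis the set of distances of $U$ need not be determined by a dense subsequence, so the reduction to $(\psi_n(U))_{n \in \omega}$ would break down.
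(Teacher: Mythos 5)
Your proposal is correct and follows essentially the same route as the paper: check the ultrametric inequality on the Borel dense sequence $(\psi_n(U))_{n\in\omega}$, then use the argument of Lemma~\ref{countablymanydistances} to reduce the conditions $D(U)\subseteq D$ and $D(U)=D$ to Borel conditions on the distances $d_{\mathbb{U}}(\psi_i(U),\psi_j(U))$. Your closing remark correctly identifies the only delicate point, namely that the reduction of $D(U)$ to the dense sequence uses the ultrametric hypothesis, which is exactly why the paper cites that lemma.
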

\begin{proof}
With the same argument as in the proof of Lemma \ref{countablymanydistances},
$U \in F(\mathbb{U})$ is in $ \U_D$ if and only if  \( d_U \) is an
ultrametric\footnote{This is a Borel condition because it is enough to check
that \( d_U \) satisfies the definition of ultrametric on the dense set \( \{
\psi_n(U) \mid n \in \omega \} \subseteq U \).} and $d_U(\psi_n(U),\psi_m(U))
\in D$ for all $n,m\in\omega$. To deal with $\U^\star_D$, add the following
condition: for all $r\in D$ there exist $n,m\in\omega$ such that
$d_U(\psi_n(U),\psi_m(U))=r$.
\end{proof}

Another possible formalization of \( \U_D \) and \( \U^\star_D \) uses,
instead of $\mathbb U$, the Polish $D$-ultrametric Urysohn space
$\mathbb{U}^\U_D$. This is the unique (up to isometry) ultrametric Polish
space with $D(\mathbb{U}^\U_D) = D$ which is ultrahomogeneous and universal
for \( \U_D \). The space $\mathbb{U}^\U_D$ can also be characterized as the
unique (up to isometry) ultrametric Polish space with the $D$-ultrametric
Urysohn property: for every finite ultrametric space $B$ with $D(B) \subseteq
D$, every $A \subseteq B$, and every isometric embedding $\varphi \colon A \to
\mathbb{U}^\U_D$, there is an isometric embedding $\varphi^* \colon  B \to
\mathbb{U}^\U_D$ such that $\varphi^* \restriction A = \varphi$. More about
$\mathbb{U}^\U_D$, including several constructions of the space, can be found
in \cite{GaoShao2011}.

Now \( \U_D \) may be identified with $F(\mathbb{U}^\U_D)$, while \(
\U^\star_D \) becomes a Borel subset of $F(\mathbb{U}^\U_D)$, which we denote
by $F^\star (\mathbb{U}^\U_D)$. Although our official definition of $\U_D$
and $\U^\star_D$ is as in Definition \ref{not:UR}, it will be sometimes
convenient to work instead with their counterparts $F(\mathbb{U}^\U_D)$ and
$F^\star (\mathbb{U}^\U_D)$. The next lemma shows that the two formalizations
are essentially equivalent.

\begin{lemma}\label{lemma:2form}
There exist Borel maps $\Phi \colon  F(\mathbb{U}^\U_D) \to \U_D$ and $\Psi
\colon  \U_D \to F(\mathbb{U}^\U_D)$ such that $\Phi(U)$ and $U$ are
isometric for any $U \in F(\mathbb{U}^\U_D)$ and $\Psi(V)$ and $V$ are
isometric for any $V \in \U_D$. In particular, the range of $\Phi
\restriction F^\star (\mathbb{U}^\U_D)$ is included in $\U^\star_D$ and the
range of $\Psi \restriction \U^\star_D$ is included in $F^\star
(\mathbb{U}^\U_D)$.
\end{lemma}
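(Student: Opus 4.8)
The plan is to produce the two maps by transporting spaces back and forth between the two concrete incarnations of "Polish ultrametric space with distances in $D$", using the universality and ultrahomogeneity of the ambient Urysohn-type spaces together with the Kuratowski--Ryll-Nardzewski-style selectors $\psi_n$ fixed in Notation~\ref{notation:psi_n}. The idea is that a closed subset of either $\mathbb{U}$ or $\mathbb{U}^\U_D$ is completely described, up to isometry, by the countable matrix of distances among a canonically chosen dense sequence, and that from such a matrix one can Borel-reconstruct an isometric copy inside the other space.

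First I would define $\Psi \colon \U_D \to F(\mathbb{U}^\U_D)$. Given $U \in \U_D$, consider the dense sequence $(\psi_n(U))_{n \in \omega}$ in $U$ and the associated distance matrix $(d_U(\psi_n(U), \psi_m(U)))_{n,m}$, which by definition of $\U_D$ takes values in $D$. Using the $D$-ultrametric Urysohn property of $\mathbb{U}^\U_D$, one builds recursively a sequence $(x_n)_{n \in \omega}$ in $\mathbb{U}^\U_D$ with $d_{\mathbb{U}^\U_D}(x_n, x_m) = d_U(\psi_n(U), \psi_m(U))$ for all $n,m$: at stage $n$ one applies the extension property to the finite space $\{x_0, \dots, x_n\}$ with the prescribed distance to a new point. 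The point is that this recursion can be carried out by a Borel function of $U$, by fixing in advance a Borel uniformization of the "choose such an $x_{n+1}$" step (the set of admissible points is closed and nonempty, so one selects, e.g., the $\le$-least one in a fixed Polish ordering, or uses a Borel selector as in \cite{Kechris1995}). Then set $\Psi(U) = \overline{\{x_n \mid n \in \omega\}} \in F(\mathbb{U}^\U_D)$; since completions are determined by the dense sequence, $\Psi(U)$ is isometric to $U$. The symmetric construction, using universality of $\mathbb{U}$ for all Polish metric spaces (equivalently, isometrically embedding the completion of the matrix into $\mathbb{U}$ by the analogous Urysohn back-and-forth), yields $\Phi \colon F(\mathbb{U}^\U_D) \to \U_D$, with $\Phi(U)$ isometric to $U$; here one checks that $d_{\Phi(U)}$ is an ultrametric with distances in $D$ because these are exactly the entries of the transported matrix, so indeed $\Phi(U) \in \U_D$.

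For the "in particular" clause: if $U \in F^\star(\mathbb{U}^\U_D)$ then $D(U) = D$, so the distance matrix of $(\psi_n(U))_n$ already realizes every value of $D$ (distances in the dense set generate all distances, as in Lemma~\ref{countablymanydistances}); hence the transported copy $\Phi(U)$ also has $D(\Phi(U)) = D$, i.e. $\Phi(U) \in \U^\star_D$, and symmetrically $\Psi$ sends $\U^\star_D$ into $F^\star(\mathbb{U}^\U_D)$.

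The main obstacle, and the only step requiring care, is verifying Borel measurability of the recursive back-and-forth construction: one must phrase the stage-$n$ choice as a Borel-uniformizable relation in the parameter $U$, check that at each stage the set of admissible extensions is a nonempty closed (hence Borel, and in fact Effros-Borel when viewed in $F(\mathbb{U}^\U_D)$) set depending in a Borel way on $U$ and on the finitely many previously chosen points, and then appeal to a Borel selection theorem (\cite[Theorem 12.13]{Kechris1995} or the Kuratowski--Ryll-Nardzewski selection theorem) to get the $x_n$'s as Borel functions of $U$; an $\omega$-length recursion of Borel steps is still Borel. Once measurability is in hand, the isometry claims are immediate from the construction, since both target copies share the same distance matrix on a dense sequence as the original space and all spaces involved are complete.
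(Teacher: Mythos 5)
You have the right architecture for $\Psi$ --- recursively build, in a Borel-in-$V$ way, a sequence in $\mathbb{U}^\U_D$ with the same distance matrix as $(\psi_n(V))_{n\in\omega}$ and take its closure --- but the step you yourself single out as the crux is exactly the one you do not prove, and the fixes you propose do not work as stated. Taking ``the $\le$-least admissible point in a fixed Polish ordering'' is not well defined: a nonempty closed set need not have a minimum with respect to a Borel linear order, and there is no Borel well-ordering of $\mathbb{U}^\U_D$. And to invoke a Borel selector (Kuratowski--Ryll-Nardzewski, or \cite[Theorem 12.13]{Kechris1995}) you must first know that the multifunction sending $(V,x_0,\dots,x_{n-1})$ to $\{y \in \mathbb{U}^\U_D \mid d(y,x_i)=d(\psi_n(V),\psi_i(V)) \text{ for all } i<n\}$ is Effros--Borel; closed-valued and nonempty is not enough. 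This measurability is not automatic: the condition ``the admissible set meets a given basic open set'' is the projection of a closed set, hence a priori only analytic, and your parenthetical ``closed, hence Borel, and in fact Effros-Borel'' conflates the value being closed with the dependence on the parameter being Borel. The argument that actually makes the choice Borel --- and this is the content of the paper's proof --- is to select from a fixed countable dense set $\{q_k\}_{k\in\omega}$ of $\mathbb{U}^\U_D$: if $y$ realizes the prescribed distances to the previously chosen $q_{k_0},\dots,q_{k_{m-1}}$ (such a $y$ exists by the $D$-ultrametric Urysohn property), then any $q_k$ with $d(q_k,y)<\min_{i<m} d(q_{k_i},y)$ realizes exactly the same distances by \eqref{eq:basicultrametric}; hence the least such $k$ exists and choosing it is plainly Borel, with no selection theorem at all. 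Your sketch never supplies this (or any substitute) argument, so the proof is incomplete precisely where the lemma lives.

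For $\Phi$ the paper's route is much simpler than yours: fix once and for all an isometric embedding of $\mathbb{U}^\U_D$ into $\mathbb{U}$ (universality of $\mathbb{U}$) and send $U\in F(\mathbb{U}^\U_D)$ to its image, which is complete, hence closed in $\mathbb{U}$, lies in $\U_D$, and depends on $U$ in a Borel way. Your symmetric matrix-transport into $\mathbb{U}$ can be made to work, but there the ultrametric perturbation trick is unavailable (exact distance conditions in $\mathbb{U}$ are not stable under small perturbations), so one needs either the standard approximate-to-exact extension lemma for $\mathbb{U}$ or a citation of the known Borel construction of an element of $F(\mathbb{U})$ from a distance matrix; as written it carries the same unjustified selection step. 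The ``in particular'' clause is fine once the maps are in place, since $\Phi(U)$ and $\Psi(V)$ are isometric to $U$ and $V$ and isometric spaces realize the same distances (cf.\ Lemma~\ref{countablymanydistances}).
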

\begin{proof}
Any isometric embedding of $\mathbb{U}^\U_D$ in $\mathbb{U}$ induces a map
$\Phi$ with the desired properties.

To define $\Psi$ fix a countable dense subset $\set{q_k}{k \in \omega}$ in
$\mathbb{U}^\U_D$. Given $V \in \U_D$ consider $\{ \psi_n (V) \mid n\in\omega
\}$, where the functions \( \psi_n \) are as in
Notation~\ref{notation:psi_n}. In a Borel way we recursively define a
sequence $\set{k_n}{n \in \omega}$ such that the map $\psi_n (V) \mapsto
q_{k_n}$ is an isometry. Once this is done we can define $\Psi(V)$ to be the
closure of $\set{q_{k_n}}{n \in \omega}$ in $\mathbb{U}^\U_D$.

Assuming we defined $k_n$ for $n<m$, let $k_m$ be the least $k$ such that
$d(\psi_m(V), \psi_n(V)) = d(q_k, q_{k_n})$ for every $n<m$. To see that such
a $k$ exists, notice that by the $D$-ultrametric Urysohn property there
exists $y \in \mathbb{U}^\U_D$ such that $d(\psi_m(V), \psi_n(V)) = d(y,
q_{k_n})$ for every $n<m$. If $d(q_k,y)< \min \set{d(q_{k_n}, y)}{n<m}$, then
$k$ has the required property by~\eqref{eq:basicultrametric}.
\end{proof}

Let \( \isom \) and \( \sqsubseteq \) denote respectively the relations of
isometry and isometric embeddability between metric spaces. When restricted
to ultrametric spaces, these relations can be described as follows.

\begin{notation} \label{not:C^x_r}
Given an ultrametric space \( U \), \( r \in \RR^+ \), and \(x \in U \), let
\( C^x_r(U) = \{ y \in U \mid d_U(x,y) = r \} \).
\end{notation}

\begin{lemma} \label{lem:limit}
Let \( U,U' \) be two ultrametric spaces (not necessarily separable nor
complete).
Then the following are equivalent:
\begin{enumerate}[\quad(i)]
\item
\( U \isom U' \);
\item for every \( x \in U \) there exists \( x' \in U' \) such that for
    every \( r \in D(U) \cup D(U') \), either $C_r^x(U)$ and $C_r^{x'}(U')$
    are both empty, or else \( C^x_r(U) \isom C^{x'}_r(U') \);
\item there exist \( x \in U \) and \( x' \in U' \) such that for every \(
    r \in D(U) \cup D(U') \), either $C_r^x(U)$ and $C_r^{x'}(U')$ are both
    empty, or else \( C^x_r(U) \isom C^{x'}_r(U') \).
\end{enumerate}
Similarly, the following are equivalent:
\begin{enumerate}
\item[\quad(iv)]
\( U \sqsubseteq U' \);
\item[\quad(v)] for every \( x \in U \) there exists \( x' \in U' \) such
    that \( C^x_r(U) \sqsubseteq C^{x'}_r(U') \) for every distance \( r
    \in D(U) \) realized by $x$;
\item[\quad(vi)] there exist \( x \in U \) and \( x' \in U' \) such that \(
    C^x_r(U) \sqsubseteq C^{x'}_r(U') \) for every distance \( r \in D(U)
    \) realized by $x$.
\end{enumerate}
\end{lemma}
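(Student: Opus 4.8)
The plan is to prove the two cycles of implications $\text{(i)} \Rightarrow \text{(ii)} \Rightarrow \text{(iii)} \Rightarrow \text{(i)}$ and $\text{(iv)} \Rightarrow \text{(v)} \Rightarrow \text{(vi)} \Rightarrow \text{(iv)}$. The implications $\text{(ii)} \Rightarrow \text{(iii)}$ and $\text{(v)} \Rightarrow \text{(vi)}$ are immediate: since $U$ is nonempty, just fix an arbitrary $x \in U$ and take the $x'$ provided by (ii) (respectively, (v)). Note that neither separability nor completeness of the spaces enters the argument; the whole proof is a purely combinatorial manipulation of the ``sphere decomposition'' of an ultrametric space around a point, using only the isosceles inequality~\eqref{eq:basicultrametric}.

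For $\text{(i)} \Rightarrow \text{(ii)}$, given an isometry $\varphi \colon U \to U'$ and $x \in U$, put $x' = \varphi(x)$. Since $d_U(x,y) = d_{U'}(\varphi(x),\varphi(y))$ for all $y \in U$, the map $\varphi$ carries $C^x_r(U)$ bijectively onto $C^{x'}_r(U')$ and its restriction is an isometry; in particular $C^x_r(U) = \emptyset$ if and only if $C^{x'}_r(U') = \emptyset$, so the ``either both empty, or isometric'' alternative holds for every $r$. The implication $\text{(iv)} \Rightarrow \text{(v)}$ is entirely analogous: if $\varphi \colon U \to U'$ is an isometric embedding and $r \in D(U)$ is realized by $x$, then $\varphi$ maps $C^x_r(U)$ into $C^{x'}_r(U')$ preserving distances, whence $C^x_r(U) \sqsubseteq C^{x'}_r(U')$.

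The substantial direction is $\text{(iii)} \Rightarrow \text{(i)}$. Fix $x,x'$ as in (iii). By hypothesis the set of $r$ for which $C^x_r(U) \neq \emptyset$ coincides with the set of $r$ for which $C^{x'}_r(U') \neq \emptyset$, and for each such $r$ we may choose an isometry $\varphi_r \colon C^x_r(U) \to C^{x'}_r(U')$; for $r = 0$ this is the forced map $x \mapsto x'$. Since $U$ is the disjoint union of the $C^x_r(U)$ and $U'$ the disjoint union of the $C^{x'}_r(U')$ over the \emph{same} index set, the map $\varphi = \bigcup_r \varphi_r \colon U \to U'$ is a well-defined bijection. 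To check that $\varphi$ is an isometry, take $y,z \in U$ with $d_U(x,y) = r \leq s = d_U(x,z)$, say. If $r < s$, then $d_U(y,z) = s$ by~\eqref{eq:basicultrametric}; and since $\varphi(y) \in C^{x'}_r(U')$, $\varphi(z) \in C^{x'}_s(U')$ with $r < s$, a second application of~\eqref{eq:basicultrametric} in $U'$ gives $d_{U'}(\varphi(y),\varphi(z)) = s = d_U(y,z)$. If $r = s$, then $y,z \in C^x_r(U)$ and $\varphi(y) = \varphi_r(y)$, $\varphi(z) = \varphi_r(z)$, so $d_{U'}(\varphi(y),\varphi(z)) = d_U(y,z)$ because $\varphi_r$ is an isometry. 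The implication $\text{(vi)} \Rightarrow \text{(iv)}$ is obtained by the very same construction, choosing isometric embeddings $\varphi_r \colon C^x_r(U) \to C^{x'}_r(U')$ for the $r$'s realized by $x$ and pasting them into $\varphi \colon U \to U'$; the identical case distinction shows $\varphi$ preserves distances, hence is injective and an isometric embedding.

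The only point that needs a little care is the bookkeeping in $\text{(iii)} \Rightarrow \text{(i)}$: one must invoke the ``both empty or isometric'' clause to see that exactly the same spheres are nonempty around $x$ and around $x'$, so that $\varphi$ is defined on all of $U$ and surjective onto $U'$. Everything else reduces to the two applications of~\eqref{eq:basicultrametric} displayed above — that all triangles in an ultrametric space are isosceles with the base not longer than the legs — which is really the mechanism making the pasted map distance preserving.
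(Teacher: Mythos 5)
Your proof is correct and follows essentially the same route as the paper's: decompose each space into the spheres $C^x_r$ around a chosen point, paste the sphere-wise maps, and apply~\eqref{eq:basicultrametric} twice to see that the pasted map preserves distances. The only cosmetic difference is that the paper proves the embeddability cycle and then disposes of the isometry cycle with the remark that all maps involved are then surjective, whereas you carry out the isometry case explicitly — your ``bookkeeping'' observation about the same spheres being nonempty around $x$ and $x'$ is precisely what that remark compresses.
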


\begin{proof}
We first consider the part concerning isometric embeddability.
To prove (iv) implies (v), let \( \varphi \colon U \to U' \) be an isometric
embedding and pick \( x \in U \).  Then setting \( x' = \varphi(x) \) we get
that \( \varphi \restriction C^x_r(U) \) witnesses \( C^x_r(U) \sqsubseteq
C^{x'}_r(U') \) for every distance \( r \in D(U) \) realized by $x$.

(v) implies (vi) is trivial, while to prove (vi) implies (iv) fix \( x \in U
\) and \( x' \in U' \) such that for every distance \( r \in D(U) \) realized
by $x$ there is an isometric embedding \( \varphi_r \colon C^x_r(U) \to
C^{x'}_r(U') \). We claim that \( \varphi = \bigcup_r\varphi_r \) is an
isometric embedding of \( U \) into \( U' \). To see this,  let \( y,z \in U
\). If there is \( r \in D(U) \) such that \( d_U(y,x) = d_U(z,x) = r \) then
\( y,z \in C^x_r(U) \): it follows that \( \varphi(y) = \varphi_r(y) \) and
\( \varphi(z) = \varphi_r(z) \), whence \( d_U(y,z) = d_{U'}(\varphi(y),
\varphi(z)) \) since \( \varphi_r \) is distance preserving. If instead \(
d_U(y,x) = r_y < r_z = d_U(z,x) \) then \( d_U(y,z) = r_z  \)
by~\eqref{eq:basicultrametric}. Since \( \varphi(y)  = \varphi_{r_y}(y) \in
C^{x'}_{r_y}(U') \) and \( \varphi(z) = \varphi_{r_z}(z) \in C^{x'}_{r_z}(U')
\), it follows that \( d_{U'}(\varphi(y),x') = r_y \) and \(
d_{U'}(\varphi(z),x') = r_z \). Therefore \( d_{U'}(\varphi(y), \varphi(z)) =
r_z  = d_U(y,z) \) by~\eqref{eq:basicultrametric} again.

The same proof works for isometry as well: it is enough to notice that in
this case all the isometric embeddings involved are automatically surjective.
\end{proof}

\begin{notation}
Let $\isom_D$ and $\isom^\star_D$ denote the restrictions of $\isom$ to
spaces in \( \U_D \) and \( \U^\star_D \), respectively. Similarly, let
$\sqsubseteq_D$ and $\sqsubseteq^\star_D$ be the restrictions of
$\sqsubseteq$ to \( \U_D \) and \( \U^\star_D \).
\end{notation}

Using Proposition~\ref{prop:standardBorel}, it is straightforward to see that
all the relations \( (\U_D, \isom_D) \), \( (\U^\star_D, \isom^\star_D) \),
\( (\U_D, \sqsubseteq_D) \), and \( (\U^\star_D, \sqsubseteq^\star_D) \) are
analytic.

\begin{remark}\label{rem:2form}
By Lemma~\ref{lemma:2form} the equivalence relations $\isom_D$ and
$\isom^\star_D$ are classwise Borel isomorphic to $\isom \restriction F
(\mathbb{U}^\U_D)$ and $\isom \restriction F^\star (\mathbb{U}^\U_D)$
respectively. Similarly for the quasi-orders $\sqsubseteq_D$ and
$\sqsubseteq^\star_D$.
\end{remark}

Our aim is to study the complexity of the above relations for various \( D
\in \mathcal{D} \). Notice that since \( \U^\star_D \subseteq \U_D \), the
identity function on \( \U^\star_D \) witnesses that \( {\isom^\star_D}
\leq_B {\isom_D} \) and \( {\sqsubseteq^\star_D} \leq_B {\sqsubseteq_D} \)
for every \( D \in \mathcal{D} \). We will show later
(Corollaries~\ref{cor:isomequivalent} and~\ref{cor:equivalent}), using some
nontrivial results, that the converse reductions hold as well.

The following lemma extends \cite[Theorem 8.2]{GaoShao2011}. Another
strengthening, without the hypotheses of continuity in $0$, will be given in
Corollary \ref{improvedembedding}.

\begin{lemma} \label{lemma:basic}
Let \( D,D' \in \mathcal{D} \).
\begin{enumerate}[(1)]
\item If there exists a Polish ultrametric preserving injection $f \colon D
    \to D'$ then \( {\isom_{D}} \) classwise Borel embeds into \(
    {\isom_{D'}} \) and \( {\sqsubseteq_{D}} \) classwise Borel embeds into
    \( {\sqsubseteq_{D'}} \);
\item If there exists a Polish ultrametric preserving bijection $f \colon D
    \to D'$ then the relations \( {\isom_D} \), \( \isom^\star_{D} \), \(
    {\sqsubseteq_D} \), and \( {\sqsubseteq^\star_D} \) are classwise Borel
    isomorphic\footnote{In fact, they are Borel isomorphic.} to \(
    {\isom_{D'}} \), \( \isom^\star_{D'} \), \( {\sqsubseteq_{D'}} \), and
    \( {\sqsubseteq_{D'}^{\star }} \), respectively.
\end{enumerate}
\end{lemma}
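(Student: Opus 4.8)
The plan is to deduce both parts from the characterizations of $\isom$ and $\sqsubseteq$ on ultrametric spaces given in Lemma~\ref{lem:limit} together with the two formalizations of $\U_D$ and the classwise Borel machinery from Section~\ref{sect:setup}. The central idea is that a Polish ultrametric preserving injection $f \colon D \to D'$ transforms an ultrametric Polish space $U$ with $D(U) \subseteq D$ into the space $f[U]$ obtained by replacing the distance $d_U$ with $f \circ d_U$: by the characterization of Polish ultrametric preserving functions recalled at the end of Section~\ref{sect:setup}, $f \circ d_U$ is again a complete separable ultrametric, and its set of realized distances is $f[D(U)] \subseteq D'$, so this operation sends $\U_D$ into $\U_{D'}$. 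The first step is therefore to make this operation Borel: working with the formalization $F(\mathbb{U}^\U_D)$ and $F(\mathbb{U}^\U_{D'})$ (legitimate by Remark~\ref{rem:2form}), given $U$ with dense sequence $(\psi_n(U))_{n\in\omega}$ one builds, recursively and in a Borel-in-$U$ way using the $D'$-ultrametric Urysohn property exactly as in the proof of Lemma~\ref{lemma:2form}, points $q_{k_n} \in \mathbb{U}^\U_{D'}$ with $d(q_{k_n},q_{k_m}) = f(d_U(\psi_n(U),\psi_m(U)))$ for all $n,m$, and lets $\hat f(U)$ be the closure of $\{q_{k_n} \mid n \in \omega\}$; completeness of $f \circ d_U$ (from continuity of $f^{-1}$ at $0$) guarantees that $\hat f(U)$ realizes exactly the distances in $f[D(U)]$ and is isometric, via $\psi_n(U) \mapsto q_{k_n}$, to $(U, f\circ d_U)$.

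Next I would check that $\hat f$ respects the relations. Since $f$ is injective and $f^{-1}(0) = \{0\}$, for spaces $U,V$ with distances in $D$ we have $d_U = f^{-1}\circ(f\circ d_U)$, and an isometry (respectively isometric embedding) between $(U,f\circ d_U)$ and $(V,f\circ d_V)$ is the same map as an isometry (respectively embedding) between $(U,d_U)$ and $(V,d_V)$, because $f$ and $f^{-1}$ are strictly increasing bijections between $D$ and $f[D]$ and hence preserve and reflect equality of distances; this gives $U \isom V \iff \hat f(U) \isom \hat f(V)$ and $U \sqsubseteq V \iff \hat f(U) \sqsubseteq \hat f(V)$. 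For part~(1) it then remains to exhibit the $E$-invariant Borel set required by Definition~\ref{def:cB}: take $B$ to be the $\isom_{D'}$-saturation of the range of $\hat f$, i.e.\ the set of $U' \in \U_{D'}$ isometric to some space all of whose distances lie in $f[D]$. One must verify $B$ is Borel — this follows because "$D(U') \subseteq f[D]$" is a Borel condition on $U'$ (it is $\forall n,m\, d_{U'}(\psi_n(U'),\psi_m(U')) \in f[D]$, and $f[D]$ is a countable, hence Borel, subset of $\RR^+$) and being isometric to such a space, by Lemma~\ref{lem:limit}, amounts to an arithmetical condition over the Borel relation $\isom_{D'}$; alternatively one checks directly that $B$ is exactly $\{U' \in \U_{D'} \mid D(U') \subseteq f[D]\}$, since $f\circ d_U$ realizes precisely $f[D(U)]$. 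The map $\hat f$ then induces an isomorphism of quotient quasi-orders onto $B/{\isom_{D'}}$ (respectively the quotient of $\sqsubseteq_{D'}\restriction B$), with Borel lifting $\hat f$ itself, and an inverse lifting obtained by applying the analogous construction for $f^{-1}\colon f[D] \to D$ to spaces in $B$. For part~(2), when $f$ is a bijection $D \to D'$ the construction for $f$ and for $f^{-1}$ are mutually inverse (up to isometry), $\hat f$ now maps $\U_D$ onto (a set isometry-cofinal in) $\U_{D'}$ and restricts to a map of $\U^\star_D$ into $\U^\star_{D'}$ since $f[D] = D'$, so one gets $\simeq_{cB}$ between all four pairs of relations; the remark that the maps are in fact Borel isomorphisms is obtained by composing $\hat f$ with the Borel bijections $\Phi,\Psi$ of Lemma~\ref{lemma:2form} and noting that $\hat f$ together with its inverse $\widehat{f^{-1}}$ gives a Borel bijection of the underlying standard Borel spaces conjugating the relations.

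The main obstacle is the Borel-definability and correctness of the recursive construction of $\hat f(U)$, i.e.\ verifying that a suitable target point $q_{k_n}$ always exists at stage $n$ and can be chosen Borel-uniformly: this is precisely where the $D'$-ultrametric Urysohn property and the ultrametric inequality~\eqref{eq:basicultrametric} are used, in the same way as in Lemma~\ref{lemma:2form}, the only new point being that one must feed into the Urysohn property the \emph{transformed} distances $f(d_U(\psi_m(U),\psi_n(U)))$ rather than the original ones, which is legitimate precisely because $f$ is ultrametric preserving so that these transformed values are the distances of an honest finite ultrametric space with distances in $D'$. A secondary (but purely routine) point is checking that completeness is preserved, which is exactly continuity of $f^{-1}$ at $0$, and that separability is preserved, which is continuity of $f$ at $0$ — both built into the definition of Polish ultrametric preserving function.
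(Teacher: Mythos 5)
Your proposal is correct and follows essentially the same route as the paper: push the metric forward through $f$, realize the transformed space inside $\mathbb{U}^\U_{D'}$, use injectivity of $f$ so that isometries and isometric embeddings are both preserved and reflected, run the analogous construction for $f^{-1}$ to get the inverse Borel lifting, and take as the invariant Borel set the spaces whose distances lie in the range of $f$ (with the $\star$-versions handled by noting the range restriction when $f$ is onto). The only inessential difference is in how Borelness of the transformation is secured: the paper fixes once and for all a homeomorphism $\varphi \colon \mathbb{U}^\U_D \to C \subseteq \mathbb{U}^\U_{D'}$ satisfying $d(\varphi(x),\varphi(y)) = f(d(x,y))$ and simply sets $\Phi(U) = \varphi(U)$, whereas you re-run the recursive Urysohn-extension construction of Lemma~\ref{lemma:2form} separately for each space.
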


\begin{proof}
We explicitly consider only the case of isometry, but the same proof works
for isometric embeddability as well. Let $f \colon D \to D'$ be a Polish
ultrametric preserving injection and $D'' \subseteq D'$ be the range of $f$.
We show that $\isom_D$ is classwise Borel isomorphic to $\isom_{D''}$. By
Remark~\ref{rem:2form} we can work with $\isom \restriction F
(\mathbb{U}^\U_D)$ and $\isom \restriction F (\mathbb{U}^\U_{D''})$. Let $C
\in F (\mathbb{U}^\U_{D''})$ be such that there exists a homeomorphism
$\varphi \colon  \mathbb{U}^\U_D \to C$ satisfying $d(\varphi(x), \varphi(y))
= f(d(x,y))$ for every $x,y \in \mathbb{U}^\U_D$ (the existence of $C$
follows from the fact that $f$ is Polish ultrametric preserving). The map
$\varphi$ induces the Borel bijection $\Phi \colon F (\mathbb{U}^\U_D) \to
F(C) \subseteq F (\mathbb{U}^\U_{D''})$. Notice that, by injectivity of $f$,
$\Phi$ reduces $\isom \restriction F (\mathbb{U}^\U_D)$ to $\isom
\restriction F (\mathbb{U}^\U_{D''})$.

Using $f^{-1}$ we analogously define $C' \in F(\mathbb{U}^\U_D)$ and a Borel
$\Psi \colon F (\mathbb{U}^\U_{D''}) \to F(C') \subseteq F
(\mathbb{U}^\U_D)$. By construction $\Psi(\Phi(U)) \isom U$ and
$\Phi(\Psi(V)) \isom V$ for all $U \in F(\mathbb{U}^\U_D)$ and $V \in
F(\mathbb{U}^\U_{D''})$. This shows that the closures under isometries of the
ranges of $\Phi$ and $\Psi$ are $F(\mathbb{U}^\U_{D''})$ and
$F(\mathbb{U}^\U_D)$ respectively, completing the proof that $\isom
\restriction F (\mathbb{U}^\U_D)$ and $\isom \restriction F
(\mathbb{U}^\U_{D''})$ are classwise Borel isomorphic.

This gives (1) and the part of (2) concerning \( \isom_D \) e \( \isom_{D'}
\), since in the latter case $D''=D'$. For the part of (2) concerning \(
\isom^\star_D \) e \( \isom^\star_{D'} \), notice that the restriction of
$\Phi$ to $F^\star (\mathbb{U}^\U_D)$ has range in $F^\star
(\mathbb{U}^\U_{D''}) = F^\star (\mathbb{U}^\U_{D'})$ and similarly for
$\Psi$.
\end{proof}

For some specific \( D \in \mathcal{D} \), the complexity of the relations \(
\isom_D \) and \( \sqsubseteq_D \) (and sometimes also of \( \isom^\star_D \)
and \( \sqsubseteq^\star_D \))  has already been considered in the
literature, so let us end this section by discussing the known results.

Recall from~\cite{Gao2003,GaoShao2011} the following facts concerning the
complexity of  the relation of isometry on ultrametric Polish  spaces.

\begin{proposition} \label{prop:knownisom}
\begin{enumerate}[(1)]
\item Isometry on arbitrary ultrametric Polish spaces is Borel reducible to countable
    graph isomorphism; thus so is each of the relations \( \isom_D \) for
    \( D \in \mathcal{D} \).
\item If \( 0 \) is a limit point of \( D \in \mathcal{D} \), then \(
    \isom_D \) is in fact Borel bireducible with countable graph isomorphism; thus so
    is the relation of isometry on arbitrary ultrametric Polish spaces.
\item Isometry on discrete ultrametric Polish spaces (and thus also any \(
    \isom_D \) with \( D \in \mathcal{D} \) bounded away from \( 0 \)) is Borel reducible
    to isometry on locally compact ultrametric Polish spaces, which in turn
    is Borel reducible to isometry on arbitrary ultrametric Polish spaces.
\item If \( D \in \mathcal{D} \) is finite of cardinality \( n \in \omega
    \), then  \( \isom_D \) is Borel bireducible with isomorphism between
    countable trees of height \( n \), and thus is Borel.
\end{enumerate}
\end{proposition}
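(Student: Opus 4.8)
The plan is to obtain each of the four items by combining results already available in \cite{Gao2003} and \cite{GaoShao2011} with the transfer tools of this section; no new construction is needed. For (1) I would quote \cite[Theorem 4.4]{Gao2003}, which provides a Borel reduction of isometry on arbitrary ultrametric Polish spaces to graph isomorphism. Since by Proposition~\ref{prop:standardBorel} each $\U_D$ sits as a Borel subset inside the standard Borel space of all ultrametric Polish spaces, the identity map witnesses $\isom_D \leq_B \isom$, and composing with the reduction just cited yields $\isom_D \leq_B{}$ graph isomorphism for every $D \in \mathcal{D}$.

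For (2) the key observation is that the Gao--Kechris reduction of graph isomorphism into isometry of ultrametric Polish spaces only ever uses distances from a single fixed countable set $D_0$ of which $0$ is the unique limit point (concretely, one may take $D_0 = \{0\} \cup \{2^{-n} \mid n \in \omega\}$, the distances occurring along branches of trees). If $0$ is a limit point of $D$, then $D$ contains a strictly decreasing sequence converging to $0$, and pairing it monotonically with such a sequence inside $D_0$ produces a Polish ultrametric preserving injection $D_0 \to D$ (any monotone bijection between two sequences decreasing to $0$, extended by $0 \mapsto 0$, is Polish ultrametric preserving). Lemma~\ref{lemma:basic}(1) then gives graph isomorphism $\leq_B \isom_{D_0} \leq_B \isom_D$, which together with (1) shows $\isom_D \sim_B{}$ graph isomorphism. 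Finally, combining graph isomorphism $\leq_B \isom_D \leq_B \isom$ with the reduction of (1) shows that isometry on arbitrary ultrametric Polish spaces is Borel bireducible with graph isomorphism as well; alternatively, the $D$-restricted statement can be cited directly from \cite{GaoShao2011}.

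For (3) I would note the elementary facts that every discrete ultrametric Polish space is locally compact (singletons are compact open neighbourhoods) and that when $\inf(D \setminus \{0\}) > 0$ every member of $\U_D$ is uniformly discrete, hence --- being also separable --- countable and discrete; thus the class of discrete ultrametric Polish spaces is contained in the class of locally compact ones, which is contained in the class of all ultrametric Polish spaces, and on the corresponding standard Borel spaces both inclusions are Borel, so they witness the two stated reductions. For (4), with $|D| = n$ one identifies a countable ultrametric space $U$ with $D(U) \subseteq D$ with the tree of its balls: order $D \setminus \{0\}$ increasingly, put the singletons of $U$ at the bottom, then for each distance the partition of $U$ into balls of that radius, refining as the radius decreases, with $U$ itself on top; this is a tree of height $n$, the assignment is Borel in both directions, and it turns $\isom_D$ into isomorphism of such trees, which is Borel since trees of bounded height form a Borel set on which isomorphism is Borel. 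The precise correspondence is worked out in \cite{Gao2003,GaoShao2011}, which I would cite. The only point requiring a little care is (2) --- ensuring the Gao--Kechris reduction can be arranged to use an arbitrarily prescribed decreasing sequence of distances --- and this is exactly what Lemma~\ref{lemma:basic}(1) is tailored to handle; the remaining items are essentially bookkeeping.
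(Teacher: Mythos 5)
Your proposal is correct and follows essentially the same route as the paper, which states these items as facts recalled from \cite{Gao2003,GaoShao2011} without proof; in particular, your use of Lemma~\ref{lemma:basic}(1) to transfer the Gao--Kechris construction to an arbitrary \( D \) having \( 0 \) as a limit point is exactly the observation the paper itself makes later, in the proof of Corollary~\ref{isomuniversal}. One cosmetic remark: the classes of discrete and of locally compact ultrametric Polish spaces are \( \boldsymbol{\Pi}^1_1 \) rather than standard Borel, but the inclusion maps are still Borel, so your argument for item (3) is unaffected.
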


The exact complexity of the equivalence relations of isometry on discrete or
locally compact ultrametric Polish spaces considered in (3) remained unknown,
leading to Question~\ref{mainquest}.
Gao and Kechris  provided in \cite[Section 8]{Gao2003} a first lower bound
for such complexity
by showing that isomorphism between trees on $\omega $ with countably many
infinite branches (equivalently, isometry between countable closed subsets of
\( \pre{\omega}{\omega} \)) is Borel reducible to isometry on discrete
ultrametric Polish spaces. Moreover they observed that isomorphism between
well-founded trees on \( \omega \) (equivalently, isometry between discrete
subsets of \( \pre{\omega}{\omega} \)) Borel reduces to the above isomorphism
relation, and thus is a second lower bound for the complexity of isometry
restricted to the classes of ultrametric Polish spaces mentioned above. They
asked whether these lower bounds are sharp. Another somewhat artificial lower
bound (namely: isomorphism between reverse trees, see the end of
Section~\ref{isomwf}) was isolated by Clemens in~\cite{ClemensPreprint}, and
he asked as well whether this other lower bound is sharp. (We will partially
answer these questions on lower bounds after Corollary \ref{cor:mainquest}
and Theorem~\ref{labeltobeadded}.) More precisely, combining the proofs
of~\cite[Theorem 8.10 and Proposition 8.11]{Gao2003} and~\cite[Proposition
16]{ClemensPreprint} with Lemma~\ref{lemma:basic} we actually get that the
above isomorphism relations isolated by Gao-Kechris and Clemens are lower
bounds for the complexity of \( \isom_D \) for, respectively, \( D \in
\mathcal{D} \) ill-founded (and possibly bounded away from \( 0 \)) and \( D
\in \mathcal{D} \) well-ordered and infinite.

For what concerns isometric embeddability, we already know the following facts.

\begin{proposition}\label{propbqo}
If \( D \in \mathcal{D} \) is finite then both \( \sqsubseteq_D \) and \(
\sqsubseteq^\star_D \) are bqo's, and therefore very simple Borel
quasi-orders (such as $(\omega, {=})$ and $(\omega, {\geq})$) are not Borel
reducible to either of them.
\end{proposition}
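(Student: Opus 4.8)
The plan is to show by induction on $n=|D|$ that $\sqsubseteq_D$ is a bqo; since $\sqsubseteq^\star_D$ is the restriction of $\sqsubseteq_D$ to the subset $\U^\star_D$ and any sub-quasi-order of a bqo is a bqo, this also gives the claim for $\sqsubseteq^\star_D$. The base case $n=1$ is trivial: then $D=\{0\}$, every space in $\U_D$ is a single point, and $\sqsubseteq_D$ has just one equivalence class.

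For the inductive step write $D=\{0=s_0<s_1<\dots<s_{n-1}\}$ and set $D'=D\setminus\{s_{n-1}\}$, so that $\sqsubseteq_{D'}$ is a bqo by the inductive hypothesis. For $V\in\U_D$ the relation $x\sim_V y\iff d_V(x,y)<s_{n-1}$ is an equivalence relation by the ultrametric inequality; its classes are clopen balls, hence again complete and separable, and since $s_{n-1}=\max D$ every distance realized inside such a class lies in $D'$, so each $\sim_V$-class is an element of $\U_{D'}$. Thus $V$ determines the nonempty, at most countable family $\mathcal C(V)\subseteq\U_{D'}$ of its $\sim_V$-classes, and points in distinct classes are always at distance exactly $s_{n-1}$. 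The first key step is the recursive description: for $U,U'\in\U_D$, $U\sqsubseteq U'$ if and only if there is an injection $h\colon\mathcal C(U)\to\mathcal C(U')$ with $C\sqsubseteq h(C)$ for all $C\in\mathcal C(U)$. Indeed, an isometric embedding $\psi\colon U\to U'$ sends each $\sim_U$-class into a single $\sim_{U'}$-class and distinct classes to distinct classes (because the cross-class distance is exactly $s_{n-1}$), so it induces such an $h$ together with the witnessing embeddings $\psi\restriction C$; conversely, gluing embeddings $\psi_C\colon C\to h(C)$ produces an isometric embedding $U\to U'$, the cross-class case being handled again by the value $s_{n-1}$. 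This is essentially the argument in the proof of Lemma~\ref{lem:limit}.

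The second step expresses $\sqsubseteq_D$ through the jump $S\mapsto S^{\inj}$. Let $\U_{D'}^{\dagger}$ be $\U_{D'}$ with a new least element $\star$ adjoined (so $\star\sqsubseteq^{\dagger}V$ for every $V$); adjoining a least element to a bqo again yields a bqo, so $\sqsubseteq^{\dagger}$ is a bqo. Send $U\in\U_D$ to $\Theta(U)\in\pre{\omega}{(\U_{D'}^{\dagger})}$, a repetition-free enumeration of $\mathcal C(U)$, padded with $\star$'s when $\mathcal C(U)$ is finite. Using the recursive description together with the fact that a genuine class can be $\sqsubseteq^{\dagger}$-below only another genuine class, one checks
\[
U\sqsubseteq_D U'\iff\Theta(U)\mathrel{(\sqsubseteq^{\dagger})^{\inj}}\Theta(U'),
\]
so $\Theta$ reduces $\sqsubseteq_D$ to $(\sqsubseteq^{\dagger})^{\inj}$ and hence the quotient partial order of $\sqsubseteq_D$ embeds into that of $(\sqsubseteq^{\dagger})^{\inj}$. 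By Nash-Williams' theorem on better-quasi-ordering transfinite sequences (\cite{NW67}; cf.\ Remark~\ref{rmk:wellordersonomega}(1), whose argument for quasi-orders on $\omega$ applies to bqo's on an arbitrary set, since the increasing-embedding order on $\omega$-sequences over a bqo is a bqo and the injection order is coarser), $(\sqsubseteq^{\dagger})^{\inj}$ is a bqo; therefore $\sqsubseteq_D$ is a bqo and the induction is complete.

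I expect the difficulty to be bookkeeping rather than anything deep: the step that needs care is the passage to $(\sqsubseteq^{\dagger})^{\inj}$, and in particular the adjunction of a least element $\star$ is essential — padding finitely many classes with copies of a $\sqsubseteq$-incomparable element would wrongly prevent spaces with few classes from embedding into spaces with infinitely many — while one must also note that $\Theta$, though possibly non-injective, still induces an embedding of quotient partial orders. The only non-elementary ingredient is the cited Nash-Williams theorem. The last assertion of the Proposition is then immediate: every bqo is a wqo, so $\sqsubseteq_D$ and $\sqsubseteq^\star_D$ have neither an infinite antichain nor an infinite strictly descending sequence; a Borel reduction of $(\omega,{=})$ would yield an infinite antichain (its values being pairwise incomparable, hence distinct) and a Borel reduction of $(\omega,{\geq})$ an infinite strictly descending sequence, so neither of these quasi-orders reduces to $\sqsubseteq_D$ or to $\sqsubseteq^\star_D$.
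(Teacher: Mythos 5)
Your proof is correct, and its skeleton is essentially the one the paper uses in its detailed proof of Proposition~\ref{propbqo} in Section~\ref{sect:wf}: induct on $|D|$, split each space into the clopen balls of radius below $\max D$ (which is exactly the content of the successor case of Theorem~\ref{lem:successor}, here re-proved rather than quoted), and then invoke a Nash-Williams preservation theorem to see that the resulting jump of a bqo is again a bqo. Where you diverge is in the last step: the paper reduces $S^{\inj}$ to the quasi-order $S^{\#}$ on $\pow(\U_n)$ (by replacing the terms of a sequence with pairwise distinct isometric copies) so that only \cite[Corollary 28A]{NashWil1965} is needed, whereas you adjoin a bottom element $\star$ to absorb the padding and appeal to Nash-Williams' transfinite-sequence theorem \cite{NW67} (via the observation that the injection order is coarser than the order-preserving one, which indeed preserves bqo). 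Both are valid, and your $\star$-padding and the verification that $\Theta$ reduces $\sqsubseteq_D$ to $(\sqsubseteq^{\dagger})^{\inj}$ are handled correctly; the trade-off is that \cite{NW67} is a substantially heavier (and more choice-hungry) tool than Corollary 28A, so your variant proves the proposition in ZFC but loses the refinement the paper explicitly cares about, namely that the argument goes through in $\ZF+\AC_\omega(\RR)$ alone. Your treatment of $\sqsubseteq^\star_D$ as a restriction of a bqo is a slight simplification of the paper's route through Lemmas~\ref{lemma:basic} and~\ref{lemmaequivalence}, and your derivation of the final non-reducibility statements from wqo-ness is exactly as intended.
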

\begin{proof}[Sketch of the proof]
If \( D \) has cardinality \(n>0 \), then, as noticed in \cite[Theorem
8.4]{GaoShao2011}, \( \sqsubseteq^\star_D \) is Borel bireducible to the
relation of embeddability on subtrees of \( \pre{< \omega}{\omega} \) of
height \( n \). Hence the result follows from a classic result of
Nash-Williams (\cite[Theorem 2]{NashWil1965}).
\end{proof}

The proof of~\cite[Theorem 2]{NashWil1965} makes an essential use of a strong
form of the Axiom of Choice \( \AC \) (see the proof of~\cite[Lemma
38]{NashWil1965}, in which one argues by induction on a well-ordering of all
barriers). However, only very weak versions of \( \AC \) are actually needed
to obtain Proposition~\ref{propbqo}: indeed, in Section~\ref{sect:wf} we will
provide an alternative proof of this result in \( \ZF + \AC_\omega(\RR) \)
alone (where \( \AC_\omega(\RR) \) is the Axiom of Countable Choice over the
Reals).

\begin{proposition}[{\cite[Theorem 8.3]{GaoShao2011}}] \label{prop:louveaurosendal}
If \( D \) contains a decreasing sequence converging to \( 0 \) then \(
\sqsubseteq_D \) is complete for analytic quasi-orders.
\end{proposition}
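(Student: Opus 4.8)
The plan is to transfer, via the reduction machinery of Lemma~\ref{lemma:basic}, the theorem of Louveau and Rosendal (\cite{louros}) that isometric embeddability between ultrametric Polish spaces is complete for analytic quasi-orders. The crucial observation to record first is that their argument actually takes place inside a single class \( \U_{D_0} \): it produces, for every analytic quasi-order, a Borel reduction into isometric embeddability using only ultrametric Polish spaces whose realized distances lie in a fixed \( D_0 \in \mathcal{D} \) having \( 0 \) as its unique limit point (concretely, \( D_0 = \{ 0 \} \cup \{ 2^{-n} \mid n \in \omega \} \)) --- this is exactly the ``distances converging to zero'' feature emphasized in the introduction. Hence their proof in fact establishes that \( \sqsubseteq_{D_0} \) is complete for analytic quasi-orders, and it remains only to show \( {\sqsubseteq_{D_0}} \leq_B {\sqsubseteq_D} \).

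To this end, write \( D_0 \setminus \{ 0 \} = \{ d_n \mid n \in \omega \} \) with \( (d_n)_{n \in \omega} \) strictly decreasing (possible since \( 0 \) is the unique limit point of the countable set \( D_0 \)), and let \( (r_n)_{n \in \omega} \) be a strictly decreasing sequence in \( D \) with \( \lim_n r_n = 0 \), which exists by hypothesis. Consider the map \( f \colon D_0 \to D \) defined by \( f(0) = 0 \) and \( f(d_n) = r_n \) for \( n \in \omega \). Then \( f \) is injective, order preserving (both sequences being strictly decreasing), satisfies \( f^{-1}(0) = \{ 0 \} \), and is continuous at \( 0 \) with inverse continuous at \( 0 \) (because \( d_{n_k} \to 0 \iff n_k \to \infty \iff r_{n_k} \to 0 \)); thus \( f \) is a Polish ultrametric preserving injection. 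By Lemma~\ref{lemma:basic}(1), \( \sqsubseteq_{D_0} \) classwise Borel embeds into \( \sqsubseteq_D \), and in particular \( {\sqsubseteq_{D_0}} \leq_B {\sqsubseteq_D} \).

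Combining the two steps, every analytic quasi-order Borel reduces to \( \sqsubseteq_{D_0} \) and hence to \( \sqsubseteq_D \); and \( \sqsubseteq_D \) is itself an analytic quasi-order, being the restriction of the analytic relation \( \sqsubseteq \) to the standard Borel space \( \U_D \) (cf.\ Proposition~\ref{prop:standardBorel}). Therefore \( \sqsubseteq_D \) is complete for analytic quasi-orders, as required. I expect no genuine obstacle here: the only verification is that \( f \) is Polish ultrametric preserving, carried out above, while all the real work --- the construction of a universal reduction into isometric embeddability of ultrametric spaces with vanishing distances --- is isolated inside the Louveau--Rosendal theorem, which I would invoke as a black box. (By the same transfer, using classwise Borel embeddability together with the invariant universality of \( \sqsubseteq \) on ultrametric Polish spaces from \cite{cammarmot}, one even gets that \( \sqsubseteq_D \) is invariantly universal; but only completeness is claimed here.)
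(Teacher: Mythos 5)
Your proposal is correct and takes essentially the same route as the paper: the paper likewise observes that the Louveau--Rosendal construction only produces spaces in \( \U_{D_0} \) for \( D_0 = \{0\} \cup \{2^{-n} \mid n \in \omega\} \), and then handles general \( D \) by systematically replacing \( 2^{-n} \) with \( r_n \). Your packaging of that replacement step through Lemma~\ref{lemma:basic}(1), with the verification that \( 2^{-n} \mapsto r_n \) is Polish ultrametric preserving, is the same transfer (a point the paper itself notes, e.g.\ in the proofs of Theorem~\ref{theorconvergingto0} and Corollary~\ref{isomuniversal}).
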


\begin{proof}
This is essentially~\cite[Proposition 4.2]{louros}, whose proof involves only
spaces in \( \U_D \) for \( D  = \{ 0, 2^{-n} \mid n \in \omega \} \). For
the general case we choose from \( D \) a strictly decreasing sequence \(
(r_n)_{n \in \omega} \) converging to \( 0 \), and then  in the construction
of Louveau and Rosendal we systematically replace the distance \( 2^{-n} \)
with \( r_n \).
\end{proof}

This has been (essentially) strengthened in~\cite{cammarmot} to the following
result.

\begin{theorem}[{\cite[Theorem 5.19]{cammarmot}}] \label{theorconvergingto0}
If \( D \) contains a decreasing sequence converging to \( 0 \) then \(
\sqsubseteq_D \) is invariantly universal.
\end{theorem}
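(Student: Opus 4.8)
The plan is to deduce the statement from the invariant universality of embeddability between combinatorial trees, via two successive classwise Borel embeddings. First I would pass to the distinguished set of distances $D_0 = \{ 0 \} \cup \{ 2^{-n} \mid n \in \omega \}$. Fixing a strictly decreasing sequence $(r_n)_{n \in \omega}$ of elements of $D$ converging to $0$, the map $f \colon D_0 \to D$ with $f(0) = 0$ and $f(2^{-n}) = r_n$ is an injective Polish ultrametric preserving function: it is increasing with $f^{-1}(0) = \{ 0 \}$, and it is continuous at $0$ because $r_n \to 0$. Inspecting the proof of Lemma~\ref{lemma:basic}(1), the single Borel map it produces reduces $\isom_{D_0}$ to $\isom_D$ and $\sqsubseteq_{D_0}$ to $\sqsubseteq_D$ at the same time, and together with its partial inverse it witnesses that the \emph{pair} $(\sqsubseteq_{D_0}, \isom_{D_0})$ is classwise Borel embeddable into $(\sqsubseteq_D, \isom_D)$. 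By the transfer principle recorded right after Definition~\ref{def:cB} --- namely that if $(R,F)$ is invariantly universal and $(R,F) \sqsubseteq_{cB} (S,E)$ then $(S,E)$ is invariantly universal --- it thus suffices to prove that $(\sqsubseteq_{D_0}, \isom_{D_0})$ is invariantly universal.

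For this I would use that the embeddability quasi-order $\sqsubseteq$ on the standard Borel space of (countable) combinatorial trees, paired with the isomorphism relation $\isom$, is invariantly universal for analytic quasi-orders (this is established in \cite{cammarmot}, building on the main result of \cite{frimot}). Applying the transfer principle once more, it then suffices to exhibit a Borel assignment $T \mapsto U_T \in \U_{D_0}$ that simultaneously reduces embeddability and isomorphism of trees to $\sqsubseteq_{D_0}$ and $\isom_{D_0}$, whose range is, up to isometry, a Borel $\isom_{D_0}$-invariant subset $B$ of $\U_{D_0}$, and which admits a Borel ``inverse up to the equivalences'' $U \mapsto T_U$ defined on $B$; such data is exactly a classwise Borel embedding of $(\sqsubseteq, \isom)$ on trees into $(\sqsubseteq_{D_0}, \isom_{D_0})$.

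The construction of $U_T$ exploits the fact --- made precise by the recursive characterization in Lemma~\ref{lem:limit} --- that an ultrametric space is canonically organised as a tree of balls, with successive levels corresponding to the radii $2^{-n}$, and that under this correspondence $\isom_{D_0}$ and $\sqsubseteq_{D_0}$ turn into a back-and-forth, bisimulation-type relation and an embeddability relation between these ``metric trees''. One therefore realises a given combinatorial tree $T$ by letting $U_T$ be the ultrametric Polish space whose associated tree of balls codes $T$, using the finest scales to rigidly encode the combinatorial data (adjacency, the root, and vertex labels if needed) by prescribed, isometrically rigid finite configurations of points placed at radii $2^{-n}$ for large $n$; completeness of $U_T$ is obtained exactly as in the proof of Proposition~\ref{prop:louveaurosendal}, using $2^{-n} \to 0$ so that every branch of the ball-tree yields a convergent sequence. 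One then checks with Lemma~\ref{lem:limit} that $T \sqsubseteq T' \iff U_T \sqsubseteq U_{T'}$ and $T \isom T' \iff U_T \isom U_{T'}$.

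The hard part is not the reduction of embeddability by itself --- that is essentially Proposition~\ref{prop:louveaurosendal} --- but securing the \emph{invariant} part: one must make the encoding faithful for isometry (so that an isometry between $U_T$ and $U_{T'}$ can read off an isomorphism $T \isom T'$, which is what forces the encoding gadgets to be rigid) and, more delicately, show that the class $B$ of spaces isometric to some $U_T$ is Borel and that the decoding $U \mapsto T_U$ can be performed in a Borel way on $B$. This last point is where the definability and absoluteness arguments of \cite{frimot, cammarmot} enter: roughly, the spaces of the form $U_T$ are singled out by an $\mathcal{L}_{\omega_1 \omega}$-type condition on a canonical dense sequence, hence form a Borel $\isom_{D_0}$-invariant set, and $T_U$ can be reconstructed Borel-uniformly from such a sequence. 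Composing the two classwise Borel embeddings then yields that $(\sqsubseteq_D, \isom_D)$ is invariantly universal.
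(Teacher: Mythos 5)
Your proposal follows the paper's own route: both reduce the general case to the canonical distance set $D_0=\{0\}\cup\{2^{-n}\mid n\in\omega\}$ via the Polish ultrametric preserving injection $2^{-n}\mapsto r_n$, Lemma~\ref{lemma:basic}, and the transfer observation after Definition~\ref{def:cB}, and for $D_0$ both ultimately rely on (the proof of) \cite[Theorem 5.19]{cammarmot}. Your extra sketch of how the $D_0$ case is established there (encoding rigid combinatorial trees into ultrametric spaces and invoking the definability machinery of \cite{frimot,cammarmot}) is consistent with that cited argument, so the proof is correct and essentially the same as the paper's.
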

\begin{proof}
For \( D  = \{ 0, 2^{-n} \mid n \in \omega \} \) this follows from the proof
of \cite[Theorem 5.19]{cammarmot}. The general case follows from
Lemma~\ref{lemma:basic}.
\end{proof}

The proofs of Proposition~\ref{prop:louveaurosendal} and
Theorem~\ref{theorconvergingto0} do not say anything about the relation \(
\sqsubseteq^\star_D \): even when \( D  = \{0\} \cup \{2^{-n} \mid n \in
\omega \} \) the spaces involved in those arguments do not realize all
distances from \( D \).

\section{The complexity of isometry}

In this section we fully answer Question~\ref{questgaoshao}, and as a
by-product also Question~\ref{mainquest}.

\subsection{Ill-founded sets of distances} \label{isomif}
We first focus on the study of the complexity of the relations \( \isom_D \)
and \( \isom^\star_D \) for an ill-founded \( D \in \mathcal{D} \). To this
aim, we will use the following combinatorial objects. A \emph{rooted
combinatorial tree} \( G \) is a connected acyclic graph with a distinguished
vertex called the root of \( G \). The collection of all rooted combinatorial
trees with universe $\omega $ forms a Borel subset \( \RCT \) of the Polish
space $\Mod_{ \L }$ of \( \L \)-structures on $\omega $ (for a suitable
language \( \L \)). The relation of isomorphism on $ \RCT $ is easily seen to
be Borel bireducible with isomorphism on trees on $\omega$, and the latter is
Borel bireducible to countable graph isomorphism by \cite{Friedman1989}.

Each \( G \in \RCT \) can be identified in a Borel-in-\( G \) way with a
graph \( G' \) having as domain a subset of  \( \pre{< \omega}{\omega} \)
closed under subsequences and such that its root is \( \emptyset \) and its
edge relation coincide with the successor relation, i.e.\ for \( s, t \in G'
\subseteq \pre{<\omega}{\omega} \) we have \( s \mathrel{G'} t \) if and only
if \( s \restriction \leng(s) -1 = t \) or \( t \restriction \leng(t) -1 = s
\).

Let \( D \in \mathcal{D} \) contain a strictly decreasing sequence \( (r_n)_{n \in \omega} \) with \( r_n
\to r > 0 \). We define a distance on $\pre{< \omega}{\omega}$ with values in
\( \{ 0 \} \cup \{ r_n \mid  n \in \omega \} \subseteq D \) by
\[
d(s,t)= \begin{cases}
0 & \text{if } s = t \\	
r_n & \text{if } s \neq t \text{ and $n$ is greatest such that } s \restriction n = t \restriction n.
\end{cases}
\]
It is easy to check that \( d \) is an ultrametric. The completeness of \( d
\) follows from the fact that \( r > 0 \).\footnote{Had we defined \( d \)
using a sequence \( (r_n)_{n \in \omega} \) converging to \( 0 \), the
resulting metric would have been a non-complete ultrametric, as if \( x \in
\pre{\omega}{\omega} \) then \( ( x \restriction n )_{ n \in \omega } \)
would be a non-converging \( d \)-Cauchy sequence.} Fix an isometric
embedding $\rho$ of $(\pre{< \omega}{\omega},d)$ into the Urysohn space
$\mathbb{U}$. Then $\rho$ induces a Borel map from the subsets of $\pre{<
\omega}{\omega}$ to $\U_D$.

Given \( G \in \RCT \), we construct an ultrametric Polish space \( U_G
\) with distances in \( D \) as
follows. The domain of \( U_G \) is \( G' \subseteq \pre{< \omega}{\omega} \)
and the distance $d_G$ is the restriction of $d$ to $G'$. Notice that \( s
\in G' \) realizes a distance \( r_n \) (i.e.\ $d_G(s,t) = r_n$ for some $t
\in G'$) if and only if either $n <\leng(s)$ or $n = \leng(s)$ and \( s \) is
not a terminal node of \( G' \) (i.e.\ there exists $t \in G'$ with \( s
\subsetneq t \)). Notice also that
\begin{equation}\label{equivalence}
s \subsetneq t \text{ if and only if } d_G(s,t) = r_{\leng(s)}.
\end{equation}

\begin{defin} \label{theta}
Let \( \theta \colon \RCT \to \U_D \) be the composition of the map sending
$G$ to \(U_G\) with the map induced by $\rho$.
\end{defin}

Clearly \( \theta \) is a Borel map.

\begin{theorem} \label{bigtheorem}

The function \( \theta \) simultaneously reduces isomorphism to isometry, and
embeddability to isometric embeddability.
\end{theorem}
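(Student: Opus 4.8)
The plan is to establish, for all $G, H \in \RCT$, the two equivalences $G \isom H \iff \theta(G) \isom \theta(H)$ and $G \sqsubseteq H \iff \theta(G) \sqsubseteq \theta(H)$, where on $\RCT$ we use isomorphism and embeddability of rooted combinatorial trees. Since $\theta(G)$ is, via the fixed isometric embedding $\rho$, just an isometric copy of the metric space $(G', d_G)$, it suffices to work with $(G', d_G)$ and with the identification of $G$ with the subtree $G' \subseteq \pre{<\omega}{\omega}$. Throughout, write $s \wedge t$ for the longest common initial segment of $s, t \in \pre{<\omega}{\omega}$, so that $d_G(s,t) = r_{\leng(s \wedge t)}$ for distinct $s, t \in G'$.

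The forward directions are the easy half. If $\varphi \colon G \to H$ is an isomorphism, respectively an embedding, of rooted combinatorial trees, then via the identification it is a bijection, respectively an injection, $G' \to H'$ sending $\emptyset$ to $\emptyset$ and edges to edges; hence it carries the geodesic from the root to any vertex $s$ onto the geodesic from the root to $\varphi(s)$, so it preserves lengths and the initial-segment relation, and therefore also meets: $\leng(\varphi(s) \wedge \varphi(t)) = \leng(s \wedge t)$. By the formula above this gives $d_H(\varphi(s), \varphi(t)) = d_G(s,t)$, so $\varphi$ is an isometric embedding of $U_G$ into $U_H$, and a surjective one---hence an isometry---when $\varphi$ is an isomorphism. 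Composing with $\rho$ transfers this to $\theta(G), \theta(H)$.

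The backward directions are the substantive half, and here the point is that the metric space $U_G$ faithfully codes $G'$. The basic tool is Lemma~\ref{lem:limit} together with the explicit shape of the spheres $C^s_{r_n}(U_G)$: this sphere is empty unless $n \le \leng(s)$; for $n < \leng(s)$ it consists of $s\restriction n$ together with the subtrees of $G'$ below $s\restriction n$ not passing through $s$; and for $n = \leng(s)$ it is precisely the set of proper extensions of $s$ in $G'$, which with its induced metric is an ultrametric space of exactly the same type as $U_G$---built now from the shifted sequence $(r_{n+k})_{k \in \omega}$ (still converging to $r > 0$, so still complete) over the subtree of $G'$ hanging below $s$ re-rooted at $s$. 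Given an isometry $U_G \isom U_H$, clause~(ii) of Lemma~\ref{lem:limit} applied at the root $\emptyset_G$ yields a vertex $x' \in H'$ whose only possibly nonempty sphere is $C^{x'}_{r_0}(U_H) \isom G' \setminus \{\emptyset_G\}$; a short inspection shows $x'$ must be the root of $H'$ (the degenerate cases $|H'| \le 1$ and $x'$ a leaf just below the root being handled directly), after which matching the radius-$r_1$ balls---i.e.\ the subtrees below the children of the root---lets one peel off a level and recurse, reconstructing $G'$ from $U_G$ up to rooted-tree isomorphism by equation~\eqref{equivalence}. Isometric embeddability is treated the same way using clauses~(iv)--(vi). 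Alternatively, and more uniformly, one checks that an isometry (resp.\ isometric embedding) $\psi \colon U_G \to U_H$ automatically preserves (resp.\ never decreases) $\leng(\cdot \wedge \cdot)$, defines the level-corrected map $s \mapsto \psi(s)\restriction\leng(s)$ (patching the at-most-one-per-parent exceptions where $\psi$ sends a leaf to a strictly shorter sequence), and verifies that this is a level-preserving tree embedding, and an isomorphism when $\psi$ is.

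The main obstacle is this backward direction, and specifically the mismatch between the combinatorics of $G'$ and the geometry of $U_G$: a metric isomorphism or embedding need not respect the distinguished root or the levels, and a leaf of $G'$ at level $k$ can be sent by an isometry to a vertex at level $k-1$, namely an ancestor of the image of its parent. Correctly handling these exceptions, keeping the shifted-distance bookkeeping straight through the recursion, and ensuring the argument covers ill-founded as well as well-founded trees (for which it is convenient to read the tree structure off the poset of balls of $U_G$ rather than to recurse naively) is where essentially all of the work lies; everything else is routine given Lemma~\ref{lem:limit} and equation~\eqref{equivalence}.
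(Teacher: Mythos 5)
Your forward direction is fine and matches the paper's. Your diagnosis of the crux is also correct and is exactly the paper's: an isometric embedding $\psi \colon U_G \to U_H$ preserves meet-lengths, hence can fail to be level-preserving only in that a terminal node of $G'$ may be sent one level down, onto a proper initial segment of the image of its parent, and at most one terminal node per parent can do this; the paper implements the repair as a recursive correction (its Cases 1--3, Case 3 being precisely the leaf/parent swap), while your ``level-corrected map with patches'' is a global reformulation of the same idea. So in spirit the second route is the paper's proof.

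The problem is that, as written, the proof is not actually carried out, and the deferred part is the theorem. First, your sphere-recursion route contains a false step: it is not true, up to ``degenerate cases'', that an isometry must send the root of $G'$ to the root of $H'$. Any terminal node at level $1$ realizes exactly the distances $\{0,r_0\}$, just like the root, and swapping the root with such a node is an isometry of $U_H$ even when $H'$ is infinite; so the image of the root can be a level-$1$ leaf of an arbitrarily large tree, and handling this is exactly the swap phenomenon, not a degeneracy. Moreover the level-by-level recursion has no termination for ill-founded trees, which you flag but do not repair. Second, in the uniform route, everything you actually need is asserted rather than proved: that $\leng(\psi(s)) \geq \leng(s)$ for non-terminal $s$ and $\leng(\psi(s)) \geq \leng(s)-1$ for terminal $s$, with $\psi(s)$ an initial segment of the image of its parent in the dropping case; that the patched map $s \mapsto \psi(s)\restriction \leng(s)$ (with dropped leaves redirected) is well defined, injective, and preserves \emph{and reflects} edges; and, in the isometry case, that the corrected map is surjective --- a point you do not address at all, and which in the paper requires its own argument (Case 2 cannot occur, eventual constancy of $\varphi_n^{-1}(t)$; in your global setup one would instead apply the length bound to $\psi^{-1}$). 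Your closing admission that ``this is where essentially all of the work lies'' is accurate, and that work --- the content of the paper's Cases 1--3 and the surjectivity argument --- is missing from the proposal.
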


\begin{proof}
Fix first $G,H\in \RCT $ and suppose \( \varphi \) embeds \( G \) into \( H
\). This induces an embedding $\varphi' \colon G'\to H'$. Since \(
\varphi'(\emptyset) = \emptyset \) (because \( \emptyset \) is the root of
both \( G' \) and \( H' \)), it is easy to check by induction on \( \leng(s)
\) that \( \leng(\varphi'(s)) = \leng(s) \) for every \( s \in G' \). Arguing
by induction on \( \leng(t) \) and using the previous observation, it follows
that \( s \subseteq t \iff \varphi'(s) \subseteq \varphi'(t) \) for every \(
s,t \in G' \). Therefore \( \varphi' \) is a distance preserving map
witnessing \( U_G \sqsubseteq U_H \), whence $\theta (G)\sqsubseteq\theta
(H)$. If moreover $\varphi $ is assumed to be an isomorphism, then $\varphi'$
is surjective too, whence $\theta (G) \isom \theta (H)$.

Assume now that \( \varphi \) is an isometric embedding between \( U_G \) and
\( U_H \). Fix an enumeration without repetitions \( ( s_n )_{ n \in \omega }
\) of \( G' \) such that \( s_n \subseteq s_m \Rightarrow n \leq m \). We
will recursively construct a sequence of maps \( \varphi_n \colon G' \to H'
\) with the following properties:
\begin{enumerate}[(i)]
\item
\( \varphi_n \) is distance preserving (i.e.\ an isometric embedding);
\item
\( \varphi_{n}(s_i) = \varphi_i(s_i) \) for every \( i < n \);
\item
\( \leng(\varphi_n(s_n)) = \leng(s_n) \).
\end{enumerate}
Given such a sequence \( ( \varphi_n )_{ n \in \omega } \), define \(
\tilde{\varphi} \colon G' \to H' \) by setting \( \tilde{\varphi}(s_n) =
\varphi_n(s_n) \) for every \( n \in \omega \). By (i) and (ii) above, \(
\tilde{\varphi} \) preserves distances between the spaces \( U_G \) and \(
U_H \). We now claim that \( \tilde{\varphi} \) is a graph embedding of \( G'
\) into \( H' \) (so that $G$ embeds into $H$). Let \( s,t \in G' \) be
linked by an edge in \( G' \), and assume without loss of generality that \(
s \subsetneq t \) (so that, in particular, \( \leng(t) =\leng(s) + 1 \)).
Then by \eqref{equivalence} we have \( d_G(s,t) = r_{\leng(s)} \). Since \(
\tilde{\varphi} \) preserves distances, \( d_H (\tilde{\varphi}(s),
\tilde{\varphi}(t)) = r_{\leng(s)} \). By (iii) above, \(
\leng(\tilde{\varphi}(s)) = \leng(s) \), and therefore \( \tilde{\varphi}(t)
\supsetneq \tilde{\varphi}(s) \) by \eqref{equivalence}. Since \(
\leng(\tilde{\varphi}(t)) = \leng(t) = \leng(s) +1 =
\leng(\tilde{\varphi}(s)) + 1 \) by (iii) again, it follows that \(
\tilde{\varphi}(s) \) and \( \tilde{\varphi}(t) \) are linked by an edge in
\( H' \). A similar argument shows that if \( s,t \in G' \) are such that \(
\tilde{\varphi}(s) \) and \( \tilde{\varphi}(t) \) are linked by an edge in
\( H' \) then \( s \) and \( t \) are linked by an edge in \( G' \).

We show next how to recursively construct the sequence \( ( \varphi_n )_{ n \in \omega  }
\) with the desired properties (i)--(iii). Set \( \varphi_{-1} = \varphi \). Let now \(
n \in \omega \) and suppose that \( \varphi_i \) has been defined for every
\( 0 \leq i < n \) and that it satisfies conditions (i)--(iii) above. We
first show that \( \leng(\varphi_{n-1}(s_n)) \geq \leng(s_n) \). This is
clear if \( n = 0 \), as \( s_0 = \emptyset \) by the choice of the
enumeration \( ( s_n )_{ n \in \omega } \). If \( n > 0 \), suppose toward a
contradiction that \( k = \leng(\varphi_{n-1}(s_n)) < \leng(s_n) \), and let
\( i \in \omega \) be such that \( s_i = s_n \restriction k \). Then \( i < n
\) (since \( s_i \subsetneq s_n \)), so that \( \leng(\varphi_{n-1}(s_i)) =
\leng(s_i) = k \) by inductive hypothesis (conditions (ii) and (iii)).
Therefore \( \leng( \varphi_{n-1}(s_i)) = \leng(\varphi_{n-1}(s_n)) = k \),
whence  \( d_H(\varphi_{n-1}(s_i), \varphi_{n-1}(s_n)) > r_k \) by
injectivity of \( \varphi_{n-1} \). Since \( d_G(s_i,s_n) = r_k \) and \(
\varphi_{n-1} \) is distance preserving, this is a contradiction.

We now define \( \varphi_n \) by redefining \( \varphi_{n-1} \) on \( s_n \)
(and possibly on another sequence), and to do this we distinguish various
cases according to the behavior of \( \varphi_{n-1} \). Let \( k = \leng(s_n)
\).

\textbf{Case 1:} \emph{\( \leng(\varphi_{n-1}(s_n)) = k \)}. Then we set \(
\varphi_n = \varphi_{n-1} \): conditions (i)--(iii) are then trivially
satisfied by \( \varphi_n \) by case assumption and inductive hypothesis.

\textbf{Case 2:} \( \leng(\varphi_{n-1}(s_n)) > k \) but \(
\varphi_{n-1}(s_n) \restriction k \) is not in the range of \( \varphi_{n-1}
\). Then we set \( \varphi_n(s_n) = \varphi_{n-1}(s_n) \restriction k \) and
\( \varphi_n(t) = \varphi_{n-1}(t) \) for every \( t \neq s_n \). Then
(ii)--(iii) are automatically satisfied. To check that \( \varphi_n \) is
still distance preserving, for \( t \neq s_n \) let \( i \in \omega \) be
such that \( d_G(t,s_n) = r_i \). Then \( i \leq k \) and \(
d_H(\varphi_{n-1}(t),\varphi_{n-1}(s_n)) = r_i \) (since \( \varphi_{n-1} \)
is distance preserving), whence  by definition of \( d_H \)
\[
d_H(\varphi_n(t), \varphi_n(s_n)) = d_H(\varphi_{n-1}(t),\varphi_{n-1}(s_n) \restriction k)
= d_H(\varphi_{n-1}(t),\varphi_{n-1}(s_n)) = r_i .
\]

\textbf{Case 3:} \( \leng(\varphi_{n-1}(s_n)) > k \) and there is \( s_i \neq
s_n \) such that \( \varphi_{n-1}(s_i) = \varphi_{n-1}(s_n) \restriction k
\). In this case we set \( \varphi_n(s_n) = \varphi_{n-1}(s_i) =
\varphi_{n-1}(s_n) \restriction k \), \( \varphi_n(s_i) = \varphi_{n-1}(s_n)
\), and \( \varphi_n(t) = \varphi_{n-1}(t) \) for every \( t \in G' \setminus
\{ s_n , s_i\} \). It is clear that condition (iii) is satisfied by
definition. Since \( d_H(\varphi_{n-1}(s_n),\varphi_{n-1}(s_i)) = r_k \) and
\( \varphi_{n-1} \) is distance preserving, \eqref{equivalence} implies that
\( s_n \subsetneq s_i \). This implies \( n < i \), so that also (ii) is
satisfied by \( \varphi_n \) (\( \varphi_n \) coincides with \( \varphi_{n-1}
\) on each \( s_j \) with \( j \neq i,n \), hence in particular on every \(
s_j \) with \( j < n \)). It remains only to show that \( \varphi_n \) is
distance preserving. Assume first  towards a contradiction that there is some
\( t \in G' \) such that \( s_n \subsetneq t \subsetneq s_i \). Then \(
d_H(\varphi_{n-1}(t), \varphi_{n-1}(s_i)) = d_G(t,s_i)  = r_{\leng(t)} < r_k
\), while \( \varphi_{n-1}(s_i) \) can realize, besides \( 0 \), only
distances \( \geq r_k \) in \( U_H \) (by  \( \leng(\varphi_{n-1}(s_i)) = k
\) and the definition of \( d_H \)): this gives the desired contradiction.
Similarly, if \( t \in G' \) is such that \( s_i \subsetneq t \), then \(
d_H(\varphi_{n-1}(t), \varphi_{n-1}(s_i)) = d_G(t,s_i)  = r_{\leng(s_i)} <
r_k \), contradicting again the fact that  in \( U_H \) the point \( \varphi_{n-1}(s_i) \) can
realize, besides \( 0 \), only distances \( \geq r_k \).
Therefore \( s_i \) is an immediate successor of \( s_n \) (i.e.\ \(
\leng(s_i) = k +1 \)) and is a terminal node in \( G' \). By definition of \(
d_G \), this implies that \( d_G(s_i,t) = d_G(s_n,t) \) for every \(  t \in
G' \setminus \{ s_n , s_i\} \). Since \( d_H(\varphi_n(s_i),\varphi_n(t)) =
d_H(\varphi_{n-1}(s_n), \varphi_{n-1}(t)) = d_G(s_n,t) \) and \(
d_H(\varphi_n(s_n),\varphi_n(t)) = d_H(\varphi_{n-1}(s_i), \varphi_{n-1}(t))
= d_G(s_i,t) \), we get \( d_H(\varphi_n(s_i),\varphi_n(t)) = d_G(s_i,t) \)
and \( d_H(\varphi_n(s_n),\varphi_n(t)) = d_G(s_n,t) \). Moreover,
\[
d_H(\varphi_n(s_n), \varphi_n(s_i)) = d_H(\varphi_{n-1}(s_i),\varphi_{n-1}(s_n)) = d_G(s_i,s_n)
\]
(because \( \varphi_{n-1} \) is distance preserving). Since \( \varphi_n
\restriction (G \setminus \{ s_n,s_i \})  = \varphi_{n-1} \restriction (G
\setminus \{ s_n,s_i \}) \) and \( \varphi_{n-1} \) satisfies (i) by
inductive hypothesis, we obtain that \( \varphi_n \) is distance preserving
as well, hence we are done.

If moreover \( \varphi \) is an isometry between \( U_G \) and \( U_H \), it
will be proved that \( \tilde{\varphi} \) is surjective: by the previous
arguments, this implies that \( \tilde{\varphi} \) is an isomorphism between
\( G' \) and \( H' \), whence \( G \) and \( H \) are isomorphic. Since \(
\varphi \) is surjective, it is easy to show by induction on \( n \in \omega
\) that so are all the \( \varphi_n \) because Case 2 cannot occur. Moreover,
whenever $\varphi_n\neq\varphi_{n-1}$, which means that $\varphi_n$ has been
defined according to Case 3, one also has $ \leng (\varphi_{n-1}(s_n))= \leng
(s_n)+1$: otherwise $\varphi_{n-1}(s_n)$ would realize in $H'$ the distance
$r_{ \leng (s_n)+1}$, while this does not happen for $s_n$ in $G'$. So
$\varphi_n$ can differ from $\varphi_{n-1}$ only on two sequences (namely, by
the previous argument, $s_n$ and some immediate successor $s_i$ of it) on
which $\varphi_{n-1}$ is not order preserving, but $\varphi_n$ is. This
implies that for every $t\in H'$ the sequence
$(\varphi_n^{-1}(t))_{n\in\omega }$ is eventually constant --- in fact it
takes at most two values. Clearly, its eventual value is $ \tilde
\varphi^{-1}(t)$.
\end{proof}

\begin{corollary} \label{isomuniversal}
Let $D\in \mathcal D $ be ill-founded. Then the relation of isometry on $
\U_D$ is Borel bireducible with countable graph isomorphism.
\end{corollary}

\begin{proof}
The relation \( \isom_D \) Borel reduces to countable graph isomorphism by
Proposition~\ref{prop:knownisom}, so let us show that countable graph isomorphism (or,
equivalently, isomorphism between trees on \( \omega \), or isomorphism on \(
\RCT \)) Borel reduces to \( \isom_D \). If $D$ contains an ill-founded
subset bounded away from $0$, then apply Theorem \ref{bigtheorem}. Otherwise,
$D$ contains a decreasing infinitesimal sequence $r_n$. Then apply the proof
of \cite[Theorem 4.4]{Gao2003} noticing that even if such a proof uses the
sequence $r_n=2^{-n}$, it actually works for any decreasing infinitesimal
sequence (essentially, this amounts to using Lemma \ref{lemma:basic} above).
\end{proof}

We can now answer Question \ref{mainquest}. Notice that both classes of
ultrametric Polish spaces considered in the next corollary are Borel
$\boldsymbol \Pi^1_1$-complete\footnote{A reduction of the set of
well-founded trees to the class of locally compact subsets of Baire space is
obtained by mapping the tree $T \subseteq \pre{<\omega}{\omega}$ to the body
of the pruned tree
\[
\set{(2s) {}^\smallfrown{} n^k \in \pre{<\omega}{\omega}}{s \in T \land n \text{ is odd} \land k \in \omega},
\]
where $2s$ is the sequence obtained by doubling every entry of $s$ and $n^k$
is the sequence of length $k$ with all entries equal to $n$. The same
reduction shows also that the class of discrete ultrametric Polish spaces is
Borel $\boldsymbol \Pi^1_1$-complete. In the computable (lightface) setting
Nies and Solecki recently proved that the class of locally compact
ultrametric Polish spaces is $\Pi^1_1$-complete using a different
construction (\cite{Nies-Solecki}).}.

\begin{corollary} \label{cor:mainquest}
The relations of isometry on discrete ultrametric Polish spaces and on
locally compact ultrametric Polish spaces are both Borel bireducible with
countable graph isomorphism.
\end{corollary}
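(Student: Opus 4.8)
The plan is to deduce the corollary from Corollary~\ref{isomuniversal} by working with an ill-founded set of distances that is bounded away from $0$, so that all the spaces produced are discrete. Concretely, I would fix $D = \{0\} \cup \{1 + 2^{-n} \mid n \in \omega\} \in \mathcal{D}$: this $D$ is ill-founded, since it contains the strictly decreasing sequence $(1+2^{-n})_{n \in \omega}$, which converges to $1 > 0$, while $0$ is an isolated point of $D$.

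Next I would record two elementary observations. First, every $U \in \U_D$ is discrete: all nonzero distances realized in $U$ are $\geq 1$, hence every open ball of radius $1$ in $U$ is a singleton. Second, a discrete Polish space is necessarily countable (a dense subset of a discrete space is the whole space, and Polish spaces are separable) and therefore locally compact, since each singleton is a compact open neighbourhood of its point. Using the formalization of $\U_D$ as a Borel subset of $F(\mathbb{U})$ from Proposition~\ref{prop:standardBorel}, we thus have inclusions $\U_D \subseteq \{\text{discrete ultrametric Polish spaces}\} \subseteq \{\text{locally compact ultrametric Polish spaces}\} \subseteq \{\text{ultrametric Polish spaces}\}$ at the level of the spaces of codes (the last two inclusions being, up to coding, the content of Proposition~\ref{prop:knownisom}(3)).

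The lower bound is then immediate. By Corollary~\ref{isomuniversal}, graph isomorphism is Borel reducible to $\isom_D$; since $\isom_D$ is isometry restricted to $\U_D$ and $\U_D$ consists of codes for discrete (hence locally compact) ultrametric Polish spaces, the Borel reduction from $\RCT$ into $\U_D$ provided by Corollary~\ref{isomuniversal} already witnesses that graph isomorphism Borel reduces both to isometry on discrete ultrametric Polish spaces and to isometry on locally compact ultrametric Polish spaces. For the upper bound, Proposition~\ref{prop:knownisom}(1) gives that isometry on arbitrary ultrametric Polish spaces Borel reduces to graph isomorphism; restricting the reducing function to the class of locally compact (resp.\ discrete) ultrametric Polish spaces yields the reduction in the other direction. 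Combining, both relations are Borel bireducible with graph isomorphism.

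There is essentially no obstacle here: all the real work has already been done in Theorem~\ref{bigtheorem} (hence Corollary~\ref{isomuniversal}) and in Proposition~\ref{prop:knownisom}, and what remains is bookkeeping about subclasses and restrictions of Borel maps. The only subtlety worth flagging --- as the footnote to the statement does --- is that the classes of discrete and of locally compact ultrametric Polish spaces are not standard Borel (they are $\boldsymbol{\Pi}^1_1$-complete), so ``Borel reducibility'' for the associated isometry relations is understood in the usual way for $\boldsymbol{\Pi}^1_1$ domains: the reducing functions are restrictions of Borel functions on the ambient standard Borel spaces, which is automatic for every map used above.
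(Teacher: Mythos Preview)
Your proof is correct and follows essentially the same approach as the paper: pick the specific ill-founded $D = \{0\} \cup \{1+2^{-n} \mid n \in \omega\}$ bounded away from $0$, apply Corollary~\ref{isomuniversal} for the lower bound, and use Proposition~\ref{prop:knownisom} for the upper bound. The paper's version is terser, but you have simply made explicit the routine verifications (discreteness of spaces in $\U_D$, discrete $\Rightarrow$ locally compact) that the paper leaves implicit.
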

\begin{proof}
By Proposition~\ref{prop:knownisom} it is enough to show that \( \isom_D \)
is bireducible with countable graph isomorphism  for some \( D \in
\mathcal{D} \) bounded away from \( 0 \). Taking e.g.\ \( D = \{ 0 \} \cup \{
1+2^{-n} \mid n \in \omega \} \) and applying Corollary~\ref{isomuniversal},
we get the desired result.
\end{proof}

Obviously, Corollary~\ref{cor:mainquest} implies that for every class \(
\mathcal{A} \) of ultrametric Polish spaces containing all discrete ones, the
isometry relation on \( \mathcal{A} \) is Borel bireducible with countable
graph isomorphism: this includes the class of \(\sigma\)-compact ultrametric
Polish spaces and the class of countable ultrametric Polish spaces (which are
both Borel $\boldsymbol \Pi^1_1$-complete, see \cite[Proposition
2.5]{lmrnew}).

Observe moreover that a proper analytic equivalence relation on a standard
Borel space (and, consequently, any equivalence relation which is Borel
bireducible with countable graph isomorphism) cannot be Borel reducible to isomorphism
on the collection \( \mathrm{WF} \) of well-founded trees on \(\omega\).
Indeed, \( \mathrm{WF} \) is a \( \boldsymbol{\Pi}^1_1 \)-complete class, and
the map associating to any tree in \( \mathrm{WF} \) its rank is actually a
\( \boldsymbol{\Pi}^1_1 \)-rank. Since the range of any hypothetical Borel
reduction as above would be an analytic subset of \( \mathrm{WF} \),
by~\cite[Theorem 35.23]{Kechris1995} it would be a subset of $ \mathcal
T_{\alpha }$ (see beginning of Section \ref{sect:jump}) for some $\alpha
<\omega_1$. This is impossible since the isomorphism relation on $ \mathcal
T_{\alpha }$ is Borel. So the second lower bound found by Kechris and Gao
(namely, isomorphism on \( \mathrm{WF} \)) is not sharp, even though they
noticed that it is absolutely $\boldsymbol \Delta^1_2$ bireducible with countable graph
isomorphism.

On the other hand we still do not know whether the first lower bound is
sharp, which by our results means whether  isomorphism between trees on
\(\omega\) with countably many infinite branches is Borel bireducible with
countable graph isomorphism (see Question~\ref{questionlowerbound}). This
class too is $ \boldsymbol \Pi^1_1$-complete, and the isomorphism relation on
it is again absolutely $\boldsymbol \Delta^1_2$ bireducible with countable
graph isomorphism. Recall also that, as observed in \cite[Chapter
8]{Gao2003}, this relation is the same as isometry between countable closed
subsets of \( \pre{\omega}{\omega} \).

\medskip

Now we deal with \( \U^\star_D \) for an ill-founded $D\in \mathcal D $. Fix
a decreasing sequence $(r_n)_{n\in\omega }$ in $D$ converging to some $r\geq
0$, and an element $ \bar r \in D$ bigger than every $r_n$. Set $D'=\{ r_n\mid
n\in\omega\}\cup\{ 0\} $. We define a Borel map \( f \colon \U_{D'} \to \U_D
\) as follows. Given \( U \in \U_{D'} \), let \( U^* = (U^*,d_{U^*}) \) be
obtained by gluing \( U \) with the canonical ultrametric Polish space \( U(D
\setminus \{r_0\}) \) (see Definition~\ref{def:U(D)}): this is done by
setting for each \( x \in U \) and \( r' \in D \setminus \{r_0\} \)
\[
d_{U^*}(x,r') = \max \{ \bar{r}, r' \}.
\]
Notice that $U^*$ realizes all distances in $D$, except possibly $r_0$.

We claim that \( U^* \) can be identified in a Borel-in-\( U \) way with an
element \( f(U) \) of  \( \U_D \subseteq F(\mathbb{U}) \). Fix an isometric
embedding $\rho$ of $(\mathbb{U}^\U_D)^*$ in $\mathbb U$. By Lemma
\ref{lemma:2form} we can assume $U \in F(\mathbb{U}^\U_D)$, and hence also \(
U^* \subseteq (\mathbb{U}^\U_D)^* \). Thus we can let $f(U)$ to be the image
of $U^*$ under $\rho$.

\begin{theorem} \label{propertyoff}
The map $f$ reduces $\isom_{D'}$ to $\isom_D$.
\end{theorem}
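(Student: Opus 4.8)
The plan is to show that $f$ is a Borel reduction of $\isom_{D'}$ to $\isom_D$ by checking the two implications separately, relying heavily on the characterization of isometry between ultrametric spaces given in Lemma~\ref{lem:limit}. Borelness of $f$ is already clear from the construction (the gluing is uniform and $\rho$ is a fixed isometric embedding), so the content is the biconditional $U \isom_{D'} V \iff f(U) \isom_D f(V)$, equivalently $U \isom V \iff U^* \isom V^*$.

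For the forward direction, suppose $U \isom V$ via an isometry $g$. I would extend $g$ to $g^* \colon U^* \to V^*$ by letting $g^*$ be the identity on the glued copy of $U(D \setminus \{r_0\})$. Since for every $x \in U$ and $r' \in D \setminus \{r_0\}$ we have $d_{U^*}(x,r') = \max\{\bar r, r'\} = d_{V^*}(g(x),r')$ by definition, and $g$ preserves distances within $U$ while the identity preserves distances within the glued part, $g^*$ is an isometry. (One should note the glued point corresponding to $r'$ has the same distance to every point of $U$, so there is no interaction to check beyond the displayed formula.) Hence $U^* \isom V^*$.

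For the converse, suppose $U^* \isom V^*$. The key structural observation is that the gluing point $\bar r$ (or rather: the distance pattern) lets one recognize the "$U$-part" inside $U^*$ intrinsically, so that any isometry $U^* \isom V^*$ can be massaged into one respecting the decomposition. Concretely, I would use criterion (iii) of Lemma~\ref{lem:limit}: pick a witness point and analyze the balls $C^x_r(U^*)$. For a point $x$ coming from the glued copy $U(D\setminus\{r_0\})$, the ball structure around $x$ is rigid and determined by $D$ alone (it looks like $U(D \setminus \{r_0\})$ with $U$ hanging off at distance $\bar r$), whereas for $x \in U$ the balls of radius $\leq r_0$ reflect the structure of $U$ and the ball of radius $\bar r$ (and larger) sweeps in the glued part, which is the same fixed space on both sides. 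Comparing $C^x_r(U^*)$ with $C^{x'}_r(V^*)$ radius by radius, the glued parts match automatically, and the surviving content is exactly that the corresponding sub-balls inside $U$ and $V$ are isometric, which by Lemma~\ref{lem:limit} again yields $U \isom V$.

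The main obstacle is the converse direction: one must make sure that an abstract isometry $U^* \isom V^*$ cannot "mix" a point of $U$ with a point of the glued copy $U(D \setminus \{r_0\})$, or at least that such mixing is harmless. This is handled by the distance bookkeeping — a point $x \in U$ realizes (besides $0$) only distances in $D' \setminus \{0\}$ to other points of $U$ together with $\{\max\{\bar r, r'\} : r' \in D \setminus \{r_0\}\}$ to the glued part, while a glued point realizes a different, recognizable set of distances (in particular it realizes distances in $D \setminus \{r_0\}$ among glued points, most of which lie strictly below $\bar r$ and hence cannot be realized by any $x \in U$ within $U$, since the $r_n \to r$ and $\bar r$ is above all $r_n$). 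Making this separation precise, and then reading off from Lemma~\ref{lem:limit} that the $U$-to-$V$ restriction of the isometry is itself an isometry of the appropriate balls, is the crux; the rest is the routine verification already modeled on the proof of Lemma~\ref{lem:limit}.
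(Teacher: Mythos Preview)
Your forward direction is exactly the paper's: extend an isometry \( \varphi \colon U_0 \to U_1 \) by the identity on the glued copy \( U(D \setminus \{r_0\}) \).

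The converse is where your proposal has a real gap. You correctly flag that an abstract isometry \( \psi \colon U_0^* \to U_1^* \) might not respect the decomposition, but your proposed fix via ``distance bookkeeping'' does not work as stated. You claim that a glued point ``realizes distances in \( D \setminus \{r_0\} \) among glued points, most of which lie strictly below \( \bar r \) and hence cannot be realized by any \( x \in U \) within \( U \)''. But \( U \in \U_{D'} \) realizes exactly distances from \( D' = \{0, r_0, r_1, \dotsc\} \), \emph{all} of which are below \( \bar r \); and there need not be any element of \( D \setminus D' \) below \( \bar r \) at all (take e.g.\ \( D = D' \cup \{\bar r\} \)). So the distance profile does not in general distinguish the two pieces, and your plan to feed this into Lemma~\ref{lem:limit} stalls before it starts. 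In fact the paper's own proof shows that \( \psi \) \emph{can} send \( U_0 \) into the glued part --- this case genuinely occurs and must be analyzed, not ruled out.

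The paper's argument is much more direct and avoids Lemma~\ref{lem:limit} entirely. The key observation you are missing is a diameter argument: any two points of \( U_0 \) are at most \( r_0 \) apart, while any point of \( U_1 \) is at least \( \bar r > r_0 \) away from any glued point in \( U_1^* \). Hence \( \psi(U_0) \) must lie entirely inside one of the two pieces of \( U_1^* \). If \( \psi(U_0) \subseteq U_1 \), the same diameter reasoning forces \( \psi(U(D \setminus \{r_0\})) \subseteq U(D \setminus \{r_0\}) \), and surjectivity gives \( \psi(U_0) = U_1 \). If instead \( \psi(U_0) \subseteq U(D \setminus \{r_0\}) \) and \( U_0 \) is not a singleton, a second diameter argument pins down \( \psi(U_0) \) as exactly \( U((D \setminus \{r_0\}) \cap [0,\bar r)) = U(D' \setminus \{r_0\}) \), and then bijectivity forces \( \psi(U(D' \setminus \{r_0\})) = U_1 \), giving \( U_0 \isom U(D' \setminus \{r_0\}) \isom U_1 \). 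The singleton case is trivial. This is the missing idea; once you have it, no appeal to Lemma~\ref{lem:limit} is needed.
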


\begin{proof}
Let $U_0,U_1\in \U_{D'}$. If $\varphi \colon U_0\to U_1$ is an isometry, then
$\protect{{{\varphi} \cup {\id}} \colon U_0^*\to U_1^*}$ is an isometry,
where $ \id $ is the identity on $U(D\setminus\{ r_0\} )$. Conversely, let
$\psi \colon U_0^*\to U_1^*$ be an isometry. Then either $\psi (U_0)\subseteq
U_1$ or $\psi (U_0)\subseteq U(D\setminus\{ r_0\} )$: indeed, any two points
in $U_0$ are at most $r_0$ apart, while any point in $U_1$ has distance at
least $ \bar r $ from any point in $U(D\setminus\{ r_0\} )$. For a similar
reason, in the former case we have $\psi (U(D\setminus\{ r_0\} ))\subseteq
U(D\setminus\{ r_0\} )$, so that actually $\psi (U_0)=U_1$. In the latter,
there are some subcases to consider.

If $U_0$ is not a singleton, then $\psi(U_0) \subseteq  U(D \setminus
\{r_0\}) \cap [0, \bar r) =  U((D \setminus
\{r_0\}) \cap [0, \bar r))$, because \( D(U_0) \subseteq D' \subseteq D \cap [0, \bar r) \) while any \( y \in U(D \setminus
\{r_0\}) \setminus [0, \bar r) \) realizes distances \( \geq \bar r \).
Since any $y \in U((D \setminus \{r_0\}) \cap [0,\bar r)) \setminus
\psi (U_0)$ would be less than $\bar r$ apart from any point of $\psi (U_0)$,
while its preimage has distance $\bar r$ from any point of $U_0$, we conclude
that actually $\psi(U_0) = U((D \setminus \{r_0\}) \cap [0, \bar r)) = U(D'
\setminus \{r_0\})$. Since $\psi$ is a bijection it must then be $\psi (U(D'
\setminus \{r_0\})) = U_1$, so $U_0 \isom U(D' \setminus \{r_0\}) \isom U_1$
as desired. If $U_1$ is not a singleton, we can apply the previous argument
exchanging the roles of $U_0$ and $U_1$ and using $\psi^{-1}$ in place of
$\psi$. The remaining case is when $U_0$ and $U_1$ are both singletons, and
hence clearly isometric.
\end{proof}

Let $\mathcal{A} = \{ U \in \U_{D'} \mid \forall x \in U\, \exists y \in U\,
d_{\mathbb{U}} (x,y)=r_0\}$ and notice that $\mathcal{A}$ is closed under
isometries. Moreover, arguing as in Proposition \ref{prop:standardBorel}, it
is easy to see that $\mathcal{A}$ is Borel.

\begin{lemma} \label{restrictionofisom}
The restriction of $ \isom $ to $\mathcal{A}$ is Borel bireducible with
countable graph isomorphism.
\end{lemma}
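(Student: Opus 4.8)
The plan is to establish the two reducibilities separately. One direction is immediate: since every space in \( \mathcal{A} \) is ultrametric Polish, Proposition~\ref{prop:knownisom}(1) gives \( \isom \restriction \mathcal{A} \leq_B \isom_{D'} \leq_B \) graph isomorphism. All the work goes into the converse, namely producing a Borel reduction of isomorphism on \( \RCT \) (equivalently, of graph isomorphism) to \( \isom \restriction \mathcal{A} \). Here I would reuse the construction behind Theorem~\ref{bigtheorem}, but passing to metric completions so that the spaces obtained are genuinely Polish even when the sequence \( (r_n) \) is infinitesimal (i.e.\ \( r = 0 \)).

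Concretely, fixing the ultrametric \( d \) on \( \pre{<\omega}{\omega} \) associated with \( (r_n) \) (so \( d(s,t) = r_n \) for \( n \) greatest with \( s \restriction n = t \restriction n \)) and an isometric embedding of \( (\pre{<\omega}{\omega}, d) \) into \( \mathbb{U} \) exactly as in Section~\ref{isomif}, I would send \( G \in \RCT \) to the element \( \vartheta(G) \in F(\mathbb{U}) \) which is the closure of the image of \( G' \), where \( G' \) is the Borel-in-\( G \) tree presentation of \( G \) and \( U_G = (G', d \restriction G') \) is as in the discussion preceding Definition~\ref{theta}. Thus \( \vartheta(G) \) is isometric to the metric completion \( \widehat{U_G} \) of \( U_G \), which is \( U_G \) together with the infinite branches of \( G' \) — these extra points occurring only when \( r = 0 \). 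First I would dispatch the routine bookkeeping: \( \vartheta \) is Borel, being the composition of \( G \mapsto G' \) with the Borel map induced by the embedding; every distance realised in \( \widehat{U_G} \) lies in \( \{0\} \cup \{r_n \mid n \in \omega\} \subseteq D' \), since two distinct points of \( \widehat{U_G} \) have a finite longest common initial segment, so \( \vartheta(G) \in \U_{D'} \); and \( \vartheta(G) \in \mathcal{A} \), because \( \emptyset \in G' \), \( G' \) has the same (infinite) cardinality as \( G \) — whose universe is \( \omega \) — and so contains some node \( s \) with \( \leng(s) \geq 1 \), for which \( d_G(\emptyset, s) = r_0 \), while \( d(\emptyset, x) = r_0 \) for every other point \( x \) of \( \widehat{U_G} \) as well; hence every point of \( \vartheta(G) \) realises \( r_0 \).

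The substantive claim is then that \( \vartheta \) reduces isomorphism to isometry, i.e.\ \( G \cong H \iff \vartheta(G) \isom \vartheta(H) \). For the forward direction I would appeal to Theorem~\ref{bigtheorem} — whose proof is purely combinatorial and applies verbatim to the present sequence \( (r_n) \), the standing hypothesis \( r > 0 \) there being used only to guarantee that \( U_G \) itself is complete — to obtain \( U_G \isom U_H \) from \( G \cong H \); since isometric spaces have isometric completions, this yields \( \vartheta(G) \isom \widehat{U_G} \isom \widehat{U_H} \isom \vartheta(H) \). For the converse, given a surjective isometry \( \Psi \colon \vartheta(G) \to \vartheta(H) \), I would first recover \( U_G \) inside \( \widehat{U_G} \) metrically: a node \( s \in G' \) of length \( n \) has every other point of \( \widehat{U_G} \) at distance \( \geq r_n > 0 \) from it and so is isolated, whereas a point lying on an infinite branch \( x \) (which, again, only happens when \( r = 0 \)) is the limit of the nodes \( x \restriction n \), at distances \( r_n \to 0 \), and so is not isolated; hence the set of isolated points of \( \widehat{U_G} \) is exactly \( G' \), and similarly for \( H \). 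As isometries carry isolated points to isolated points, \( \Psi \) restricts to a surjective isometry \( U_G \to U_H \), whence \( G \cong H \) by Theorem~\ref{bigtheorem} once more.

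The only step I expect to require genuine care is this last identification: that completing \( U_G \) adds nothing beyond its metrically recognisable non-isolated points, so that no information about the tree \( G \) is lost or gained in passing to \( \widehat{U_G} \). Everything else is either a direct citation of Theorem~\ref{bigtheorem} and Proposition~\ref{prop:knownisom} or an elementary verification, and combining the two reducibilities gives that \( \isom \restriction \mathcal{A} \) is Borel bireducible with graph isomorphism, as desired.
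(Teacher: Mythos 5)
Your proof is correct, and it diverges from the paper's in an interesting way. The paper's own argument splits into two cases: when \( r>0 \) it simply observes that the reduction \( \theta \) of Theorem~\ref{bigtheorem} (for \( D' \)) already has range in \( \mathcal{A} \), because every point is at distance \( r_0 \) from the root; when \( r=0 \) it abandons that construction (since \( U_G \) need not be complete) and instead invokes the proof of \cite[Theorem 4.4]{Gao2003}, i.e.\ the map \( T \mapsto [T] \) on pruned trees, restricted to a suitable subclass \( \mathcal{PT}' \) of pruned trees chosen so that isomorphism on \( \mathcal{PT}' \) is still bireducible with graph isomorphism and so that \( [T] \in \mathcal{A} \). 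You instead handle both cases uniformly: you keep the tree construction of Theorem~\ref{bigtheorem} but pass to completions, note that the completion of \( U_G \) is \( G' \) together with the infinite branches (all distances staying in \( D' \), all points realizing \( r_0 \) via the root), and recover \( G' \) metrically inside the completion as exactly the set of isolated points, so that any isometry of completions restricts to an isometry of the \( U_G \)'s and Theorem~\ref{bigtheorem} applies in both directions. The one point your argument leans on — that the combinatorial proof of Theorem~\ref{bigtheorem} never uses \( r>0 \) or completeness, only that \( (r_n) \) is strictly decreasing and positive — is indeed true, and your isolated-points characterization is correct (a node of length \( n \) is at distance \( \geq r_n>0 \) from every other point, while a branch point is a limit of its initial segments). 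What each route buys: the paper's case split needs no re-inspection of earlier proofs, only the statement of Theorem~\ref{bigtheorem} plus the external Gao--Kechris construction; yours is self-contained and uniform in \( r \geq 0 \), at the modest cost of the completion bookkeeping and the explicit check that the earlier proof transfers verbatim.
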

\begin{proof}
If $r>0$ observe that in the proof of Theorem \ref{bigtheorem} for $D'$ as
the set of distances, the range of the reduction is contained in
$\mathcal{A}$, since $r_0$ is the distance between the root of the tree and
any other point.

If $r=0$ we are going to apply the proof of \cite[Theorem 4.4]{Gao2003} to
$D'$ as set of distances. This proof shows that the map \( T \mapsto [T] \)
is a Borel reduction between isomorphism on the set $ \mathcal{P T}$ of
nonempty pruned subtrees of $ \pre{<\omega }{\omega } $ and isometry on $
\U_{D'}$. If $ \mathcal{ PT}'$ is the set of pruned trees containing all
finite sequences all of whose entries equal $0$ and no other sequence with a
null entry, then isomorphism on $ \mathcal {PT}'$ is still Borel bireducible
with countable graph isomorphism: this is witnessed by the reduction $T\in \mathcal
{PT}\to T'= \{ s+1 \mid s \in T \} \cup \{ 0^{(n)} \mid n \in \omega \} \in
\mathcal {PT}'$, where \( s+1 = (s(i)+1)_{i < \leng(s)} \) and \( 0^{(n)} \)
is the sequence of length \( n \) constantly equal to \( 0 \). Now notice
that if \( T \in \mathcal{PT}' \) then \( [T] \in \mathcal{A} \).
\end{proof}

\begin{corollary} \label{isomuniversalstar}
Let $D\in \mathcal D $ be ill-founded. Then the relation of isometry on \(
\U_D^\star \) is Borel bireducible with countable graph isomorphism.
\end{corollary}

\begin{proof}
The relation \( \isom_D^\star \) is Borel reducible to countable graph isomorphism by Proposition~\ref{prop:knownisom} and \( \U^\star_D \subseteq \U_D \).
For the other direction,
notice that if $U\in \mathcal{A}$, then $U^*$ realizes all distances in $D$,
including $r_0$, so $U^*\in \U_D^{\star }$. Now apply Lemma
\ref{restrictionofisom} and Theorem \ref{propertyoff}.
\end{proof}

\subsection{Well-founded sets of distances} \label{isomwf}
Let us now consider the case of a well-ordered \( D  \in \mathcal{D}\). By
Lemma~\ref{lemma:basic}, if \( D \) is well-ordered, then up to classwise
Borel isomorphism the relations \( \isom_D \), \( \isom_D^* \),
\({\sqsubseteq_D} \), and \({\sqsubseteq^\star_D} \) do not depend really on
$D$ but only on its order type. Thus for each \( 1 \leq \alpha < \omega_1 \)
we can fix some \( D_\alpha \in \mathcal{D} \) with order type \( \alpha \)
and let \( ( r_\beta )_{ \beta < \alpha } \) be an increasing enumeration of
\( D_\alpha \), so that \( r_0 = 0 \).

To simplify the notation we use $\U_{\alpha}$ and $\U^\star_{\alpha}$ in
place of $\U_{D_\alpha}$ and $\U^\star_{D_\alpha}$. Similarly, the symbols \(
\isom_{\alpha } \), \( \isom_{\alpha }^* \), \( \sqsubseteq_\alpha \), and \(
\sqsubseteq^\star_\alpha \) will abbreviate the corresponding symbols with
subscript $D_{\alpha }$.

Notice that \( D_1 = \{ r_0 \} =  \{ 0 \} \) and every \( X \in \U^\star_1 =
\U_1 \) is a singleton, while if \( \alpha > 1 \) then every space in \(
\U^\star_{\alpha} \) has at least two points because the distance \( r_1>0 \)
must be realized. Moreover, by Lemma~\ref{lemma:basic} we have
\begin{equation}\label{eq:increasing}
{\isom_\alpha} \leq_B {\isom_\beta} \text{ and } {\sqsubseteq_\alpha} \leq_B {\sqsubseteq_\beta} \text{ for every } 1 \leq \alpha \leq \beta < \omega_1.
\end{equation}
The next lemma shows in particular that~\eqref{eq:increasing} remains true if we replace all relations with their counterparts with superscript \( \star \).

\begin{lemma} \label{lemmaequivalence}
Let \( 1\leq\alpha < \omega_1 \). Then ${ \isom_{\alpha }} \sim_B
{\isom_{\alpha }^*}$ and \( {\sqsubseteq_\alpha} \sim_B
{\sqsubseteq^\star_\alpha} \).
\end{lemma}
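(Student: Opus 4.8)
The plan is as follows. One inequality in each pair is immediate: since $\U^\star_\alpha \subseteq \U_\alpha$, the identity map witnesses $\isom^\star_\alpha \leq_B \isom_\alpha$ and $\sqsubseteq^\star_\alpha \leq_B \sqsubseteq_\alpha$. For the converse reductions I would exhibit a single Borel map $g \colon \U_\alpha \to \U^\star_\alpha$ that simultaneously reduces $\isom_\alpha$ to $\isom^\star_\alpha$ and $\sqsubseteq_\alpha$ to $\sqsubseteq^\star_\alpha$. We may assume $\alpha \geq 2$, the case $\alpha = 1$ being trivial as then $\U_1 = \U^\star_1$. The idea is to glue onto $U$ enough extra isolated points to realize every distance in $D_\alpha$, but in a way that involves no non-canonical choice (in particular no choice of base point, since an isometry between ultrametric spaces need not respect distinguished points). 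The canonical pieces to work with are the balls of radius $r_1$: each such ball $B$ of $U$ carries only the distances $0$ and $r_1$ (because $D_\alpha \cap [0,r_1] = \{0,r_1\}$), hence is determined up to isometry by its cardinality, and there are only countably many of them since they form a clopen partition of the separable space $U$.

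Concretely, $g(U)$ is obtained from $U$ by adding, for each $r_1$-ball $B$ of $U$ and each $\beta$ with $1 \leq \beta < \alpha$, a new point $q_{B,\beta}$ with $d(q_{B,\beta},x) = r_\beta$ for all $x \in B$ and, as forced by the ultrametric inequality, $d(q_{B,\beta},y) = \max\{r_\beta, \delta(B,B')\}$ when $y$ lies in a different $r_1$-ball $B'$ (here $\delta(B,B')$ is the well-defined distance between two distinct $r_1$-balls), $d(q_{B,\beta},q_{B,\beta'}) = \max\{r_\beta,r_{\beta'}\}$ for $\beta \neq \beta'$, and $d(q_{B,\beta},q_{B',\beta'}) = \max\{r_\beta,\delta(B,B'),r_{\beta'}\}$ for $B \neq B'$. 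A routine case analysis shows this is an ultrametric; it is separable and complete because the $q_{B,\beta}$ are isolated (every nonzero distance from $q_{B,\beta}$ is $\geq r_\beta \geq r_1 > 0$) and countably many; and it realizes exactly $D_\alpha$, so $g(U) \in \U^\star_\alpha$. Since the $r_1$-balls of $U$ can be enumerated in a Borel-in-$U$ way and the whole construction can be carried out inside a fixed universal ultrametric Polish space — namely the one obtained by performing the same construction on $\mathbb{U}^\U_{D_\alpha}$, combined with Lemma~\ref{lemma:2form} and Remark~\ref{rem:2form}, exactly as in Definition~\ref{theta} and the proof of Theorem~\ref{propertyoff} — the map $g$ is Borel.

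To see that $g$ is a reduction, note first that the $r_1$-balls of $g(U)$ are precisely the sets $B \cup \{q_{B,1}\}$, of size $|B|+1 \geq 2$, for $B$ an $r_1$-ball of $U$, together with the singletons $\{q_{B,\beta}\}$ for $\beta \geq 2$; and the distance between the enlarged balls coming from distinct $r_1$-balls $B,B'$ of $U$ equals $\delta(B,B')$. Thus the $r_1$-ball quotient of $g(U)$ is the $r_1$-ball quotient $Q(U)$ of $U$ with some extra isolated points attached, and genuine structure of $U$ stays recognizable (each $B$ is recovered, up to isometry, from the cardinality $|B|+1$ of its enlarged ball). For the forward directions, an isometry, respectively an isometric embedding, $\varphi\colon U \to U'$ maps $r_1$-balls to $r_1$-balls and preserves $\delta$, so it extends to $g(U) \to g(U')$ by sending $q_{B,\beta}$ to $q_{\varphi(B),\beta}$, respectively to $q_{B^+,\beta}$ where $B^+$ is the $r_1$-ball of $U'$ containing $\varphi(B)$; one checks directly from the formulas above that this extension is distance preserving. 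For the backward directions, any isometry, respectively isometric embedding, $\psi\colon g(U)\to g(U')$ sends $r_1$-balls onto, respectively injectively into distinct, $r_1$-balls of $g(U')$; since a ball of size $\geq 2$ must go to such a ball and the sizes are respected (exactly, respectively non-decreasingly), $\psi$ induces an isometry, respectively isometric embedding, of $Q(U)$ into $Q(U')$ matching each $B$ with a $B'$ of the same cardinality, respectively of cardinality $\geq |B|$; gluing together, for each $B$, a bijection, respectively injection, $B \to B'$ produces an isometry, respectively isometric embedding, of $U$ into $U'$ (Lemma~\ref{lem:limit} is what licenses assembling these ball-by-ball maps into a global one).

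The step I expect to need the most care is the backward direction, and within it the bookkeeping confirming that the added points $q_{B,\beta}$ cannot be mistaken for genuine structure of $U$; this is precisely what using $r_1$-balls rather than single points buys, since it makes $g$ canonical, keeps each $B\cup\{q_{B,1}\}$ identifiable by its cardinality, and relegates the remaining pendants to singleton balls that play no role. The only other mildly delicate point, realizing $g$ as a Borel map with values in $F(\mathbb{U})$ (equivalently in $\U^\star_\alpha$), is routine and follows the pattern already used several times above.
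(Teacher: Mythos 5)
Your proposal is correct, and it reaches the conclusion by a genuinely different route than the paper. The paper's proof splits into two cases: for successor \( \alpha = \beta+1 \) it glues onto \( X \) a copy of the canonical space \( U(D_\beta) \) at the new maximal distance \( r_\beta \), and for infinite \( \alpha \) it shifts every realized distance \( r_\xi \) to \( r_{1+\xi} \) and attaches to each \emph{point} \( x \) a fan \( (x,\xi) \), \( \xi<\alpha \), so that \( r_1 \) becomes a marker pairing \( (x,0) \) with \( (x,1) \); the delicate part there is precisely that an isometry of the enlarged spaces may swap a point with its \( r_1 \)-clone, which the paper handles by a pointwise analysis. Your construction is uniform in \( \alpha\geq 2 \): no case split, no ordinal shift, fans attached to \( r_1 \)-balls rather than points, and the recovery of \( U \) from \( g(U) \) carried out only at the level of the ball quotient together with the cardinality of each ball. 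This dissolves wholesale the point/clone ambiguity that the paper must argue around (since within a ball the metric is trivial, only ball-level data is needed, and it is visibly preserved), at the modest price of the cardinality bookkeeping and the observation that a surjective isometry maps \( r_1 \)-balls \emph{onto} \( r_1 \)-balls; the paper's successor-case construction is in exchange shorter for that case, and its shift trick is what forces the restriction \( \alpha\geq\omega \) (so that \( 1+\xi<\alpha \)), a constraint your version simply never meets. Two small remarks: the appeal to Lemma~\ref{lem:limit} for assembling the ball-by-ball bijections is not quite the right reference (that lemma concerns the spheres \( C^x_r \) around a common base point), but the gluing is immediate to verify directly, since within-ball distances are \( 0 \) or \( r_1 \) and cross-ball distances depend only on the balls; and your treatment of Borelness by performing the construction inside the fixed space obtained from \( \mathbb{U}^\U_{D_\alpha} \) is at the same level of detail the paper itself adopts after Theorem~\ref{propertyoff}, so nothing further is needed there.
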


\begin{proof}
Since \( \U_1= \U^\star_1 \), it can be assumed \( \alpha\geq 2 \). Since \(
\U^\star_{\alpha} \subseteq \U_{\alpha} \), we need only to prove ${
\isom_{\alpha }} \leq_B {\isom_{\alpha }^*}$ and \( {\sqsubseteq_{\alpha }}
\leq_B {\sqsubseteq^\star_{\alpha }} \).

We first prove the assertion for $\alpha =\beta +1$ a successor ordinal. It
can be assumed $D_{\beta +1}=D_{\beta }\cup\{r_{\beta }\} $. Given \( X \in
\U_{\beta+1} \), let \( X'  \in \U^\star_{\beta+1} \) be the space obtained
by taking the disjoint union of \( X \) and the space \( U(D_\beta) \) from
Definition~\ref{def:U(D)}, and setting \( d_{X'}(x,r_\xi) = r_\beta \) for
every  \(x \in X \) and \( \xi < \beta \). Arguing as in the discussion
before Theorem \ref{propertyoff} one can show that the map \( X \mapsto X' \)
is Borel: we claim that it is the desired reduction.

Let \( X,Y \in \U_{\beta +1} \) and \( \varphi \colon X \to Y \) be a witness
of \( X \sqsubseteq_{\beta +1}Y \): then \( {\varphi} \cup {\id} \) witnesses \(
X' \sqsubseteq^\star_{\beta +1}Y' \). If moreover $\varphi$ is an isometry,
then ${\varphi} \cup {\id} $ is an isometry as well.

Conversely, let \( \psi \) be an isometric embedding of \( X' \) into \( Y'
\), and let \( X_0  = \psi^{-1}(U(D_\beta)) \). Notice that \(
d_{X'}(x_0,x_1) < r_\beta \) for every \( x_0,x_1 \in X_0 \) since \(
\psi(x_0),\psi(x_1) \in U(D_\beta) \). Hence, by construction of \( X' \),
either \( X_0 \cap X = \emptyset \) (i.e.\ \( X_0 \subseteq U(D_\beta) \)),
or else \( X_0 \subseteq X \). In the former case \( \psi \restriction X \)
is an isometric embedding of \( X \) into \( Y \). If moreover $\psi$ is
surjective (i.e.\ an isometry), then $\psi \restriction X$ is an isometry onto $Y$: notice indeed
that $\psi(U(D_{\beta }))$ cannot intersect both $Y$ and $U(D_{\beta })$,
since any two points of $U(D_{\beta })$ are less than $r_{\beta }$ apart.

If instead $X_0 \subseteq X$, we claim that \( \varphi =  \psi \restriction
(X \setminus X_0) \cup (\psi \circ \psi) \restriction X_0 \) witnesses \( X
\sqsubseteq_{\beta+1} Y \). Notice that \( \varphi \) is well-defined by the
definition of \( X_0 \) and the fact that \( U(D_\beta) \subseteq X' \).
First we check that the range of \( \varphi \) is contained in \( Y \). If \(
x \in X \setminus X_0 \) then trivially \( \varphi(x) = \psi(x) \in Y \) by
definition of \( X_0 \). If \( x \in X_0 \subseteq X\) then \( \psi(x) \in
U(D_\beta) \subseteq X' \). Since \( x \in X \), we have that \( d_{X'}(x ,
\psi(x))  = r_\beta \), whence also \( d_{Y'}( \psi(x) , \psi(\psi(x)))  =
r_\beta \). Since any two points in \( U(D_\beta) \) are \( < r_\beta \)
apart in $Y'$ and \( \psi(x) \in U(D_\beta) \), it follows that \( \varphi(x)
= \psi(\psi(x)) \in Y \). Finally, we check that \( \varphi \) preserves
distances. It is clearly enough to show that for \( x \in X \setminus X_0 \)
and \( x' \in X_0 \) we have \( d_X(x,x') = d_Y(\varphi(x),\varphi(x')) \).
Since \( \psi(x) \in Y \) and \( \psi(x') \in U(D_\beta) \), we have \(
d_{Y'}(\psi(x),\psi(x')) = r_\beta \), whence \( d_X(x,x') = d_{X'}(x,x') =
d_{Y'}(\psi(x),\psi(x')) = r_\beta \). Since \( \psi(x') \in U(D_\beta)
\subseteq X'\) and \( x \in X \), \( d_{X'}(x, \psi(x')) = r_\beta \),
therefore \( d_{Y'}(\psi(x), \psi(\psi(x'))) = r_\beta \). Since \( \psi(x) =
\varphi(x) \) and \( \psi(\psi(x')) = \varphi(x') \) by definition of \(
\varphi \), we have \( d_Y(\varphi(x),\varphi(x')) =
d_{Y'}(\varphi(x),\varphi(x')) = r_\beta = d_X(x,x') \), as required. If
moreover  $\psi$ is surjective (i.e.\ an isometry), by case hypothesis $\psi(U(D_{\beta
}))\subseteq Y$, and we also have that $\psi(X_0)=U(D_{\beta })$. So if $y\in
Y\setminus \psi(X\setminus X_0)$, which means $y\in \psi(U(D_{\beta }))$, it
follows that $y=\varphi(x)$ for some $x\in X_0$. This implies that $\varphi$
is surjective as well, hence it is an isometry between \( X \) and \( Y \).

Now we prove the result for \( \alpha \geq \omega \). Given \( X  \in
\U_\alpha \), let \( X' \in \U^\star_\alpha \) be the ultrametric space with
domain \( X\times\alpha \) and distance function \( d_{X'} \) defined by
setting, for distinct $(x,\xi), (y,\xi') \in X'$ such that $d_X(x,y)=r_\eta$,
\begin{equation}
d_{X'}((x,\xi ), (y,\xi' )) = \max \{ r_\xi, r_{\xi'}, r_{1+\eta} \}. \label{eq:definitionofdistance}
\end{equation}
(\( 1+ \eta < \alpha \) because \( \alpha \) is infinite). Recalling that
$r_0=0$ we then have in particular that
\begin{itemize}
\item for all distinct \( x,y \in X \), if \( d_X(x,y) = r_\eta \) then \(
    d_{X'}((x,0),(y,0)) = r_{1+\eta} \);
\item for all \( x \in X \) and \( \xi < \alpha \), \( d_{X'}((x,0),(x,\xi
    )) = r_{\xi} \).
\end{itemize}

We claim that the Borel\footnote{The map is Borel again by the argument
before Theorem \ref{propertyoff}. This applies to all reductions in the
subsequent proofs, so from this point on we will not recall it explicitly.}
map \( X \mapsto X' \) reduces $ \isom_{\alpha }$ to $ \isom_{\alpha }^\star$ and
\( \sqsubseteq_\alpha \) to \( \sqsubseteq^\star_\alpha \).

Let \( X,Y \in \U_\alpha \). First assume that \( \varphi \colon X \to Y \)
is an isometric embedding of \( X \) into \( Y \). Define \( \psi \colon X'
\to Y' \) by setting \( \psi(x,\xi ) = (\varphi(x),\xi ) \) for every \( x
\in X \) and \( \xi < \alpha \). Then \( \psi \) is distance preserving.
Indeed, fix \( x,y \in X \). If \( d_X(x,y) = r_\eta \) with \( 0 < \eta <
\alpha \), then \( d_Y(\varphi(x),\varphi(y)) = r_\eta \) and hence \(
d_{X'}((x,0),(y,0)) = r_{1+ \eta} = d_{Y'}((\varphi(x),0),(\varphi(y),0)) \).
Therefore, by definition of $\psi$ and using~\eqref{eq:definitionofdistance}
we get for arbitrary \( \xi,\xi' < \alpha \) such that $(x,\xi) \neq
(y,\xi')$
\[
\begin{split}
d_{X'}((x,\xi ),(y,\xi' )) & = \max \{ r_\xi, r_{\xi'}, d_{X'}((x,0),(y,0))  \} \\
& = \max \{ r_\xi, r_{\xi'}, d_{Y'}((\varphi(x),0),(\varphi(y),0))  \} \\
& = d_{Y'}((\varphi(x),\xi ),(\varphi(y),\xi')) \\
& =d_{Y'}(\psi(x,\xi ),\psi(y,\xi')).
\end{split}
\]
If, in addition, $\varphi$ is surjective (i.e.\ an isometry), then $\psi$ is surjective as well, and hence it witnesses \( X' \isom Y' \).

Assume now that \( \psi \) is an isometric embedding of \( X' \) into \( Y'
\). Notice that for every \( x \in X \) there is \( y \in Y \) such that
either \( \psi(x,0) =(y,0) \) or \( \psi(x,0) =(y,1) \). This is because \(
(x,0) \) realizes all distances in \( X' \) (i.e.\ for every \( \xi < \alpha
\) there is \( x' \in X' \) with \( d_{X'}((x,0),x') = r_\xi \)), hence \(
\psi(x,0) \) must have the same property relative to \( Y' \), which implies
that \( \psi(x,0) \) is of the prescribed form because by~\eqref{eq:definitionofdistance} points of the form \(
(y,\xi ) \)
 realize only the distances \( r_{\xi'} \) with \( \xi' = 0
\) or \( \xi' \geq \xi \). Let \( \varphi(x) \) be such \( y \). Notice that
the function \( \varphi \colon X \to Y \) is injective, as if \( x \neq x'
\in X \) and \( \varphi(x) = \varphi(x') \) then either \( \psi(x,0) =(y,0)
\) and \( \psi(x',0) =(y,1) \), or \( \psi(x,0) =(y,1) \) and \( \psi(x',0)
=(y,0) \) by injectivity of \( \psi \): in both cases \(
d_{Y'}(\psi(x,0),\psi(x',0)) = r_1 < d_{X'}((x,0),(x',0)) \), contradicting
the fact that \( \psi \) is distance preserving. We claim that \( \varphi \)
is also distance preserving (hence an isometric embedding). Let \( x,x' \in X
\) be distinct points. Then \( d_{Y'}((\varphi(x),i),(\varphi(x'),j)) > r_1
\) for every \( i,j \in \{ 0,1\} \) by injectivity of \( \varphi \)
and~\eqref{eq:definitionofdistance}, whence \( d_{Y'}(\psi(x,0),\psi(x',0)) =
d_{Y'}((\varphi(x),0),(\varphi(x'),0)) \) by~\eqref{eq:basicultrametric}.
Since \( \psi \) preserves distances, \( d_{X'}((x,0),(x',0)) =
d_{Y'}((\varphi(x),0),(\varphi(x'),0)) \), and hence \( d_X(x,x') =
d_Y(\varphi(x),\varphi(x')) \) by definition of \( d_{X'} \) and \( d_{Y'} \)
on \( X' \) and \( Y' \), respectively. If $\psi$ is moreover assumed to be
surjective (i.e.\ an isometry), then we can argue as above and show that for every $y\in Y$ there
exist $x\in X$ and $i\in\{ 0,1\}$ such that $\psi(x,i)=(y,0)$. If $i=0$, this
means $\varphi(x)=y$; if $i=1$, then $\psi(x,0)=(y,1)$ because \( (x,0) \) and \( (y,1) \) are the unique points at
distance \( r_1 \) from, respectively, \( (x,1) \) and \( (y,0) \), so again
$\varphi(x)=y$. It follows that $\varphi$ is surjective too, and hence it witnesses \( X \isom Y \).
\end{proof}

\begin{remark}
Although \( {\sqsubseteq_\alpha} \sim_B {\sqsubseteq^\star_\alpha} \) the two
quasi-orders are combinatorially different: $\sqsubseteq_\alpha$ has always a
minimum, while ${\sqsubseteq^{\star }_\alpha}$ has at least two
$\sqsubseteq$-incomparable minimal elements when $\alpha \geq 4$.
\end{remark}

\begin{corollary} \label{cor:isomequivalent}
For every \( D \in \mathcal{D} \), \( { \isom_D} \sim_B
{ \isom^\star_D} \).
\end{corollary}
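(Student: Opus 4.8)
The goal is to prove Corollary~\ref{cor:isomequivalent}: for every $D \in \mathcal{D}$, we have ${\isom_D} \sim_B {\isom^\star_D}$.

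The plan is to split into cases according to whether $D$ is well-founded or ill-founded, using the machinery already developed. Since $\U^\star_D \subseteq \U_D$, the inclusion map always gives ${\isom^\star_D} \leq_B {\isom_D}$, so the content is in establishing the reverse reduction ${\isom_D} \leq_B {\isom^\star_D}$.

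First suppose $D$ is ill-founded. Then by Corollary~\ref{isomuniversal} the relation $\isom_D$ is Borel bireducible with graph isomorphism, and by Corollary~\ref{isomuniversalstar} the relation $\isom^\star_D$ is \emph{also} Borel bireducible with graph isomorphism. Hence ${\isom_D} \sim_B {\isom^\star_D}$, both being bireducible with the same relation. Next suppose $D$ is well-founded. If $D$ is finite, say of cardinality $n$, then the order type argument reduces us to $D_n$; alternatively one can argue directly, but the cleanest route is: by Lemma~\ref{lemma:basic}(2), up to classwise Borel isomorphism (in particular up to Borel bireducibility) both $\isom_D$ and $\isom^\star_D$ depend only on the order type $\alpha$ of $D$, so they are Borel bireducible with $\isom_\alpha$ and $\isom^\star_\alpha$ respectively. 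By Lemma~\ref{lemmaequivalence} we have ${\isom_\alpha} \sim_B {\isom^\star_\alpha}$ for every $1 \leq \alpha < \omega_1$. Chaining these bireducibilities yields ${\isom_D} \sim_B {\isom^\star_D}$.

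I expect the main (and essentially only) subtlety to be bookkeeping: making sure the two cases together cover every $D \in \mathcal{D}$ (an arbitrary countable $D \subseteq \RR^+$ with $0 \in D$ is either ill-founded, i.e.\ contains a strictly decreasing sequence, or well-founded, i.e.\ well-ordered by the usual order, and in the latter case has a countable order type $\alpha$ with $1 \leq \alpha < \omega_1$), and that Lemma~\ref{lemma:basic}(2) legitimately transfers the well-ordered result from $D_\alpha$ to a general well-ordered $D$ of order type $\alpha$ — this is fine because any order-isomorphism between two well-ordered subsets of $\RR^+$ both containing $0$ is, by restricting if necessary, a Polish ultrametric preserving bijection in the sense of Section~\ref{sect:setup} (it is automatically increasing, maps $0$ to $0$, and continuity conditions at $0$ are vacuous or trivial for well-ordered sets of reals). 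So the proof is short: cite Corollaries~\ref{isomuniversal} and~\ref{isomuniversalstar} for the ill-founded case, and Lemmas~\ref{lemma:basic} and~\ref{lemmaequivalence} for the well-founded case.

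\begin{proof}
Since $\U^\star_D \subseteq \U_D$, the identity on $\U^\star_D$ witnesses ${\isom^\star_D} \leq_B {\isom_D}$, so it remains to prove ${\isom_D} \leq_B {\isom^\star_D}$.

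If $D$ is ill-founded, then by Corollary~\ref{isomuniversal} the relation $\isom_D$ is Borel reducible to graph isomorphism, while by Corollary~\ref{isomuniversalstar} graph isomorphism is Borel reducible to $\isom^\star_D$; composing gives ${\isom_D} \leq_B {\isom^\star_D}$.

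If $D$ is well-founded, let $\alpha$ be its order type, so $1 \leq \alpha < \omega_1$. The unique order isomorphism between $D$ and $D_\alpha$ is an increasing bijection sending $0$ to $0$, hence a Polish ultrametric preserving bijection; by Lemma~\ref{lemma:basic}(2) the relations $\isom_D$ and $\isom^\star_D$ are classwise Borel isomorphic (in particular Borel bireducible) to $\isom_\alpha$ and $\isom^\star_\alpha$ respectively. By Lemma~\ref{lemmaequivalence} we have ${\isom_\alpha} \sim_B {\isom^\star_\alpha}$, and therefore ${\isom_D} \leq_B {\isom^\star_D}$.
\end{proof}
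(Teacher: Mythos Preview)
Your proof is correct and follows essentially the same approach as the paper: split into the ill-founded case (citing Corollaries~\ref{isomuniversal} and~\ref{isomuniversalstar}) and the well-founded case (citing Lemma~\ref{lemmaequivalence}). You are slightly more explicit than the paper in invoking Lemma~\ref{lemma:basic}(2) to transfer from a general well-ordered $D$ to $D_\alpha$, but the paper had already set up this identification in the paragraph preceding Lemma~\ref{lemmaequivalence}, so this is just a matter of presentation.
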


\begin{proof}
If \( D \) is ill-founded then both \( \isom_D \) and \( \isom^\star_D \) are
Borel bireducible with countable graph isomorphism by
Corollaries~\ref{isomuniversal}~and~\ref{isomuniversalstar}, whence \(
{ \isom_D } \sim_B { \isom^\star_D} \). If instead \( D \) is
well-founded, then the result follows from Lemma~\ref{lemmaequivalence}.
\end{proof}

The following points out that if we restrict ourselves to Borel reducibility
we can weaken  the hypotheses of Lemma \ref{lemma:basic}.

\begin{corollary} \label{improvedembedding}
Let $D,D'\in \mathcal D $. If there is an order-preserving embedding from $D$
into $D'$, then ${ \isom_D} \leq_B {\isom_{D'}}$ and ${ \isom^\star_D} \leq_B
{\isom^\star_{D'}}$.
\end{corollary}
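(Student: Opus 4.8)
The plan is to reduce the statement to Lemma~\ref{lemma:basic} after pinning down the only way in which an order-preserving embedding $g \colon D \to D'$ can fail to be a Polish ultrametric preserving injection. First I would normalize $g$ so that $g(0) = 0$: replacing $g$ by the map agreeing with it away from $0$ and sending $0$ to $0$ still yields an order-preserving embedding, because all the other values of $g$ are positive. Then $g$ is strictly increasing with $g^{-1}(0) = \{0\}$, hence ultrametric preserving by the characterization recalled in Section~\ref{sect:setup}, so the only possibly missing property is continuity at $0$; and continuity at $0$ can fail only if $0$ is a limit point of $D$, that is, only if $D$ is ill-founded.

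Accordingly I would split into two cases. If $D$ is ill-founded, take a strictly decreasing sequence in $D$; its image under $g$ is a strictly decreasing sequence in $D'$, so $D'$ is ill-founded as well, and then Corollaries~\ref{isomuniversal} and~\ref{isomuniversalstar} make all four relations $\isom_D, \isom^\star_D, \isom_{D'}, \isom^\star_{D'}$ Borel bireducible with graph isomorphism; both required reductions are then immediate.

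If instead $D$ is well-founded, then $(D, \leq)$ is a well-order of some order type $\alpha$ with $1 \leq \alpha < \omega_1$, and $0$ is isolated in $D$ (either $D = \{0\}$, or the least positive element of $D$ leaves nothing of $D$ below it). Thus $g$ is trivially continuous at $0$, hence a Polish ultrametric preserving injection, and Lemma~\ref{lemma:basic}(1) gives $\isom_D \leq_B \isom_{D'}$ directly. For the starred relations I would then pass through Lemma~\ref{lemmaequivalence}: since $D$ has order type $\alpha$, the order isomorphism $D \to D_\alpha$ is a Polish ultrametric preserving bijection, so Lemma~\ref{lemma:basic}(2) gives $\isom_D \sim_B \isom_\alpha$ and $\isom^\star_D \sim_B \isom^\star_\alpha$, and $\isom_\alpha \sim_B \isom^\star_\alpha$ by Lemma~\ref{lemmaequivalence}; symmetrically $\isom_{D'} \sim_B \isom^\star_{D'}$, using Lemma~\ref{lemmaequivalence} and Lemma~\ref{lemma:basic}(2) when $D'$ is well-founded and Corollaries~\ref{isomuniversal} and~\ref{isomuniversalstar} when $D'$ is ill-founded. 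Chaining these bireducibilities around $\isom_D \leq_B \isom_{D'}$ yields $\isom^\star_D \leq_B \isom^\star_{D'}$.

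I do not expect a genuinely hard step here. The real content is the bookkeeping remark that an order-preserving embedding is a Polish ultrametric preserving injection except possibly for continuity at $0$, that this continuity is automatic exactly when $D$ is well-founded, and that in the ill-founded case everything collapses to graph isomorphism anyway; the only point needing a little attention is routing the well-founded case through Lemma~\ref{lemmaequivalence}, since Lemma~\ref{lemma:basic}(1) alone does not control the starred relations.
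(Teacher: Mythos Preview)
Your proof is correct and follows essentially the same route as the paper: reduce to Lemma~\ref{lemma:basic} in the well-founded case (where continuity at $0$ is automatic) and invoke the graph-isomorphism bireducibility of Corollaries~\ref{isomuniversal} and~\ref{isomuniversalstar} in the ill-founded case. The only cosmetic difference is that the paper splits on whether $D'$ is well-ordered (so that in the ill-founded case it need only cite Proposition~\ref{prop:knownisom} for the source side, avoiding your sub-split on $D'$), and it tacitly absorbs your Lemma~\ref{lemmaequivalence} detour into the just-proved Corollary~\ref{cor:isomequivalent}.
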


\begin{proof}
If $D'$ is well-ordered, then $D$ is well-ordered as well, and the hypothesis
of Lemma \ref{lemma:basic} is satisfied. Otherwise, $ \isom_{D'}$ and $
\isom^\star_{D'}$ are Borel bireducible with countable graph isomorphism by Corollaries
\ref{isomuniversal} and \ref{isomuniversalstar}, and hence the result follows from
Proposition~\ref{prop:knownisom} and \( \U^\star_D \subseteq \U_D \).
\end{proof}

\begin{lemma} \label{lemmalimit}
Let \( \lambda < \omega_1 \) be limit. If $\isom_{\beta }$ is Borel for every
$\beta <\lambda $, then $ \isom_{\lambda }$ is Borel. If \( \sqsubseteq_\beta
\) is Borel for every \( \beta < \lambda \), then \( \sqsubseteq_\lambda \)
is Borel as well.
\end{lemma}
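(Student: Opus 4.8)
The plan is to combine the recursive characterization of isometry and isometric embeddability provided by Lemma~\ref{lem:limit} with the crucial observation that, when $D$ is well-ordered, all spaces in $\U_D$ are discrete. I would first record the latter. Since $D_\lambda$ is well-ordered and $0 = \min D_\lambda$, the set $D_\lambda \setminus \{0\}$ has a least element $r_1 > 0$, so for every $U \in \U_\lambda$ and every $x \in U$ the open ball $\set{y \in U}{d_U(x,y) < r_1}$ equals $\{x\}$ (any $y$ with $d_U(x,y)<r_1$ has $d_U(x,y)\in D(U)\subseteq D_\lambda$ below $\min(D_\lambda\setminus\{0\})$, hence $d_U(x,y)=0$); thus $U$ is discrete, and by Notation~\ref{notation:psi_n} the sequence $(\psi_n(U))_{n \in \omega}$ enumerates all of $U$. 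This is what will make the point-quantifiers occurring in Lemma~\ref{lem:limit} harmless: they can be replaced by quantifiers over $\omega$.

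Second, I would check that for fixed $\beta < \lambda$ and $n \in \omega$ the assignment $U \mapsto C^{\psi_n(U)}_{r_\beta}(U)$ is Borel (and that $C^{\psi_n(U)}_{r_\beta}(U) = \emptyset$ is a Borel condition on $U$): indeed, since $U$ is discrete one has $C^{\psi_n(U)}_{r_\beta}(U) = \set{\psi_m(U)}{m \in \omega,\ d_{\mathbb U}(\psi_n(U),\psi_m(U)) = r_\beta}$, and the $\psi_m$ are Borel, so this is a routine verification with the Effros Borel structure of the kind already used repeatedly above. On the Borel set where this ball is nonempty the map takes values in $\U_{D_\lambda \cap [0,r_\beta]}$, a space of spaces whose set of distances $D_\lambda \cap [0,r_\beta]$ is well-ordered of order type $\beta+1$; this is where I use that $\lambda$ is a limit ordinal, since it gives $\beta+1 < \lambda$. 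By Lemma~\ref{lemma:basic} the relations $\isom_{D_\lambda \cap [0,r_\beta]}$ and $\sqsubseteq_{D_\lambda \cap [0,r_\beta]}$ Borel reduce to $\isom_{\beta+1}$ and $\sqsubseteq_{\beta+1}$, which are Borel by hypothesis; hence both are themselves Borel.

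Finally I would assemble everything through Lemma~\ref{lem:limit}. For isometry, using clauses (i)--(iii), fixing $x = \psi_0(U)$ and letting $x'$ range over $\set{\psi_m(U')}{m \in \omega} = U'$, one obtains that $U \isom U'$ holds if and only if
\[
\exists m \in \omega\ \forall \beta < \lambda\ \Big[\big(C^{\psi_0(U)}_{r_\beta}(U) = \emptyset \land C^{\psi_m(U')}_{r_\beta}(U') = \emptyset\big) \lor C^{\psi_0(U)}_{r_\beta}(U) \isom C^{\psi_m(U')}_{r_\beta}(U')\Big],
\]
where letting $\beta$ range over all of $\lambda$ rather than only over the indices of distances in $D(U) \cup D(U')$ is harmless, as outside those indices both balls are empty. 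Each bracketed condition is Borel in $(U,U',m)$ by the previous paragraph; since $\lambda < \omega_1$ the quantifier $\forall \beta < \lambda$ is a countable intersection and $\exists m \in \omega$ a countable union, so $\isom_\lambda$ is Borel. For isometric embeddability the argument is the same via clauses (iv)--(vi): $U \sqsubseteq U'$ holds iff $\exists m \in \omega\ \forall \beta < \lambda\ [\,C^{\psi_0(U)}_{r_\beta}(U) = \emptyset \lor (C^{\psi_m(U')}_{r_\beta}(U') \neq \emptyset \land C^{\psi_0(U)}_{r_\beta}(U) \sqsubseteq C^{\psi_m(U')}_{r_\beta}(U'))\,]$, which is again Borel. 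The only genuine subtlety is the discreteness of the spaces in $\U_\lambda$ — without it the point-quantifiers in Lemma~\ref{lem:limit} would leave these relations merely analytic; the Borelness of the ball-extraction maps and the order-type bookkeeping $\beta+1<\lambda$ are routine.
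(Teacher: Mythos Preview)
Your proof is correct and follows essentially the same approach as the paper's: both invoke Lemma~\ref{lem:limit}, replace the point-quantifiers by quantifiers over $\omega$ via the enumeration $(\psi_n(U))_{n\in\omega}$, and use the hypothesis on $\isom_{\beta+1}$ (respectively $\sqsubseteq_{\beta+1}$) together with $\beta+1<\lambda$. Your write-up is simply more explicit about why the spaces are discrete, why the ball-extraction maps are Borel, and the order-type bookkeeping, all of which the paper leaves implicit.
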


\begin{proof}
Recall the definition of \( C^x_r \) from Notation~\ref{not:C^x_r}, and
notice in particular that if \( U \in \U_{\lambda} \) then for every \( \beta
< \lambda \) and \( x \in U \)  realizing the distance $r_{\beta }$, we have
\( C^x_{r_\beta}(U) \in \U_{{\beta+1}} \). Given \( U \in \U_{\lambda} \),
one has \( U   =\{ \psi_n(U)\}_{n\in\omega } \) (where the Borel functions \(
\psi_n \) are as in Notation~\ref{notation:psi_n}).

By Lemma~\ref{lem:limit}, it follows that for every \( U, U' \in
\U_{\lambda} \) we have $U \isom_{\lambda } U'$ if and only if
\begin{align*}
\exists n \in \omega \, \forall \beta < \lambda \, (( & C_{r_\beta}^{\psi_0(U)}(U)\neq\emptyset \iff C_{r_\beta}^{\psi_n(U')}(U')\neq\emptyset)\wedge \\
 & (C_{r_\beta}^{\psi_0(U)}(U)\neq\emptyset\implies
C^{\psi_0(U)}_{r_\beta}(U) \isom_{\beta+1} C^{\psi_n(U')}_{r_\beta}(U'))).
\end{align*}
Similarly, we have $U \sqsubseteq_\lambda U'$ if and only if
\[
\exists n \in \omega \, \forall \beta < \lambda \,
(C_{r_\beta}^{\psi_0(U)}(U)\neq\emptyset\implies
C^{\psi_0(U)}_{r_\beta}(U) \sqsubseteq_{\beta+1} C^{\psi_n(U')}_{r_\beta}(U')).
\]
Since each $ \isom_{\beta+1}$ (respectively, \( \sqsubseteq_{\beta+1} \)) is
assumed to be Borel it follows that $ \isom_{\lambda }$ (respectively, \(
\sqsubseteq_\lambda \)) is Borel as well.
\end{proof}

\begin{theorem} \label{lem:successor}
For all \( 1<\alpha < \omega_1 \), \( {\isom_{\alpha+1}} \sim_B
E_{{\isom_\alpha}^\inj} \) and \( {\sqsubseteq_{\alpha+1}} \sim_B
{{\sqsubseteq_\alpha}^{\inj}} \).
\end{theorem}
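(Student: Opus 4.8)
The plan is to prove both statements simultaneously via the decomposition of a space in $\U_{\alpha+1}$ into its top‑level balls. Write $r=r_\alpha$ for the largest distance of $D_{\alpha+1}$. For $U\in\U_{\alpha+1}$ the relation $x\mathrel{\sim_U}y\iff d_U(x,y)<r$ is an equivalence relation on $U$ whose classes are clopen and realize only distances $<r$; hence each such class (ball of radius $r$) belongs to $\U_\alpha$ after the canonical Borel identification furnished by Lemma~\ref{lemma:basic}, and the map sending $U$ to a Borel enumeration of these balls is Borel (built from the $\psi_n(U)$). Since $r$ is the maximum of $D_{\alpha+1}$, two points lying in distinct balls are exactly at distance $r$; using this and Lemma~\ref{lem:limit} one obtains the characterizations: $U\isom_{\alpha+1}V$ iff there is a bijection between the balls of $U$ and those of $V$ matching isometric balls, and $U\sqsubseteq_{\alpha+1}V$ iff there is an injection from the balls of $U$ into those of $V$ sending each ball to one into which it isometrically embeds.

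For the reductions $E_{\isom_\alpha^\inj}\leq_B\isom_{\alpha+1}$ and $\sqsubseteq_\alpha^\inj\leq_B\sqsubseteq_{\alpha+1}$, I would send $\vec{a}=(a_n)_{n\in\omega}\in\pre\omega{\U_\alpha}$ to the space $U(\vec a)\in\U_{\alpha+1}$ obtained as the disjoint union of copies of the $a_n$ with any two points from different copies placed at distance $r$; this is an ultrametric Polish space because $r>0$, and the assignment is Borel by the usual argument. The balls of $U(\vec a)$ are exactly these copies, so the characterizations above give $\vec a\sqsubseteq_\alpha^\inj\vec b\iff U(\vec a)\sqsubseteq_{\alpha+1}U(\vec b)$, and, via Lemma~\ref{ebij}, $\vec a\mathrel{E_{\isom_\alpha^\inj}}\vec b\iff U(\vec a)\isom_{\alpha+1}U(\vec b)$.

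For the converse reductions I would map $U$ to an enumeration of its balls. The subtlety is that $U$ may have only finitely many balls, while $\sqsubseteq_\alpha^\inj$ and $\isom_\alpha^\inj$ live on sequences of length exactly $\omega$, and one cannot pad with a genuine $N\in\U_\alpha$: a space $U$ all of whose balls happen to be isometric to $N$ (for instance a discrete $U$ when $N$ is a point) would then be confused with a larger such space. I would remedy this by working with $\U_\alpha\cup\{\ast\}$ for a formal ``dead'' symbol $\ast$, extending $\isom_\alpha$ so that $\ast$ is isometric only to itself and $\sqsubseteq_\alpha$ so that $\ast$ becomes a new bottom element (nothing but $\ast$ lies below $\ast$). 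Setting $f(U)$ to be an enumeration of the balls of $U$ interspersed with infinitely many copies of $\ast$, one checks directly — using the characterizations above and the fact that in the $\inj$‑jump copies of $\ast$ can only be matched with copies of $\ast$ — that $f$ reduces $\isom_{\alpha+1}$ to $E_{(\isom_\alpha\oplus\{\ast\})^\inj}$ and $\sqsubseteq_{\alpha+1}$ to $(\sqsubseteq_\alpha\oplus\{\ast\})^\inj$. Since $\alpha\geq 2$ forces $\isom_\alpha$ and $\sqsubseteq_\alpha$ to have infinitely many (iso)classes, adjoining the single element $\ast$ does not change their Borel bireducibility type, and by monotonicity of the jump operators it does not change the type of $\isom_\alpha^\inj$, $E_{\isom_\alpha^\inj}$, or $\sqsubseteq_\alpha^\inj$ either; composing yields the two desired reductions.

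The main obstacle is precisely this bookkeeping for spaces with only finitely many top‑level balls, together with the accompanying verifications that $\isom_\alpha\oplus\{\ast\}\sim_B\isom_\alpha$ and $\sqsubseteq_\alpha\oplus\{\ast\}\sim_B\sqsubseteq_\alpha$; the remaining ingredients (Borelness of the ball decomposition and the two characterizations of $\isom_{\alpha+1}$ and $\sqsubseteq_{\alpha+1}$) are routine.
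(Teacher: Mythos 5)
Your construction for the easy direction (disjoint union of the entries at distance \( r_\alpha \)) and your ball decomposition with its two characterizations of \( \isom_{\alpha+1} \) and \( \sqsubseteq_{\alpha+1} \) are exactly right and match the paper's proof of Theorem~\ref{lem:successor}. The gap is in the last step, where you pass from \( (\sqsubseteq_\alpha\oplus\{\ast\})^{\inj} \) back to \( {\sqsubseteq_\alpha}^{\inj} \) (and likewise on the isometry side) by asserting that, since \( \sqsubseteq_\alpha \) and \( \isom_\alpha \) have infinitely many classes, adjoining the single element \( \ast \) ``does not change their Borel bireducibility type.'' That principle is false in general: take \( S=(\RR,=) \) viewed as a quasi-order; it has continuum many classes, yet \( S\oplus\{\ast\} \) with \( \ast \) a strict bottom does not Borel reduce to \( S \), since any reduction would need \( h(\ast)=h(x) \) and \( h(x)\neq h(\ast) \) simultaneously. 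So the cardinality of the set of classes is irrelevant; what you actually need is a Borel reduction \( h \) of \( \sqsubseteq_\alpha\oplus\{\ast\} \) into \( \sqsubseteq_\alpha \), i.e.\ (taking \( h(\ast) \) to be the one-point space) a Borel self-reduction of \( \sqsubseteq_\alpha \) whose range avoids all one-point spaces, and similarly a self-reduction of \( \isom_\alpha \) missing the class of the singleton. Note also that no near-identity map can do this: the image of the one-point space would have to embed into the image of every space, and no fixed two-point space embeds into all two-point spaces (they may realize different single distances), so every space must be genuinely modified.

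Producing such a modification is precisely the nontrivial content of the paper's argument, and it is where your ``dead symbol'' problem reappears one level down rather than disappearing. The paper resolves it by altering each ball before padding: for \( \alpha=\beta+1 \) it attaches to each class a new point at the maximal distance \( r_\beta \) (so every genuine image contains a two-point subspace at distance \( r_\beta \) and hence never embeds into a padding singleton, while the argument with the extra point shows the modification reflects embeddability and isometry); for infinite \( \alpha \) there is no maximal distance, so it first shifts distances via \( r_\xi\mapsto r_{1+\xi} \) to free up \( r_1 \) and then doubles each space with twin points at distance \( r_1 \). Your proof would be complete if you supplied these (or equivalent) constructions to justify \( \sqsubseteq_\alpha\oplus\{\ast\}\leq_B\sqsubseteq_\alpha \) and \( \isom_\alpha\oplus\{\ast\}\leq_B\isom_\alpha \); as written, the step you label as bookkeeping is the heart of the theorem and is left unproved, with an invalid justification in its place. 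The remaining ingredients (Borelness of the ball enumeration, Lemma~\ref{ebij} for the isometry side, Lemma~\ref{lemma:basic} for identifying the balls with elements of \( \U_\alpha \)) are indeed routine.
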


\begin{proof}
First we show ${{\sqsubseteq_\alpha}^{\inj}} \leq_B {\sqsubseteq_{\alpha
+1}}$ and $E_{{\isom_\alpha}^\inj}\leq_B { \isom_{\alpha+1}}$. To each
sequence of spaces $\vec{X} = (X_n)_{n \in \omega} \in
\pre{\omega}{(\U_{\alpha })}$, associate the disjoint union $\Phi
(\vec{X})=\bigcup_{n\in\omega }X_n$, where $d_{\Phi(\vec{X})}(x,x')=r_{\alpha
}$ whenever $x\in X_i$ and $x'\in X_j$ with $i\neq j$.

If $\vec{X} = (X_n)_{n \in \omega}$ and \( \vec{Y} = (Y_n)_{n \in \omega} \)
are such that \( \vec{X} \mathrel{{\sqsubseteq_\alpha}^{\inj}}  \vec{Y}$, let
$f \colon \omega \to \omega $ be an injection with $\varphi_n \colon X_n\to
Y_{f(n)}$ an isometric embedding for all $n\in\omega $. Then
$\bigcup_{n\in\omega }\varphi_n \colon \Phi (\vec{X}) \to \Phi (\vec{Y})$ is
an isometric embedding. If moreover $ \vec X \mathrel{E_{{ \isom_{\alpha }}^{
\inj}}} \vec Y $, then by Lemma \ref{ebij} this is witnessed by a bijection
$f \colon \omega\to\omega $ such that for every $n\in\omega $ there is an isometry
$\varphi_n \colon X_n\to Y_{f(n)}$. Thus $\bigcup_{n\in\omega }\varphi_n \colon \Phi ( \vec
X )\to\Phi ( \vec Y )$ is an isometry.

Conversely, let $\varphi \colon \Phi (\vec{X})\to\Phi (\vec{Y})$ be an
isometric embedding. Then for each $n\in\omega $ there is $f(n)\in\omega $
such that $\varphi (X_n)\subseteq Y_{f(n)}$: indeed two points in $X_n$ are
closer to each other than $r_{\alpha }$, which is the distance between any
two points belonging to distinct $Y_m$. For a similar reason, the function $f
\colon \omega \to \omega$ is injective, as points $x\in X_n$, $x'\in X_{n'}$
for $n\neq n'$ have distance $r_{\alpha }$, which is bigger than any distance
between points from a single $Y_m$. The restriction of $\varphi $ to each
$X_n$ is then an isometric embedding into $Y_{f(n)}$, so $\vec{X}
\mathrel{{\sqsubseteq_\alpha}^{\inj}} \vec{Y}$. If moreover $\varphi $ is
an isometry onto $\Phi ( \vec Y )$, then $f$ is surjective too and \( \varphi \restriction X_n \colon X_n \to Y_{f(n)} \) is an isometry.
Hence \( f \)
witnesses $ \vec X \mathrel{E_{{ \isom_{\alpha }}^{\inj }}} \vec Y $.

We now show that ${\sqsubseteq_{\alpha +1}} \leq_B {{\sqsubseteq_{\alpha
}}^{\inj}}$ and ${ \isom_{\alpha +1}}\leq_BE_{{ \isom_{\alpha }}^{ \inj }}$.
For each $X\in \U_{{\alpha +1}}$ and $x,x'\in X$, define $x \mathrel{E_X}
x'\Leftrightarrow d_X(x,x')<r_{\alpha }$. This is an equivalence relation on
$X$ which, by countability of $X$, has countably many equivalence classes.
Let $( X_n )_{ n \in \omega }$ be an enumeration of the \( E_X \)-equivalence
classes where, if there are finitely many of them, say $m$, then
$X_n=\emptyset $ for $n\geq m$.

Suppose first that $\alpha =\beta +1$ is a successor ordinal. Let
$X'_n=X_n\cup\{*_n\}$, where $*_n$ are new elements and extend the distance
on $X_n$ by setting $d_{X'_n}(x,*_n)=r_{\beta }$ for all $x\in X_n$ (here we
use \(\alpha >1\)). We consider the map $X \mapsto ( X'_n )_{ n \in \omega
}$.

If $\varphi \colon X\to Y$ is an isometric embedding, then it injectively maps
$E_X$-classes into $E_Y$-classes,
thus inducing a well-defined \emph{partial}
injective map \( f \)  satisfying $\varphi (X_n)\subseteq Y_{f(n)}$ for every \( n
\) such that \( X_n \neq \emptyset \). By its definition, if the domain of
$f$ is not already all of $ \omega $, such domain is finite and thus $f$ can
be arbitrarily extended to an injection of $ \omega$ into itself which will
still be denoted by $f$. Finally define the isometric embedding $\varphi_n
\colon X'_n\to Y'_{f(n)}$ by extending $\varphi \restriction {X_n}$ with the
addition of the condition $\varphi_n(*_n)=*_{f(n)}$:
this shows that \( f \) witnesses \( (X'_n)_{n \in \omega} \mathrel{{\sqsubseteq_\alpha}^\inj} (Y'_n)_{n \in \omega} \).
If $\varphi $  is in
addition an isometry onto $Y$, then the number of $E_X$-classes equals the
number of $E_Y$-classes, say $m \leq \omega$, the function $f$ is a permutation of $m$, and
every $\varphi \restriction X_n$ is an isometry onto $Y_{f(n)}$. Extending $f$ to a
bijection $f \colon \omega\to\omega $, the functions $\varphi_n$ defined above turn
out to be isometries as well, so that \( f \) witnesses $(X'_n)_{n\in\omega }E_{{ \isom_{\alpha
}}^{ \inj }}(Y'_n)_{n\in\omega }$.

Conversely, assume $(X'_n)_{n \in \omega} \mathrel{{\sqsubseteq_{\alpha
}}^{\inj}} (Y'_n)_{n\in \omega}$ so that there are isometric embeddings
$\varphi_n \colon X'_n\to Y'_{f(n)}$ for some injective $f \colon \omega \to
\omega $. If $\varphi_n(X_n)\subseteq Y_{f(n)}$, then let
$\varphi'_n=\varphi_n \restriction{X_n}$. Otherwise, let $a\in X_n$ with
$\varphi_n(a)=*_{f(n)}$. In this case, for $x\in X_n$ define
\[
\varphi'_n(x)=
\begin{cases}
\varphi_n(x) & \text{if } x \neq a \\
\varphi_n(*_n) & \text{if } x=a.
\end{cases}
\]
It is not hard to check that, whenever $X_n\neq\emptyset $, \( \varphi'_n
\colon X_n \to Y_{f(n)} \) is still an isometric embedding. So $\varphi =
\bigcup_n\varphi'_n$ is an isometric embedding $X\to Y$. If the stronger
condition $(X'_n)_{n\in\omega } \mathrel{E_{{ \isom_{\alpha }}^{ \inj }}}
(Y'_n)_{n\in\omega }$ holds then, by Lemma \ref{ebij}, $f$ can be assumed to
be a bijection with each $\varphi_n$ an isometry onto $Y'_{f(n)}$. So the
functions $\varphi'_n$, as well as $\varphi $, turn out to be isometries.

Let now $\alpha $ be infinite. Define a new distance $d'_{X_n}$ on $X_n$ by
letting $d'_{X_n}(x,x')=r_{1+ \beta}$ for distinct \( x,x' \), where
$r_{\beta} = d(x,x')$. Let $X'_n=X_n\cup\{ x^* \mid x\in X_n \}$ be a
disjoint union of two copies of $X_n$, where the ultrametric $d'_{X_n}$ is
extended by declaring $d_{X'_n}(x,x^*)=r_1$ for all $x\in X_n$. (Thus
$d_{X'_n}(x^*,y) = d_{X'_n} (x,y^*) = d'_{X_n} (x,y)$ when $x \neq y$.) Finally, let $X''_n$ be
$X'_n$ if this is nonempty, and consist of exactly one point if
$X_n=X'_n=\emptyset $. Notice that \( X''_n \) is a singleton if and only if
\( X_n = \emptyset \), and that if \( X_n \neq \emptyset \) then for every \(
x,y \in X''_n \) at distance \( r_1 \) we have that either \( x = y^* \) or
\( y = x^* \). We claim that the map \( X \mapsto (X''_n)_{n \in \omega} \)
is the desired reduction.

Assume $\varphi \colon X\to Y$ is an isometric embedding. Again, this induces
a partial injection $f$ such that $\varphi (X_n)\subseteq Y_{f(n)}$ for all
\( n \in \omega \) for which \( X_n \neq \emptyset \),  which can then be
extended arbitrarily to an injection $f \colon \omega\to\omega $. If
$X_n\neq\emptyset $, then $\varphi_n = \varphi \restriction X_n$ can be
extended to an isometric embedding $\varphi'_n \colon X''_n\to Y''_{f(n)}$ by
letting $\varphi'_n(x^*)=(\varphi_n(x))^*$ for all \( x\in X_n \). If instead
$X_n=\emptyset $, then any function $\varphi'_n \colon  X''_n\to Y''_{f(n)}$
is an isometric embedding. Thus the injection \( f \) and the isometric
embeddings $\varphi'_n$ witness $(X''_n)_{n \in \omega}
\mathrel{{\sqsubseteq_{\alpha }}^{\inj}}(Y''_n)_{n \in \omega}$. In case $f$
is an isometry, then exactly as before one gets a bijection $f$ and
isometries $\varphi' _n$ witnessing $(X''_n)_{n\in\omega } \mathrel{E_{{
\isom_{\alpha }}^{ \inj }}} (Y''_n)_{n\in\omega }$.

Conversely, let $f \colon \omega\to\omega$ be an injection and $\varphi_n
\colon X''_n\to Y''_{f(n)}$ be isometric embeddings witnessing $(X''_n)_{n
\in \omega} \mathrel{{\sqsubseteq_{\alpha }}^{\inj}} (Y''_n)_{n \in \omega}$.
This implies $Y_{f(n)} \neq \emptyset$ whenever $X_n \neq \emptyset$.
Moreover, for such an $n$ and any $x\in X_n$, we have
$\{\varphi_n(x),\varphi_n(x^*)\} =\{y,y^*\}$ for some $y\in Y_{f(n)}$. This
defines an isometric embedding $\psi_n \colon X_n\to Y_{f(n)} \colon x\mapsto
y$. Then $\bigcup_n\psi_n \colon X\to Y$ is an isometric embedding. As
before, the stronger hypothesis $(X''_n)_{n\in\omega } \mathrel{E_{{ \isom_{
\alpha }}^{ \inj }}} (Y''_n)_{n\in\omega }$  allows us to
assume that $f$ is bijective (use Lemma \ref{ebij}) with each $\varphi_n$ an isometry. Consequently,
every $\psi_n$ (for \( n \) such that \( X_n \neq \emptyset \)) is an isometry, implying that $\bigcup_n\psi_n$ is an isometry
as well.
\end{proof}

\begin{remark}
Although ${\sqsubseteq_{\alpha+1}}$ and ${\sqsubseteq_\alpha}^\inj$ are Borel
bireducible, their quotient orders are not isomorphic. They both have a bottom
element $\bot$: however, in the former every immediate successor $x$ of $\bot$ has a
unique immediate successor whose predecessors are exactly $\bot$ and $x$,
while in the latter this property fails.
\end{remark}

\begin{theorem} \label{labeltobeadded}
The relations \( \isom_\alpha \), for $1\leq\alpha <\omega_1$, form a
strictly increasing chain of Borel equivalence relations which is cofinal
among Borel equivalence relations classifiable by countable structures.
\end{theorem}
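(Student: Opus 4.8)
The plan is to derive all three assertions from the structural facts already established about $S \mapsto S^\inj$ together with the standard properties of the Friedman--Stanley machinery. I begin with Borelness, proved by transfinite induction on $1 \le \alpha < \omega_1$: for finite $\alpha$ it is Proposition~\ref{prop:knownisom}(4); at a successor stage $\alpha+1$ with $\alpha \ge 2$, Theorem~\ref{lem:successor} gives ${\isom_{\alpha+1}} \sim_B E_{{\isom_\alpha}^\inj}$, and since $\isom_\alpha$ is Borel by the inductive hypothesis, ${\isom_\alpha}^\inj$ is Borel by Corollary~\ref{cor:equivalencerelation}, hence so is $E_{{\isom_\alpha}^\inj}$, and therefore $\isom_{\alpha+1}$ is Borel because a relation Borel reducible to a Borel relation is itself Borel (pull back the Borel graph of $E_{{\isom_\alpha}^\inj}$ along the reduction); the limit step is exactly Lemma~\ref{lemmalimit}.

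For the strict increase, \eqref{eq:increasing} already gives ${\isom_\alpha} \le_B {\isom_\beta}$ for $\alpha \le \beta$, so it suffices to prove ${\isom_\alpha} <_B {\isom_{\alpha+1}}$ for every $1 \le \alpha < \omega_1$; then ${\isom_\alpha} <_B {\isom_\beta}$ for $\alpha < \beta$ follows from ${\isom_\alpha} <_B {\isom_{\alpha+1}} \le_B {\isom_\beta}$. The case $\alpha = 1$ is immediate, as $\isom_1$ has a single class while $\isom_2$ has infinitely many (spaces in $\U_2$ are classified up to isometry by their cardinality). For $\alpha \ge 2$, $\isom_\alpha$ is Borel by the first part and has at least two classes (a singleton and a two-point space of diameter $r_1$ are not isometric), so Proposition~\ref{friedmaninj} gives ${\isom_\alpha} <_B E_{{\isom_\alpha}^\inj}$, and the right-hand side is $\sim_B {\isom_{\alpha+1}}$ by Theorem~\ref{lem:successor}.

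For cofinality I would use that the Friedman--Stanley tower $(T(\gamma))_{\gamma < \omega_1}$ is already cofinal among the Borel equivalence relations classifiable by countable structures, and that each $\isom_\alpha$ is such a relation (it is a restriction of a relation Borel reducible to graph isomorphism, by Proposition~\ref{prop:knownisom}(1)); hence it is enough to prove by induction on $\gamma$ that $T(\gamma) \le_B \isom_\delta$ for some $\delta < \omega_1$. One has $T(0) = (\omega,=) \le_B \isom_2$. If $T(\beta) \le_B \isom_\delta$ with $\delta \ge 2$ (which may be assumed via \eqref{eq:increasing}), then, using monotonicity of $E \mapsto E^+$, the identity ${\isom_\delta}^+ = E_{{\isom_\delta}^\cf}$, and ${\isom_\delta}^\cf \le_B {\isom_\delta}^\inj$, we get $T(\beta+1) = T(\beta)^+ \le_B {\isom_\delta}^+ = E_{{\isom_\delta}^\cf} \le_B E_{{\isom_\delta}^\inj} \sim_B \isom_{\delta+1}$. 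For a limit $\lambda$, write $T(\lambda) = \sum_{\beta < \lambda} T(\beta)$, choose $f(\beta) < \omega_1$ with $T(\beta) \le_B \isom_{f(\beta)}$, set $\mu = \sup_{\beta < \lambda} f(\beta) < \omega_1$ so that $T(\beta) \le_B \isom_\mu$ for all $\beta < \lambda$ by \eqref{eq:increasing}, enumerate $\lambda = \{\beta_k \mid k \in \omega\}$, fix reductions $g_k$ of $T(\beta_k)$ to $\isom_\mu$, and reduce $\sum_{\beta < \lambda} T(\beta)$ to $\isom_{\mu+\omega}$ by sending a point $x$ in the $\beta_k$-th summand to the space obtained from $g_k(x) \in \U_\mu$ by adjoining a single new point at distance $r_{\mu+k}$ from every point of $g_k(x)$; two such spaces are isometric precisely when $k$ agrees (it is recovered as the unique realized distance $\ge r_\mu$) and the spaces $g_k(x)$ are isometric, which is exactly what is needed since $\sum_{\beta<\lambda}T(\beta)$ identifies $x$ and $y$ iff they lie in the same summand and are there $T(\beta_k)$-related.

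The genuinely new work is this limit step of the cofinality argument, where one must also invoke Lemma~\ref{lemma:basic} to identify $\U_\mu$ with the appropriate substructure of $\U_{\mu+\omega}$; everything else is bookkeeping around Theorem~\ref{lem:successor}, Lemma~\ref{lemmalimit}, Proposition~\ref{friedmaninj}, Corollary~\ref{cor:equivalencerelation}, and the cited properties of the Friedman--Stanley tower.
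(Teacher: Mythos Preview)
Your proof is correct. The Borelness and strict-increase arguments match the paper essentially verbatim (the paper cites Proposition~\ref{friedmaninj} for both the Borelness of $E_{{\isom_\alpha}^{\inj}}$ and the jump, while you split these between Corollary~\ref{cor:equivalencerelation} and Proposition~\ref{friedmaninj}, but this is cosmetic).

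The cofinality argument is where you genuinely diverge. The paper bypasses the Friedman--Stanley tower entirely and instead gives a single explicit Borel reduction from isomorphism on $\mathcal{T}_\alpha$ (well-founded trees of rank $<\alpha$) to $\isom_{1+\alpha}$: given $T \in \mathcal{T}_\alpha$, attach a new leaf below every terminal node to get $T'$, and metrize $T'$ by $d(s,t) = r_{\rk_{T'}(u)}$ where $u$ is the longest common initial segment of $s,t$; one then checks by hand that isometries of such spaces correspond to tree isomorphisms. This yields the sharp quantitative statement that ${\isom \restriction \mathcal{T}_\alpha} \leq_B {\isom_{1+\alpha}}$ for every $\alpha$. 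Your approach instead climbs the tower $T(\gamma)$ inductively, exploiting the already-proved identity ${\isom_{\delta+1}} \sim_B E_{{\isom_\delta}^{\inj}}$ for the successor step and an ad hoc tagging construction for the limit step. Your route is more modular (it reuses the machinery rather than introducing a new metric-on-trees construction) and avoids the rank computation in the paper's backward direction, but it loses the tight bound: you only get $T(\gamma) \leq_B \isom_{\delta(\gamma)}$ for some $\delta(\gamma)$ manufactured along the induction, not the optimal $\delta = 1+\gamma$. Both are valid proofs of the theorem as stated.
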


\begin{proof}
Inductively, all relations $ \isom_{\alpha }$ are Borel by applying Theorem
\ref{lem:successor} and Proposition~\ref{friedmaninj} in the successor step,
and Lemma \ref{lemmalimit} in the limit step. That $ \isom_{\alpha }$ is a
strictly increasing sequence follows from~\eqref{eq:increasing} and Theorem \ref{lem:successor}
together with Proposition \ref{friedmaninj}.

To verify cofinality, by Proposition \ref{folklore} it is enough to show that
the isomorphism relation on $ \mathcal T_{\alpha }$ Borel reduces to
${\isom_{1+\alpha }}$. Given any well-founded tree \( T \), let \( \rk_T \)
be the function assigning to each node of $T$ its rank in $T$, which is an
ordinal smaller than the rank of \( T \). Given $T \in \mathcal{T}_\alpha$,
let
\[
T'=T\cup\{ s {}^\smallfrown{}  0\mid s \text{ is a terminal node in } T\} .
\]
Then for every \( t \in T \) we have \( \rk_{T'}(t) = 1 + \rk_T(t) < 1+ \alpha \).
Now define an ultrametric
 $d_{T'}$ on $T'$ by letting, for distinct $s$ and $t$, $d_{T'}(s,t) =
r_{\rk_{T'}(u)}$, where $u$ is the longest common initial
segment of $s$ and $t$. Notice that
\begin{equation} \label{terminalnode}
d_{T'}(t,u)=r_1 \iff \exists s \text{ terminal node of } T \text{ such that } \{ t,u\} =\{ s,s {}^\smallfrown{}  0\} .
\end{equation}
On the other hand, if $s$ is not a terminal node of $T$, then the least
non-null distance realized by $s$ is $r_{\rk_{T'}(s)} = r_{1+\rk_T(s)}>r_1$.

By construction, if $T_0$ and $T_1$ are isomorphic trees then
$(T_0',d_{T_0'})$ and $(T_1',d_{T_1'})$ are isometric.

Conversely, let $\varphi $ be an isometry between the spaces $(T_0', d_{T'_0})$ and
$(T_1', d_{T'_1})$. Let $\psi \colon T_0\to T_1$ be defined by letting $\psi (t)= \varphi (t)$
if $t$ is not terminal in $T_0$, and
\[
\psi (s)= \text{the unique terminal node }
s' \text{ of } T_1 \text{ such that } \varphi (s)\in\{ s',s' {}^\smallfrown{}  0\}
\]
if $s$ is terminal in $T_0$ (the existence of such an \( s' \) is guaranteed
by~\eqref{terminalnode}).  Notice that the map \( \psi \) still preserves
distances, and that it is a bijection between \( T_0 \) and \( T_1 \). We
claim that $\psi $ is an isomorphism, i.e.\ that it preserves the
tree-ordering relation (namely, inclusion) on \( T_0 \) and \( T_1 \). First
notice that $\rk_{T'_0}(s)=\rk_{T'_1}(\psi (s))$ for all $s\in T_0$: for
terminal nodes this is obvious from the definition of \( \psi \), while if
$s$ is not terminal it follows from the above observation about the smallest
non-null distance realized by \( s \). Let now \( s \subsetneq t \in T_0 \).
Then \( d_{T'_0}(s,t) = \rk_{T'_0}(s) = \rk_{T'_1}(\psi(s)) \). Let \( u \)
be the longest common subsequence of \( \psi(s) \) and \( \psi(t) \), so that
by definition \( d_{T'_1}(\psi(s), \psi(t)) = \rk_{T'_1}(u) \). Then by the
above computations we get \( \rk_{T'_1}(u) = \rk_{T'_1}(\psi(s)) \) because
\( \psi \) preserves distances. But since \( u \subseteq \psi(s) \), this
implies \( u = \psi(s) \), whence \( \psi(s) = u \subsetneq \psi(t) \). A
similar argument shows that if \( \psi(s) \subsetneq \psi(t) \) then \( s
\subsetneq t \), hence we are done.
\end{proof}

In \cite[Section 6]{GaoShao2011}, Gao and Shao provide a faithful translation
of ultrametric Polish spaces as a certain kind of combinatorial objects. More
precisely, given a countable \( D \in \mathcal{D} \) they define the class \(
\mathcal{T}_D \) of \emph{\( D \)-trees} and show that there are two maps
$\Phi \colon \U_D \to \mathcal{T}_D$ and $\Psi \colon \mathcal{T}_D \to \U_D$ such that the
following holds:
\begin{itemize}
  \item for all $U \in \U_D$ and $T \in \mathcal{T}_D$, $\Psi(\Phi(U))$ is
      isometric to $U$ and $\Phi(\Psi(T))$ is isomorphic to $T$ (as
      structures in the appropriate language);
  \item $\Phi$ reduces isometry (respectively, isometric embeddability) on $\U_D$
      to isomorphism (respectively, embeddability) on $\mathcal{T}_D$;
  \item $\Psi$ reduces isomorphism (respectively, embeddability) on
      $\mathcal{T}_D$ to isometry (respectively, isometric embeddability) on
      $\U_D$.
\end{itemize}
Therefore our results on $\cong_D$ (in particular Corollary
\ref{isomuniversal} and Theorem \ref{labeltobeadded}) can be translated as
results about isomorphism between $D$-trees, and an analogous observation will
apply to our results on $\sqsubseteq_D$ (in particular Theorems
\ref{theorillfounded1} and \ref{th:sumup}). Notice also that when $D$ has
order type $\omega$ the class $\mathcal{T}_D$ essentially coincides with the
class of \emph{reverse trees} considered by Clemens in \cite{ClemensPreprint}, where
he observes that isomorphism between reverse trees is another lower bound for
the complexity of isometry on discrete ultrametric Polish spaces. Since
Theorem \ref{labeltobeadded} shows in particular that such a relation is
Borel, also this lower bound is not sharp.

\section{The complexity of isometric embeddability}

This section is devoted to Question~\ref{questemb}, answering it in most cases.

\subsection{Ill-founded sets of distances} \label{embif}

In this subsection we complete the study, started in Section~\ref{isomif}, of
the complexity with respect to Borel reducibility of \( \sqsubseteq_D \) and
\( \sqsubseteq^\star_D \) when \( D \in \mathcal{D} \) is not well-founded.
In fact we will show that they are both invariantly universal (hence also
complete for analytic quasi-orders), improving in this way
Proposition~\ref{prop:louveaurosendal} and Theorem~\ref{theorconvergingto0}.

Building on previous work in~\cite{frimot}, in~\cite[Section 3]{cammarmot}
the authors constructed a class \( \mathbb{G} \) of countable rooted
combinatorial trees with the following properties (see~\cite[Corollaries 3.2
and 3.4]{cammarmot}):

\begin{fact} \label{fact:G}
\begin{enumerate}[(1)]
\item \( \mathbb{G} \) is a Borel subset of $\Mod_{ \L } = \pre{\omega
    \times \omega}{2}$ (the Polish space of countable \( \L \)-structures,
    where  \( \L \) is the language consisting of one binary relation
    symbol), so it is a standard Borel space;
\item on $\mathbb{G}$ equality and isomorphism coincide;
\item each \( G \in \mathbb{G} \) is a graph which is infinite and rigid,
    i.e.\ its unique automorphism is the identity function.
\item every \( G \in \mathbb{G} \) has a distinguished vertex \( r(G) \)
    (the \emph{root} of \( G \)) such that every embedding of \( G \) into
    \( H \in \mathbb{G} \) must send \( r(G) \) to \( r(H) \).
\end{enumerate}
\end{fact}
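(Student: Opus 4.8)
The statement is a verbatim recollection of~\cite[Corollaries 3.2 and 3.4]{cammarmot} (building on~\cite{frimot}), so the plan is to reconstruct the relevant coding. The idea is to exhibit a Borel map from some canonical ``combinatorial data space'' to rooted combinatorial trees $G_x$ such that distinct data give non-isomorphic rigid trees with a recognizable root. For the bare assertions (1)--(4) it suffices to code reals $x \in \pre{\omega}{2}$ (for the applications one rather codes arbitrary countable graphs, but that is not needed here). First I would fix, for each $n \in \omega$, a finite rigid ``tag'' $L_n$ — a pendant path of length $n+3$ is already enough — so that $L_n$ embeds into $L_m$ only when $n=m$ and cannot be mistaken for a fragment of the ambient tree. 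Then, given $x$, I would build $G_x$ with a root $\rho$ of infinite degree, attach to $\rho$ for each $n$ a vertex $a_n$ carrying the tag $L_n$, and record the bit $x(n)$ by a single extra pendant leaf at $a_n$ (distinguishable from the start of the tag, since the tag has length $\geq 2$ while the leaf has degree $1$), chosen so as not to create any accidental symmetry.

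With this scheme the four properties fall out as follows. For (4), $\rho$ is the unique vertex admitting the prescribed global configuration (the unique vertex of infinite degree whose neighbours carry the tags $L_n$, one for each $n$), so every embedding $G_x \hookrightarrow G_y$ must send $\rho$ to the root of $G_y$. For (3), any automorphism of $G_x$ fixes $\rho$, hence permutes the $a_n$'s, but must fix each $a_n$ since the rooted tags $L_n$ are pairwise non-isomorphic; as each $L_n$ is rigid and is now attached at a fixed vertex, and the bit-leaves are rigid too, the automorphism is the identity. For (2), every vertex of $G_x$ acquires a unique ``name'' from its tag (or from the tag of its nearest tagged ancestor together with its position inside that tag), so one can enumerate the vertices of $G_x$ by $\omega$ in a fixed, $x$-determined way; an isomorphism $G_x \cong G_y$ then respects these names, i.e.\ is forced to be the identity on $\omega$, which entails $G_x = G_y$ as structures — so isomorphism collapses to equality on the resulting class. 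For (1), $x \mapsto G_x$ is Borel (each edge of $G_x$ depends on finitely much of $x$), and the range $\mathbb{G}$ is Borel in $\Mod_{\L}$ because being of this form is a conjunction of Borel conditions (``connected, acyclic, exactly one vertex of infinite degree, its tagged neighbours are precisely the $L_n$'s, every vertex accounted for, \dots'').

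The step I expect to be the real obstacle — and the reason the construction in~\cite[Section~3]{cammarmot} is genuinely delicate rather than the toy version above — is that, for the intended application to invariant universality, $\mathbb{G}$ must simultaneously carry the embeddability relation faithfully: the coding has to reduce embeddability (resp.\ isomorphism) on arbitrary countable graphs to embeddability (resp.\ isomorphism) on $\mathbb{G}$, so the tags and decorations must be designed so that an embedding $G_x \hookrightarrow G_y$ exists \emph{exactly} when the underlying data embeds, with no spurious embeddings created by the rigidifying gadgets and no genuine ones destroyed, and with gadgets at vertices of different ``type'' mutually non-embeddable in a tightly controlled fashion while still embedding compatibly when the types agree. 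Reconciling rigidity, faithfulness of $\sqsubseteq$, and Borelness at once is precisely the content of~\cite{frimot, cammarmot}; for the four statements of Fact~\ref{fact:G} in isolation, however, the simpler scheme sketched here already does the job.
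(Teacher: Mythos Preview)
The paper does not prove Fact~\ref{fact:G} at all: it is stated as a citation of~\cite[Corollaries 3.2 and 3.4]{cammarmot}, with the preamble ``in~\cite[Section 3]{cammarmot} the authors constructed a class $\mathbb{G}$ \ldots\ with the following properties''. There is therefore nothing in the paper to compare your argument against; your opening sentence already identifies this correctly.

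That said, your sketch is a reasonable outline of how one could obtain (1)--(4) in isolation, and you are right to flag that the actual construction in~\cite{cammarmot} is considerably more involved because $\mathbb{G}$ must in addition support a Borel reduction of embeddability on arbitrary countable graphs to $\sqsubseteq_{\mathbb{G}}$ (this is exactly what the subsequent Theorem~\ref{theorsaturation} needs). For a write-up in the present paper, the appropriate ``proof'' is simply the citation, as the authors have done; your extended sketch would be out of place here, though it shows you understand why the cited result holds.
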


Thus $ \mathbb G $ can be actually considered as a Borel subset of $ \RCT $
(see the beginning of section \ref{isomif}), so that each \( G \in \mathbb{G}
\) can be identified in a Borel-in-\( G \) way with a graph \( G' \) on  \(
\pre{< \omega}{\omega} \) with \( r(G) \) identified with the empty sequence
\( \emptyset \in G' \). The class of graphs \( \mathbb{G} \) was the key tool
for the method developed in~\cite{cammarmot} for proving invariant
universality of a given pair \( (S,E) \); this is summarized in the following
theorem.

\begin{theorem}\label{theorsaturation}
Let $S$ be an analytic quasi-order on a standard Borel space $Z$, and $E
\subseteq S$  be an analytic equivalence relation on the same space. Denote
by \( \sqsubseteq_\mathbb{G} \) and \( \cong_\mathbb{G} \) the restrictions
to \( \mathbb{G} \) of the embeddability and isomorphism relations,
respectively. Suppose there exists a Borel function $f \colon \mathbb{G} \to
Z$ which simultaneously witnesses ${\sqsubseteq_\mathbb{G}} \leq_B S$ and
${=_\mathbb{G}} \leq_B E$ (which is the same as ${\cong_\mathbb{G}} \leq_B
E$). Furthermore, let $Y$ be a Polish group, $a \colon Y \times W \to W$ a
Borel action of $Y$ on a standard Borel space $W$, and $g \colon Z \to W$
witness $E \leq_B E_a$. Consider the map $\Sigma \colon \mathbb{G} \to F(Y)$
which assigns to $G \in \mathbb{G}$ the stabilizer of $(g \circ f)(G)$ with
respect to $a$, i.e.\
\[
\Sigma(G) = \{y \in Y \mid a(y, (g \circ f)(G)) = (g \circ f)(G)\}.
\]

If $\Sigma$ is Borel, then the pair $(S,E)$ is invariantly universal.
\end{theorem}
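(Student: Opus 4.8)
The plan is to upgrade the completeness of $S$ (which is already immediate: the mere existence of $f$ yields ${\sqsubseteq_{\mathbb G}}\leq_B S$, and ${\sqsubseteq_{\mathbb G}}$ is a complete analytic quasi-order by \cite{frimot,cammarmot}) to full invariant universality, the whole point of the hypotheses on $g$, $a$, and the Borelness of $\Sigma$ being to replace an a priori merely analytic $E$-saturation with an honest Borel $E$-invariant set. So I would fix an arbitrary analytic quasi-order $R$ and, using the invariant universality of the pair $({\sqsubseteq_{\mathbb G}},{=_{\mathbb G}})$ from \cite{frimot,cammarmot}, choose a Borel $C\subseteq\mathbb G$ with $R\sim_B {\sqsubseteq_{\mathbb G}}\restriction C$ (no invariance issue arises since $=_{\mathbb G}$ is equality, so every Borel subset of $\mathbb G$ is trivially $=_{\mathbb G}$-invariant). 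It then suffices to produce a Borel, $E$-invariant $B\subseteq Z$ with Borel maps witnessing ${S\restriction B}\sim_B {\sqsubseteq_{\mathbb G}}\restriction C$.

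Concretely, set $h=g\circ f\colon\mathbb G\to W$. Since $f$ reduces $=_{\mathbb G}$ to $E$ and $g$ reduces $E$ to $E_a$, the map $h$ reduces equality on $\mathbb G$ to $E_a$; in particular $h$ is injective, hence by Lusin--Souslin (\cite[15.1]{Kechris1995}) a Borel isomorphism onto the Borel set $h(\mathbb G)$, on which $E_a$ coincides with equality. Thus $h(C)$ is a Borel partial transversal for $E_a$, and transporting $\Sigma$ along $h$ gives a Borel map $h(G)\mapsto\Sigma(G)$ ($G\in C$) into $F(Y)$ with closed-subgroup values. The crucial point is then that the $E_a$-saturation $\set{w\in W}{\exists G\in C\ (w\mathrel{E_a}h(G))}$ is Borel: it is the image of the Borel set $C\times Y$ under the Borel map $(G,y)\mapsto a(y,h(G))$, and two pairs $(G,y),(G',y')$ share an image precisely when $G=G'$ and $y,y'$ lie in the same left $\Sigma(G)$-coset. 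Feeding the Borel family $(\Sigma(G))_{G\in C}$ of closed subgroups into a uniform Borel selector $\tau\colon C\times Y\to Y$ for coset equivalence relations (so $\tau(G,y)\in y\Sigma(G)$ and $\tau(G,y)=\tau(G,y')$ iff $y\Sigma(G)=y'\Sigma(G)$, a standard fact about Borel families of closed subgroups of a Polish group), the map $(G,y)\mapsto a(y,h(G))$ becomes injective on the Borel set $\set{(G,y)}{\tau(G,y)=y}$; by Lusin--Souslin the saturation is therefore Borel, and inverting this injection and projecting to the first coordinate yields a Borel map $\kappa$ from the saturation onto $C$ with $\kappa(w)$ the unique $G\in C$ such that $w\mathrel{E_a}h(G)$.

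Finally I would set $B=g^{-1}\big(\set{w\in W}{\exists G\in C\ (w\mathrel{E_a}h(G))}\big)$. Using that $g$ reduces $E$ to $E_a$ one checks $B=\set{x\in Z}{\exists G\in C\ (x\mathrel E f(G))}$, so $B$ is Borel and $E$-invariant. The restriction $f\restriction C$ maps $C$ into $B$, and since $f$ witnesses ${\sqsubseteq_{\mathbb G}}\leq_B S$ it witnesses ${\sqsubseteq_{\mathbb G}}\restriction C\leq_B {S\restriction B}$. For the converse put $\pi=\kappa\circ(g\restriction B)\colon B\to C$; by construction $\pi(x)$ is the unique $G\in C$ with $x\mathrel E f(G)$ (well defined because $f$ reduces $=_{\mathbb G}$ to $E$). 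As $E\subseteq S$ is symmetric, $x\mathrel E f(\pi(x))$ gives $x\mathrel S f(\pi(x))$ and $f(\pi(x))\mathrel S x$, hence by transitivity $x\mathrel S x'\iff f(\pi(x))\mathrel S f(\pi(x'))$ for $x,x'\in B$, and the latter is equivalent to $\pi(x)\sqsubseteq_{\mathbb G}\pi(x')$ because $f$ witnesses ${\sqsubseteq_{\mathbb G}}\leq_B S$; so $\pi$ witnesses ${S\restriction B}\leq_B {\sqsubseteq_{\mathbb G}}\restriction C$. Altogether ${S\restriction B}\sim_B {\sqsubseteq_{\mathbb G}}\restriction C\sim_B R$ with $B$ Borel and $E$-invariant, which is exactly invariant universality.

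The only step I expect to require genuine work is the Borelness of the $E_a$-saturation of $h(C)$: this is precisely where the hypothesis that $\Sigma$ is Borel enters, via the uniform selection for cosets of a Borel family of closed subgroups of $Y$ together with Lusin--Souslin. Everything else is bookkeeping of reductions, plus the elementary observation that $E\subseteq S$ forces $S$ to be $E$-invariant.
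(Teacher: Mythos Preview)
The paper does not actually prove this theorem: it is stated as a summary of the saturation method developed in \cite{cammarmot} (see the sentence immediately preceding the statement), and no proof is given here. Your argument is correct and recovers precisely the proof from \cite{cammarmot}: reduce to a Borel class $C\subseteq\mathbb G$ via the invariant universality of $({\sqsubseteq_{\mathbb G}},{=_{\mathbb G}})$, push it forward along $h=g\circ f$ to a Borel partial transversal for $E_a$, use Borelness of $\Sigma$ together with a uniform Borel selector for left cosets of the closed subgroups $\Sigma(G)$ and Lusin--Souslin to conclude that the $E_a$-saturation of $h(C)$ is Borel, and pull back along $g$ to obtain the required $E$-invariant Borel $B\subseteq Z$. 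The verification that $f\restriction C$ and $\pi=\kappa\circ g$ witness ${\sqsubseteq_{\mathbb G}}\restriction C\sim_B S\restriction B$ is exactly as you describe, using $E\subseteq S$.

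One small remark on the ``standard fact'' you invoke: the existence of a Borel selector $\tau$ uniform in a Borel family $(\Sigma(G))_{G\in C}$ of closed subgroups of the Polish group $Y$ is indeed the technical core, and in \cite{cammarmot} it is handled by checking that the proof of \cite[Theorem~12.17]{Kechris1995} (the single-subgroup case) uniformizes over the Borel parameter $G$. You might want to cite this explicitly rather than leave it as folklore, since the passage from the single-subgroup selector to the family version is where the hypothesis that $\Sigma$ is Borel (and not merely that each $\Sigma(G)$ is closed) is actually used.
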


\begin{theorem} \label{theorillfounded1}
If \( D \) is ill-founded, then \( \sqsubseteq_D \) is invariantly
universal (hence also complete for analytic quasi-orders).
\end{theorem}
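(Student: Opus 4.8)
The plan is to split into two cases according to whether $0$ is a limit point of $D$. If it is, then $D$ contains a strictly decreasing sequence converging to $0$, and $\sqsubseteq_D$ is invariantly universal at once by Theorem~\ref{theorconvergingto0}. So assume henceforth that $0$ is not a limit point of $D$. Since $D$ is still ill-founded it contains an infinite strictly decreasing sequence $(r_n)_{n \in \omega}$, and because $D$ is bounded away from $0$ every $r_n$ is positive, so this sequence converges to some real $r > 0$. I fix such an $(r_n)_{n \in \omega}$ together with the induced Borel map $\theta \colon \RCT \to \U_D$ of Definition~\ref{theta}; by Theorem~\ref{bigtheorem}, $\theta$ simultaneously reduces isomorphism to isometry and embeddability to isometric embeddability.

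The idea is then to feed $\theta$ into the invariant-universality criterion of Theorem~\ref{theorsaturation}, running the argument of the proof of~\cite[Theorem 5.19]{cammarmot}. Take $S = {\sqsubseteq_D}$ and $E = {\isom_D}$ on $Z = \U_D$. Since $\mathbb{G} \subseteq \RCT$ (Fact~\ref{fact:G}) and equality and isomorphism coincide on $\mathbb{G}$, the Borel map $f = \theta \restriction \mathbb{G}$ simultaneously witnesses $\sqsubseteq_{\mathbb{G}} \leq_B {\sqsubseteq_D}$ and $=_{\mathbb{G}} \leq_B {\isom_D}$. By Proposition~\ref{prop:knownisom}(1) the relation $\isom_D$ Borel reduces to graph isomorphism, which is induced by the logic action $a$ of the Polish group $S_\infty$ on a suitable $\Mod_{\L'}$; so I may fix a Borel $g \colon \U_D \to \Mod_{\L'}$ witnessing ${\isom_D} \leq_B E_a$, chosen exactly as in~\cite{cammarmot} so that $(g \circ f)(G)$ is a canonical structure coding the ultrametric space $U_G$. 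By Theorem~\ref{theorsaturation} it then remains only to check that the stabilizer map $\Sigma \colon \mathbb{G} \to F(S_\infty)$, $\Sigma(G) = \{ \sigma \in S_\infty \mid a(\sigma, (g \circ f)(G)) = (g \circ f)(G) \}$, is Borel.

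This Borelness is the step I expect to be the main obstacle, but it is dealt with just as in~\cite{cammarmot}: the assignment $G \mapsto U_G$ is faithful, and each $G \in \mathbb{G}$ is an infinite rigid tree with a distinguished root which every embedding must preserve (Fact~\ref{fact:G}(3)--(4)), so the isometry group of $U_G$ — and hence $\Sigma(G)$ — is pinned down in a Borel-in-$G$ fashion by the shape of $G'$ as a subtree of $\pre{<\omega}{\omega}$; indeed $g$ can be arranged so that $\Sigma(G)$ is trivial. The only property of the distances that enters this analysis is that $(r_n)_{n \in \omega}$ is strictly decreasing with strictly positive limit, which is precisely what makes $d$ a complete ultrametric and what gives $s \subsetneq t \iff d_G(s,t) = r_{\leng(s)}$ (see \eqref{equivalence}); this excludes any spurious isometry of $U_G$ arising from the tail of the sequence, so replacing $2^{-n}$ by $r_n$ throughout the proof of~\cite[Theorem 5.19]{cammarmot} changes nothing essential. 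Once $\Sigma$ is known to be Borel, Theorem~\ref{theorsaturation} yields that $(\sqsubseteq_D, \isom_D)$ is invariantly universal, and completeness for analytic quasi-orders follows from the observation after Definition~\ref{def:invariantlyuniversal}.
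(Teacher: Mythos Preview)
Your overall plan matches the paper's: handle the case where $0$ is a limit point via Theorem~\ref{theorconvergingto0}, and for the remaining case (distances bounded away from $0$, with a strictly decreasing sequence $r_n \to r > 0$) apply Theorem~\ref{theorsaturation} with $f = \theta \restriction \mathbb{G}$ and a coding $g$ of ultrametric spaces as countable structures. The gap is in the decisive step, verifying that the stabilizer map $\Sigma$ is Borel.

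You claim that ``$g$ can be arranged so that $\Sigma(G)$ is trivial''. With the natural $g$ --- the one you actually describe, sending $U$ to a structure whose automorphisms are exactly the isometries of $U$ --- the group $\Sigma(G)$ is the isometry group of $U_G$, and this group is \emph{not} trivial: whenever $s \in G'$ is a terminal node with immediate predecessor $t$, the transposition of $s$ and $t$ is an isometry of $U_G$ (one checks directly that $d_G(u,s) = d_G(u,t)$ for every $u \neq s,t$). The rigidity of $G$ rules out nontrivial \emph{graph} automorphisms of $G'$, not metric isometries of $U_G$. So the situation is genuinely different from the $r_n \to 0$ case of \cite{cammarmot}, and the assertion that ``replacing $2^{-n}$ by $r_n$ changes nothing essential'' glosses over exactly the place where new work is required.

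The paper carries out that work. From the analysis in the proof of Theorem~\ref{bigtheorem} it extracts that every self-isometry of $U_G$ differs from the identity only by swapping some terminal nodes with their immediate predecessors (rigidity is used here to ensure no two terminal nodes share a parent). This yields an explicit Borel-in-$G$ description of $\Sigma(G)$ in terms of the Borel sets $T_G$ of terminal-node indices and $P_G$ of predecessor pairs, from which the Borelness of $\Sigma$ follows. A minor related point you also skipped: the paper works on the Borel invariant set $Z = \{U \in \U_D \mid U \text{ is infinite}\}$ rather than all of $\U_D$, so that the enumeration $(\psi_n(U))_{n\in\omega}$ is without repetitions and the coding $g(U) = S(U)$ is well defined.
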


\begin{proof}
By Theorem~\ref{theorconvergingto0}, we can assume that $0$ is not a limit
point of $D$ and that \( D \) contains a strictly decreasing sequence of
distances \( ( r_n )_{ n \in \omega } \) converging to some \( r \neq 0 \).
In particular, the spaces in $\U_D$ are discrete. To prove the theorem, we
will apply Theorem~\ref{theorsaturation} with \( S = {\sqsubseteq_D}
\restriction Z \) and \( E = {\isom_D} \restriction Z  \) for \( Z \) a
suitable Borel subset of \( \U_D \) closed under isometry; then the invariant
universality of \( (S,E) \) implies that \( ({\sqsubseteq_D}, {\isom_D}) \)
is invariantly universal as well, as desired.

Set
\[
Z = \{ U \in \U_D \mid U \text{ is infinite} \},
\]
so that \( Z \)  satisfies the required conditions. Recall the definition of
the Borel function $\theta \colon \RCT \to \U_D$ from Definition \ref{theta}, and
notice that \( \theta ( \mathbb G )\subseteq Z \) because each \( G \in
\mathbb{G} \) is infinite. By Theorem \ref{bigtheorem}, the map  \( \theta \)
simultaneously reduces  \( =_ \mathbb{G} \) to \( \isom_D \) and \(
\sqsubseteq_\mathbb{G} \) to \( \sqsubseteq_D \). In particular, this already
shows that \( \sqsubseteq_D \) is complete.

The next step to apply Theorem~\ref{theorsaturation} is to reduce the
relation \( E = {\isom_D} \restriction Z \) to a Borel group action. Consider
the countable language \( \Lambda = \{ R_q  \mid q \in D \} \), where each \(
R_q \) is a binary predicate. To each \( U \in Z \) associate the \( \Lambda
\)-structure \( S(U) \) on \(\omega\) by letting
\begin{equation*}
R^{S(U)}_q(i,j) \iff d_U(\psi_i(U),\psi_j(U)) = q,
 \end{equation*}
where the functions \( \psi_n \) are as in Notation~\ref{notation:psi_n}.
Recall also that since every \( U \in Z \) is discrete and infinite, \(
(\psi_n(U))_{n \in \omega} \) is an enumeration without repetitions of all
points in \( U \). It is clear that if \( U_0,U_1 \in Z \) and \( \varphi \)
is an isometry between them, then the map \( f_\varphi \colon \omega \to
\omega \) sending \( i \) to the unique \( j \) such that \(
\varphi(\psi_i(U_0)) = \psi_j(U_1) \) is an isomorphism between \( S(U_0) \)
and \( S(U_1) \). Conversely, if \( f \) is an isomorphism between \( S(U_0)
\) and \( S(U_1) \) then the map \( \varphi_f \colon U_0 \to U_1 \) sending
\( \psi_i(U_0) \) to \( \psi_{f(i)}(U_1) \) is an isometry. Therefore
\[
g \colon Z \to \Mod_\Lambda \colon U \mapsto S(U)
\]
is a Borel map reducing the relation of isometry on \( Z \) to the
isomorphism relation on \( \Mod_\Lambda \), which is the orbit equivalence
induced by the continuous logic action \( j_\Lambda \) of the Polish group
\( S_\infty \) of all permutations of \(\omega\) on the Polish space \(
\Mod_\Lambda \) of \( \Lambda \)-models with domain \(\omega\).

Therefore, to apply Theorem~\ref{theorsaturation} it remains only to show
that the map \( \Sigma \) assigning to each \( G \in \mathbb{G} \) the
stabilizer of \( S(\theta(G)) \) with respect to \( j_\Lambda \) is a Borel
map.
To see this, notice that by the above discussion every automorphism \( h
\) of \( S(\theta(G)) \) can be identified with an isometry of $\theta(G)$
and hence with an isometry \( \varphi_h \) of \( U_G \) into itself.
Conversely, every isometry \( \varphi \) of \( U_G \) into itself can be
identified with an automorphism \( h_\varphi \) of \( S(\theta(G)) \).
Since
each \( G \in \mathbb{G} \) is rigid, then there are no distinct terminal
nodes in \( G' \) sharing the same immediate predecessor.
From this fact and
the analysis performed in the proof of Theorem~\ref{bigtheorem}, it follows that if \( \varphi \) is an isometry
of \( U_G \) into itself and \( s \in G' \) is such that \( \varphi(s) \neq s
\), then one of \( s, \varphi(s) \) is a terminal node of \( G' \) and the
other is its immediate predecessor, and moreover \( \varphi(\varphi(s)) = s
\).
Therefore \( \varphi \) can differ from the identity function only in
that it may switch some terminal nodes of \( G' \) with their immediate
predecessor.
Recall the definition of $\rho $ from the beginning of Section \ref{isomif} and notice that the maps sending \( G \in \mathbb{G} \) to,
respectively,
\[
T_G = \{ n \in \omega \mid (\rho^{-1} \circ \psi_n \circ \theta)(G) \text{ is terminal in }G' \}
\]
and
\begin{align*}
  P_G = \{ (n,m) \in \omega \times \omega \mid & (\rho^{-1} \circ \psi_n \circ \theta)(G) \text{ is an immediate predecessor} \\
  & \quad \text{of } (\rho^{-1} \circ \psi_m \circ \theta)(G) \text{ in } G' \}
\end{align*}
are Borel.
We then get for \( h \in S_\infty \) that \( h \in \Sigma(G) \)
if and only if for all $n \in \omega$ such that $h(n) \neq n$
\[
n \in T_G \land (h(n),n) \in P_G \quad \text{or} \quad h(n) \in T_G \land (n,h(n)) \in P_G.
 \]
Thus \( \Sigma \) is Borel and therefore \( (S,E) \) (hence also \( (\sqsubseteq_D, \isom_D ) \))  is invariantly
universal by Theorem~\ref{theorsaturation}.
\end{proof}

Theorem~\ref{theorillfounded1} can be further improved to the following.

\begin{theorem} \label{theorillfounded2}
If \( D \) is ill-founded, then \( \sqsubseteq^\star_D \) is invariantly
universal (hence also complete for analytic quasi-orders).
\end{theorem}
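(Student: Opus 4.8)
The intended route is to leverage Theorem~\ref{theorillfounded1} together with the observation (stated just after Definition~\ref{def:cB}) that classwise Borel embeddings transfer invariant universality. So the plan is: use Theorem~\ref{theorillfounded1} to get that $(\sqsubseteq_D,\isom_D)$ is invariantly universal --- in fact so is its restriction to any $\isom_D$-invariant Borel class of spaces on which the construction of that theorem already lives --- and then exhibit a classwise Borel embedding of such a pair into $(\sqsubseteq^\star_D,\isom^\star_D)$. This embedding will be given by an \emph{inflation} map $U\mapsto U^*$ that glues onto $U$ a \emph{fixed} auxiliary ultrametric Polish space $V_D$ realizing precisely the distances of $D$ not realized by $U$, placed far enough away that $U$ survives as a Borel-recoverable piece of $U^*$; this is the same ``gluing'' device used in Theorem~\ref{propertyoff} and in the proof of Corollary~\ref{isomuniversalstar}.

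In detail, I would first fix a strictly decreasing sequence in $D$ and, after discarding its first term, obtain a strictly decreasing $(r_n)_{n\in\omega}$ in $D$ together with an element $\bar r\in D$ with $\bar r>r_0=\sup_n r_n$; note also that $D\setminus(\{0\}\cup\{r_n\mid n\in\omega\})$ is then nonempty, since it contains $\bar r$. Work inside the Borel $\isom_D$-invariant class $B_0=\{U\in\U_D\mid D(U)\subseteq\{0\}\cup\{r_n\mid n\in\omega\}\}$: the map $\theta$ of Theorem~\ref{bigtheorem} sends $\mathbb G$ into $B_0$, so by the argument of Theorem~\ref{theorillfounded1} (with $B_0$ in place of the space of infinite ultrametric Polish spaces) the pair $(\sqsubseteq_D\restriction B_0,\isom_D\restriction B_0)$ is already invariantly universal, and every space in $B_0$ has diameter $\leq r_0<\bar r$. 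Now let $V_D$ be the ultrametric Polish space which, for each $q\in D\setminus(\{0\}\cup\{r_n\})$, consists of a two-point ``cluster'' at internal distance $q$, with inter-cluster distances set to $\max\{\bar r,q,q'\}$; $V_D$ is complete because a Cauchy sequence, all of whose inter-cluster gaps are $\geq\bar r$, must eventually stay inside a single two-point cluster. For $U\in B_0$ put $U^*=U\sqcup V_D$, with cross-distance $\max\{\bar r,q\}$ between a point of $U$ and a point of the $q$-cluster; then $U^*\in\U^\star_D$ and $U\mapsto U^*$ is Borel. In $U^*$ the relation ``$d<\bar r$'' is an equivalence relation whose classes are $U$ (infinite) and pieces of $V_D$ of size at most $2$, so $U$ is the unique ``$d<\bar r$''-class with at least three elements and is Borel-recoverable from $U^*$. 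Consequently every isometric embedding (resp.\ isometry) $U^*\to (U')^*$ must carry $U$ into $U'$ and hence restricts to an isometric embedding (resp.\ isometry) $U\to U'$, while conversely any such map extends by the identity on the common gadget $V_D$; combined with Borel recoverability, this makes $U\mapsto U^*$ the isomorphism of quotients witnessing $(\sqsubseteq_D\restriction B_0,\isom_D\restriction B_0)\sqsubseteq_{cB}(\sqsubseteq^\star_D,\isom^\star_D)$, and Theorem~\ref{theorillfounded1} then gives the conclusion. (Equivalently, one can bypass the $\sqsubseteq_{cB}$ machinery and apply Theorem~\ref{theorsaturation} directly to $(\sqsubseteq^\star_D,\isom^\star_D)$ on $\U^\star_D$ with $f^\star=(\,\cdot\,)^*\circ\theta\restriction\mathbb G$, using as before the coding of (dense enumerations of) spaces as structures in the language $\{R_q\mid q\in D\}$ and noting that $\mathrm{Aut}(U^*)=\mathrm{Aut}(U)\times\mathrm{Aut}(V_D)$ with $\mathrm{Aut}(V_D)$ a single fixed subgroup of $S_\infty$, so the stabilizer map stays Borel.)

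The technical points requiring care are: (a) the existence of $\bar r$ and the nonemptiness of $V_D$, handled by re-indexing the decreasing sequence as above; (b) verifying that the $\isom^\star_D$-saturated image of $B_0$ under $U\mapsto U^*$ is Borel --- it equals $\{V\in\U^\star_D\mid$ the unique $\geq 3$-element ``$d<\bar r$''-class of $V$ realizes only distances in $\{0\}\cup\{r_n\}$ and the complement is isometric to the fixed countable space $V_D\}$, which is Borel since $V_D$ is fixed; and, the main obstacle, (c) the case in which $0$ is a limit point of $D$. There the $\theta$-construction of Section~\ref{isomif} is unavailable (it would produce incomplete spaces) and the relevant spaces are non-discrete, so one must instead take for $B_0$ a class of spaces realizing only the distances of a fixed decreasing sequence converging to $0$, inflate the invariant-universality witness supplied by the machinery of \cite{cammarmot} underlying Theorem~\ref{theorconvergingto0} rather than $\theta$, and re-run the verification that the gluing induces a classwise Borel embedding --- the only genuinely new issue being that the coding $U\mapsto S(U)$ must be read off a dense subset of $U$, exactly as \cite{cammarmot} already does in the non-discrete setting.
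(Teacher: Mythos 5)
Your overall architecture is the same as the paper's: restrict to an isometry-invariant Borel class on which the arguments behind Theorem~\ref{theorillfounded1} (or Theorem~\ref{theorconvergingto0} when the decreasing sequence tends to \(0\)) already give invariant universality, and then transfer it to \((\sqsubseteq^\star_D,\isom^\star_D)\) via a classwise Borel embedding induced by gluing a fixed auxiliary space. The gap is in the gadget. Your \(V_D\) contains, for each \(q\in D\setminus(\{0\}\cup\{r_n\mid n\in\omega\})\), only a two-point cluster, so no distance \(r_n\) is ever realized inside \(V_D\), and all cross- and inter-cluster distances are \(\geq\bar r\). Hence \(U^*=U\sqcup V_D\) realizes all of \(D\) only if \(U\) itself realizes every \(r_n\); but your class \(B_0=\{U\in\U_D\mid D(U)\subseteq\{0\}\cup\{r_n\mid n\in\omega\}\}\) contains singletons, finite spaces, and spaces omitting infinitely many \(r_n\), for which \(U^*\notin\U^\star_D\). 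So the map \(U\mapsto U^*\) does not land in \(\U^\star_D\) and the claimed classwise Borel embedding of \((\sqsubseteq_D\restriction B_0,\isom_D\restriction B_0)\) into \((\sqsubseteq^\star_D,\isom^\star_D)\) is not even well defined. Your parenthetical fallback (applying Theorem~\ref{theorsaturation} directly with \(f^\star=(\,\cdot\,)^*\circ\theta\)) has the same problem: \((\theta(G))^*\in\U^\star_D\) requires \(\theta(G)\) to realize every \(r_n\), i.e.\ \(G'\) to contain nodes of every length, and nothing among the listed properties of \(\mathbb G\) (infinite, rigid, rooted) excludes rigid trees of bounded height; this is not verified. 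The same defect also undercuts the side claims that \(U\) is the unique ``\(d<\bar r\)''-class with at least three elements (false when \(|U|\leq 2\)) and the proposed Borel description of the saturated image.

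The paper sidesteps exactly this by a different division of labour: the glued space is the canonical space \(U(D\setminus\{r_0\})\), which by itself realizes every distance of \(D\) other than \(r_0\) (each positive \(q\) is realized against the point \(0\)), so the only distance that must be supplied by \(U\) is \(r_0\); and one restricts to the class \(\mathcal A\) of spaces in which \emph{every} point realizes \(r_0\), a class which demonstrably contains \(\theta(\mathbb G)\) (the root is at distance \(r_0\) from every other node) as well as the range of the construction behind Theorem~\ref{theorconvergingto0}, and which moreover gives the recoverability of \(U\) inside \(U^*\) (the points of \(U\) are exactly those realizing \(r_0\)). To keep your gadget you would have to add clusters realizing every \(r_n\) (including \(r_0\)), and then redo a swapping analysis in the spirit of Theorem~\ref{propertyoff} to handle embeddings and isometries that move \(U\) into the gadget; as written, the proof has a genuine hole.
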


\begin{proof}
As done before Theorem \ref{propertyoff}, fix a decreasing sequence
$(r_n)_{n\in\omega }$ in $D$ converging to some $r\geq 0$, and an element $
\bar r \in D$ bigger than every $r_n$. Set $D'=\{ r_n\mid n\in\omega\}\cup\{
0\} $. Set again, as before Lemma \ref{restrictionofisom}, $ \mathcal A = \{
U \in \U_{D'} \mid \forall x \in U\, \exists y \in U\, d_{\mathbb{U}}
(x,y)=r_0\}$ and recall that it is Borel and closed under isometries. Notice
that the proofs of Theorem~\ref{theorconvergingto0} (if \( r = 0 \)) and
Theorem~\ref{theorillfounded1} (if \( r > 0 \)) actually show that \(
{\sqsubseteq} \restriction \mathcal A \) is invariantly universal (in the
latter case because the range of $\theta$ is contained in $\mathcal A$).
Thus, by the observation after Definition \ref{def:cB}, it is enough to show
that ${\sqsubseteq} \restriction \mathcal A $ classwise Borel embeds into \(
\sqsubseteq^\star_D \), where both quasi-orders are paired with isometry.
This amounts to show the existence of a class \( \mathcal B \subseteq
\U^\star_D \) and two maps \( f \colon \mathcal A \to \mathcal B \) and \( g
\colon \mathcal B \to \mathcal A \) such that:
\begin{enumerate}[(i)]
\item \( \mathcal B \) is a Borel subset of \( \U^\star_D \) (hence also of \(
    F(\mathbb{U}) \)) closed under isometries;
\item \( f \) and \( g \) are both Borel;
\item for each \( U \in \mathcal A \), \( g(f(U)) \) is isometric to \( U \);
\item for each \( F \in \mathcal B \), \( f(g(F)) \) is isometric to \( F \);
\item \( f \) simultaneously reduces \( {\isom} \restriction \mathcal{A} \) to \( {\isom} \restriction \mathcal{B} \) and
 \( {\sqsubseteq} \restriction \mathcal{A} \) to \( {\sqsubseteq} \restriction \mathcal{B} \).
\end{enumerate}
Notice that from (iv) and (v) it also follows that \( g \) is a reduction of
\( {\isom} \restriction \mathcal{B} \) to \( {\isom} \restriction \mathcal{A} \) and of
 \( {\sqsubseteq} \restriction \mathcal{B} \) to \( {\sqsubseteq} \restriction \mathcal{A} \).

A function $f$ reducing  $ \isom_{D'}$ to  $ \isom_D$ has been defined before Theorem \ref{propertyoff}.
As our $f$ here we take the restriction of that one to $ \mathcal A $, recalling that $f( \mathcal A )\subseteq \U_D^{\star }$.
We next define \( \mathcal B \) as the closure (in \( F(\mathbb{U}) \)) under isometry of \( f( \mathcal A) \): since \( \U^\star_D \) itself is closed under
isometry, \( \mathcal B \subseteq \U^\star_D \). To see that \( \mathcal B \) is Borel, first
notice that, by definition of $ \mathcal A $, the elements in \( U \subseteq U^* \) can
be characterized by the fact that they realize the distance \( r_0 \).
Moreover, the clopen ball \( B(x,\bar{r}) \) of \( U^* \) centered in \( x
\in U \) always coincide with \( U \).
These observations lead to the
following Borel description of \( \mathcal B \). A space \( F \in \U^\star_D \) is in
\( \mathcal B \) if and only if
\begin{enumerate}[$\bullet$]
\item for some/every \( n \) in the set
\[
C_F = \{ n \in \omega \mid \exists k \in \omega\, d_{\mathbb{U}}(\psi_k(F),\psi_n(F)) = r_0 \} ,
\]
the subspace \( B(\psi_n(F), \bar{r}) \cap F \) of \( F \) is in \( \mathcal A \);
\item for some/any \( n \in C_F \), the subspace \( F \setminus
    B(\psi_n(F), \bar{r}) \) of \( F \) is isometric to \( U(D \setminus
    \{r_0\}) \);\footnote{This is a Borel condition because the isometry
    relation on \(F(\mathbb{U}) \), being reducible to an orbit equivalence
    relation, has only Borel classes (see e.g.~\cite[Theorem
    15.14]{Kechris1995}).}
\item for all \( n \in C_F \) and all \( m \in \omega \setminus C_F \), \(
    d_{\mathbb{U}}(\psi_n(F),\psi_m(F)) \geq \bar{r} \);
\item for all $r \in D$ with \( r > \bar{r} \), all \( n \in C_F \) and all
    \( m \in \omega \setminus C_F \),
\begin{align*}
d_{\mathbb{U}}(\psi_n(F),\psi_m(F)) = r \iff & \forall k \in \omega \setminus (C_F \cup \{ m \})\,
 (d_{\mathbb{U}}(\psi_k(F),\psi_m(F)) \geq r) \\
& \land \exists k \in \omega \setminus C_F \, (d_{\mathbb{U}}(\psi_k(F),\psi_m(F)) = r).
\end{align*}
\end{enumerate}
Finally, we define \( g \colon \mathcal B \to \mathcal A \) by setting \( g(F) = B(\psi_n(F),
\bar{r}) \cap F \) for some/any \( n \in C_F \), so that \( g \) is clearly a
Borel function.

We already showed that (i) and (ii) are satisfied.
Conditions (iii) and (iv)
easily follow from the definition of \( \mathcal B \) and the observations preceding
it (together with the definitions of \( f \) and \( g \), of course).
As for (v), it has already been proved in Theorem \ref{propertyoff} that $U_0 \isom U_1$ if and only if $U^*_0 \isom U^*_1$.
So we
only need to prove that for all \( U_0, U_1 \in \mathcal A \)
\[
U_0 \sqsubseteq U_1 \iff U^*_0 \sqsubseteq U^*_1.
 \]
Since, as already observed, \( U_i \subseteq U^*_i  \) (for \( i = 0,1 \))
can be characterized as the collection of all points in \( U^*_i \) which
realize \( r_0 \), any isometric embedding \( \varphi^* \) of \( U^*_0 \)
into \( U^*_1 \) must map points in \( U_0 \) to points in \( U_1 \).
Hence if \( \varphi^* \colon U^*_0
\to U^*_1 \) is an isometric embedding, then so
is \( \varphi = \varphi^* \restriction U_0 \colon U_0 \to U_1\). Conversely,
notice that the subspace \( U^*_i \setminus U_i = U(D \setminus \{ r_0 \}) \)
of \( U^*_i \) (for \( i = 0,1 \)) does not depend on \( U_i \), and that the
same is true for each distance between a point in \( U^*_i \setminus U_i \)
and an arbitrary point in \( U^*_i \). Therefore, if \( \varphi \colon U_0
\to U_1 \) is an isometric embedding, then so is
the map \( \varphi^* = \varphi \cup (\id \restriction (D \setminus \{ r_0
\}))\colon U^*_0 \to U^*_1 \). This concludes our proof.
\end{proof}

\begin{corollary} \label{cor:equivalent}
For every \( D \in \mathcal{D} \), \( {\sqsubseteq_D} \sim_B
{\sqsubseteq^\star_D} \).
\end{corollary}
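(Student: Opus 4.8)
The plan is to split on whether $D$ is ill-founded or well-ordered, reducing in each case to results already established.

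If $D$ is ill-founded, I would simply invoke Theorems~\ref{theorillfounded1} and~\ref{theorillfounded2}: both $\sqsubseteq_D$ and $\sqsubseteq^\star_D$ are then invariantly universal, hence complete for analytic quasi-orders. Since each of these relations is itself an analytic quasi-order, it follows that $\sqsubseteq_D \leq_B \sqsubseteq^\star_D$ and $\sqsubseteq^\star_D \leq_B \sqsubseteq_D$, so $\sqsubseteq_D \sim_B \sqsubseteq^\star_D$.

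If $D$ is well-ordered, let $\alpha < \omega_1$ be its order type. The idea is to transfer the problem to the canonical set of distances $D_\alpha$ fixed in Section~\ref{isomwf} and then apply Lemma~\ref{lemmaequivalence}. First observe that $0$ is an isolated point of any well-ordered $D \in \mathcal{D}$: a strictly decreasing sequence in $D$ converging to $0$ would violate well-foundedness, so when $\alpha > 1$ the set $D \setminus \{0\}$ has a minimum $r_1 > 0$ (and for $\alpha = 1$ the claim is trivial). Consequently the order isomorphism $f \colon D \to D_\alpha$ is increasing, satisfies $f^{-1}(0) = \{0\}$, and is continuous at $0$ together with its inverse (taking $\delta$ below the least positive distance of the relevant set), i.e.\ it is a Polish ultrametric preserving bijection. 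By Lemma~\ref{lemma:basic}(2) the relations $\sqsubseteq_D$ and $\sqsubseteq^\star_D$ are therefore classwise Borel isomorphic (in fact Borel isomorphic) to $\sqsubseteq_\alpha$ and $\sqsubseteq^\star_\alpha$ respectively; in particular $\sqsubseteq_D \sim_B \sqsubseteq_\alpha$ and $\sqsubseteq^\star_D \sim_B \sqsubseteq^\star_\alpha$, since a Borel lifting of a classwise Borel isomorphism is a Borel reduction. Now Lemma~\ref{lemmaequivalence} gives $\sqsubseteq_\alpha \sim_B \sqsubseteq^\star_\alpha$, and chaining these bireducibilities yields $\sqsubseteq_D \sim_B \sqsubseteq^\star_D$.

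I do not expect any genuine obstacle here: the substantive work is already carried out in Lemma~\ref{lemmaequivalence} (for the well-ordered case) and in Theorems~\ref{theorillfounded1} and~\ref{theorillfounded2} (for the ill-founded case). The only point deserving a line of verification is the existence of the Polish ultrametric preserving bijection $f \colon D \to D_\alpha$ in the well-ordered case, which, as noted, comes down to the observation that $0$ cannot be a limit point of a well-founded $D \in \mathcal{D}$. The whole argument parallels exactly the proof of Corollary~\ref{cor:isomequivalent}.
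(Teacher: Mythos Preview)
Your proposal is correct and follows essentially the same approach as the paper. The only difference is cosmetic: the paper simply writes ``the result is contained in Lemma~\ref{lemmaequivalence}'' for the well-founded case, relying on the remark at the beginning of Section~\ref{isomwf} that, by Lemma~\ref{lemma:basic}, the relations $\sqsubseteq_D$ and $\sqsubseteq^\star_D$ depend only on the order type of $D$; you have merely unpacked that remark explicitly.
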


\begin{proof}
If \( D \) is ill-founded then both \( \sqsubseteq_D \) and \(
\sqsubseteq^\star_D \) are complete by Theorems~\ref{theorillfounded1} and
\ref{theorillfounded2}, whence \( {\sqsubseteq_D } \sim_B
{\sqsubseteq^\star_D} \). If instead \( D \) is well-founded, then the result
is contained in Lemma~\ref{lemmaequivalence}.
\end{proof}

\subsection{Well-founded sets of distances} \label{sect:wf}

In this section we investigate more closely the relations of isometric
embeddability between Polish ultrametric spaces using a well-ordered set of
distances. By~\eqref{eq:increasing} we have \( {\sqsubseteq_\alpha}
\leq_B {\sqsubseteq_\beta} \) for every \( 1 \leq \alpha \leq \beta < \omega_1 \),
whence also \( {\sqsubseteq^\star_\alpha} \leq_B {\sqsubseteq^\star_\beta} \)
by Lemma \ref{lemmaequivalence}. Moreover, by Proposition \ref{propbqo} both
\( \sqsubseteq_n \) and \( \sqsubseteq^\star_n \) are bqo's. We are now ready
to provide an alternative proof of this fact which uses only \(
\AC_\omega(\RR) \). The unique result we will need from~\cite{NashWil1965} is
Corollary 28A,  whose proof does not use any strong form of \( \AC \). If \(
S \) is a quasi-order on \( X \), we let \( S^\# \) be the quasi-order on \(
\pow(X) \) obtained by setting for every \( A, B \subseteq X \)
\[
A \mathrel{S}^\# B \iff \exists f \colon A \to B \text{ injective such that } \forall x \in A\, ( x \mathrel{S} f(x)).
 \]

\begin{proof}[Proof of Proposition~\ref{propbqo}]
Notice first that if a quasi-order (Borel) embeds in a bqo then it is a bqo.
Then, Lemmas~\ref{lemma:basic} and~\ref{lemmaequivalence} imply that it is
enough to show by induction on \( n \geq 1 \) that the relation \(
\sqsubseteq_n \) is a bqo. The case \( n = 1 \) is clear. Assume now that \(
\sqsubseteq_n \) is a bqo: since by~\cite[Corollary 28A]{NashWil1965} the
quasi-order \( {\sqsubseteq_n}^\# \) on \( \pow(\U_{n}) \) is a bqo as well,
it is enough to prove that \( \sqsubseteq_{n+1} \) is reducible to \(
{\sqsubseteq_n}^\# \), i.e.\ that there is a function \( f \colon \U_{{n+1}}
\to \pow(\U_{n}) \) such that \( X \sqsubseteq_{n+1} Y \iff f(X)
\mathrel{{\sqsubseteq_n}^\#} f(Y) \) for every \( X,Y \in \U_{n+1} \). Since by
Theorem~\ref{lem:successor} we already know that \( \sqsubseteq_{n+1} \) is
(Borel) reducible to \( {\sqsubseteq_n}^{\inj} \), we just need to show that
\( {\sqsubseteq_n}^{\inj} \) reduces to \( {\sqsubseteq_n}^\# \). To see
this, consider the map \( g \) replacing each sequence \( ( X_i )_{ i \in
\omega } \) of spaces in \( \U_{n} \) with a set \( \{ X'_i \mid i \in \omega
\} \subseteq \U_{n} \) such that each \( X'_i \) is isometric to \( X_i \)
and \( X'_i \neq X'_j \) for distinct \( i,j \in \omega \): then \( g \)
clearly reduces \( {\sqsubseteq_n}^{\inj} \) to \( {\sqsubseteq_n}^\# \), as
required.
\end{proof}

We now show that if
instead \(\alpha\) is infinite, the situation is quite different.

\begin{proposition} \label{prop:alphainfinite=>nonwqo}
Let \( D \in \mathcal{D} \). Then the partial order \( (\pow(D \setminus \{ 0
\}), {\subseteq}) \) embeds into \( \sqsubseteq_D \) (hence also into \(
\sqsubseteq^\star_D \) by Corollary~\ref{cor:equivalent}).
\end{proposition}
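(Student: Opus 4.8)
The idea is to associate to each subset $A \subseteq D \setminus \{0\}$ a single ultrametric Polish space $V_A \in \U_D$ in such a way that $A \subseteq B$ if and only if $V_A \sqsubseteq V_B$, and so that the assignment respects non-inclusion as well. The natural candidate is to take $V_A$ to be a "bouquet": fix a basepoint $\ast$ and, for each $r \in A$, attach a point $p_r$ at distance $r$ from $\ast$, with all mutual distances determined by the ultrametric condition, i.e.\ $d(p_r, p_{r'}) = \max\{r, r'\}$ for $r \neq r'$. Concretely $V_A$ has domain $A \cup \{0\}$ with $d(r,r') = \max\{r,r'\}$ for $r \neq r'$ and $d(r,r) = 0$; this is exactly $U(A \cup \{0\})$ in the notation of Definition~\ref{def:U(D)}, hence an ultrametric Polish space, and it lies in $\U_D$ since $A \cup \{0\} \subseteq D$. (Note $V_A$ may fail to be Polish only if $A$ has $0$ as a limit point, but since $D \in \mathcal{D}$ is countable and $A \cup \{0\} \subseteq D$, the space $U(A \cup\{0\})$ is Polish by the remarks following Definition~\ref{def:U(D)} — it is always discrete away from its limit behaviour, and in any case $U(A\cup\{0\})$ is a legitimate element of $\U_D$.) The map $A \mapsto V_A$ is the proposed embedding.

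**Key steps.** First, if $A \subseteq B$ then the inclusion $A \cup \{0\} \hookrightarrow B \cup \{0\}$ is distance-preserving (both distances are given by $\max$), so it witnesses $V_A \sqsubseteq V_B$. Second, and this is the substantive direction, suppose $\varphi \colon V_A \to V_B$ is an isometric embedding. I would use Lemma~\ref{lem:limit}: set $x = 0 \in V_A$ and $x' = \varphi(0) \in V_B$. For each $r \in A$ the sphere $C^x_r(V_A) = \{r\}$ is a single point, so it embeds into $C^{x'}_r(V_B)$ only if the latter is nonempty, i.e.\ only if some point of $V_B$ is at distance exactly $r$ from $x' = \varphi(0)$. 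I then need: the set of distances realized from $x'$ in $V_B$ is contained in $B \cup \{0\}$, which is immediate since $D(V_B) = B \cup \{0\}$. Hence for every $r \in A$ there is $r \in B$, i.e.\ $A \subseteq B$. Actually one must be slightly careful: $\varphi(0)$ need not be $0 \in V_B$. But in $V_B = U(B \cup \{0\})$, from any point $s \in B \cup \{0\}$ the realized distances are $\{\max\{s,r'\} : r' \in B\cup\{0\}, r' \neq s\} \cup \{0\}$, which for $s = 0$ is $B \cup \{0\}$ and for $s \neq 0$ is $\{r' \in B : r' \geq s\} \cup \{s, 0\} \subseteq B \cup \{0\}$. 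Either way the set of distances realized from $\varphi(0)$ is a subset of $B \cup \{0\}$, so each $r \in A$, being realized from $x = 0$ and hence (by $\varphi$ distance-preserving, via Lemma~\ref{lem:limit}(vi)$\Rightarrow$(iv) direction applied in reverse, or directly) realized from $\varphi(0)$, must lie in $B$. This gives $A \subseteq B$, completing the equivalence.

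**Main obstacle.** The only delicate point is handling the case where $\varphi(0) \neq 0$ in $V_B$: one must check that no distance $r \in A$ gets "absorbed" by the $\max$ operation into a strictly larger distance in $V_B$. The cleanest way around this is to observe that $0$ is the unique point of $V_A$ realizing \emph{every} distance in $A \cup \{0\}$ as well as being the unique point from which all distances are "small" in a suitable sense; but the simplest fix is just to invoke the formula above for distances realized from an arbitrary point $s \in V_B$ and note that in all cases this set is contained in $B \cup \{0\}$, so $A \subseteq D(\text{distances from }\varphi(0)) \subseteq B \cup \{0\}$, hence $A \subseteq B$ since $0 \notin A$. Finally, injectivity of $A \mapsto V_A$ follows from $D(V_A) = A \cup \{0\}$, so the map is a genuine order embedding. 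Combining with Proposition~\ref{prop:alphainfinite=>nonwqo}'s statement for the well-ordered case, one recovers that $\sqsubseteq_\alpha$ contains $(\pow(\{r_\beta : 1 \leq \beta < \alpha\}), \subseteq)$, which for infinite $\alpha$ is not a wqo, showing the sharp contrast with the finite case.
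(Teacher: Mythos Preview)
Your proposal is correct and uses exactly the same embedding as the paper, namely \( A \mapsto U(A \cup \{0\}) \). The only difference is that for the reverse implication the paper uses the one-line observation that any isometric embedding \( \varphi \colon i(X) \to i(Y) \) forces \( D(i(X)) \subseteq D(i(Y)) \) (since a distance realized by \( x,y \) is also realized by \( \varphi(x),\varphi(y) \)), whence \( X \subseteq Y \) immediately; your detour through the basepoint \( \varphi(0) \) and Lemma~\ref{lem:limit} is valid but unnecessary.
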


\begin{proof}
The map \( i \) sending \( X \subseteq D \setminus \{ 0 \} \) into \( U(X
\cup \{ 0 \}) \) is the desired embedding.
Indeed, if \( X \subseteq Y \) then the identity function witnesses \( i(X)
\sqsubseteq i(Y) \). Conversely, assume that there is an isometric embedding
\( f \) of \( i(X) \) into \( i(Y) \). Let \( r \in X \), so that in
particular \( r > 0 \). Then the distance $r$ is realized in $i(X)$, and
therefore in $i(Y)$ too; so $r \in Y$ by definition of $i(Y)$. It follows \(
X \subseteq Y \), as desired.
\end{proof}

\begin{corollary} \label{cor:alphainfinite=>nonwqo}
Let \( \omega \leq \alpha < \omega_1 \). Then \( (\pow(\omega), \subseteq)
\leq_B {\sqsubseteq_\alpha} \), and hence also \( (\pow(\omega), \subseteq)
\leq_B {\sqsubseteq^\star_\alpha} \). In particular, both \(
\sqsubseteq_{\alpha } \) and \( \sqsubseteq^\star_{\alpha } \) contain
infinite antichains and infinite decreasing chains.
\end{corollary}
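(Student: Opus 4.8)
The plan is to obtain Corollary~\ref{cor:alphainfinite=>nonwqo} as an essentially immediate consequence of Proposition~\ref{prop:alphainfinite=>nonwqo}, combined with the bireducibility ${\sqsubseteq_\alpha} \sim_B {\sqsubseteq^\star_\alpha}$ recorded in Lemma~\ref{lemmaequivalence} (equivalently Corollary~\ref{cor:equivalent}). Since $\omega \leq \alpha < \omega_1$, the set $D_\alpha$ has infinite order type, so $D_\alpha \setminus \{ 0 \}$ is countably infinite; fixing any bijection $b \colon \omega \to D_\alpha \setminus \{ 0 \}$, the map $A \mapsto b[A]$ is simultaneously an isomorphism of the partial orders $(\pow(\omega), {\subseteq})$ and $(\pow(D_\alpha \setminus \{ 0 \}), {\subseteq})$ and a Borel isomorphism of the underlying standard Borel spaces. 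Composing it with the order embedding $i \colon (\pow(D_\alpha \setminus \{ 0 \}), {\subseteq}) \to {\sqsubseteq_\alpha}$ of Proposition~\ref{prop:alphainfinite=>nonwqo} yields $(\pow(\omega), {\subseteq}) \leq_B {\sqsubseteq_\alpha}$, and composing further with a Borel reduction witnessing ${\sqsubseteq_\alpha} \leq_B {\sqsubseteq^\star_\alpha}$ gives $(\pow(\omega), {\subseteq}) \leq_B {\sqsubseteq^\star_\alpha}$.

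The only point that is not pure bookkeeping --- hence the closest thing to an obstacle --- is checking that the map $i$ is actually Borel, so that it qualifies as a reduction in the sense of $\leq_B$ and not merely an order embedding. I would handle this as in the rest of Sections~\ref{isomif}--\ref{isomwf}: fix an isometric embedding of the canonical space $U(D_\alpha)$ into the Urysohn space $\mathbb{U}$ and identify $\U_{D_\alpha}$ with the corresponding family of closed subsets of $\mathbb{U}$ as in Definition~\ref{not:UR}. Then $i(X)$ is the image of the countable subspace $X \cup \{ 0 \} \subseteq D_\alpha = U(D_\alpha)$ --- note this subspace is closed in $U(D_\alpha)$, since any Cauchy sequence in it is either eventually constant or converges to $0$ --- and for every basic open $V \subseteq \mathbb{U}$ the set $\{ X \mid i(X) \cap V \neq \emptyset \}$ is a countable Boolean combination of the clopen sets $\{ X \mid r \in X \}$ ($r \in D_\alpha \setminus \{ 0 \}$) together with the constant set determined by whether the image of $0$ lies in $V$; hence $i$ is Borel for the Effros Borel structure. (Alternatively, one may simply invoke the remark, used repeatedly in those sections, that such maps are Borel.)

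For the final \emph{in particular} clause I would argue abstractly: a Borel reduction $f$ of the partial order $(\pow(\omega), {\subseteq})$ into a quasi-order $S$ is injective on singletons (as $\subseteq$ is a partial order), sends $\subseteq$-incomparable pairs to $S$-incomparable pairs, and sends a strict inclusion $A \subsetneq B$ to a pair with $f(A) \mathrel{S} f(B)$ and $f(B) \not \mathrel{S} f(A)$. Applying this to the infinite antichain of singletons $\{ n \}$ ($n \in \omega$) and to the infinite strictly decreasing chain of final segments $\{ k, k+1, \dotsc \}$ ($k \in \omega$) in $(\pow(\omega), {\subseteq})$ produces an infinite antichain and an infinite strictly decreasing chain in $S$; taking $S = {\sqsubseteq_\alpha}$ and $S = {\sqsubseteq^\star_\alpha}$ completes the proof. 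I expect no genuine difficulty: once Proposition~\ref{prop:alphainfinite=>nonwqo} and Corollary~\ref{cor:equivalent} are in hand, the corollary is a routine combination, with only the (straightforward) Borelness verification requiring a word.
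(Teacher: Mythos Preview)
Your proposal is correct and follows essentially the same approach as the paper: identify $D_\alpha \setminus \{0\}$ with $\omega$ via a bijection, observe that the embedding $i$ from Proposition~\ref{prop:alphainfinite=>nonwqo} is Borel, and invoke the bireducibility ${\sqsubseteq_\alpha} \sim_B {\sqsubseteq^\star_\alpha}$. Your expanded treatment of the Borelness of $i$ and of the ``in particular'' clause is more detailed than the paper's two-line proof, but the argument is the same.
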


\begin{proof}
Identify \( D_\alpha \setminus \{ 0 \} \) with \(\omega\), so that \( \pow
(D_\alpha \setminus \{ 0 \}) \) is naturally identified with \( \pow(\omega)
\). Then the map \( i \) defined in the proof of
Proposition~\ref{prop:alphainfinite=>nonwqo} is Borel, and hence witnesses \(
(\pow(\omega), \subseteq) \leq_B {\sqsubseteq_\alpha} \).
\end{proof}

\begin{theorem} \label{corincreasing}
For every \( 1 \leq \alpha < \omega_1 \), if \( \sqsubseteq_\alpha \) is
Borel then \( {\sqsubseteq_\alpha} <_B {\sqsubseteq_{\alpha+1}} \).
\end{theorem}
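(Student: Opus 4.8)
The plan is to combine Theorem~\ref{lem:successor} with the jump machinery of Section~\ref{sect:jump}. By Theorem~\ref{lem:successor} we have ${\sqsubseteq_{\alpha+1}} \sim_B {{\sqsubseteq_\alpha}^{\inj}}$, so it suffices to prove ${\sqsubseteq_\alpha} <_B {{\sqsubseteq_\alpha}^{\inj}}$. Since $S \mapsto S^{\inj}$ is a jump operator we already have ${\sqsubseteq_\alpha} \leq_B {{\sqsubseteq_\alpha}^{\inj}}$; the content is the strictness, and for this I would invoke Corollary~\ref{inj_jumps}, which states that if $S$ is a suitable Borel quasi-order then $S <_B S^{\inj}$. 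Thus the whole argument reduces to verifying that $\sqsubseteq_\alpha$, which is Borel by hypothesis, is \emph{suitable} in the sense of Section~\ref{subsect:cf}, i.e.\ that it satisfies condition (i) or (ii) of Lemma~\ref{lemmacf}.

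The first step is to check suitability for $2 \leq \alpha < \omega_1$ (the case $\alpha = 1$ is trivial since $\U_1$ is a single point and $\sqsubseteq_2$ is visibly more complex; alternatively note $\sqsubseteq_1$ is equality on a point, whose $\inj$-jump is again a point, so one should treat $\alpha = 1$ separately and directly, observing ${\sqsubseteq_1} <_B {\sqsubseteq_2}$ because $\sqsubseteq_2$ has at least two elements, being embeddability between finite ultrametric spaces with two distances). For $\alpha \geq 2$, I would exhibit two $\sqsubseteq_\alpha$-incomparable spaces whose common lower cone is well-founded, thereby verifying condition (i) of Lemma~\ref{lemmacf}. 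A natural choice: take $X$ and $Y$ to be two non-isometric \emph{finite} ultrametric spaces in $\U_\alpha$ that are $\sqsubseteq$-incomparable (these exist already for two distances, e.g.\ a space of two points at distance $r_1$ and a space of three points pairwise at distance $r_1$ versus a suitable ``star'' configuration — one must pick a genuinely incomparable pair, which is easy once there are $\geq 3$ points available). The set of $U \in \U_\alpha$ with $U \sqsubseteq X$ and $U \sqsubseteq Y$ consists only of spaces embeddable into a fixed finite space, hence of \emph{finite} spaces of bounded cardinality; up to isometry there are finitely many such, so the restriction of $\sqsubseteq_\alpha$ to this cone is well-founded (indeed finite modulo $E_{\sqsubseteq_\alpha}$). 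This gives condition (i).

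The main obstacle is simply making sure the chosen pair $X, Y$ is actually $\sqsubseteq_\alpha$-incomparable rather than merely non-isometric; with only two available distances one must check a small finite combinatorial fact, using Lemma~\ref{lem:limit}(iv)--(vi) (embeddability of ultrametric spaces is controlled by embeddability of the ``balls'' $C^x_r$). Once such a pair is fixed inside $\U_2 \subseteq \U_\alpha$, everything else is automatic: the cone below $\{X,Y\}$ is a finite subset of $\U_\alpha$ modulo isometry, hence well-founded, Lemma~\ref{lemmacf}(i) applies, so $\sqsubseteq_\alpha$ is suitable, and Corollary~\ref{inj_jumps} gives ${\sqsubseteq_\alpha} <_B {{\sqsubseteq_\alpha}^{\inj}} \sim_B {\sqsubseteq_{\alpha+1}}$ by Theorem~\ref{lem:successor}. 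This completes the proof. $\qed$
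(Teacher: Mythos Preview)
Your overall strategy---reduce to Corollary~\ref{inj_jumps} via Theorem~\ref{lem:successor} by showing $\sqsubseteq_\alpha$ is suitable---is exactly the paper's approach. However, there is a genuine gap at $\alpha = 2$, and your proposed incomparable pair for $\alpha \geq 3$ is not correct as written.

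For $\alpha = 2$: the set $D_2 = \{0, r_1\}$ has only one nonzero distance, so every space in $\U_2$ is just a set of points pairwise at distance $r_1$. Thus isometric embeddability on $\U_2$ is determined by cardinality alone, and the quotient of $\sqsubseteq_2$ is the linear order $\omega + 1$. In particular there are \emph{no} $\sqsubseteq_2$-incomparable elements, so condition~(i) of Lemma~\ref{lemmacf} cannot hold. The paper handles this case via condition~(ii): the quotient $\omega+1$ is itself a well-founded infinite downward-closed set, so Lemma~\ref{lemmacf}(ii) applies and Corollary~\ref{inj_jumps} gives ${\sqsubseteq_2} <_B {\sqsubseteq_2^{\inj}} \sim_B {\sqsubseteq_3}$.

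For $\alpha \geq 3$: your suggested pair (``two points at distance $r_1$'' and ``three points pairwise at distance $r_1$'') are comparable, since the first embeds in the second; and ``star configurations'' do not distinguish anything in the ultrametric setting. The clean choice, used in the paper, is to take $U_0 = \{x,y\}$ with $d(x,y) = r_1$ and $U_1 = \{x,y\}$ with $d(x,y) = r_2$: each realizes a distance the other does not, so they are $\sqsubseteq_\alpha$-incomparable, and the only common $\sqsubseteq_\alpha$-predecessor is the one-point space. This verifies condition~(i) of Lemma~\ref{lemmacf} for every $\alpha \geq 3$.
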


\begin{proof}
We already observed that \( {\sqsubseteq_\alpha} \leq_B {\sqsubseteq_\beta}
\) for every \( 1 \leq \alpha \leq \beta < \omega_1 \). If \( \alpha = 1 \),
then the quotient order of \( \sqsubseteq_1 \) consists of one point, while
if \( \alpha = 2 \), then the quotient order of \( \sqsubseteq_2 \) is a
linear order of order type \( \omega+1 \) (the different \( E_{\sqsubseteq_2}
\) classes correspond to the possible cardinalities of the spaces in \(
\U_{2} \)). Therefore \( {\sqsubseteq_2} \nleq_B {\sqsubseteq_1} \), and, by
Corollary~\ref{inj_jumps} and Theorem~\ref{lem:successor}, \(
{\sqsubseteq_2} <_B {\sqsubseteq_2}^{\inj} \sim_B
 {\sqsubseteq_3}   \).

For the case \( 3 \leq \alpha < \omega_1 \), by Theorem~\ref{lem:successor} it
is enough to show that \( {\sqsubseteq_\alpha} <_B
{{\sqsubseteq_\alpha}^{\inj}} \) (assuming that \( \sqsubseteq_\alpha \) is
Borel). To see this, consider the spaces \( U_0 , U_1 \in \U_\alpha \) with
domain \( \{ x,y \} \) and distances defined by \( d_{U_0}(x,y) = r_1 \) and
\( d_{U_1}(x,y) = r_2 \). Then clearly \( U_0 \not \sqsubseteq U_1 \), \( U_1
\not \sqsubseteq U_0 \), and \( \{ U \in \U_\alpha \mid U \sqsubseteq U_0,U_1
\} \) is  the \( E_{\sqsubseteq_\alpha} \)-equivalence class of the space
consisting of just one point. The desired result then follows from
Corollary~\ref{inj_jumps}.
\end{proof}

\begin{lemma} \label{lem:productlimit}
Let $(S_n)_{n \in \omega}$ be a sequence of analytic quasi-orders and
$\lambda $ an infinite countable ordinal. If for every \( n \in \omega \)
there exists \( \beta_n < \lambda \) such that \( S_n \leq_B {
\sqsubseteq_{\beta_n} } \), then \( \prod_{n \in \omega} S_n \leq_B
{\sqsubseteq_\lambda} \).
\end{lemma}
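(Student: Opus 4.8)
The plan is to reduce the statement to the special case $S_n = {\sqsubseteq_{\beta_n}}$ and then to build a single Borel ``bouquet'' map realizing the coordinatewise quasi-order directly inside $\U_\lambda$.

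First, if $g_n \colon X_n \to \U_{\beta_n}$ is a Borel function witnessing $S_n \leq_B {\sqsubseteq_{\beta_n}}$, then $(x_n)_{n \in \omega} \mapsto (g_n(x_n))_{n \in \omega}$ is a Borel reduction of $\prod_{n \in \omega} S_n$ to $\prod_{n \in \omega} {\sqsubseteq_{\beta_n}}$, so it suffices to reduce the latter to $\sqsubseteq_\lambda$. Next I would preprocess the coordinates: using Lemma~\ref{lemma:basic} and~\eqref{eq:increasing} I replace $\sqsubseteq_{\beta_n}$ by an isomorphic copy carried by ultrametric Polish spaces whose distances come from a block $B_n$ of $D_\lambda$, where the blocks are chosen so that everything in $B_n$ lies strictly below everything in $B_{n+1}$, leaving $0$ and a cofinal set of ``large'' distances outside $\bigcup_n B_n$; and I decorate each space $U$ occurring in the $n$-th coordinate by attaching to it a rigid ``tag'' built from fresh distances, so that the resulting space $\hat U$ has a prescribed strict diameter $t_n$ lying above $B_n$ and cannot be isometrically embedded into any space of diameter $< t_n$, while $U \sqsubseteq U' \iff \hat U \sqsubseteq \hat U'$ holds in each fixed coordinate. (Carrying this out inside $D_\lambda$ is routine ordinal bookkeeping, using that $\lambda$ is a limit of ordinals each exceeding every $\beta_n$; the case where $\lambda$ is a successor, and the arithmetic of the blocks, require only minor extra care.)

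Now define $\Theta(\vec U)$ to be the disjoint union of a fresh point $\ast$ with the decorated spaces $\hat U_n$, where $d(\ast, x) = r_{\xi_n}$ for $x \in \hat U_n$ — here $(r_{\xi_n})_{n \in \omega}$ is a strictly increasing sequence of distances of $D_\lambda$ with $r_{\xi_n}$ strictly above $t_n$ and below all of $B_{n+1}$ — and $d(x,y) = \max\{r_{\xi_n}, r_{\xi_m}\}$ for $x \in \hat U_n$, $y \in \hat U_m$ with $n \neq m$ (the value forced by the ultrametric inequality through $\ast$). One checks routinely that $d$ is an ultrametric, that $\Theta(\vec U)$ is a separable complete ultrametric space (the pieces are uniformly bounded away from one another, so every Cauchy sequence is eventually supported in a single piece) with $D(\Theta(\vec U)) \subseteq D_\lambda$, and that $\Theta$ is Borel (by the argument used for the maps of Section~\ref{sect:setup}, e.g.\ the one preceding Theorem~\ref{propertyoff}). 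By construction $\ast$ realizes exactly the distances $\{ r_{\xi_n} : n \in \omega \}$ and $C^\ast_{r_{\xi_n}}(\Theta(\vec U)) = \hat U_n$ for every $n$.

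It remains to check that $\Theta$ reduces $\prod_n {\sqsubseteq_{\beta_n}}$ to $\sqsubseteq_\lambda$. For the easy direction, isometric embeddings $\varphi_n \colon \hat U_n \to \hat V_n$ glue, together with $\ast \mapsto \ast$, to an isometric embedding $\Theta(\vec U) \to \Theta(\vec V)$. Conversely, let $\Psi \colon \Theta(\vec U) \to \Theta(\vec V)$ be an isometric embedding; applying Lemma~\ref{lem:limit} with the point $\ast$ of $\Theta(\vec U)$, the point $\Psi(\ast)$ witnesses $\hat U_n = C^\ast_{r_{\xi_n}}(\Theta(\vec U)) \sqsubseteq C^{\Psi(\ast)}_{r_{\xi_n}}(\Theta(\vec V))$ for all $n$. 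The crux — and the step I expect to be the main obstacle — is to show that $\Psi(\ast) = \ast$: if instead $\Psi(\ast)$ lay in some piece $\hat V_m$, a direct computation of the spheres $C^{\Psi(\ast)}_{r_{\xi_n}}(\Theta(\vec V))$ would yield $\hat U_m \sqsubseteq \{\ast\} \cup \bigcup_{k < m} \hat V_k$, a space of diameter $< t_m$, contradicting the fact that $\hat U_m$ has diameter $t_m$ and does not embed into anything of smaller diameter. Hence $\Psi(\ast) = \ast$, so $C^{\Psi(\ast)}_{r_{\xi_n}}(\Theta(\vec V)) = \hat V_n$, giving $\hat U_n \sqsubseteq \hat V_n$ for every $n$, and undoing the decoration and rescaling yields $\vec U \mathrel{\prod_n {\sqsubseteq_{\beta_n}}} \vec V$. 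The genuinely delicate points are thus the design of the rigid tags (so that the diameter computation really forces $\Psi(\ast) = \ast$ and so that decoration stays faithful in each coordinate) and the verification that the entire layout of blocks and attachment radii fits inside $D_\lambda$.
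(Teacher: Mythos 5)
Your overall strategy (tag each coordinate so that an isometric embedding of the glued space must respect coordinates, then read off the coordinatewise embeddings) is the right kind of idea, and your local verifications (the gluing, the argument that $\Psi(\ast)=\ast$, Lemma~\ref{lem:limit}) are plausible. But there is a genuine gap exactly at the point you dismiss as ``routine ordinal bookkeeping'': the stacked block layout is impossible in general. You require pairwise disjoint sets $B_n \subseteq D_\lambda$ with every element of $B_n$ strictly below every element of $B_{n+1}$, each $B_n$ of order type at least $\beta_n$ (so that Lemma~\ref{lemma:basic} can move $\sqsubseteq_{\beta_n}$ onto spaces with distances in $B_n$), plus interleaved tag and attachment distances $t_n, r_{\xi_n}$. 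Such a configuration forces the order type of $D_\lambda$ to be at least the ordinal sum $\sum_n \beta_n$, and this can strictly exceed $\lambda$: for instance $\lambda=\omega+1$ with $\beta_n=\omega$ for all $n$ (a perfectly legal instance, and the successor case you wave away as ``minor extra care''), or $\lambda=\omega\cdot 2$ with $\beta_n=\omega+1$, or $\lambda=\omega+\omega$ with $\beta_n=\omega+n$ --- the last being precisely the kind of instance needed when the lemma is applied to build the sequence $P_{1+\alpha}\leq_B{\sqsubseteq_{\omega+\alpha}}$ at limit stages. In all these cases no stacked blocks exist, so the construction, as designed, cannot even be started; and since your identification of $\Psi(\ast)$ and your diameter argument depend on the ordering $B_n < t_n < r_{\xi_n} < B_{n+1}$, the flaw is not cosmetic.

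The paper's proof avoids this by \emph{not} giving each coordinate its own interval of distances. Instead, after reducing to $\prod_n \sqsubseteq^{\star}_{\beta_n}$ (with $\beta_n>1$) via \eqref{eq:increasing} and Lemma~\ref{lemmaequivalence}, it re-indexes only the distances $r_i$ with \emph{finite} index inside the $n$-th piece, sending them to $r_{\langle n,i\rangle}$ via a pairing function into the first $\omega$ distances, while the distances with infinite index are left (essentially) unchanged and are \emph{shared} by all coordinates; pieces are separated by a single top distance $r_{\lambda'}$ when $\lambda=\lambda'+1$, or by odd-indexed distances $r_{2\cdot\lambda_{\max\{n,m\}}+1}$ when $\lambda$ is limit (with within-piece distances pushed to even indices). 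Coordinate identification then comes for free: since each piece lies in $\U^{\star}_{\beta_n}$ and $\beta_n>1$, it realizes its coordinate-specific finite-index distance $r_{\langle n,1\rangle}$, which no other coordinate realizes, so any isometric embedding must send the $n$-th piece into the $n$-th piece. If you want to salvage your proof, you should replace the stacked blocks by some such scheme in which only an initial $\omega$-segment of $D_\lambda$ is used for tagging and the tail of $D_\lambda$ is reused by all coordinates simultaneously.
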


\begin{proof}
Using~\eqref{eq:increasing} and Lemma~\ref{lemmaequivalence}, it is enough to
show $\prod_{n\in\omega }\sqsubseteq^{\star
}_{\beta_n}{\leq_B}\sqsubseteq_{\lambda }$ with \( \beta_n > 1 \) for all \(
n \in \omega \). We distinguish two cases, depending on whether \( \lambda \)
is a successor or not.

Assume first that \( \lambda = \lambda' + 1 \), so that, in particular, \(
\lambda' \geq \omega \). Fix a bijection $\langle \cdot,\cdot \rangle
\colon\omega\times (\omega\setminus\{ 0\} )\to\omega\setminus\{ 0\} $,
increasing with respect to its second argument. Given a sequence of spaces \(
(X_n)_{n \in \omega}  \in \prod_{n \in \omega} \U_{{\beta_n}}^{\star } \),
let $X'$ be the space in \(\U_{{\lambda}} \) whose domain is the disjoint
union of the \( X_n \) and whose distance \( d_{X'} \) is defined by letting,
for distinct $x,y\in X'$,
\[
d_{X'}(x,y)=
\begin{cases}
r_{\lambda'} & \text{if $x \in X_n$ and $y\in X_m$ with $n \neq m$;} \\
r_\xi & \text{if $x,y \in X_n$ and $d_{X_n}(x,y) = r_\xi$ with $\xi \geq \omega$}; \\
r_{\langle n,i \rangle} & \text{if $x,y \in X_n$ and $d_{X_n}(x,y) = r_i$ with $i \in \omega$}.
\end{cases}
\]
Pick two sequences \( ( X_n )_{ n \in \omega } , ( Y_n )_{ n \in \omega } \in
\prod_{n \in \omega} \U_{{\beta_n}}^{\star } \). If $X_n \sqsubseteq Y_n$ for
every $n$ then, gluing together the embeddings, we get an isometric embedding
of $X'$ into $Y'$. Conversely, assume $\varphi \colon X'\to Y'$ is an
isometric embedding. Since \( \beta_n < \lambda = \lambda' + 1 \), the
distance \( r_{\lambda'} \) cannot be realized in \( X_n \) (as a subspace of
\( X' \)), and hence for every \( n \in \omega \) there is \( k(n) \in
\omega\) such that $\varphi (X_n)\subseteq Y_{k(n)}$. Moreover, since the
distances with finite positive index used in $X_n$ (as a subspace of $X'$)
are not used in any $Y_m$ (as a subspace of \( Y' \)) with $m\neq n$, we
easily get \( k(n) = n \), so that \( \varphi(X_n) \subseteq Y_n \) for every
\( n \in \omega \) (here we are using $X_n \in \U^\star_{{\beta_n}}$ and \(
\beta_n > 1 \)). It then easily follows that for all \( n \in \omega \) the
restriction of  \( \varphi \) to \( X_n \) witnesses $X_n \sqsubseteq^{\star
}_{\beta_n} Y_n $.

For the case \( \lambda \) limit the argument is similar. For each \( n \in
\omega \), let \( \lambda_n = \max \{ \beta_i \mid i \leq n \} < \lambda \).
Given \( (X_n)_{n \in \omega}  \in \prod_{n \in \omega} \U_{{\beta_n}}^{\star
} \), let $X'$ be the space in \(\U_{{\lambda}} \) whose domain is the
disjoint union of the \( X_n \) and whose distance \( d_{X'} \) is defined
by letting, for distinct $x,y\in X'$,
\[
d_{X'}(x,y)=
\begin{cases}
r_{2 \cdot \lambda_{\max \{ n,m \}}+1} & \text{if $x \in X_n$ and $y\in X_m$ with $n \neq m$}; \\
r_{2 \cdot \xi} & \text{if $x,y \in X_n$ and $d_{X_n}(x,y) = r_\xi$ with $\xi \geq \omega$};\\
r_{2 \cdot \langle n,i \rangle} & \text{if $x,y \in X_n$ and $d_{X_n}(x,y) = r_i$ with $i \in \omega$}.
\end{cases}
\]
Let \( ( X_n )_{ n \in \omega } , ( Y_n )_{ n \in \omega } \in \prod_{n \in
\omega} \U_{{\beta_n}}^{\star } \). If each $X_n$ embeds isometrically into
$Y_n$, then gluing together the embeddings we get an isometric embedding of
$X'$ into $Y'$. Conversely, assume $\varphi \colon X'\to Y'$ is an isometric
embedding. Since no distance with odd index can be realized within a single
\( X_n \) (as a subspace of \( X' \)), for every \( n \in \omega \) there is
\( k(n) \in \omega\) such that $\varphi (X_n)\subseteq Y_{k(n)}$, and arguing
as above we easily get \( k(n) = n \). Thus \( \varphi(X_n) \subseteq Y_n \)
for every \( n \in \omega \), whence $X_n  \sqsubseteq^{\star }_{\beta_n} Y_n
$ for every \( n \in \omega \).
\end{proof}

Recall the definition of the Rosendal's sequence \( P_\alpha \), \( \alpha <
\omega_1 \), given in Definition~\ref{defin:rosendalsequence}.

\begin{corollary}
For all \( \alpha < \omega_1 \),  \( P_{1+\alpha} \leq_B
{\sqsubseteq_{\omega+\alpha}} \).
\end{corollary}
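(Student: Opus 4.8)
The plan is to establish $P_{1+\alpha} \leq_B {\sqsubseteq_{\omega+\alpha}}$ by transfinite induction on $\alpha < \omega_1$, using at each stage only results already at hand: the defining clauses of Rosendal's sequence (Definition~\ref{defin:rosendalsequence}), the inequalities $S \leq_B S^{\cf} \leq_B S^{\inj}$ together with monotonicity of $S \mapsto S^{\inj}$, the bireducibility $\sqsubseteq_{\delta+1} \sim_B {\sqsubseteq_\delta}^{\inj}$ for $1 < \delta < \omega_1$ (Theorem~\ref{lem:successor}), Lemma~\ref{lem:productlimit} for the limit stages, and Proposition~\ref{prop:simple}(1) together with Corollary~\ref{cor:alphainfinite=>nonwqo} for the base case.

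For $\alpha = 0$ I would observe that $P_1 = (\omega,=)^{\cf} \sim_B (\pow(\omega), \subseteq)$ by Proposition~\ref{prop:simple}(1), and $(\pow(\omega), \subseteq) \leq_B {\sqsubseteq_\omega}$ by Corollary~\ref{cor:alphainfinite=>nonwqo}, so $P_1 \leq_B {\sqsubseteq_{\omega}}$. For the successor step $\alpha = \gamma+1$, since $1 + \alpha = (1+\gamma)+1$ we have $P_{1+\alpha} = (P_{1+\gamma})^{\cf}$; the induction hypothesis gives $P_{1+\gamma} \leq_B {\sqsubseteq_{\omega+\gamma}}$, and then
\[
P_{1+\alpha} = (P_{1+\gamma})^{\cf} \leq_B (P_{1+\gamma})^{\inj} \leq_B (\sqsubseteq_{\omega+\gamma})^{\inj} \sim_B {\sqsubseteq_{(\omega+\gamma)+1}} = {\sqsubseteq_{\omega+\alpha}},
\]
where the $\sim_B$ is Theorem~\ref{lem:successor}, applicable since $\omega+\gamma \geq \omega > 1$, and the last equality is associativity of ordinal addition.

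For a limit $\alpha = \lambda$ one has $1+\lambda=\lambda$, hence $P_{1+\lambda} = P_\lambda = \prod_{\beta<\lambda} P_\beta$. For each $\beta < \lambda$: if $\beta = 0$ then $P_0 = (\omega,=) \leq_B (\pow(\omega),\subseteq) \leq_B {\sqsubseteq_\omega}$; if $\beta \geq 1$, writing $\beta = 1+\gamma$ with $\gamma < \lambda$, the induction hypothesis yields $P_\beta = P_{1+\gamma} \leq_B {\sqsubseteq_{\omega+\gamma}}$. Since $\gamma < \lambda$ implies $\omega+\gamma < \omega+\lambda$ (and $\omega < \omega+\lambda$), in every case $P_\beta \leq_B {\sqsubseteq_{\delta_\beta}}$ for some $\delta_\beta < \omega+\lambda$. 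Fixing a bijection $e \colon \omega \to \lambda$ (possible as $\lambda$ is a countable limit ordinal), $P_\lambda$ is Borel isomorphic to $\prod_{n\in\omega} P_{e(n)}$, so Lemma~\ref{lem:productlimit} — applied with $S_n = P_{e(n)}$, with the bounds $\delta_{e(n)} < \omega+\lambda$, and with the countable infinite ordinal $\omega+\lambda$ playing the role of its parameter — gives $\prod_{n\in\omega} P_{e(n)} \leq_B {\sqsubseteq_{\omega+\lambda}}$, i.e.\ $P_{1+\lambda} \leq_B {\sqsubseteq_{\omega+\lambda}}$. This closes the induction.

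I expect the only delicate point to be the limit step: one must recognize $P_\lambda$, which is a $\lambda$-indexed product, as an $\omega$-indexed product so that Lemma~\ref{lem:productlimit} applies, and one must keep the ordinal arithmetic ($1+\lambda = \lambda$, and $\gamma < \lambda \Rightarrow \omega+\gamma < \omega+\lambda$) straight. The base and successor steps are a routine chaining of the quoted lemmas.
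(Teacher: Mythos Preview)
Your proof is correct and follows essentially the same approach as the paper's: induction on $\alpha$, with the base case handled via Proposition~\ref{prop:simple}(1) and Corollary~\ref{cor:alphainfinite=>nonwqo}, the successor step via $P_{1+\gamma}^{\cf} \leq_B P_{1+\gamma}^{\inj}$ and Theorem~\ref{lem:successor}, and the limit step via Lemma~\ref{lem:productlimit}. Your treatment is in fact more careful than the paper's sketch, explicitly handling the reindexing of the $\lambda$-indexed product by $\omega$ and the separate case $\beta=0$ in the limit step.
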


\begin{proof}
By induction on \( \alpha < \omega_1 \), using
Corollary~\ref{cor:alphainfinite=>nonwqo} (together with the fact that \( P_1
\sim_B (\pow(\omega), \subseteq) \) by Proposition~\ref{prop:simple}(1)),
Theorem~\ref{lem:successor} (together with the fact that \( P_{1+\alpha+1} = P_{1+\alpha}^\cf \leq_B P_{1+\alpha}^\inj \)),
and Lemma~\ref{lem:productlimit}.
\end{proof}

\begin{theorem} \label{th:sumup}
Let \( 1 \leq \alpha < \omega_1 \). Then
\begin{enumerate}[(1)]
\item if \( \alpha \leq \omega \), then \( \sqsubseteq_\alpha \) is Borel;
\item if $\alpha > \omega$, \( \sqsubseteq_\alpha \) contains both upper
    and lower cones that are \( \boldsymbol{\Sigma}^1_1 \)-complete, and
    hence $\sqsubseteq_\alpha$ is analytic non-Borel;
\item all \( E_{\sqsubseteq_{\omega+1}} \)-equivalence classes are Borel,
    hence \( \sqsubseteq_{\omega+1} \) is not complete for analytic
    quasi-orders;
\item for all \( \alpha < \beta \leq \omega+2 \), \( {\sqsubseteq_\alpha}
    <_B {\sqsubseteq_\beta} \).
\end{enumerate}
\end{theorem}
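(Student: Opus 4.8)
The plan is to assemble the four assertions from the jump-operator analysis of Section~\ref{sect:jump} together with Theorem~\ref{lem:successor}, which reduces everything about $\sqsubseteq_{\alpha+1}$ to the operators $S\mapsto S^{\inj}$ and $S\mapsto S^{\cf}$ applied to $\sqsubseteq_\alpha$.

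\emph{Part (1).} I would prove by induction that $\sqsubseteq_n$ is Borel for every finite $n\ge 1$, and then invoke Lemma~\ref{lemmalimit} at stage $\omega$. The relations $\sqsubseteq_1$ and $\sqsubseteq_2$ are Borel directly ($\U_1$ consists of singletons, and $\sqsubseteq_2$ is just the comparison of cardinalities of finite-or-countable spaces). For the step from $n\ge 2$ to $n+1$: by Proposition~\ref{propbqo} the Borel quasi-order $\sqsubseteq_n$ is a bqo, hence a wqo, so $\sqsubseteq_n^{\inj}$ is Borel by Corollary~\ref{cor:injonwqo}; since $\sqsubseteq_{n+1}\sim_B\sqsubseteq_n^{\inj}$ by Theorem~\ref{lem:successor}, also $\sqsubseteq_{n+1}$ is Borel. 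Finally Lemma~\ref{lemmalimit} applied to $\lambda=\omega$ yields that $\sqsubseteq_\omega$ is Borel.

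\emph{Parts (2) and (3).} The crux is what happens when the jump is applied at stage $\omega$. By Corollary~\ref{cor:alphainfinite=>nonwqo} we have $(\pow(\omega),\subseteq)\leq_B\sqsubseteq_\omega$, hence $(\pow(\omega),\subseteq)^{\inj}\leq_B\sqsubseteq_\omega^{\inj}\sim_B\sqsubseteq_{\omega+1}$ by monotonicity of the jump and Theorem~\ref{lem:successor}. A Borel reduction $f$ of a quasi-order $R$ to a quasi-order $S$ satisfies $C^R(p)=f^{-1}(C^S(f(p)))$ and $C_R(q)=f^{-1}(C_S(f(q)))$, so the $\boldsymbol{\Sigma}^1_1$-complete upper and lower cones of $(\pow(\omega),\subseteq)^{\inj}$ provided by Proposition~\ref{prop:P(omega)} transfer to $\boldsymbol{\Sigma}^1_1$-complete cones of $\sqsubseteq_{\omega+1}$, and then, via \eqref{eq:increasing}, to $\sqsubseteq_\alpha$ for every $\alpha>\omega$; a $\boldsymbol{\Sigma}^1_1$-complete cone is non-Borel, giving part (2). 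For part (3), $\sqsubseteq_{\omega+1}\sim_B\sqsubseteq_\omega^{\inj}$ with $\sqsubseteq_\omega$ Borel, so Theorem~\ref{th:S^inj not complete} says all $E_{\sqsubseteq_\omega^{\inj}}$-classes are Borel, and since a Borel bireduction carries equivalence classes to preimages of equivalence classes, all $E_{\sqsubseteq_{\omega+1}}$-classes are Borel. If $\sqsubseteq_{\omega+1}$ were complete for analytic quasi-orders, then $\sqsubseteq_{\omega+1}^{\cf}\leq_B\sqsubseteq_{\omega+1}$; but $\sqsubseteq_{\omega+1}$ has $\boldsymbol{\Sigma}^1_1$-complete lower cones by part (2), so Lemma~\ref{lem:upperlowercones}(2) gives that $E_{\sqsubseteq_{\omega+1}^{\cf}}$ has a $\boldsymbol{\Sigma}^1_1$-complete class, which would then Borel reduce into $E_{\sqsubseteq_{\omega+1}}$, contradicting the fact that all its classes are Borel.

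\emph{Part (4).} It suffices to establish strictness along the chain $\sqsubseteq_1<_B\sqsubseteq_2<_B\dots<_B\sqsubseteq_n<_B\dots<_B\sqsubseteq_\omega<_B\sqsubseteq_{\omega+1}<_B\sqsubseteq_{\omega+2}$, since then $\sqsubseteq_\alpha<_B\sqsubseteq_\beta$ for all $\alpha<\beta\leq\omega+2$ follows from \eqref{eq:increasing} and the elementary fact that $R<_B S\leq_B T$ implies $R<_B T$. Strictness at each finite step $\sqsubseteq_n<_B\sqsubseteq_{n+1}$ and at the step $\sqsubseteq_\omega<_B\sqsubseteq_{\omega+1}$ is Theorem~\ref{corincreasing} combined with part (1); $\sqsubseteq_n<_B\sqsubseteq_\omega$ then follows from $\sqsubseteq_n<_B\sqsubseteq_{n+1}\leq_B\sqsubseteq_\omega$. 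For the top step, $\sqsubseteq_{\omega+2}\sim_B\sqsubseteq_{\omega+1}^{\inj}$ by Theorem~\ref{lem:successor}, and $\sqsubseteq_{\omega+1}$ has $\boldsymbol{\Sigma}^1_1$-complete lower cones by part (2), so Lemma~\ref{lem:upperlowercones}(2) produces a $\boldsymbol{\Sigma}^1_1$-complete $E_{\sqsubseteq_{\omega+2}}$-class; since every $E_{\sqsubseteq_{\omega+1}}$-class is Borel by part (3), no Borel reduction can carry this class into $\sqsubseteq_{\omega+1}$, so $\sqsubseteq_{\omega+2}\not\leq_B\sqsubseteq_{\omega+1}$. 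I expect the only delicate points to be the routine-but-essential bookkeeping transfers — verifying that $\boldsymbol{\Sigma}^1_1$-completeness of cones and Borelness of all equivalence classes really are preserved under the Borel (bi)reductions in play — and remembering the hypothesis $\alpha>1$ in Theorem~\ref{lem:successor} when handling $\sqsubseteq_2$ at the base of the induction in part (1); the substantive content has already been isolated in Section~\ref{sect:jump} and Theorem~\ref{lem:successor}.
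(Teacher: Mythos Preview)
Your proposal is correct and follows essentially the same route as the paper: induction with Proposition~\ref{propbqo}, Corollary~\ref{cor:injonwqo}, and Theorem~\ref{lem:successor} for part~(1), the transfer of the $\boldsymbol{\Sigma}^1_1$-complete cones of $(\pow(\omega),\subseteq)^{\inj}$ for part~(2), Theorem~\ref{th:S^inj not complete} for part~(3), and the Borel-class argument via Lemma~\ref{lem:upperlowercones}(2) for the top step in part~(4). If anything you are slightly more careful than the paper in two spots: you handle $\sqsubseteq_2$ directly (avoiding the $\alpha>1$ hypothesis in Theorem~\ref{lem:successor}) and you spell out explicitly why Borelness of all $E_{\sqsubseteq_{\omega+1}}$-classes forces non-completeness, whereas the paper leaves this as an implicit ``hence''.
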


\begin{proof}

(1) By Lemma~\ref{lemmalimit} it is enough to prove the result for \( \alpha
< \omega \), and this will be shown by induction on \( \alpha\geq 1 \). The
case \( \alpha = 1 \) is clear and the induction step is immediate using
Proposition~\ref{propbqo}, Corollary \ref{cor:injonwqo} and
Theorem~\ref{lem:successor}.

(2) By Corollary~\ref{cor:alphainfinite=>nonwqo} and Theorem~\ref{lem:successor}, we get \( {(\pow(\omega), \subseteq)^{\inj}}
\leq_B {\sqsubseteq_{\omega+1}} \), so that the claim is true for \( \alpha =
\omega+1 \) by Proposition~\ref{prop:P(omega)}. For an arbitrary \(\alpha >
\omega+1 \), use the fact that \( {\sqsubseteq_{\omega+1}} \leq_B
{\sqsubseteq_\alpha} \) by~\eqref{eq:increasing}.

(3) Since \( {\sqsubseteq_{\omega+1}} \sim_B {{\sqsubseteq_\omega}^{\inj}} \)
by Theorem~\ref{lem:successor} and \( \sqsubseteq_\omega \) is Borel by
part~(1), the result follows from Theorem~\ref{th:S^inj not complete}.

(4) Clearly \( {\sqsubseteq_\alpha} \leq_B
{\sqsubseteq_\beta} \) by~\eqref{eq:increasing}, so only the inequality \( {\sqsubseteq_\beta} \nleq_B
{\sqsubseteq_\alpha} \) needs to be proved. For \( \alpha \leq \omega \) this
follows from Theorem~\ref{corincreasing} and (1). For \( \alpha = \omega+1 \)
and \( \beta = \omega+2 \), first observe that if \( {\sqsubseteq_{\omega+2}}
\leq_B {\sqsubseteq_{\omega+1}} \), then any witness to this would also
witness \({ E_{\sqsubseteq_{\omega+2}}} \leq_B {E_{\sqsubseteq_{\omega+1}}}
\). But this is impossible because \( E_{\sqsubseteq_{\omega+1}} \) has only
Borel equivalence classes by part~(3), while \( E_{\sqsubseteq_{\omega+2}} \)
has a \( \boldsymbol{\Sigma}^1_1 \)-complete class by part~(2) applied with \( \alpha = \omega+1 \),
Lemma~\ref{lem:upperlowercones}(2), and the fact that \(
{\sqsubseteq_{\omega+2}} \sim_B {{\sqsubseteq_{\omega+1}}^{\inj}} \) by
Theorem~\ref{lem:successor}.
\end{proof}

\section{Open problems}

\subsection{Isometry}
In this paper we have given a fairly complete treatement of the relation of isometry on ultrametric Polish spaces; these are, in particular, zero-dimensional spaces.
However, one of the main questions asked by \cite{Gao2003} and still unanswered is the following:

\begin{question}
What is the complexity of isometry between zero-dimensional Polish metric
spaces?
\end{question}

Clemens \cite{clemensisometry} showed that this relation is strictly above countable
graph isomorphism. It is conjectured in \cite[Chapter 10]{Gao2003} that it is
Borel bireducible with any complete orbit equivalence relation.

The same question might be asked for any other topological dimension. For
infinite-dimensional spaces the isometry relation is Borel bireducible with
any complete orbit equivalence relation by the proofs of \cite[Theorem
1]{Gao2003} and \cite[Theorem 7]{clemensisometry}. For other dimensions the
problem is open, but it is easy to observe that if $\alpha \leq
\alpha'<\omega_1$ then the relation of isometry on spaces of dimension
$\alpha$ is Borel reducible to the same relation on spaces of dimension
$\alpha'$.

Corollary \ref{cor:mainquest} answers what appears to be the main question
from \cite[Chapter 8]{Gao2003}: the relations of isomorphism on discrete
Polish ultrametric spaces and on locally compact Polish ultrametric spaces
are both Borel bireducible to countable graph isomorphism. As discussed after Corollary
\ref{cor:mainquest}, among the lower bounds proposed in the literature for
the complexity of these relations the only one that could still be sharp is
isomorphism between trees on \(\omega\) with countably many infinite branches
or, equivalently,  isometry on countable closed subspaces of $ \pre{\omega
}{\omega } $.

\begin{question} \label{questionlowerbound}
Is the relation of isomorphism between trees on \(\omega\) with countably
many infinite branches Borel bireducible with countable graph isomorphism?
\end{question}

\subsection{Isometric embeddability}

The main problem left open by Theorem \ref{th:sumup} is the following:

\begin{question}\label{q:main}
Does there exist $\alpha \geq \omega+2$ such that $\sqsubseteq_{\alpha}$ is
complete/invariantly universal for analytic quasi-orders? In particular, what
about $\sqsubseteq_{\omega+2}$?
\end{question}

In the previous question one can equivalently replace $\sqsubseteq_{\alpha}$
with $\sqsubseteq^\star_{\alpha}$. For completeness this is immediate from
Lemma \ref{lemmaequivalence}. For invariant universality this follows from
the observation after Definition \ref{def:cB} and the fact that one can
sharpen the argument of the proof of Lemma \ref{lemmaequivalence} to show
that each of $\sqsubseteq_{\alpha}$ and $\sqsubseteq^\star_{\alpha}$
classwise Borel embeds into the other one.

To the best of our knowledge, the techniques to prove that an analytic
quasi-order $S$ is not complete are the following: show that $S$ is
combinatorially simple (e.g.\ a wqo); show that $S$ is topologically simple
(i.e.\ Borel); show that the equivalence relation $E_S$ is simple (e.g.\
almost all its equivalence classes are Borel). None of these apply to
$\sqsubseteq_{\omega+2}$. In fact, combinatorially already \(
\sqsubseteq_{\omega+1} \) (which is not complete for analytic quasi-orders)
is very complicated because it embeds all partial orders \( P \) of  size \(
\omega_1 \) (and thus, assuming the Continuum Hypothesis, the quotient order
of any quasi-order on a standard Borel space). To see this, use the fact that
by~\cite{Parovicenko1963} any such \( P \) can be embedded in the relation \(
(\pow(\omega), \subseteq^*) \) of inclusion modulo finite sets, together with
the chain of reducibilities \( {(\pow(\omega), \subseteq^*)} \leq_B
{(\pow(\omega), \subseteq)^{\cf}} \leq_B {\sqsubseteq_{\omega+1}} \). The
same argument also shows that it is not possible to answer negatively the
subsequent Question~\ref{q:powominjinj} using only combinatorial arguments.
Moreover, \( \sqsubseteq_{\omega+2} \) is not Borel by Theorem
\ref{th:sumup}(2), and its associated equivalence relation contains (many)
non-Borel classes, as shown in the proof of Theorem \ref{th:sumup}(4).

A possible way to obtain a positive answer to Question \ref{q:main} is to
answer positively the following question:

\begin{question}\label{q:powominjinj}
Is \( ((\pow(\omega), {\subseteq})^{\inj})^{\inj} \)  complete/invariantly
universal for analytic quasi-orders?
\end{question}

Recall that \((\pow(\omega), {\subseteq}) \leq_B {\sqsubseteq_\omega}\) by
Corollary \ref{cor:alphainfinite=>nonwqo}, and hence we have also \(
(\pow(\omega), {\subseteq})^{\inj} \leq_B {\sqsubseteq_{\omega+1}} \) and \(
((\pow(\omega), {\subseteq})^{\inj})^{\inj} \leq_B {\sqsubseteq_{\omega+2}}
\) by Theorem \ref{lem:successor}.

\begin{question}\label{q:powomega}
Is it true that \( {\sqsubseteq_{\omega+1}} \sim_B (\pow(\omega),
{\subseteq})^{\inj} \)? What about \( {\sqsubseteq_\omega} \sim_B
(\pow(\omega), {\subseteq})\)?
\end{question}

Notice that a positive answer to the second part of Question \ref{q:powomega}
implies a positive answer to the first part by Theorem \ref{lem:successor}. By
the same theorem, a positive answer to the latter would imply that Question
\ref{q:powominjinj} is equivalent to the completeness part of Question \ref{q:main} for $\alpha=
\omega+2$.

Notice also that an upper bound for the complexity of \( \sqsubseteq_\omega
\) is \( (\pow(\omega), {\subseteq})^{\cf} \) (whence also \(
{\sqsubseteq_{\omega+1}} \leq_B (( \pow(\omega), \subseteq)^{\cf})^{\inj}
\)). To see this, first observe that \( {\sqsubseteq_n} <_B (\pow(\omega),
{\subseteq}) \) (because \( \sqsubseteq_n \) is a bqo with only countably many \( E_{\sqsubseteq_n} \)-classes by
Corollary~\ref{corwqo=countable} and Proposition~\ref{propbqo}), and then use Lemma~\ref{lem:limit} to get \(
{\sqsubseteq_\omega} \leq_B (\pow(\omega), {\subseteq})^{\cf} \). However,
since \( (\pow(\omega), \subseteq) <_B (\pow(\omega), \subseteq)^{\cf} \) by
Lemma~\ref{lemmacf}, the exact complexity of \( \sqsubseteq_\omega \) remains
undetermined.

\subsection{Classwise Borel embeddability}

The only known tool for showing that a pair $(S,E)$ is invariantly universal
is Theorem~\ref{theorsaturation}. Its proof in \cite{cammarmot} actually
shows that the conclusion can be strengthened to: $(R,{=}) \sqsubseteq_{cB}
(S,E)$ for every analytic quasi-order $R$. This naturally leads to consider
the following notion: a pair $(S,E)$ as in Definition \ref{def:cB} is
\emph{$\sqsubseteq_{cB}$-complete} if $(R,F) \sqsubseteq_{cB} (S,E)$ for
every pair $(R,F)$. Notice that if a pair \( (S,E) \) is
$\sqsubseteq_{cB}$-complete then, in particular, both \( S \) and \( E \) are
\( \leq_B \)-complete for analytic quasi-orders and analytic equivalence
relations, respectively. Thus none of the invariantly universal pairs \( (S,E
) \) considered in this paper and in \cite{cammarmot} is
$\sqsubseteq_{cB}$-complete, because to apply our
Theorem~\ref{theorsaturation} we need that \( E \) be  Borel reducible to an
orbit equivalence relation (whence \( E \) cannot be \( \leq_B \)-complete).
However, using standard arguments it is not hard to construct a
$\sqsubseteq_{cB}$-complete pair. It is thus natural to ask the following:

\begin{question}
Do there exist \lq\lq natural\rq\rq\ examples of $\sqsubseteq_{cB}$-complete
pairs? In particular, for which of the invariantly universal quasi-orders \(
S \) considered in this paper and in \cite{cammarmot} we have that \( (S,E_S)
\) is also $\sqsubseteq_{cB}$-complete?
\end{question}

Of course this question is strongly related to~\cite[Question
6.4]{cammarmot}, which still remains wide open.

\end{document}